 \numberwithin{equation}{section}
\newtheorem{theorem}{Theorem}[section]
\newtheorem{corollary}[theorem]{Corollary}
\newtheorem{proposition}[theorem]{Proposition}
\newtheorem{lemma}[theorem]{Lemma}
\theoremstyle{definition}
\newtheorem{defn}{Definition}[section]
\theoremstyle{remark}
\newtheorem{remark}{Remark}[section]
\newcommand{\RR}{\mathbb{R}}
\newcommand{\NN}{\mathbb{N}}
\newcommand{\EE}{\mathbf{E}}
\newcommand{\PP}{\mathbf{P}}
\newcommand{\clf}{\mathcal{F}}
\newcommand{\clp}{\mathcal{P}}
\newcommand{\cls}{\mathcal{S}}
\newcommand{\clg}{\mathcal{G}}
\newcommand{\clh}{\mathcal{H}}
\newcommand{\Om}{\Omega}
\newcommand{\om}{\omega}
\newcommand{\tsigma}{\tilde \sigma}
\newcommand{\btheta}{\boldsymbol \theta}
\newcommand{\bmu}{\boldsymbol \mu}
\newcommand{\brho}{\boldsymbol \rho}
\newcommand{\beps}{\boldsymbol \epsilon}
\newcommand{\br}{\boldsymbol r}
\newcommand{\be}{\boldsymbol e}
\newcommand{\bX}{\boldsymbol X}
\newcommand{\bF}{\boldsymbol \clf}
\newcommand{\bG}{\boldsymbol \clg}
\newcommand{\bH}{\boldsymbol \clh}
\newcommand{\po}{p}
\definecolor{amet}{rgb}{0.8, 0.2, 0.8}
\definecolor{ques}{rgb}{0.8, 0.2, 0.2}
\definecolor{nico}{rgb}{0.8, 0, 0.7}
\title{Approximating Quasi-Stationary Distributions with Interacting Reinforced Random Walks}
\date{\today}
\author{Amarjit Budhiraja}
\author{Nicolas Fraiman}
\author{Adam Waterbury}
\address{Department of Statistics and Operations Research\\ University of North Carolina at Chapel Hill\\ Hanes Hall, Chapel Hill, NC 27599, USA}
\email{amarjit@email.unc.edu, fraiman@email.unc.edu, atw02@live.unc.edu}
\subjclass[2010]{Primary 60J10, 34F05; Secondary 60F10, 92D25}
\keywords{quasi-stationary distributions, stochastic approximation, interacting particles, central limit theorem}
\begin{document}

\begin{abstract}
We propose two numerical schemes for approximating quasi-stationary distributions (QSD) of finite state Markov chains with absorbing states. Both schemes are described in terms of certain interacting chains in which the interaction is given
in terms of the total time occupation measure of all particles in the system and has the impact of reinforcing transitions, in an appropriate fashion, to states where the collection of particles has spent more time. The schemes can be viewed as combining the key features of the two basic simulation-based methods for approximating QSD originating from the works of Fleming and Viot (1979) and 
Aldous, Flannery and Palacios (1998), respectively. The key difference between the two schemes studied here is that in the first 
method one starts with $a(n)$ particles at time $0$ and number of particles stays constant over time whereas in the second method we start with one particle and at most one particle is added at each time instant in such a manner that there are $a(n)$
particles at time $n$. We prove almost sure convergence to the unique QSD and establish Central Limit Theorems for the two schemes under the key assumption that $a(n)=o(n)$. When $a(n)\sim n$, the fluctuation behavior is expected to be non-standard. Some exploratory numerical results are presented to illustrate the performance of the two approximation schemes.
\end{abstract}

\maketitle


\section{Introduction}

Markov processes with absorbing states  occur frequently in epidemiology \cite{bart60}, statistical physics \cite{van92}, and population biology  \cite{melvil}. Quasi-stationary distributions (QSD) are the basic mathematical object used to describe the long time behavior of such Markov processes on non-absorption events. Just as stationary distributions of ergodic Markov processes make the law of the Markov process, initialized at that distribution, invariant at all times, quasi-stationary distributions are probability measures that leave the conditional law of the Markov process, on the event of non-absorption, invariant. QSD have been widely studied since the pioneering work of Kolmogorov \cite{Kol38}, Yaglom \cite{Yag47} and Sevastyanov \cite{Sev51}, cf. \cites{melvil, pol, colpiemarsersan}.
Numerical computation of QSD is an important problem and the goal of this work is to investigate two related approximation schemes for QSD of finite state Markov chains. Specifically, we consider the following setting. 

Let $\Delta$ denote a finite set and consider a nonempty subset $\partial \Delta \subset \Delta$. Let $\Delta^o \doteq \Delta \setminus \partial \Delta$ and assume that $\Delta^o$ is nonempty. Let $\{Y_n\}$ be a Markov chain taking values in $\Delta$ with transition probability kernel $\{P_{x,y}\}_{x,y\in \Delta}$.
We denote by $\PP_{\nu}$ the probability measure under which $\{Y_n\}$ has initial distribution $\nu$, namely $\PP_{\nu}(Y_0 \in A) = \nu(A)$. If $\nu = \delta_x$ for some $x \in \Delta$, we write $\PP_x$ instead of $\PP_{\nu}$. We assume that $\{Y_n\}$ is absorbed upon entering $\partial \Delta$. In particular, for each $x \in \partial \Delta$,
\[
\PP_x(Y_1 \in \Delta^o) = 0.
\]
Without loss of generality, we assume that $\partial \Delta$ consists of a single point which we denote by $0$. Note that
\[
\PP_x(Y_1 = y) = P_{x,y} \quad\mbox{ for } x,y \in \Delta.
\]
A probability measure $\mu$ on $\Delta^o$ is a \emph{quasi-stationary distribution (QSD)} for the chain $\{Y_n\}$ if 
$$
\PP_\mu(Y_n =x \,|\, Y_n \in \Delta^o) = \mu(x)\text{, for all }x \in \Delta^o\text{ and }n \in \NN.
$$
We assume that $\Delta^o$ is an irreducible class of the Markov chain and that $P_{x,0}>0$ for some $x \in \Delta^o$. Under this irreducibility assumption on the chain it follows from  Perron-Frobenius theory that there is a unique QSD for $\{Y_n\}$ which we denote by $\theta_*$; see \cite{colpiemarsersan}*{Chapter 3}.
This probability measure  on $\Delta^o$ can be characterized as the normalized left eigenvector of the substochastic matrix $\{P_{x,y}\}_{x,y\in \Delta^o}$ associated with some eigenvalue $\lambda \in (0,1)$.
In particular, 
unlike invariant distributions for Markov processes, the QSD is characterized as a   solution of a {\em nonlinear} equation and thus presents harder numerical challenges. In general, numerical linear algebra methods become difficult when the underlying transition probability matrix is large or ill-conditioned. Thus, it is natural to explore simulation-based approaches. 

There have been two main simulation-based approaches for approximating QSD. These approaches originate from the works of 
Fleming and Viot \cite{FV79} and Aldous, Flannery and Palacios \cite{AFP88}, respectively.
In numerical schemes based on the ideas of Fleming and Viot (see \cites{burholmar, delmic2}),
one considers a collection  of  particles evolving independently according to the Markov chain with transition probability kernel $\{P_{x,y}\}$, and whenever a particle is absorbed it jumps instantly to the position of another particle selected at random. It is known that as both time and the number of particles tend to infinity, the empirical measure of the current positions of the particles converges almost surely to the unique QSD $\theta_*$ \cites{delmic2, ville1, benclo}. The method of Aldous et al.\ (see \cites{benclo, BGZ16}) approximates the QSD with the time occupation measure of a single particle that evolves according to the transition kernel $\{P_{x,y}\}$ between visits to $0$, and when it hits $0$ it jumps to a previously visited position with probability proportional to the time the chain spent at that position.

There has been substantial recent progress in analyzing the convergence rates of these algorithms. C\'erou, Delyon, Guyader and Rousset \cite{CDGR20} proved a Central Limit Theorem (CLT) for the law of Fleming-Viot particle systems at a given fixed time under very general assumptions. Lelievre, Pillaud-Vivien and Reygner \cite{LPR18} obtained an infinite-time version in the setting of finite space Markov chains, extending the ideas of Del Moral and Miclo \cite{MM03}. For the Aldous, Flannery and Palacios scheme, Bena\"{i}m and Cloez \cite{benclo} and, independently, Blanchet, Glynn and Zheng \cite{BGZ16} proved a Central Limit Theorem, see also \cites{delmic3, delmic4}. 

Each of the approximation methods discussed above has  benefits and shortcomings. Approximating with several particles helps the approximation better explore the space, particularly when the Markov process has metastable states where a scheme using a single particle can get stuck in place for long periods of time.
On the other hand, as the number of particles approach infinity, a Fleming-Viot approximation approaches the conditional law of the Markov chain (conditioned on non-extinction) at some finite-time instant rather than the QSD, and thus in order to obtain a good approximation for the QSD one needs to run the algorithm over long time periods. This can be computationally expensive and numerical experiments (see Section \ref{sec:numeric}) suggest that, with equivalent number of particle moves, a single particle reinforced random walk scheme of Aldous et al.\ performs better than a Fleming-Viot type scheme. This trade-off between the exploration of state space through multiple particles and the reinforcement of particle transition probabilities based on the time occupation measure motivates the present work, which studies two algorithms that
combine desirable features of both approximation schemes.

The two schemes that we study consider a collection of particles that, unlike Fleming-Viot approximations in which interactions occur through the current particle states, are governed by interactions with the time occupancy measures of {\em all} particles. Specifically, when a particle is absorbed, it instantly jumps to a state with probability proportional to the total time  spent at that position by all the particles in the collection.
The main difference between the two schemes considered in this work is that in the first scheme we start with $a(n)$ particles at time $0$
and the number of particles stays constant over time, whereas in the second scheme we add one particle at a time at some fixed rate so that there are $a(n)$ particles at time instant $n$. The approximation to the QSD is given by the combined (and suitably normalized) time occupation measure of all particles in the system. Our main results, Theorems  \ref{thm:mainrate}, \ref {thm:clt1}, \ref {thm:branchlln1}, and \ref{thm:branchclt1}
provide a.s.\ convergence to the QSD (i.e.\ strong law of large numbers) and 
central limit theorems for the two schemes. 
In Section \ref{sec:numeric} we present some exploratory numerical results on the performance of the two schemes and its comparison  with the Fleming-Viot and Aldous et al.\ methods.
The approach to the mathematical analysis of the two schemes is
inspired by the  methods used in \cites{benclo}, for the study of
the Aldous et al.\ scheme based on the path of a single particle, and draws from  techniques for establishing central limit results for general stochastic approximation schemes developed by  Delyon \cite{dey} and  Fort \cite{for}.

The theory of stochastic approximations (SA) has a long history, starting from the works of Robbins and Monro \cite{RM51}, and Kiefer and Wolfowitz \cite{KW52}. Since then, it has found many applications and has developed into a thriving area of research \cites{kus03,bor09,BMP12}.
In a typical stochastic approximation scheme one constructs a discrete time stochastic process whose continuous time interpolation over suitably slow decreasing time steps approaches the fixed point of a deterministic ordinary differential equation (ODE) as the continuous time parameter
$t$ approaches infinity.
One of the key differences, from this standard setting,  in the analysis of our first scheme
presented in Sections \ref{sec:rate}--\ref{sec:clt}, is that instead of a single stochastic approximation sequence, one needs to study an array, indexed by $n$, of sequences such that for each $n$
the sequence can be viewed as a SA algorithm targeting the QSD as the number of steps increase.
Our first result, Theorem \ref{thm:mainrate}, provides a strong law of large numbers for this
array as $n$ and the number of time steps become large. This result also provides an
almost sure upper bound on the rate of convergence which plays a crucial role later in the proof of the central limit theorem in Theorem \ref {thm:clt1}. In order to establish a suitable
rate of convergence, we introduce the notion of {\em pseudo-trajectory sequences} (see Definition \ref{defn:lambdapt}), which is inspired by the ideas of asymptotic pseudo-trajectories considered in \cites{benclo, ben3}, and is well-suited for array-type schemes such as those considered
here.

In Theorem \ref{thm:clt1}  we establish a central limit theorem for the array by
considering the $n$-th sequence run for $n$ time steps. The proof uses several ideas from \cite{for}*{Section 4}. In that work, the author considers a general SA algorithm 
which covers settings such as that of a 
 controlled Markov chain that evolves, conditional on the past history of the system, according to a stochastic kernel depending on the current approximation. The proofs of \cite{for} do not 
 easily extend to array settings of the form considered in the current work and it turns out that the rate of convergence in Theorem \ref{thm:mainrate} is key to suitably controlling the error arrays in the martingale decomposition of the SA sequences.
 One of the key requirements in the proofs is that $a(n)= o(n)$. Indeed, when $a(n)\sim n$, the errors due to the finite-time behavior of the collection of particles can accumulate and the fluctuation properties under the natural central limit scaling can be somewhat non-standard, see Remark \ref{rem:rem2} for a discussion of this point.


While in this work our focus is on approximating the QSD of a finite state Markov chain, the approach used to prove Theorems \ref{thm:mainrate} and \ref{thm:clt1} is more generally applicable. In particular, the notion of a pseudo-trajectory sequence introduced in Definition \ref{defn:lambdapt} should be useful for obtaining  bounds on the rate of convergence 
and establishing central limit theorems for other types of SA arrays.

The second numerical scheme is studied in Section \ref{sec:branch}.
In this method the approximation is initialized with a single particle and as time progresses particles are added to the system. At each step at most one particle is added and the number of particles at time $n$ is denoted by $a(n)$. Once more, the combined time occupation measure of 
all particles is used to approximate the QSD and 
 to replace particles that get absorbed. Since the number of particles changes over time, the analysis of error terms and the covariance structure gets more involved. In order to keep the presentation simple, here we restrict attention  to the case where $a(n)\sim n^{\zeta}$ for some
 $\zeta \in (0,1)$. In Theorem \ref{thm:branchlln1} we prove a.s. convergence of the approximation to the QSD and in Theorem \ref{thm:branchclt1} we provide a 
  central limit theorem for this approximation scheme.
  
  One of the challenges in constructing stochastic approximation schemes, with provable central limit fluctuations, for approximating QSD using a large number of particles is to carefully analyze the contribution to the variance and bias due to the finite-time behavior of the dynamics and to suitably calibrate the weights given to particle states as time increases.
  Specifically, for the two algorithms studied in the current work, we find that in comparison 
  to the single particle SA schemes studied in \cites{benclo, BGZ16}, one needs to place higher weights on  particle states at later time instants in order to suitably counterbalance the variability due to the finite-time behavior of the chains. This point is discussed further in Remark 
  \ref{rem:rem1}, however a precise understanding of relationships between size of SA arrays
  and time step sizes, for central limit results to hold, remains to be fully developed.
  Finally, we remark that in this work we consider SA arrays and sequences  with time steps
  of order $1/n$. Convergence and fluctuation results for interacting particle schemes with more general time steps satisfying appropriate decay conditions will be a topic for future study.

  We now describe the two schemes in some detail.

\subsection{Description of the algorithms}\label{sec:algdescription}

We denote by $\mathcal{P}(\Delta^o)$ the space of probability measures on  $\Delta^o$. Letting $d\doteq |\Delta^o|$, $\mathcal{P}(\Delta^o)$ can be identified with the $(d-1)$-dimensional simplex
$$\cls \doteq \Big\{x \in \RR_+^d: \sum_{i=1}^d x_i=1\Big\}.$$
For notational convenience, elements of $\Delta^o$ will be labeled as $\{1, 2, \ldots d\}$.
 For each $\nu \in \mathcal{P}(\Delta^o)$, we consider a transition probability kernel $K[\nu]$ on $\Delta^o$ given by 
\begin{equation}\label{eq:kernel}
K[\nu]_{x,y} \doteq P_{x,y} + P_{x,0}\,\nu(y) \quad\mbox{ for } x,y \in \Delta^o.
\end{equation}
For each $\nu \in \mathcal{P}(\Delta^o)$, the Markov chain associated with the transition probability kernel $K[\nu]$ is irreducible, and we denote the corresponding unique invariant distribution  by $\pi(\nu)$. Define
$$h(\nu)  \doteq \pi(\nu) - \nu \quad\mbox{ for } \nu \in \mathcal{P}(\Delta^o).$$
It is well known that $h: \cls \to T \cls \doteq \{x\in \RR^d: \sum_{i=1}^d x_i =0\}$ is a smooth function 
and the Jacobian matrix $\nabla h(\theta_*)$ is a Hurwitz matrix, in particular there is some $L > 0$ such that the eigenvalues of $\nabla h(\theta_*)$ 
have their real parts bounded above by $-L$; see \cite{benclo}*{Corollary 2.3}.

The  approximation algorithms described below are given in terms of a certain step size sequence  denoted by $\{\gamma_n\}_{n=1}^{\infty}$, and we assume that for some $\gamma_* > 0$, 
 we have 
\begin{equation}\label{eq:stepsize}
\gamma_{k+1} \doteq \frac{\gamma_*}{k+N_*},\; k \in \NN_0,
\end{equation}
where $N_* = \lfloor \gamma_*\rfloor +1$.
Let $\{a(n)\}_{n\in\NN}$ be a  sequence of positive integers increasing to $\infty$.

\noindent {\bf Algorithm I.} For fixed $x_0 \in \Delta^o$, we consider a collection  $\{X^{i,n}_{k}\}_{1\leq i\leq a(n), n \in \NN, k \in \NN_0}$ of $\Delta^o$--valued random variables,  an array $\{\theta_{k}^n\}_{n \in \NN, k \in \NN_0}$ of $\clp(\Delta^o)$--valued random measures, and a collection $\{\clf^n_k\}_{n \in \NN, k \in \NN_0}$ of $\sigma$-fields  given on some probability space $(\Om, \clf, \PP)$,  defined recursively as follows. For $n \in \NN$ and $k=0$, let
$$
X^{i,n}_0 \doteq x_0, \;\; 1 \leq i \leq a(n),
\quad
\mathcal{F}_0^n \doteq \{\emptyset, \Omega\}
\quad\text{and}\quad
\theta_{0}^n\doteq \delta_{x_0}.
$$
Having defined the above random variables and $\sigma$-fields for   some $k \in \NN_0$ and all $n\in \NN$, define, for each $n \in \NN$ and $1 \leq i \leq a(n)$
\begin{equation}\label{eq:eq242}
\PP\left(X^{1,n}_{k+1} = y_1, \dots, X^{a(n),n}_{k+1} = y_{a(n)} \;\middle|\;  \mathcal{F}_k^n \right) = \prod_{i=1}^{a(n)} K[\theta_k^n]_{x_i,y_i}.
\end{equation}
on the set  $\{X_{k}^{1,n} = x_1,\dots,X_{k}^{a(n),n} = x_{a(n)}\}$. The filtration is extended as
 $$\mathcal{F}_{k+1}^n \doteq \mathcal{F}_{k}^n \vee \sigma\Big(X_{k+1}^{1,n},\dots,X_{k+1}^{a(n),n}\Big)$$
and the new estimate of the QSD is given by
\begin{equation}\label{eq:stochalg}
\theta^n_{k+1} \doteq (1 - \gamma_{k+1})\theta^n_k  + \gamma_{k+1} \frac{1}{a(n)} \sum_{i=1}^{a(n)} \delta_{X^{i,n}_{k+1}}.
\end{equation}
We are interested in the asymptotic behavior of $\theta_n \doteq \theta_n^n$. In order to write $\{\theta^n_k\}_{n=1}^{\infty}$ as a stochastic approximation (SA) algorithm, for $1\le i \le a(n)$ and $k \in \NN_0$, let
\begin{equation}\label{eq:eq554S}
\epsilon^n_{k+1} \doteq \frac{1}{a(n)} \sum_{i=1}^{a(n)} \epsilon^{i,n}_{k+1}
\quad\text{where}\quad
\epsilon^{i,n}_{k+1}\doteq \delta_{X^{i,n}_{k+1}} - \pi(\theta^n_{k}).
\end{equation}
Then the evolution for the QSD approximation  $\theta^n_k$  from (\ref{eq:stochalg}) can be rewritten as 
\begin{equation}\label{eq:stochalg1}
\theta^n_{k+1} = \theta^n_k + \gamma_{k+1} \big(h(\theta^n_k) + \epsilon^n_{k+1}\big).
\end{equation}

\noindent {\bf Algorithm II.} In order to distinguish from the notation used for the 
first scheme, we will use  bold symbols to denote some key quantities with slightly different definitions than those in the definition of the first algorithm. In this method, rather than starting with $a(n)$ particles, we will start with $1$
particle at time $0$ and add particles over time. This algorithm is therefore described by a
single sequence of random variables rather than by an array. In particular $a(n)$ will denote the number of particles at the $n$-th time step rather than the number of particles in the $n$-th sequence in the array. 
Here $\{a(n)\}$ is a non-decreasing sequence of integers satisfying the following:
\begin{enumerate}
\item $a(0) = 1$.
\item For each $n \in \NN$, $a(n+1) - a(n) \leq 1$.
\item The number of particles at instant $n$ is $a(n)$ and there is some $\zeta \in (0,1)$ such that the $n$-th particle is added at time step $b(n)= \lfloor n^{1/\zeta}\rfloor$.
\end{enumerate}
The above properties in particular say that $a(n) \sim n^{\zeta}$ and the sequence $\{b(n)\}$ satisfies
 $b(1) \doteq 0$, and 
\begin{equation}\label{eq:kappadef}
b(n) \doteq \inf\{ m > b(n-1) : a(m) = a(n-1) + 1 \}.
\end{equation}
We will also need a $\{1, \ldots , a(n)\}$ valued random variable $\iota_n$ which
will tell us where to add the new particle at time instant $n+1$ if $a(n+1)= a(n)+1$. The precise
manner in which this particle is added is not important and one can use an arbitrary non-anticipative rule for doing so. More precisely, the scheme is given as follows.

Consider a collection $\{\bX^i_n\}_{1\leq i\leq a(n+1), n \in\NN_0}$ of $\Delta^o$--valued random variables, a sequence $\{\iota_n\}_{n \in \NN_0}$ of random variables with $\iota_n$ taking values in $\{1, \ldots, a(n)\}$, a sequence $\{\btheta_n\}_{n \in \NN_0}$ of $\mathcal{P}(\Delta^o)$--valued random measures, and a sequence $\{\bF_n\}_{n \in \NN_0}$ of $\sigma$-fields given on some probability space $(\Omega, \mathcal{F}, \PP)$, recursively defined as follows. We let 
$$
\bX_0^1 \doteq x_0,\quad 
\iota_0\doteq1, \quad 
\bF_0 \doteq \{\emptyset, \Omega\}
\quad 
\text{and}\quad \btheta_0 \doteq \delta_{x_0}. 
$$
Note $a(0)=a(1)=1$. We  let $\iota_1 \doteq 1$. Having defined $\{\bX^i_n\}_{1\leq i\leq a(n+1)}$,  $\btheta_n$, $\iota_{n+1}$ and $\bF_n$, define the elements for the next step as follows:
{\setlength{\leftmargini}{1em}
\begin{itemize}
\item Conditioned on $\bF_n$, particles evolve according to the kernel $K[\btheta_n]$ independently. In particular, if no branching occurs, namely $a(n+2)=a(n+1)$, then
$$
\PP\left( \bX^1_{n+1}=y_1,\dots, \bX^{a(n+2)}_{n+1}=y_{a(n+2)} \;\middle|\; \bF_n\right) 
= \prod_{i=1}^{a(n+1)}K[\btheta_n]_{x_i,y_i}
$$
on the set $\{\bX^1_n = x_1,\dots,\bX^{a(n+1)}_n = x_{a(n+1)}\}$.
On the other hand, if a branching event occurs, i.e.  $a(n+2)=a(n+1)+1$,  on the set $\{\bX^1_n = x_1,\dots,
\bX^{a(n+1)}_n = x_{a(n+1)} \mbox{ and } \iota_{n+1}=\ell\}$,  the particle with index $\ell$
will replicate, the new particle be given the index $a(n+2)$, and
$$
\PP\left( \bX^1_{n+1}=y_1,\dots, \bX^{a(n+2)}_{n+1}=y_{a(n+2)} \;\middle|\; \bF_n \right) 
= \left(\prod_{i=1}^{a(n+1)}K[\btheta_n]_{x_i,y_i}\right) K[\btheta_n]_{x_{\ell},y_{a(n+2)}}.
$$
\item With $\bG_{n+1}\doteq \bF_n \vee \sigma\{\bX^1_{n+1}, \ldots , \bX^{a(n+2)}_{n+1}\}$
and $\bH_{n+1}$ an arbitrary $\sigma$-field independent of $\bG_{n+1}$, let 
 $\iota_{n+2}$ be an arbitrary $\bG_{n+1}\vee \bH_{n+1}$ measurable random variable with values in $\{1,\dots,a(n+2)\}$.
\item Let $\bF_{n+1} = \bF_n \vee \sigma\big(\bX_{n+1}^1\dots,\bX_{n+1}^{a(n+2)}\big) \vee \sigma(\iota_{n+2})$.
\item Finally, let the new QSD estimate be
 \begin{equation}\label{eq:branchalg1}
\btheta_{n+1} \doteq (1 - \gamma_{n+1})\btheta_n + \gamma_{n+1} \frac{1}{a(n+1)} \sum_{i=1}^{a(n+1)}\delta_{\bX^i_{n+1}}.
\end{equation}
\end{itemize}}
Note that by construction, $\btheta_n$, $\{\bX^i_n\}_{1\leq i\leq a(n+1)}$, and $\iota_{n+1}$ are $\bF_n$ measurable for all $n \in \NN_0$. Also note that $\iota_{n+1}$ plays a role in the definition of the measure $\btheta_{n+2}$ only when $a(n+2)=a(n+1)+1$.

In order to write $\btheta_n$ as a SA algorithm, we define, for $1 \leq i \leq a(n+1)$ and $n \in \NN_0$,
\begin{equation}\label{eq:eq143r}
\beps_{n+1} \doteq \frac{1}{a(n+1)} \sum_{i=1}^{a(n+1)} \beps^i_{n+1}
\quad\text{where}\quad
\beps^i_{n+1} \doteq \delta_{\bX^i_{n+1}} - \pi(\btheta_n).
\end{equation}
Then the evolution equation in (\ref{eq:branchalg1}) can be rewritten as
\begin{equation}\label{eq:branchalg2}
\btheta_{n+1} = \btheta_n + \gamma_{n+1}( h(\btheta_n) + \beps_{n+1}).
\end{equation}

\subsection{Statement of results}

We first describe the results for Algorithm I, namely the algorithm given by \eqref{eq:stochalg1}. The following theorem proves that the approximation scheme converges a.s. to the unique QSD $\theta_*$ and provides an a.s. upper bound on the rate at which $\{{\theta}^{n}_k\}$ converges to $\theta_*$. 

\begin{theorem}\label{thm:mainrate}
As $n \to \infty$, ${\theta}^{n}_n\to \theta_*$ almost surely. Furthermore,
for each $\po \in (0,1)$, there is a $\beta > 0$, such that for $\PP$-a.e. $\om$, there is a  $n_0\equiv n_0(\om) \in \NN$ such that for all  $n \geq n_0$ and  $n^{\po} \le k \leq n$,
$$
\| \theta^{n}_k - \theta_*\| \leq k^{-\beta}.
$$
\end{theorem}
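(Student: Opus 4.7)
The plan is to view each sequence $\{\theta^{n}_k\}_k$ in the array as a stochastic approximation scheme tracking the ODE $\dot\theta = h(\theta)$, for which $\theta_*$ is an exponentially stable equilibrium by the Hurwitz property of $\nabla h(\theta_*)$. My argument has three main ingredients: a Poisson-equation decomposition of the noise $\epsilon^n_{k+1}$, a Lyapunov estimate derived from the linearization at $\theta_*$, and a uniform-in-$n$ tail bound that upgrades per-sequence rates to the diagonal/uniform statement in the theorem. For the first, since $K[\nu]$ is irreducible on the finite set $\Delta^o$ for every $\nu \in \cls$ and $\cls$ is compact, I would construct a vector-valued solution $\phi_\nu \colon \Delta^o \to \RR^d$ of the Poisson equation $\phi_\nu(x) - K[\nu]\phi_\nu(x) = \delta_x - \pi(\nu)$ that is uniformly bounded and Lipschitz in $\nu$. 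Applying this identity at $X^{i,n}_{k+1}$ and adding/subtracting $K[\theta^n_k]\phi_{\theta^n_k}(X^{i,n}_k)$ yields
\[
\delta_{X^{i,n}_{k+1}} - \pi(\theta^n_k) = \bigl[\phi_{\theta^n_k}(X^{i,n}_{k+1}) - K[\theta^n_k]\phi_{\theta^n_k}(X^{i,n}_k)\bigr] + \bigl[K[\theta^n_k]\phi_{\theta^n_k}(X^{i,n}_k) - K[\theta^n_k]\phi_{\theta^n_k}(X^{i,n}_{k+1})\bigr],
\]
whose average over $i$ splits $\epsilon^n_{k+1}$ into a martingale difference $e^n_{k+1}$ with conditional variance $O(1/a(n))$ and a predictable piece $r^n_{k+1}$ whose cumulative contribution, after Abel summation, is controlled by $|\gamma_{k+1} - \gamma_k| = O(\gamma_k^2)$ together with $\|\theta^n_k - \theta^n_{k-1}\| = O(\gamma_k)$ and the Lipschitz constant of $\nu \mapsto K[\nu]\phi_\nu$.

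For the local analysis near $\theta_*$, the Hurwitz property lets me solve the Lyapunov equation $W\,\nabla h(\theta_*) + \nabla h(\theta_*)^\top W = -I$ and set $V(\theta) = (\theta - \theta_*)^\top W (\theta - \theta_*)$. Inserting \eqref{eq:stochalg1} into a Taylor expansion of $V$ together with the noise decomposition above yields, on a neighborhood of $\theta_*$,
\[
\EE[V(\theta^n_{k+1}) \mid \clf^n_k] \le (1 - c\,\gamma_{k+1})\, V(\theta^n_k) + \frac{C\,\gamma_{k+1}^2}{a(n)} + \gamma_{k+1}^2\, \rho^n_k,
\]
for constants $c, C > 0$ independent of $n$ and a remainder $\rho^n_k$ that is summable once combined with moment bounds on the increments. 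A Robbins--Siegmund-type argument then gives, for each fixed $n$, almost sure convergence $V(\theta^n_k) \to 0$ as $k \to \infty$, and iterating the recursion produces a polynomial rate of the form $V(\theta^n_k) = O(k^{-2\beta})$ with $\beta > 0$ determined by $c\,\gamma_*$.

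The principal obstacle is upgrading these per-$n$ estimates to the uniform claim with a single $\PP$-a.s.\ index $n_0(\omega)$ valid for all $n \ge n_0$ and $n^{\po} \le k \le n$. For this I would invoke the pseudo-trajectory sequence framework of Definition~\ref{defn:lambdapt}, which is tailored to arrays of SA sequences. The quantitative core is a Burkholder-type maximal inequality
\[
\PP\!\left(\sup_{n^{\po} \le k \le n}\Bigl\| \sum_{j=\lceil n^{\po}\rceil}^{k} \gamma_{j+1}\, e^n_{j+1} \Bigr\| > n^{-\alpha}\right) \le C_q\, n^{-q(\po - 2\alpha)},
\]
obtained by bounding the $2q$-th moment via $\sum_{j \ge n^{\po}} \gamma_j^2 = O(n^{-\po})$; choosing $\alpha < \po/2$ and $q$ large makes this summable in $n$. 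Analogous polynomially-summable estimates for the predictable piece $r^n_{k+1}$, together with Borel--Cantelli, then produce the required $n_0(\omega)$, with $\theta^n_n \to \theta_*$ following as the special case $k = n$. The main difficulty throughout is precisely this uniformity in $n$: standard one-sequence SA arguments do not yield summable-in-$n$ tail probabilities, and reconciling this with the array structure is what the pseudo-trajectory sequence notion, combined with the $a(n)$-particle variance reduction, is designed to handle.
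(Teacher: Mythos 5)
Your Poisson-equation decomposition of $\epsilon^n_{k+1}$ into a martingale difference plus a predictable remainder is exactly the one used in the paper (with $\phi_\nu$ playing the role of $Q[\nu]$), and you correctly identify both the $O(1/a(n))$ conditional variance of the martingale part and the Abel-summation treatment of the predictable part. The rest of the proposal, however, diverges from the paper and contains a genuine gap in its central argument.

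The paper does \emph{not} use a Lyapunov function. Instead, it establishes that the interpolated array $\{\hat\theta^n\}$ is a $(\lambda,\tau_n,\po)$-pseudo-trajectory sequence for the flow $\Phi$ of $\dot\theta=h(\theta)$ (Sections~\ref{sec:llnpf}--\ref{sec:mainrate}), and then Lemma~\ref{lem:rate1} converts this property directly into the polynomial rate, using that $\Phi$ contracts exponentially toward $\theta_*$ on the \emph{whole} simplex (by \cite{benclo}*{Lemma 2.1}). Your proposed route — a quadratic Lyapunov function $V(\theta)=(\theta-\theta_*)^\top W(\theta-\theta_*)$ plus Robbins--Siegmund — has three unaddressed difficulties. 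First, the conditional drift inequality $\EE[V(\theta^n_{k+1})\mid\clf^n_k]\le(1-c\gamma_{k+1})V(\theta^n_k)+\cdots$ is valid only on a neighborhood of $\theta_*$ (because the linearization is local), and nothing in the sketch establishes, uniformly in $n$, that $\theta^n_k$ enters and remains in that neighborhood for $k\ge n^\po$; the paper sidesteps this entirely by using the global attractor property of the ODE. Second, Robbins--Siegmund and iterating the recursion give at best an $L^2$ rate $\EE V(\theta^n_k)=O(k^{-2\beta})$; upgrading this to the a.s.\ statement of the theorem, simultaneously over all $n\ge n_0(\omega)$ and all $k\in[n^\po,n]$, is the hard part and cannot be read off from the recursion. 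Third, the claim that the PTS framework would supply the missing uniformity is not coherent with the Lyapunov argument: the PTS notion is a statement about proximity of $\hat\theta^n$ to the \emph{flow} $\Phi$ over windows of bounded interpolated length, verified window-by-window with Burkholder plus Borel--Cantelli across the $O(\log n)$ windows (Lemmas~\ref{lem:boundu}--\ref{lem:deltalemma}); it is not a device for feeding into a Lyapunov recursion. Your single-shot Burkholder bound over the entire range $j\in[n^\po,n]$ controls the cumulative noise but provides no mechanism for tracking the drift $h(\theta^n_j)$, which is precisely what the window-plus-Gr\"onwall (Lemma~\ref{lem:deltaasy}) and flow-contraction (Lemma~\ref{lem:rate1}) structure of the paper supplies. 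In short: the decomposition is right, the difficulty is correctly diagnosed, but the Lyapunov half of the argument is not carried through and does not splice onto the PTS half.
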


Theorem \ref{thm:mainrate} is proved in Section \ref{sec:rate}. The next theorem provides
a central limit theorem for the sequence $\{\theta_n^n\}$.  Define the sequence $\{\sigma_n\}_{n=1}^{\infty}$ by 
\begin{equation}\label{eq:signdef}
\sigma_n \doteq \sqrt{a(n)/\gamma_n}.
\end{equation}
This sequence will give the scaling factor in the CLT.
The covariance matrix for the limiting Gaussian distribution is given in terms of a  nonnegative definite matrix $U_*$ which is introduced later in \eqref{eq:ustar}.
For the CLT we will need additional conditions on the step sizes and the number of particles
in the system.
\begin{theorem}\label{thm:clt1}
Suppose that $a(n)/n\to 0$ as $n\to \infty$ and $\gamma_* > L^{-1}$.
Then, as $n \to \infty$, 
$$
\sigma_n ( \theta^n_n - \theta_*) \stackrel{\mathcal{L}}{\to} \mathcal{N}(0, V),
$$
where $V$ is the solution to the Lyapunov equation 
\begin{equation}\label{eq:eq247}
U_* + \nabla h(\theta_*)V + V \nabla h(\theta_*)^T + \gamma_*^{-1}V = 0,
\end{equation}
 $U_*$ is the nonnegative definite matrix given by \eqref{eq:ustar}, and $\stackrel{\mathcal{L}}{\to}$ denotes convergence in distribution.
\end{theorem}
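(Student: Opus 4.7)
The plan is to linearize the recursion \eqref{eq:stochalg1} around $\theta_*$, unroll the resulting affine dynamics, and apply a martingale triangular-array CLT to the dominant noise contribution while controlling the quadratic remainder via the a.s.\ rate from Theorem~\ref{thm:mainrate}. The scheme follows the template of \cite{for}*{Section~4}, but with the additional complication that we must handle the array $\{\theta^n_k\}$ along the diagonal $k=n$ rather than a single SA sequence.

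First I would write $\widetilde{\theta}^n_k \doteq \theta^n_k - \theta_*$ and Taylor expand $h$ at $\theta_*$ to obtain
\begin{equation*}
\widetilde{\theta}^n_{k+1} = \bigl(I + \gamma_{k+1}\nabla h(\theta_*)\bigr)\widetilde{\theta}^n_k + \gamma_{k+1}\epsilon^n_{k+1} + \gamma_{k+1} R^n_k,
\end{equation*}
with $\|R^n_k\|\le C\|\widetilde{\theta}^n_k\|^2$. Iterating from $k=0$ to $n$ with propagator $\Phi_{j,k}\doteq\prod_{i=j+1}^k(I+\gamma_i\nabla h(\theta_*))$ splits $\widetilde{\theta}^n_n$ into a deterministic transient $\Phi_{0,n}\widetilde{\theta}^n_0$, a martingale sum $S^{\mathrm{m}}_n\doteq\sum_{k=1}^n\gamma_k\Phi_{k,n}\epsilon^n_k$, and a quadratic remainder $S^{\mathrm{r}}_n\doteq\sum_{k=1}^n\gamma_k\Phi_{k,n}R^n_{k-1}$. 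Because $\nabla h(\theta_*)$ is Hurwitz with spectral bound $-L$ and $\gamma_k\sim\gamma_*/k$, the standard bound $\|\Phi_{k,n}\|\lesssim(k/n)^{L\gamma_*}$ holds, so the transient multiplied by $\sigma_n=\sqrt{a(n)/\gamma_n}$ is negligible under $a(n)=o(n)$ and $\gamma_*>L^{-1}$.

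For the martingale part, since the $a(n)$ particles at step $k+1$ are conditionally i.i.d.\ given $\clf^n_k$ with row transitions $K[\theta^n_k]$, one has
\begin{equation*}
\EE\bigl[\epsilon^n_{k+1}(\epsilon^n_{k+1})^T \,\big|\, \clf^n_k\bigr] = \frac{1}{a(n)}\,U(\theta^n_k),
\end{equation*}
where $U(\nu)$ is a continuous-in-$\nu$ single-particle covariance matrix that reduces to $U_*$ at $\nu=\theta_*$. The conditional covariance of $\sigma_n S^{\mathrm{m}}_n$ is therefore $\gamma_n^{-1}\sum_k\gamma_k^2\,\Phi_{k,n}U(\theta^n_{k-1})\Phi_{k,n}^T$, and Theorem~\ref{thm:mainrate} lets one replace $U(\theta^n_{k-1})$ by $U_*$ on the tail $k\ge n^{\po}$ at the cost of a vanishing error. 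A Riemann-sum computation as in \cite{for}*{Section~4} then identifies the limit with the unique solution $V$ of the Lyapunov equation \eqref{eq:eq247}; the hypothesis $\gamma_*>L^{-1}$ is exactly what makes $\nabla h(\theta_*)+(2\gamma_*)^{-1}I$ stable and hence renders the equation solvable. The Lindeberg condition reduces to $\max_k\gamma_k\sigma_n\|\Phi_{k,n}\|\to 0$, which follows from the explicit form of $\sigma_n$ together with the boundedness $\|\epsilon^n_{k+1}\|\le 2$, so a martingale triangular-array CLT yields $\sigma_n S^{\mathrm{m}}_n\stackrel{\mathcal{L}}{\to}\mathcal{N}(0,V)$.

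The main obstacle is showing $\sigma_n S^{\mathrm{r}}_n\to 0$ in probability; this is where the array structure blocks any direct invocation of \cite{for}. For this I would use the a.s.\ rate $\|\widetilde{\theta}^n_k\|\le k^{-\beta}$ from Theorem~\ref{thm:mainrate}, valid for $n^{\po}\le k\le n$: choosing $\beta$ large enough relative to $\po$ gives
\begin{equation*}
\sigma_n\sum_{k=n^{\po}}^n\gamma_k\|\Phi_{k,n}\|\,k^{-2\beta}\to 0 \quad\text{a.s.},
\end{equation*}
while the initial block $k<n^{\po}$ is absorbed by the decay of $\|\Phi_{k,n}\|$ combined with the trivial bound $\|\widetilde{\theta}^n_k\|\le 2$. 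The assumption $a(n)=o(n)$ is essential here to keep $\sigma_n$ from outweighing the rate exponents. Combining the three contributions yields $\sigma_n\widetilde{\theta}^n_n\stackrel{\mathcal{L}}{\to}\mathcal{N}(0,V)$.
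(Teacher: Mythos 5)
There is a genuine gap in the core of your argument: the raw noise $\epsilon^n_{k+1}$ is \emph{not} a martingale difference with respect to $\clf^n_k$. By construction, $\EE[\delta_{X^{i,n}_{k+1}}\mid\clf^n_k] = (K[\theta^n_k])_{X^{i,n}_k,\cdot}$, the one-step transition row from the current particle position, whereas $\epsilon^{i,n}_{k+1}$ subtracts the \emph{stationary} distribution $\pi(\theta^n_k)$. Thus $\EE[\epsilon^n_{k+1}\mid\clf^n_k]\neq 0$ in general, your sum $S^{\mathrm{m}}_n$ is not a martingale, and the triangular-array CLT you invoke does not apply. Relatedly, your claimed conditional-covariance identity $\EE[\epsilon^n_{k+1}(\epsilon^n_{k+1})^T\mid\clf^n_k]=\tfrac{1}{a(n)}U(\theta^n_k)$ is false: the $a(n)$ particles are conditionally independent but not i.i.d.\ (each transitions from its own $X^{i,n}_k$), so the conditional covariance depends on the full configuration $(X^{1,n}_k,\ldots,X^{a(n),n}_k)$, not just on $\theta^n_k$. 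The paper resolves both issues at once through the Poisson-equation decomposition of Lemma~\ref{lem:qsmooth}, writing $\epsilon^n_{k+1}=e^n_{k+1}+r^n_{k+1}$ with $e^n_{k+1}$ a genuine $\clf^n$-martingale increment and $r^n_{k+1}$ a remainder that is treated by Abel summation (see Proposition~\ref{prop:clt3} and Lemma~\ref{lem:rholem1}). This machinery is entirely absent from your proposal but is the indispensable mechanism.

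Two further points. First, your decomposition of $\tilde\theta^n_n$ accounts only for a Taylor quadratic remainder $\|R^n_k\|\lesssim\|\tilde\theta^n_k\|^2$; it misses the Markov-noise remainder $r^n_k$ entirely. Controlling that term under the $\sigma_n$ scaling requires the $r^{n,a}/r^{n,b}$ split and the summation-by-parts identity \eqref{eq:rhor21a}, and it is precisely there that $a(n)=o(n)$ earns its keep in estimates like \eqref{eq:rhor23}. Second, you mischaracterize the role of $\gamma_*>L^{-1}$: the Lyapunov equation \eqref{eq:eq247} is uniquely solvable as soon as $\nabla h(\theta_*)+(2\gamma_*)^{-1}I$ is Hurwitz, i.e.\ under the weaker hypothesis $\gamma_*>(2L)^{-1}$. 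The strengthened condition $\gamma_*>L^{-1}$ is required because the remainder bounds in Lemma~\ref{lem:rholem1} need $L'\in(\gamma_*^{-1},L)$ (see also Remark~\ref{rem:rem1}). Even after fixing the martingale issue, one also still has to handle the state-dependent covariance correction, which the paper does via a second Poisson-equation argument leading to the $D^{(2),n}_k$ terms in Proposition~\ref{prop:clt2 A}; this too cannot be waved away by the continuity of $\nu\mapsto U(\nu)$.
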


Theorem \ref{thm:clt1}  is proved in Section \ref{sec:clt} by combining results from Sections 
\ref{sec:rate} and \ref{sec:errors}. 

The following are our main results for Algorithm II given by \eqref{eq:branchalg2}.
The first result proves the a.s. convergence of the scheme. This time we don't provide convergence rates as it turns out that unlike the proof of Theorem \ref{thm:clt1}, the proof of Theorem \ref{thm:branchclt1} does not require the use of convergence rates.
\begin{theorem}\label{thm:branchlln1}
As $n \to \infty$, $\btheta_n \to \theta_*$ almost surely.
\end{theorem}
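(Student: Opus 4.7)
The plan is to view the recursion \eqref{eq:branchalg2} as a noisy Euler scheme for the ODE $\dot\theta=h(\theta)$ and invoke the ODE method of stochastic approximation. Since $h$ is smooth on $\cls$, $\theta_*$ is its unique zero, and $\nabla h(\theta_*)$ is Hurwitz, $\theta_*$ is in fact globally asymptotically stable for this ODE on $\cls$ (as used in \cite{benclo}). It will therefore suffice to show that the noise $\beps_{n+1}$ from \eqref{eq:eq143r} satisfies the Robbins--Monro-type condition that makes $\{\btheta_n\}$ an asymptotic pseudo-trajectory of the associated semiflow; standard SA convergence results then give $\btheta_n\to\theta_*$ almost surely.

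Concretely, for each particle $i$ I would split
\[
\beps^i_{n+1} = \bigl[\delta_{\bX^i_{n+1}} - K[\btheta_n](\bX^i_n,\cdot)\bigr] + \bigl[K[\btheta_n](\bX^i_n,\cdot) - \pi(\btheta_n)\bigr],
\]
and, after averaging over $1\le i\le a(n+1)$, write $\beps_{n+1}=M_{n+1}+R_{n+1}$. The first piece $M_{n+1}$ is an $\bF_n$-martingale difference; the conditional independence built into \eqref{eq:eq143r} gives $\EE[\|M_{n+1}\|^2\mid\bF_n]\le C/a(n+1)$, so the assumption $a(n)\sim n^\zeta$ together with $\gamma_n\sim\gamma_*/n$ makes $\sum_n\gamma_{n+1}^2/a(n+1)$ finite, whence $\sum_n\gamma_{n+1}M_{n+1}$ converges a.s.\ by $L^2$ martingale convergence. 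For the bias $R_{n+1}$ I would use the Poisson equation for the irreducible kernel $K[\nu]$: to each coordinate function $f$ and each $\nu\in\cls$ associate the smooth-in-$\nu$ solution $\phi[\nu,f]$ of $\phi-K[\nu]\phi=f-\pi(\nu)f$, and set $\psi_n\doteq K[\btheta_n]\phi[\btheta_n,f]$, which obeys $\psi_n-K[\btheta_n]\psi_n = K[\btheta_n]f-\pi(\btheta_n)f$. Evaluating at $\bX^i_n$ and rewriting
\[
\psi_n(\bX^i_n)-K[\btheta_n]\psi_n(\bX^i_n) = \bigl[\psi_n(\bX^i_{n+1})-K[\btheta_n]\psi_n(\bX^i_n)\bigr] - \bigl[\psi_n(\bX^i_{n+1})-\psi_n(\bX^i_n)\bigr]
\]
produces a further $\bF_n$-martingale difference (the first bracket) plus a term that becomes summable after Abel summation in $n$, using the elementary estimates $\gamma_n-\gamma_{n+1}=O(\gamma_n^2)$ and $\|\psi_{n+1}-\psi_n\|_\infty \le C\|\btheta_{n+1}-\btheta_n\|=O(\gamma_{n+1})$ that follow from smoothness of $\nu\mapsto\phi[\nu,\cdot]$.

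The new ingredient compared with the fixed-population Algorithm~I or with single-particle schemes is the branching: at each time $b(m)=\lfloor m^{1/\zeta}\rfloor$ the population grows by one, the new particle is placed at the location of $\bX^{\iota_{n+1}}_n$, and the normalization in \eqref{eq:eq143r} jumps from $1/a(n+1)$ to $1/a(n+2)$. Both effects generate extra boundary-type corrections in the Abel summation for $R$. Since branchings are spaced as $m^{1/\zeta}$, there are only $O(N^\zeta)$ of them by step $N$, and the extra contribution per branching to the summed noise is of order $\gamma_{b(m)}/a(b(m))$, comfortably summable for $\zeta\in(0,1)$. The main obstacle, on which I would spend the most care, is precisely this bookkeeping of the branching-induced corrections and of the time-varying normalization in the Abel summation; once all pieces are shown to be summable or a.s.\ convergent, the decomposition exhibits $\sum_n\gamma_{n+1}\beps_{n+1}$ as an a.s.\ convergent series, the asymptotic pseudo-trajectory property of $\{\btheta_n\}$ is verified, and the ODE method delivers $\btheta_n\to\theta_*$ almost surely.
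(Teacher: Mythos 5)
Your proposal follows essentially the same route as the paper: view \eqref{eq:branchalg2} as a noisy discretization of $\dot\theta=h(\theta)$, decompose the noise via the Poisson equation into a martingale-difference part plus a telescoping part, handle the latter by summation by parts while carefully tracking the boundary corrections caused by the changing normalization $1/a(\cdot)$ and the branching events, then conclude via the asymptotic pseudo-trajectory framework of Bena\"{i}m and the global attractivity of $\theta_*$. Your two-stage split (first $\delta_{\bX^i_{n+1}}-K[\btheta_n](\bX^i_n,\cdot)$ versus $K[\btheta_n](\bX^i_n,\cdot)-\pi(\btheta_n)$, then the Poisson equation applied to the second piece via $\psi_n=K[\btheta_n]\phi[\btheta_n,\cdot]$) is algebraically equivalent to the paper's $e^i_{n+1},r^i_{n+1}$ decomposition in \eqref{eq:er-def-br}, and the per-branching estimate of order $\gamma_{b(m)}/a(b(m))\asymp m^{-(1+1/\zeta)}$ that you flag as the crux is exactly what the paper's Lemma \ref{lem:deltasbranch} (case $\ell=3$, via the $\eta_0,\dots,\eta_3$ terms) verifies in detail.
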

Theorem \ref{thm:branchlln1} is proved in Section \ref{sec:branch}.

Our final result gives a CLT for Algorithm II. Proof is given in Section \ref{sec:cltalg2}.
\begin{theorem}\label{thm:branchclt1}
Suppose that $a(n)/n\to 0$ as $n\to \infty$ and $\gamma_* > L^{-1}$.
Then, as $n \to \infty$, 
$$
\sigma_n ( \btheta_n - \theta_*) \stackrel{\mathcal{L}}{\to} \mathcal{N}(0, V),
$$
where $V$ is the solution to the Lyapunov equation
$$
U_* + \nabla h(\theta_*) V + V\nabla h(\theta_*)^T + (1+ \zeta)\gamma_*^{-1} V = 0,
$$
and $U_*$ is the nonnegative definite matrix given by \eqref{eq:ustar}.
\end{theorem}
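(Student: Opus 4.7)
The plan is to follow the template of the proof of Theorem \ref{thm:clt1}, specialized to the single-sequence setting of Algorithm II (there is no array structure here), while accounting for the growth of the particle count $a(n)\sim n^\zeta$ and its effect on the scaling. The extra $(1+\zeta)\gamma_*^{-1}$ damping in the Lyapunov equation will emerge naturally from the logarithmic derivative of $\sigma_n$.

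By Theorem \ref{thm:branchlln1} we have $\btheta_n\to\theta_*$ a.s., so on a set of full measure, eventually $\btheta_n$ lies in a neighborhood of $\theta_*$ on which $h$ is smooth. Taylor-expanding gives
$$
\btheta_{n+1}-\theta_* = \bigl(I+\gamma_{n+1}\nabla h(\theta_*)\bigr)(\btheta_n-\theta_*) + \gamma_{n+1}\beps_{n+1} + \gamma_{n+1}r_n,
$$
with $r_n=O(\|\btheta_n-\theta_*\|^2)$. Setting $\tilde Z_n\doteq \sigma_n(\btheta_n-\theta_*)$ and using $\sigma_n^2=a(n)/\gamma_n$ together with $a(n)\sim n^\zeta$ and $\gamma_n\sim\gamma_*/n$, one verifies (in a telescoped/averaged sense that smooths out the discrete jumps $a(n+1)-a(n)\in\{0,1\}$) that
$$
\frac{\sigma_{n+1}}{\sigma_n} = 1 + \frac{(1+\zeta)\gamma_{n+1}}{2\gamma_*} + o(\gamma_{n+1}).
$$
The scaled recursion therefore takes the form
$$
\tilde Z_{n+1} = (I+\gamma_{n+1}A_*)\tilde Z_n + \sigma_{n+1}\gamma_{n+1}\beps_{n+1} + E_n,\qquad A_*\doteq \nabla h(\theta_*) + \tfrac{1+\zeta}{2\gamma_*}I,
$$
with an asymptotically negligible error $E_n$. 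Because, conditional on $\bF_n$, the particles are i.i.d.\ (before or after a branching event), the conditional covariance of $\beps_{n+1}$ is of order $U(\btheta_n)/a(n+1)$, so the rescaled noise $\sigma_{n+1}\gamma_{n+1}\beps_{n+1}$ has conditional covariance of order $\gamma_{n+1}U(\btheta_n)$, matching a classical SA recursion driven by the matrix $A_*$.

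Decomposing $\beps_{n+1}$ via the Poisson equation for $K[\btheta_n]$ exactly as in Section \ref{sec:errors}, one obtains $\beps_{n+1}=\Delta M_{n+1}+\text{(telescoping remainder)}$ with $\{\Delta M_{n+1}\}$ a bounded $\{\bF_n\}$-martingale difference whose per-particle local covariance functional coincides with the one appearing in Algorithm I. Aggregating then gives convergence of the predictable quadratic variation to $U_*$ from \eqref{eq:ustar}, and boundedness of $\beps_{n+1}$ supplies the Lindeberg condition. Applying the stochastic approximation martingale CLT in the spirit of Fort \cite{for} to the linear recursion driven by the Hurwitz matrix $A_*$ delivers $\sigma_n(\btheta_n-\theta_*)\to\mathcal N(0,V)$ in distribution, where $V$ solves $U_*+A_*V+VA_*^T=0$, i.e.\ the asserted Lyapunov equation. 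The assumption $\gamma_*>L^{-1}$ ensures $A_*$ is Hurwitz, since the eigenvalues of $\nabla h(\theta_*)$ have real parts $\leq -L$ and $(1+\zeta)/(2\gamma_*)<L$ for $\zeta<1$, so $V$ is well-defined.

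The principal obstacle will be controlling the Poisson-equation remainder and the quadratic term $\sigma_{n+1}\gamma_{n+1}r_n$ in the absence of an a.s.\ rate of convergence analogous to Theorem \ref{thm:mainrate}. One route is to first establish a companion rate bound for $\|\btheta_n-\theta_*\|$ by adapting the pseudo-trajectory argument of Section \ref{sec:rate} to the branching scheme; an alternative is to exploit only the a.s.\ convergence from Theorem \ref{thm:branchlln1} together with the Hurwitz stability of $A_*$ to show that these errors, once propagated through the linear system, remain asymptotically negligible. A secondary technical point is the irregularity of $\sigma_{n+1}/\sigma_n$ at the branching times $b(k)=\lfloor k^{1/\zeta}\rfloor$ (which have density $\sim n^{\zeta-1}$): the cumulative effect correctly matches the averaged rate $(1+\zeta)/(2n)$, but this must be quantified, e.g.\ via summation by parts, to avoid pointwise pitfalls in the passage to the limit.
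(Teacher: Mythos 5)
Your proposal is sound in outline but takes a genuinely different route from the paper. You rescale the state recursion directly, setting $\tilde Z_n = \sigma_n(\btheta_n-\theta_*)$ and absorbing the factor $\tfrac{1+\zeta}{2\gamma_*}I$ into a modified Hurwitz matrix $A_* = \nabla h(\theta_*) + \tfrac{1+\zeta}{2\gamma_*}I$, so the Lyapunov equation emerges as $U_* + A_*V + VA_*^T = 0$. The paper instead keeps the linearized evolution $\bmu_n$ of \eqref{eq:mudef1} driven by the unmodified $\nabla h(\theta_*)$, decomposes $\btheta_n - \theta_* = \bmu_n + \brho_n$, shows $\sigma_n\brho_n\to 0$ in probability (Propositions \ref{prop:rhordiff}, \ref{prop:r1r2bounds}, Corollary \ref{cor:rhoto0}), and proves the CLT for $\sigma_n\bmu_n$ via a martingale-difference array $Z_{n,k}=\sigma_n\gamma_k\psi_*(n,k+1)\be_k$, with the $(1+\zeta)\gamma_*^{-1}$ term surfacing only in the deterministic variance recursion of Lemma \ref{lem:newlimitvar} through the relation $\tsigma_{n+1}^2/\tsigma_n^2 - 1 = (1+\zeta)\gamma_n/\gamma_* + o(\gamma_n)$ for a smoothed scaling $\tsigma_n$. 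Both routes give the same answer, and you correctly identify that the paper's error control needs only a.s.\ convergence (Theorem \ref{thm:branchlln1}) rather than an a.s.\ rate — this is exactly what the paper does.

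One caution: what you call a ``secondary technical point'' — the irregularity of $\sigma_{n+1}/\sigma_n$ at branching times — is actually more central in your formulation than in the paper's. At a branching time the pointwise increment $\sigma_{n+1}/\sigma_n - 1$ contains a term of size $\sim 1/(2a(n)) \sim n^{-\zeta}$, which is \emph{much larger} than $\gamma_{n+1}\sim n^{-1}$ when $\zeta<1$; the stated relation $\sigma_{n+1}/\sigma_n = 1+\tfrac{(1+\zeta)\gamma_{n+1}}{2\gamma_*}+o(\gamma_{n+1})$ thus fails pointwise and the error term $E_n$ you fold into the recursion is not small in any stepwise sense. Your suggested summation-by-parts remedy should work because these large-but-sparse corrections accumulate like $\sum_{i\le a(n)}1/i \sim \zeta\log n = \sum_k\zeta\gamma_k/\gamma_*$, but making that precise against the exponentially decaying product kernel is nontrivial. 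The clean way to do it within your framework is to rescale by a smoothed version of $\sigma_n$ (e.g.\ $\tsigma_n^2 = n^\zeta/\gamma_n$, with $\sigma_n/\tsigma_n\to 1$) so that the recursion coefficients are regular, and only compare $\sigma_n$ to $\tsigma_n$ at the very end — this is implicitly what the paper's design achieves by never rescaling the trajectory, instead putting the scaling only on the algebraic variance sum where the jumps in $a(\cdot)$ never enter ratios.
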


\begin{remark}\label{rem:rem1}
The condition $\gamma_* > L^{-1}$ is used in an important way in the proofs of CLT in Theorems 
\ref{thm:clt1} and 
\ref{thm:branchclt1}. We note that the CLT for a single particle scheme given in \cite{benclo}
allows for any $\gamma_* > (2L)^{-1}$. Thus, we find that for CLT results here we need larger step sizes
than those allowed for the single particle scheme. Larger step sizes correspond to placing higher weights
on particle states at later time instants. This need for suitably emphasizing later time points more
arises in order to counterbalance the variability due to the large number of particles at any fixed time instant.
\end{remark}

\begin{remark}\label{rem:rem2}
Recall that for the CLT results we require that $a(n)=o(n)$. This condition is crucial in obtaining the estimates on the discrepancy array (resp. sequence) given in Lemma \ref{lem:rholem1} (resp. Proposition \ref{prop:rhordiff}). 
As noted in the Introduction, when $a(n)\sim n$ one expects nonstandard fluctuation behavior under the natural CLT scaling. To see this, consider the elementary setting of a collection of
i.i.d.\ Markov chains. Specifically, let
  $\{X^n_m, \; m\in\NN_0\}_{n\in \NN}$ be a collection of i.i.d.\ irreducible Markov chains on $\Delta^o$ with transition probability kernel $K_0$ and stationary distribution $\theta_*$. For simplicity suppose that  $X^n_0 = x_0$ for all $n\in \NN$, for some $x_0 \in \Delta^o$.  Define
\begin{equation}\label{eq:indepthetan}
\theta^{n}_m =  \frac{1}{m a(n)}\sum_{k=1}^{m}  \sum_{i=1}^{a(n)}   ( \delta_{X^i_k} - \theta_*), \;\; m, n \in \NN.
\end{equation}
It is straightforward to show that if 
 $a(n) = o(n)$, then, as $n \to \infty$,
$$
\sqrt{a(n) n}(\theta^n_n - \theta_*) \stackrel{\mathcal{L}}{\to} \mathcal{N}(0,U_*).
$$
where $U_*$ is defined in a similar manner as in \eqref{eq:ustar}.
However when 
$a(n) \sim a_*n$ for some $a_* \in (0,\infty)$,  a different behavior emerges, and in fact
the asymptotic mean of the scaled differences $\sqrt{a(n) n}(\theta^n_n - \theta_*)$ is nonzero
as $n \to \infty$.  In particular, one can easily see that
$$
\sqrt{a(n)n }(\theta^n_n - \theta_*) \stackrel{\mathcal{L}}{\to} \mathcal{N}(\alpha_*,U_*),
$$
where 
$$
\alpha_* \doteq a_*[ (K_0Q_0)_{x_0,\cdot} - \theta_*(K_0Q_0)],\;\; \theta_*(K_0Q_0) \doteq \sum_{x\in \Delta^o} (K_0Q_0)_{x,\cdot} \theta_*(x),
$$
where $Q_0$ is defined as in \eqref{eq:qnudef} on replacing on its right side $K[\nu]$ with $K_0$
and $\Pi(\nu)$ with the $d\times d$ matrix $[\theta_*, \theta _*, \cdots]^T$.
For the stochastic approximation algorithms considered in this work, in order to study the limit behavior when 
 $a(n) \sim  n$ one will  need to carefully analyze the limiting behavior of state dependency in the (appropriately scaled) discrepancy array/sequence, which describes the deviations of the linearized evolution from the underlying stochastic approximation algorithm (see discussion in Section \ref{sec:decandlin} below) in order to identify
 the  asymptotic `drift' in the Gaussian limit. This study will be taken up elsewhere.
\end{remark}
 
\begin{remark}\label{rem:rem3}
Since in Algorithm II one particle is added at a time and at time $k$ there are $a(k)$ particles, a more natural
 choice of the central limit scaling than   $\sigma_n$  is given by the sequence
$$
\beta_n \doteq \left( \gamma_n \frac{1}{n} \sum\limits_{k=1}^{n} \frac{1}{a(k)}\right)^{-1/2}.
$$
From Theorem \ref{thm:branchclt1} it follows immediately that
$$
{\beta}_n ( \btheta_n - \theta_*) \stackrel{\mathcal{L}}{\to} \mathcal{N}(0,\tilde{V}),
$$
where $\tilde{V}$ is the unique solution to the Lyapunov equation
\begin{equation}\label{eq:noteasy10}
(1 - \zeta)U_* + (1+\zeta)\gamma_*^{-1} \tilde{V} + \nabla h(\theta_*) \tilde{V}+ \tilde{V} \nabla h(\theta_*)^T = 0.
\end{equation}
On the other hand, recall that for Algorithm I the central limit theorem takes the form
$$
\sigma_n(\theta_n^n - \theta_*) \stackrel{\mathcal{L}}{\to} \mathcal{N}(0, V),
$$
where $V$ is the solution to \eqref{eq:eq247}.
The quantities $V$ and $\tilde V$ can be viewed as the `per-particle' asymptotic covariance matrices for the two numerical schemes.
\end{remark}

\subsection{Decomposition and linearization}
\label{sec:decandlin}
One of the key ingredients in the proofs is the following explicit representation of the solution of Poisson's equation 
associated with the transition probability kernel $K[\cdot]$.
For a proof, see \cite{ben3}*{Lemma 5.1}.
\begin{lemma}\label{lem:qsmooth}
For each $\nu \in \mathcal{P}(\Delta^o$), let $\Pi(\nu)$ be the $d \times d$ matrix with entries $\Pi(\nu)_{x,y} = \pi(\nu)_{y}$.  Then for each $\nu \in \mathcal{P}(\Delta^o)$, the matrix 
\begin{equation}\label{eq:qnudef}
Q[\nu] \doteq - \int_0^{\infty} \left[\exp\left( t(K[\nu] - I)\right) - \Pi(\nu)\right] \,dt,
\end{equation}
is well-defined and the map $\nu \mapsto Q[\nu]$ is continuously differentiable. Furthermore, 
\begin{equation}\label{eq:pois}
(I - K[\nu])Q[\nu] = Q[\nu](I - K[\nu]) = I - \Pi(\nu).
\end{equation}
\end{lemma}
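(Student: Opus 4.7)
The plan is to prove the lemma in three steps: establishing convergence of the defining integral via spectral analysis, verifying the Poisson identities by direct computation, and proving smoothness of $\nu \mapsto Q[\nu]$ by characterizing $Q[\nu]$ as the unique solution of an augmented linear system.

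For well-definedness, the essential point is that $K[\nu]$ is an irreducible stochastic matrix on the finite set $\Delta^o$, so Perron--Frobenius yields that $1$ is a simple eigenvalue of $K[\nu]$ and all other eigenvalues lie in the open unit disk. The rank-one spectral projection onto the eigenspace for $1$ is precisely $\Pi(\nu)$, and therefore $e^{t(K[\nu]-I)} - \Pi(\nu) = e^{t(K[\nu]-I)}(I-\Pi(\nu))$ decays at an exponential rate governed by the spectral gap of $K[\nu]$. Continuity of the spectrum in $\nu$ combined with compactness of $\clp(\Delta^o)$ gives a uniform lower bound on the gap, hence absolute convergence of the integral uniformly on $\clp(\Delta^o)$.

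For the Poisson identities, I would first verify the elementary facts $K[\nu]\Pi(\nu) = \Pi(\nu)K[\nu] = \Pi(\nu)$, using stochasticity of $K[\nu]$ (each row sums to $1$) for the first equality and invariance $\pi(\nu)K[\nu] = \pi(\nu)$ for the second. This gives $(I-K[\nu])\Pi(\nu) = \Pi(\nu)(I-K[\nu]) = 0$, so multiplication of $Q[\nu]$ by $I-K[\nu]$ from either side reduces to $-\int_0^\infty (I-K[\nu]) e^{t(K[\nu]-I)}\,dt$. This telescopes via the semigroup identity $\frac{d}{dt} e^{t(K[\nu]-I)} = (K[\nu]-I)e^{t(K[\nu]-I)}$ together with the limits $e^0 = I$ and $\lim_{t\to\infty} e^{t(K[\nu]-I)} = \Pi(\nu)$, yielding the claim.

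For smoothness, I would characterize $Q[\nu]$ as the unique solution of the augmented linear system $(I-K[\nu])M = I-\Pi(\nu)$ together with the normalization $\Pi(\nu)M = 0$. Uniqueness holds because the kernel of $I-K[\nu]$ is the range of $\Pi(\nu)$ (one-dimensional by irreducibility), which is killed by the second equation; that our specific $Q[\nu]$ satisfies $\Pi(\nu)Q[\nu] = 0$ follows from $\Pi(\nu)e^{t(K[\nu]-I)} = \Pi(\nu)$ (since $\Pi(\nu)(K[\nu]-I) = 0$) and $\Pi(\nu)^2 = \Pi(\nu)$. Now $\nu \mapsto K[\nu]$ is affine by \eqref{eq:kernel}, and $\nu \mapsto \pi(\nu)$ is smooth via the implicit function theorem applied to $\pi(I-K[\nu]) = 0$, $\pi \cdot \one = 1$, whose Jacobian is nonsingular by irreducibility. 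Cramer's rule applied to the augmented system then produces $Q[\nu]$ as a rational function of the smooth data with nowhere-vanishing denominator on $\clp(\Delta^o)$, giving continuous differentiability. The main technical obstacle I anticipate is ensuring uniform nonsingularity of the augmented system across all of $\clp(\Delta^o)$; this ultimately rests on the $\nu$-independent lower bound $K[\nu]_{x,y} \ge P_{x,y}$ from \eqref{eq:kernel}, which makes the irreducibility structure uniform in $\nu$.
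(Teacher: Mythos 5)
The paper does not give its own proof of this lemma; it simply refers the reader to Lemma~5.1 of the cited Bena\"im reference. So there is no in-paper proof to compare against. Your three-part strategy---exponential decay via the spectral structure of $K[\nu]$, the telescoping argument for the Poisson identities, and characterization of $Q[\nu]$ by an augmented linear system plus the implicit function theorem for smoothness---is a standard and essentially correct way to establish this result. Two points deserve attention.

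First, the claim that ``all other eigenvalues lie in the open unit disk'' is not true for a general irreducible stochastic matrix: $K[\nu]$ need not be aperiodic. For instance, with $\Delta^o = \{1,2\}$, $P_{1,0}=P_{1,2}=1/2$, $P_{2,1}=1$, and $\nu=\delta_2$, one gets $K[\nu]=\left(\begin{smallmatrix}0&1\\1&0\end{smallmatrix}\right)$, whose spectrum is $\{\pm1\}$. Your conclusion nonetheless holds: what matters for the integral is the spectrum of $K[\nu]-I$ on $\ker\Pi(\nu)$, and any eigenvalue $\lambda\neq 1$ of $K[\nu]$ with $|\lambda|\le 1$ satisfies $\operatorname{Re}(\lambda-1)<0$. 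You should phrase the decay argument in terms of the real parts of the eigenvalues of $K[\nu]-I$ rather than the modulus of the eigenvalues of $K[\nu]$; this also makes the compactness/uniform-gap step cleaner since the eigenvalue $0$ of $K[\nu]-I$ is simple for every $\nu$ (by irreducibility, which is uniform because $K[\nu]_{x,y}\geq P_{x,y}$).

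Second, you assert the telescoping ``yields the claim'' without computing the sign. Carrying it out: $-\int_0^\infty (I-K[\nu])e^{t(K[\nu]-I)}\,dt = \int_0^\infty \tfrac{d}{dt}e^{t(K[\nu]-I)}\,dt = \Pi(\nu) - I$, which is the \emph{negative} of the right-hand side of \eqref{eq:pois} under the definition \eqref{eq:qnudef} as written (with the leading minus sign). The usage of $Q$ elsewhere in the paper (e.g.\ in the decomposition $\epsilon_{k+1}^{i,n}=(X^{i,n}_{k+1})^T(I-K[\theta^n_k])Q[\theta^n_k]$, which is equated with $(X^{i,n}_{k+1})^T(I-\Pi(\theta^n_k))$) requires $(I-K)Q=I-\Pi$, which is what you obtain if you drop the leading minus sign in \eqref{eq:qnudef}. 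So either the paper's definition or your telescoping direction carries a sign slip; write this step out explicitly rather than asserting it.
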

Using the above result, and following \cites{ben3,benclo}, we decompose the noise in Algorithm I given in 
\eqref{eq:eq554S} in the following manner: for each $n \in \NN$ and $1 \leq i \leq a(n)$, write
\begin{equation*}
\begin{aligned}
\epsilon_{k+1}^{i,n} &= \delta_{X^{i,n}_{k+1}} - \pi(\theta^n_{k}) 
= (X^{i,n}_{k+1})^T (I-\Pi(\theta^n_k)) \\
&= (X^{i,n}_{k+1})^T (I - K[\theta^n_k])Q[\theta^n_k] 
= Q[\theta_k^{n}]_{X_{k+1}^{i,n},\cdot} - (K[\theta_k^{n}]Q[\theta_k^{n}])_{X_{k+1}^{i,n},\cdot}.
\end{aligned}
\end{equation*}
Then we can write $\epsilon_{k+1}^{i,n} = e^{i,n}_{k+1} + r^{i,n}_{k+1}$ where
\begin{equation}\label{eq:er-def}
\begin{aligned}
e^{i,n}_{k+1} &\doteq Q[\theta_k^{n}]_{X_{k+1}^{i,n},\cdot} - (K[\theta^{n}_{k}]Q[\theta^{n}_k])_{X_k^{i,n}, \cdot}, \;\;
r^{i,n}_{k+1} &\doteq (K[\theta^{n}_{k}]Q[\theta^{n}_k])_{X_k^{i,n}, \cdot} - (K[\theta^{n}_k]Q[\theta^{n}_k])_{X_{k+1}^{i,n},\cdot}
\end{aligned}
\end{equation}
For each $n,k \in \NN$, define the \emph{error} and \emph{remainder} arrays $\{e^n_k\}$ and $\{r^n_k\}$ by
\begin{equation}\label{eq:errors}
e^{n}_{k+1} \doteq \frac{1}{a(n)} \sum_{i=1}^{a(n)} e^{i,n}_{k+1}, 
\qquad  
r^{n}_{k+1} \doteq  \frac{1}{a(n)} \sum_{i=1}^{a(n)} r^{i,n}_{k+1}.
\end{equation}
Then the algorithm defined in (\ref{eq:stochalg}) can be written as
\begin{equation}\label{eq:rep1}
\theta^{n}_{k+1} \doteq \theta^{n}_{k} + \gamma_{k+1} h(\theta^{n}_k) + \gamma_{k+1} e^{n}_{k+1} + \gamma_{k+1} r^{n}_{k+1}.
\end{equation}

Along with the above evolution equation, it will be helpful to consider the \emph{linearized evolution} array $\{\mu^n_k\}$ given by
\begin{equation}\label{eq:mu}
\mu^{n}_0 \doteq 0 
\quad\text{and}\quad 
\mu^{n}_{k+1} \doteq \big(I + \gamma_{k+1} \nabla h(\theta_*) \big)\mu_k^{n} + \gamma_{k+1} {e}_{k+1}^{n},
\end{equation}
and to study the \emph{discrepancy} array $\{\rho^n_k\}$ given by
\begin{equation}\label{eq:rho}
\rho^{n}_0 \doteq \theta^n_0 - \theta_* 
\quad\text{and}\quad 
\rho_{k}^{n} \doteq (\theta_{k}^{n} - \theta_*) - \mu_{k}^{n}.
\end{equation}
Note that $\theta_{k}^{n} - \theta_* = \mu_{k}^{n} + \rho_{k}^{n}$ for all $k\in \NN_0$. As in \cite{for}, the proof of Theorem \ref{thm:clt1} relies on two steps: the first is to prove a central limit theorem for the sequence $\{\mu_n^n\}$ (with suitable scaling), and the second is to show that under the central limit scaling, the sequence $\{\rho^n_n\}$ tends to 0 in probability. 

We follow a similar approach for Algorithm II introduced in \eqref{eq:branchalg1}. This time we define the \emph{error} and \emph{remainder} sequences $\{\be_{n+1}\}$ and $\{\br_{n+1}\}$ by 
\begin{equation}\label{eq:benrn}
\be_{n+1} \doteq \frac{1}{a(n+1)} \sum_{i=1}^{a(n+1)} e^{i}_{n+1},
\qquad
\br_{n+1} \doteq \frac{1}{a(n+1)} \sum_{i=1}^{a(n+1)} r^{i}_{n+1},
\end{equation}
where the terms for each particle are given by
\begin{equation}\label{eq:er-def-br}
\begin{aligned}
e^{i}_{n+1} &\doteq Q[\btheta_n]_{\bX_{n+1}^{i},\cdot} - (K[\btheta_{n}]Q[\btheta_n])_{\bX_n^{i}, \cdot} \\
r^{i}_{n+1} &\doteq (K[\btheta_{n}]Q[\btheta_n])_{\bX_n^{i}, \cdot} - (K[\btheta_n]Q[\btheta_k])_{\bX_{n+1}^{i},\cdot}
\end{aligned}
\end{equation}
Then the sequence $\{\btheta_n\}$ defined in (\ref{eq:branchalg1}) can be rewritten as
\begin{equation}\label{eq:branchTheta1}
\btheta_{n+1} = \btheta_n + \gamma_{n+1} h(\btheta_n) + \gamma_{n+1} \be_{n+1} + \gamma_{n+1} \br_{n+1}.
\end{equation}
We also introduce the \emph{linearized evolution} sequence $\{\bmu_k\}$ given by
\begin{equation}\label{eq:mudef1}
\bmu_0 \doteq 0
\quad\text{and}\quad 
\bmu_{n+1} \doteq (I + \gamma_{n+1} \nabla h(\theta_*))\bmu_n + \gamma_{n+1} \be_{n+1}
\end{equation}
and we define the \emph{discrepancy} sequence $\{\brho_k\}$ by
\begin{equation}\label{eq:rhobranch1}
\brho_0 \doteq \btheta_0 - \theta_*
\quad\text{and}\quad 
\brho_{n+1} \doteq \btheta_{n+1} - \theta_* - \bmu_{n+1}.
\end{equation}
The proof once more  proceeds by first establishing  a central limit theorem for the linearized evolution and then showing that the discrepancy is asymptotically negligible.

\subsection{Notation}
\label{sec:notat}
The following notation will be used.
Convergence in distribution of random variables $Z_n$ to $Z$ will be denoted as
$Z_n \stackrel{\mathcal{L}}{\to} Z$. 
Constants in the proofs of various estimates will be denoted as $\kappa, \kappa_1, \kappa_2, \cdots$; their values may change from one proof to next. For a space $S$, $m \in \NN$ and a  bounded $h: S \to \RR^m$, $\|h\|_{\infty}\doteq \sup_{s\in S} \|h(s)\|$. For nonnegative sequences $\{a_n\}$, $\{b_n\}$ we write $a_n \sim b_n$, if $a_n/b_n \to 1$ as $n\to \infty$.
For a vector $v\in \RR^d$, the $j$-th coordinate will be denoted as $v(j)$ or $v_j$.
We denote by $ \mathcal{C}^0 \doteq C^0(\RR_+, \mathcal{P}(\Delta^o))$  the space of continuous $\mathcal{P}(\Delta^o)$-valued functions on $[0, \infty)$ endowed with the topology of uniform convergence on compact intervals. Recall that a sequence $\{x_n\}$ from $ \RR_+$ to $\mathcal{P}(\Delta^o)$ converges to a limit $x_*$ in $\mathcal{C}^0$ if and only if for each $T > 0$, 
$$
\lim_{n\rightarrow\infty} \sup_{0\leq s \leq T}\| x_n(t) - x_*(t)\| = 0.
$$
For $x \in \mathcal{C}^0$ we let $\| x \|_{T,*} \doteq \sup_{0\leq s \leq T}\| x(s)\|$.  Recall that the topology on $\mathcal{C}^0$ is induced by the metric 
$$
d(x,y) \doteq \sum_{T=1}^{\infty} 2^{-T}\min\{1 , \| x - y\|_{T,*}\}, \;\; x,y \in \mathcal{C}^0.
$$

\subsection{Organization}
The paper is organized as follows. In Section \ref{sec:rate} we prove a.s. convergence of Algorithm I and provide some associated rate of convergence bounds (Theorem \ref{thm:mainrate}). In Section \ref{sec:errors} we analyze the noise terms of Algorithm I. Combining results of Sections \ref{sec:rate} and \ref{sec:errors}, in Section \ref{sec:clt} we prove the  central limit theorem for this algorithm stated in Theorem \ref{thm:clt1}. In Section \ref{sec:branch} we prove a.s. convergence for Algorithm II
 and in Section \ref{sec:cltalg2} we establish the corresponding CLT.
 Finally, in Section \ref{sec:numeric} we present some exploratory numerical experiments.

\section{Convergence of Algorithm I}\label{sec:rate}

This section is dedicated to the proof of Theorem \ref{thm:mainrate}.
In Section \ref{sec:asymp} we introduce a notion of  {\em pseudo-trajectory sequences} for the flow induced by $h$ that is motivated by ideas of asymptotic pseudo trajectories  considered in \cites{ben,benclo} and which is more well-suited for the array-type stochastic approximations  studied here.  In Section \ref{sec:llnpf} we show that the sequence
 $\{\hat{\theta}^{n}\}$ of continuous time processes obtained from a suitable interpolation of our stochastic approximation array $\{\theta^n_k\}$ satisfies the  pseudo-trajectory sequence property introduced in Section \ref{sec:asymp} and finally, in Section \ref{sec:mainrate} we use this fact to complete the proof of Theorem \ref{thm:mainrate}.

\subsection{Pseudo-trajectory sequences}\label{sec:asymp}
Consider the sequence of algorithm update time instants $\{\tau_k\}$ associated with the SA, defined as
\begin{equation}\label{eq:updatetime}
\tau_0 = 0, \; \tau_k \doteq \sum_{j=1}^k \gamma_j, \; k \in \NN.
\end{equation}
For $r \in \RR_+$, we let $\tau_r \doteq \tau_{\lfloor r \rfloor}$.
  For $\nu \in \mathcal{P}(\Delta^o)$, consider the ODE
associated with the flow induced by $h$,
\begin{equation}\label{eq:phiflow}
\dot{\Phi}(t) = h(\Phi(t)), \;\; \Phi(0) = \nu.
\end{equation}
We denote the solution to (\ref{eq:phiflow}) with initial condition $\Phi(0) = \nu$ by $\{\Phi_t(\nu)\}$. 

We now introduce a notion of a pseudo-trajectory sequence that will be convenient for our purposes. 
\begin{defn}\label{defn:lambdapt}
For  $\lambda < 0$ and $\po \in (0,1]$, we say that a sequence $\{X_n\} \subset \mathcal{C}^0$ is a $(\lambda, \tau_n, \po)$-pseudo-trajectory sequence (PTS) for  $\Phi$ if for all $T > 0$ and $\epsilon>0$, there is an $n_0\in \NN$ such that for all $n \ge n_0$ and $0 \leq j \leq L_n \doteq L_n(\po, T) \doteq \left\lfloor \frac{1}{T}\left(\tau_n - \frac{\tau_{n^{\po}}}{2}\right)\right\rfloor + 1$,
and $t_{n,j} \doteq \frac{\tau_{n^{\po}}}{2}+jT$,
$$
  \sup_{0\leq u \leq 2T} \left\|  X_n(t_{n,j} + u ) - \Phi_{u}(X_n(t_{n,j}))\right\|  \le \exp\{(\lambda+\epsilon) t_{n,j}\}.
$$
\end{defn}

%

The following lemma provides an upper bound for the rate at which a $(\lambda,\tau_n,\po)$-PTS converges to $\theta_*$. Recall that the largest eigenvalue of $\nabla h(\theta_*)$ is bounded above by $- L < 0$.

\begin{lemma}\label{lem:rate1}
Suppose that for some $\lambda < 0$ and $\po \in (0,1)$,  $\{X_n\}$ is a $(\lambda, \tau_n, \po)$-PTS for $\Phi$. Then there is some $\beta > 0$ and $n_0 \in \NN$ such that for all $n \geq n_0$, if $n^{\po} \leq m \leq n$, then
$$
\| X_n(\tau_m) - \theta_*\| \leq \exp(-\beta \tau_m).
$$
\end{lemma}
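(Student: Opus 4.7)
The plan is to iterate the $(\lambda,\tau_n,\po)$-PTS hypothesis against the local exponential contraction of the flow $\Phi$ at $\theta_*$: by choosing the grid step $T$ large enough that one ODE step of length $T$ strictly contracts distance to $\theta_*$, I obtain a discrete linear recursion for $a_j:=\|X_n(t_{n,j})-\theta_*\|$ whose solution decays exponentially in the continuous time variable $\tau_m$. A short interpolation then handles values of $\tau_m$ that do not lie on the grid $\{t_{n,j}\}$.

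First I would record a standard local stability fact. Since $\nabla h(\theta_*)$ is Hurwitz with spectral bound $-L$, there exist constants $\delta_0>0$, $c_0\ge 1$, and $L_0\in(0,L)$ such that
$$\|\Phi_t(\nu)-\theta_*\|\le c_0\, e^{-L_0 t}\|\nu-\theta_*\|\quad\text{for all } \nu\in \cls\cap B(\theta_*,\delta_0),\; t\ge 0.$$
Combined with global attraction of $\theta_*$ on the compact simplex $\cls$ (a standing feature of the QSD setting, cf.\ \cite{benclo}), this yields a time $T_1$ with $\Phi_t(\cls)\subset B(\theta_*,\delta_0/2)$ for every $t\ge T_1$. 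I would then fix $T_*\ge T_1$ large enough that $c_0 e^{-L_0 T_*}\le 1/2$, pick $\eps\in(0,-\lambda)$, and invoke the PTS hypothesis to get $n_1$ such that
$$\sup_{0\le u\le 2T_*}\|X_n(t_{n,j}+u)-\Phi_u(X_n(t_{n,j}))\|\le e^{(\lambda+\eps)t_{n,j}}$$
for all $n\ge n_1$ and $0\le j\le L_n$.

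The PTS bound at $j=0$, $u=T_*$, together with $\Phi_{T_*}(X_n(t_{n,0}))\in B(\theta_*,\delta_0/2)$ and $e^{(\lambda+\eps)t_{n,0}}\to 0$, gives $a_1\le \delta_0$ for $n$ large. For $j\ge 1$ with $a_j\le\delta_0$, the local contraction applied at $X_n(t_{n,j})$ combined with the PTS bound produces the recursion $a_{j+1}\le \tfrac{1}{2}a_j+e^{(\lambda+\eps)t_{n,j}}$, which induction preserves $\le\delta_0$. Unrolling,
$$a_j\le 2^{-(j-1)}a_1+\sum_{i=1}^{j-1}2^{-(j-1-i)}e^{(\lambda+\eps)t_{n,i}},$$
and a direct geometric estimate of each term (using $t_{n,i}=\tau_{n^\po}/2+iT_*$, and splitting according to whether $2e^{(\lambda+\eps)T_*}\lessgtr 1$) shows $a_j\le C\bigl(2^{-j}+e^{(\lambda+\eps)t_{n,j}}\bigr)$. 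Because $\tau_m=t_{n,j}\ge\tau_{n^\po}=2t_{n,0}$, we have $2^{-j}\le e^{-(\log 2)\tau_m/(2T_*)}$, so $a_j\le C\,e^{-\beta_0 \tau_m}$ with $\beta_0:=\min\{(\log 2)/(2T_*),-(\lambda+\eps)\}>0$ and $C$ independent of $n,j$.

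Finally, for an arbitrary $\tau_m\in[\tau_{n^\po},\tau_n]$, I would write $\tau_m=t_{n,j}+u$ with $u\in[0,T_*]$ and $j\ge 1$, and apply the PTS bound and the local Lipschitz estimate for $\Phi_u$ to get
$$\|X_n(\tau_m)-\theta_*\|\le e^{(\lambda+\eps)t_{n,j}}+c_0 a_j\le C' e^{-\beta_0\tau_m}.$$
Since $\tau_m\ge\tau_{n^\po}\to\infty$ as $n\to\infty$, the constant $C'$ can be absorbed into the exponent by replacing $\beta_0$ with any $\beta\in(0,\beta_0)$, yielding the stated bound for all $n\ge n_0$. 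The main obstacle is the careful balancing of the two competing decay rates in the unrolled recursion; the key structural feature that makes everything work is that the condition $m\ge n^\po$ forces $\tau_m\ge 2 t_{n,0}$, which supplies precisely enough extra contraction in the geometric factor $2^{-j}$ to override the merely-bounded initial value $a_1$.
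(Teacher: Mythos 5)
Your proof is correct, and it reaches the same conclusion by a genuinely different route from the paper's. The paper invokes the global exponential contraction estimate of \cite{benclo}*{Lemma 2.1}, namely that for any $\alpha_1\in(0,L)$ there is a $T$ with $\sup_{T\le u\le 2T}\|\Phi_u(\nu)-\theta_*\|\le e^{-\alpha_1 T}\|\nu-\theta_*\|$ uniformly over all $\nu\in\clp(\Delta^o)$, and then iterates the linear recursion directly; there is no need to enter a neighborhood first or maintain an inductive invariant. You instead build the contraction from scratch: local exponential stability near $\theta_*$ (from $\nabla h(\theta_*)$ being Hurwitz), plus global attraction of $\theta_*$ on the simplex, to drive the iterates into a $\delta_0$-ball where the $\tfrac12$-contraction is available, and you must then maintain $a_j\le\delta_0$ by induction. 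Both work; the paper's version is cleaner because the global contraction lemma is already on hand, while yours is more self-contained and would survive in settings where only local stability is known. Two small remarks worth tightening if you were to write this out fully: your unrolled bound $a_j\le C(2^{-j}+e^{(\lambda+\eps)t_{n,j}})$ implicitly assumes $2e^{(\lambda+\eps)T_*}\ne 1$; the resonant case gives an extra factor of $j$, which is harmless (it only requires shrinking $\beta_0$ slightly, or perturbing $T_*$) but should be flagged. Also, your final interpolation needs $u\in[0,T_*]$ with $j\ge 1$; this requires $\tau_{n^\po}/2\ge T_*$, which holds for $n$ large but should be made explicit alongside the other lower bounds on $n_0$. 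Neither point is a gap. Finally, it is worth noting that the paper leverages the logarithmic growth $\tau_k\sim\gamma_*\log k$ to conclude $\tau_{n^\po}/2\ge\kappa_4\tau_n$, whereas you only use $\tau_m\ge\tau_{n^\po}$; your route is in fact slightly more robust to the step-size schedule.
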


\begin{proof}
Fix $\alpha_1 \in (0, L)$. Then we can find (cf.
 \cite{benclo}*{Lemma 2.1}) some $T\in (0,\infty)$  so that for all $\nu \in \mathcal{P}(\Delta^o)$,
$$
\sup_{T\le u \le 2T} \| \Phi_u(\nu) - \theta_*\| \leq \exp( - \alpha_1 T)\|\nu - \theta_*\|.
$$
For $n^{\po} \le m \le n$, let $0\le j(m)\le L_n$ be such that $t_{n, j(m)+1} \le \tau_m \le t_{n, j(m)+1} +T$ and let $u(m) \doteq \tau_m - t_{n,j(m)}$, so that
$$\tau_m = (\tau_m - t_{n, j(m)}) + t_{n, j(m)} = u(m) + t_{n, j(m)}.$$
Note that $u(m)\in [T,2T]$.
Now, fix $\epsilon \in (0, -\lambda)$, and let $\alpha_2 \doteq -(\lambda + \epsilon) > 0$. Define
$\alpha \doteq \alpha_1 \wedge \alpha_2$. Since $\{X_n\}$ is a $(\lambda,\tau_n,\po)$-PTS for $\Phi$, we can find some $n_0$ such that for all $n \geq n_0$ and   for each $n^{\po} \le m \leq n$, 
\begin{equation*}
\begin{split}
\| X_n(\tau_m) - \theta_*\| &\leq \| X_n(u(m) + t_{n, j(m)}) - \Phi_{u(m)}(X_n(t_{n, j(m)}))\|
 + \| \Phi_{u(m)}(X_n(t_{n, j(m)})) - \theta_*\|\\
&\leq \exp( - \alpha t_{n, j(m)}) + \exp(- \alpha T)\| X_n(t_{n, j(m)}) - \theta_*\|\\
&\leq \exp \left( - \alpha \frac{\tau_{n^{\po}}}{2}\right) + \exp(-\alpha T)\|X_n(t_{n, j(m)}) - \theta_*\|.
\end{split}
\end{equation*}
Iterating this for an additional $j(m)$ times, 
 we  see that there are $\kappa_i \equiv \kappa_i(\alpha, T) \in (0,\infty)$ such that
 if $n \geq n_0$ and $m \geq n^{\po}$, then 
\begin{equation}\label{eq:eq709}
\begin{split}
\| X_n(\tau_m) - \theta_*\| &\leq \exp\left( - \alpha \frac{\tau_{n^{\po}}}{2}\right)\left( \sum_{k=0}^{j(m)} \exp(- \alpha k T)\right) + \kappa_1\exp\left(-\alpha (j(m)+1)T\right)\\
&\le \kappa_2\left (\exp\left( - \alpha \frac{\tau_{n^{\po}}}{2}\right) + \exp\left(-\alpha (j(m)+1)T\right)\right)\\
&= \kappa_2\left (\exp\left( - \alpha \frac{\tau_{n^{\po}}}{2}\right) + \exp\left(-\alpha (j(m)+2)T\right) \exp( \alpha T) \right)\\
&\le \kappa_3 \left (\exp\left( - \alpha \frac{\tau_{n^{\po}}}{2}\right) + \exp\left(-\alpha (j(m)+2)T\right)  \right)
\end{split}
\end{equation}
Note that, by our choice of $j(m)$,
$$T(j(m)+2) \ge \tau_m - \frac{\tau_{n^{\po}}}{2} \ge \frac{\tau_{n^{\po}}}{2}.$$
Also note that for $k \in \NN$
$$
\gamma_* (\log(k+N_*)- \log(N_*)) \le \tau_k \le 1+ \gamma_*\log(k+N_*-1)	
$$
from which it follows that, there is a $\kappa_4 \in (0, \infty)$ and $n_1\ge n_0$ such that and all $n\ge n_1$,
$\frac{\tau_{n^{\po}}}{2} \ge \kappa_4 \tau_n.$
Combining the above two observations with \eqref{eq:eq709}, we have for all $n \ge n_1$ and $n^{\po} \le m \leq n$
$$
\| X_n(\tau_m) - \theta_*\| \leq 2\kappa_3 \exp\left( - \alpha \frac{\tau_{n^{\po}}}{2}\right)
\le 2\kappa_3 \exp\left( - \alpha \kappa_4 \tau_{m}\right).$$

The result follows.
\end{proof}

\subsection{The algorithm as a pseudo-trajectory sequence}\label{sec:llnpf}
In this section we show that a suitable continuous time interpolation of the array $\{\theta^n_k\}$ is a PTS for $\Phi$ in the sense of Definition \ref{defn:lambdapt}. 
For $n \in \NN$,   let $\hat{\theta}^n$ be the continuous-time process defined as
$$
\hat{\theta}^n(\tau_k + t) \doteq \theta_k^n + t\frac{\theta_{k+1}^n -\theta_k^n}{\tau_{k+1}-\tau_{k}},
\; t \in [0, \gamma_{k+1}) \mbox{ and } k \in \NN_0.
$$
We write $\bar{\theta}^{n}(\cdot)$ to denote the analogous continuous-time process obtained by piecewise constant interpolations of $\{\theta^{n}_{k}\}$. We will prove in this section that, with $\lambda = -(2\gamma_*)^{-1}$
and arbitrary $\po \in (0,1)$,
$\{\hat{\theta}^n\}$   is a $(\lambda, \tau_n, \po)$-PTS for $\Phi$.
%
%
Towards that end, let
\begin{equation}\label{eq:epsilondef}
\bar{\epsilon}^{n}(\tau_k + t) \doteq \epsilon^{n}_{k+1}, \; \; t \in [0, \gamma_{k+1}), \; k \in \NN_0
\end{equation}
and define
\begin{equation}\label{eq:deltadef}
\Delta(n,t_{n,j}, T) \doteq  \sup_{0\leq u \leq 2T} \left\| \int_{ t_{n,j} }^{ t_{n,j}+ u}\bar{\epsilon}^{n}(s)ds\right\|, \;\; 0 \leq j \leq L_n,
\end{equation}
 where $L_n = L_n(\po,T)$ is as in Definition \ref{defn:lambdapt}. In Lemma \ref{lem:deltaasy} we provide an estimate relating $\hat \theta^n$ with $\Delta(\cdot , \cdot, \cdot)$ that is used to prove asymptotic properties of $\{\hat{\theta}^{n}\}$. The proof is a consequence of the Lipschitz property of
 $h$ and Gr{\"o}nwall's lemma.
Define $m : \RR_+ \rightarrow \NN_0$ by 
\begin{equation}\label{eq:mtdef}
m(t)  \doteq \sup\{k \geq 0: t \geq \tau_k\}, \; t\ge 0.
\end{equation}
\begin{lemma}\label{lem:deltaasy}
	For each $T \in  (0,\infty)$ there is a  $C \doteq C(T)\in (0,\infty)$
such that for all $n\in \NN$ and
$k\leq L_n$,
$$
\sup_{0\leq u \leq 2T}\left\| \hat{\theta}^{n}(t_{n,k} + u) - \Phi_u(\hat{\theta}^{n}(t_{n,k}))\right\| \leq C(T)[ \Delta(n, t_{n,k}, T) + \gamma_{m(t_{n,k})}]
$$
\end{lemma}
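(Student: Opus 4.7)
The plan is to express both $\hat\theta^n(t_{n,k}+u)$ and $\Phi_u(\hat\theta^n(t_{n,k}))$ as integrals, subtract, and then control the difference by Grönwall's inequality, with the noise integral absorbed into $\Delta(n,t_{n,k},T)$ and all remaining discretization artefacts absorbed into $O(\gamma_{m(t_{n,k})})$.

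First I would rewrite the recursion \eqref{eq:stochalg1} in telescoping form. For $k_0 \le k_1$,
\[
\theta^n_{k_1}-\theta^n_{k_0}=\sum_{j=k_0+1}^{k_1}\gamma_j h(\theta^n_{j-1})+\sum_{j=k_0+1}^{k_1}\gamma_j\epsilon^n_j =\int_{\tau_{k_0}}^{\tau_{k_1}}\bigl(h(\bar\theta^n(s))+\bar\epsilon^n(s)\bigr)\,ds,
\]
using \eqref{eq:epsilondef} and the definition of $\bar\theta^n$. Since $\hat\theta^n$ is the piecewise linear interpolation, for any $t\in\RR_+$ we have $\|\hat\theta^n(t)-\bar\theta^n(t)\|\le \|\theta^n_{m(t)+1}-\theta^n_{m(t)}\|\le \gamma_{m(t)+1}(\|h\|_\infty+\|\bar\epsilon^n(t)\|)$, which is $O(\gamma_{m(t)})$ since $h$ and $\bar\epsilon^n$ are uniformly bounded on the simplex. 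Combining this with partial-step corrections at the two endpoints (each also $O(\gamma_{m(t_{n,k})})$), I would obtain
\[
\hat\theta^n(t_{n,k}+u)=\hat\theta^n(t_{n,k})+\int_0^u h(\hat\theta^n(t_{n,k}+s))\,ds+\int_{t_{n,k}}^{t_{n,k}+u}\bar\epsilon^n(s)\,ds+R_n(u),
\]
where $\sup_{0\le u\le 2T}\|R_n(u)\|\le \kappa_1(T)\gamma_{m(t_{n,k})}$, using once more boundedness of $h$ and of $\bar\epsilon^n$ on the simplex together with the Lipschitz bound $\|h(\hat\theta^n(s))-h(\bar\theta^n(s))\|\le L_h\|\hat\theta^n(s)-\bar\theta^n(s)\|$.

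Subtracting the integral form $\Phi_u(\hat\theta^n(t_{n,k}))=\hat\theta^n(t_{n,k})+\int_0^u h(\Phi_s(\hat\theta^n(t_{n,k})))\,ds$ of \eqref{eq:phiflow}, I would get, letting $D_n(u)\doteq\hat\theta^n(t_{n,k}+u)-\Phi_u(\hat\theta^n(t_{n,k}))$,
\[
\|D_n(u)\|\le L_h\int_0^u \|D_n(s)\|\,ds+\Delta(n,t_{n,k},T)+\kappa_1(T)\gamma_{m(t_{n,k})},
\]
valid for all $u\in[0,2T]$. Grönwall's lemma then yields
\[
\sup_{0\le u\le 2T}\|D_n(u)\|\le\bigl(\Delta(n,t_{n,k},T)+\kappa_1(T)\gamma_{m(t_{n,k})}\bigr)e^{2L_h T},
\]
which is the claimed estimate with $C(T)\doteq(1\vee \kappa_1(T))e^{2L_h T}$.

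The only genuine technical point is bookkeeping of the boundary corrections, namely that $t_{n,k}$ and $t_{n,k}+u$ generally do not coincide with any update time $\tau_j$, so the integral representation holds only up to a discrepancy localized to two partial steps of length at most $\gamma_{m(t_{n,k})}$ each. Since the integrands in question are uniformly bounded, these contribute at most $O(\gamma_{m(t_{n,k})})$ and are absorbed into $R_n(u)$; this is the step that requires a bit of care but involves no subtle estimate. Everything else is a standard Lipschitz-plus-Grönwall argument, and neither step invokes anything beyond the smoothness of $h$, the boundedness of iterates on the simplex, and monotonicity of $\gamma_k$.
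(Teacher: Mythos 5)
Your proof is correct and follows essentially the same route as the paper: pass to the integral representation of $\hat\theta^n$, control $\|\bar\theta^n-\hat\theta^n\|$ by $O(\gamma_{m(t_{n,k})})$ using boundedness of $h$ and $\bar\epsilon^n$ and the Lipschitz property of $h$, bound the noise integral by $\Delta(n,t_{n,k},T)$, and close with Gr\"onwall. The only cosmetic difference is that you start from the telescoped discrete recursion and then invoke ``partial-step corrections at the two endpoints'': in fact no such correction is needed, because the piecewise-linear interpolation satisfies $\hat\theta^n(t_1)-\hat\theta^n(t_0)=\int_{t_0}^{t_1}[h(\bar\theta^n(s))+\bar\epsilon^n(s)]\,ds$ exactly for all $t_0\le t_1$ (this is what the paper uses directly); the only nonzero contribution to your $R_n(u)$ is the $h(\bar\theta^n)$-versus-$h(\hat\theta^n)$ replacement, which you bound correctly. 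Since your spurious extra terms are nonnegative and still $O(\gamma_{m(t_{n,k})})$, the argument goes through unchanged.
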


\begin{proof}
Fix $T \in (0,\infty)$. Note that, for $n \in \NN$ and $t\ge 0$,
$$
\hat{\theta}^{n}(t) = \hat{\theta}^{n}(0) + \int_0^t [ h(\bar{\theta}^{n}(s)) + \bar{\epsilon}^{n}(s)]ds.
$$
Define
$$
A_{n,k}(s) \doteq \int_{ t_{n,k}}^{ t_{n,k} + s }[ h(\bar{\theta}^{n}(u)) - h(\hat{\theta}^{n}(u))]du,\;\; B_{n,k}(s) \doteq \int_{t_{n,k} }^{ t_{n,k} + s} \bar{\epsilon}^{n}(u)du.
$$
Then, for $0\le s \le 2T$,
\begin{equation*}
\begin{split}
\hat{\theta}^{n}(t_{n,k}+ s) = \hat{\theta}^{n}(t_{n,k}) + \int_0^s h(\hat{\theta}^{n}(t_{n,k}+ u))du + A_{n,k}(s) + B_{n,k}(s).
\end{split}
\end{equation*}
Also,
$$
\Phi_s(\hat{\theta}^{n}(t_{n,k})) = \hat{\theta}^{n}(t_{n,k}) + \int_0^s h (\Phi_u(\hat{\theta}^{n}(t_{n,k})))du.
$$
Letting $K$ denote the Lipschitz constant of $h$ we see that
\begin{equation}\label{eq:gron}
\begin{split}
\| \hat{\theta}^{n}(t_{n,k} + s) - \Phi_s(\hat{\theta}^{n}(t_{n,k}))\| &= \left\| \int_0^s [h(\hat{\theta}^{n}(t_{n,k} + u)) - h(\Phi_u(\hat{\theta}^{n}(t_{n,k})))]du + A_{n,k}(s) + B_{n,k}(s)\right\|\\
&\leq K \int_0^s \| \hat{\theta}^{n}( t_{n,k} + u) - \Phi_u(\hat{\theta}^{n}( t_{n,k}))\|du + \|A_{n,k}(s)\| + \|B_{n,k}(s)\|,
\end{split}
\end{equation}
for all $s \geq 0$. Next 
for each $u \in [ t_{n,k}, t_{n,k} + 2T]$,
\begin{equation}\label{eq:thetatau2}
\hat{\theta}^{n}(u) - \bar{\theta}^{n}(u)= \hat{\theta}^{n}(u) - \hat{\theta}^{n}(\tau_{m(u)}) = \int_{\tau_{m(u)}}^{u}[ h(\bar{\theta}^{n}(s)) + \bar{\epsilon}^{n}(s)]ds.
\end{equation}
Note that, with $\kappa_1 \doteq \| h\|_{\infty}+2$, for $u \in [ t_{n,k}, t_{n,k} + 2T]$,
$$
\left\| \int_{\tau_{m(u)}}^u \left(h(\bar{\theta}^{n}(s))
+ \bar{\epsilon}^{n}(s)\right) ds \right\| \le  \kappa_1(u - \tau_{m(u)}) \le  \kappa_1 \gamma_{m(u)} \leq \kappa_1 \gamma_{m( t_{n,k})},
$$
  %
Combining the above estimate with \eqref{eq:thetatau2}, it follows that 
for $0 \leq s \leq 2T$,
\begin{equation}\label{eq:boundA}
\begin{split}
\| A_{n,k}(s) \| &\leq K \int_{ t_{n,k}}^{ t_{n,k} + s} \| \bar{\theta}^{n}(u) - \hat{\theta}^{n}(u)\|du \leq 2KT  \kappa_1 \gamma_{m(t_{n,k})}.
\end{split}
\end{equation}
The result now follows on using the estimate 
\eqref{eq:boundA} in \eqref{eq:gron},
recalling the definition of $B_{n,k}$ and
$\Delta(n, t_{n,k},T)$,
and applying Gr{\"o}nwall's lemma.
\end{proof}

Lemma  \ref{lem:deltalemma} provides the key estimate in the proof that $\{\hat{\theta}^{n}\}$ is a PTS for $\Phi$. The main ingredients in its proof are Lemmas  \ref{lem:boundu}, \ref{lem:delta2}, \ref{lem:delta3}, and \ref{lem:delta4} given below. Consider the following decomposition of the algorithm's noise given in terms of $\delta^{\ell,i, n}_{k+1}$ defined as, for $1\le i \le a(n)$,
\begin{equation}\label{eq:delta-defs}
\delta^{\ell,i,n}_{k+1}(x) \doteq \left\lbrace
\begin{aligned}
&\gamma_{k+1}\,Q[\theta_k^{n}]_{X_{k+1}^i,x}\;
-\gamma_{k+1}\,\left(K[\theta_k^{n}]Q[\theta_k^{n}]\right)_{X_k^i,x}
&\qquad\ell = 1&\;\\
&\gamma_{k+1}\,\left(K[\theta_k^{n}]Q[\theta_k^{n}]\right)_{X_k^i,x}
\;-\gamma_{k}\,\left(K[\theta_k^{n}]Q[\theta_k^{n}]\right)_{X_k^i,x}
&\qquad\ell= 2&\;\\
&\gamma_{k}\,\left(K[\theta_k^{n}]Q[\theta_k^{n}]\right)_{X_k^i,x}
\;-\gamma_{k+1}\left(K[\theta_{k+1}^{n}]Q[\theta_{k+1}^{n}]\right)_{X_{k+1}^i,x}
&\qquad\ell = 3&\;\\
&\gamma_{k+1}\,\left(K[\theta_{k+1}^{n}]Q[\theta_{k+1}^{n}]\right)_{X_{k+1}^i,x}
\;-\gamma_{k+1}\,\left(K[\theta_k^{n}]Q[\theta_k^{n}]\right)_{X_{k+1}^i,x}
&\qquad\ell  = 4&\;
\end{aligned}
\right.
\end{equation}
For each $1 \leq \ell \leq 4$, let
\begin{equation}\label{eq:noise-decomp}
\delta^{\ell,{n}}_{k+1} \doteq \dfrac{1}{a(n)} \sum_{i=1}^{a(n)}\delta^{\ell,i,n}_{k+1}
\quad\text{and observe that}\quad
\gamma_{k+1} \epsilon_{k+1}^{n} = \sum_{\ell=1}^{4} \delta^{\ell,n}_{k+1}
\end{equation}
since
\begin{equation}\label{eq:eq417}
\gamma_{k+1} \epsilon_{k+1}^{i,n} = \gamma_{k+1} \left(Q[\theta_k^{n}]_{X_{k+1}^i,x}- \left(K[\theta_k^{n}]Q[\theta_k^{n}]\right)_{X_{k+1}^i,x}\right)
 =\sum_{\ell=1}^4 \delta^{\ell,i,n}_{k+1}.
\end{equation}

 The following lemma estimates the error term corresponding to $\ell=1$.
%
Henceforth in this section we assume that $\po \in (0,1)$ and $T\in (0,\infty)$ are fixed, and $\lambda \doteq -(2 \gamma_*)^{-1}$. Recall the quantities $L_n$ and $t_{n,j}$ from Definition \ref{defn:lambdapt}.

\begin{lemma}\label{lem:boundu}
Let $q \ge 2$.
There is a  $n_0 \in \NN$ and $C(q,T)\in (0,\infty)$ such that for all $n \ge n_0$ and all $0\le k \leq L_n$,
$$
\EE\left(\sup_{0\leq u \leq 2T}\left\| \sum_{j = m(t_{n,k})}^{m(t_{n,k}+u)}\delta^{1,n}_{j+1}\right\|^q\right) \leq C(q,T) \exp\left( q \lambda t_{n,k} \right).
$$
\end{lemma}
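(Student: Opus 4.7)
The plan is to observe that $\{\delta^{1,n}_{j+1}\}_{j\ge 0}$ is an $\{\mathcal{F}_j^n\}$-martingale difference sequence and then apply the Burkholder-Davis-Gundy (BDG) inequality together with the explicit step-size asymptotics to extract the exponential decay.

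First I would verify the martingale property. By the conditional law of $X_{k+1}^{i,n}$ given $\mathcal{F}_k^n$ from \eqref{eq:eq242},
$$\EE\bigl[Q[\theta_k^{n}]_{X_{k+1}^{i,n},\cdot}\,\big|\,\mathcal{F}_k^n\bigr]=\sum_{y}K[\theta_k^{n}]_{X_k^{i,n},y}\,Q[\theta_k^{n}]_{y,\cdot}=(K[\theta_k^{n}]Q[\theta_k^{n}])_{X_k^{i,n},\cdot},$$
so $\EE[\delta^{1,i,n}_{k+1}\mid\mathcal{F}_k^n]=0$, and averaging in $i$ shows that $\{\delta^{1,n}_{j+1}\}$ is an MDS. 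Working component-wise, for each $x\in\Delta^o$ the process $N\mapsto \sum_{j=m(t_{n,k})}^{N}\delta^{1,n}_{j+1}(x)$ is a scalar martingale. Applying BDG (preceded by Doob if needed) and then summing over the $d$ components of $\Delta^o$ yields
$$\EE\Bigl[\sup_{0\le u\le 2T}\Bigl\|\sum_{j=m(t_{n,k})}^{m(t_{n,k}+u)}\delta^{1,n}_{j+1}\Bigr\|^q\Bigr]\le C_q\,\EE\Bigl[\Bigl(\sum_{j=m(t_{n,k})}^{m(t_{n,k}+2T)}\|\delta^{1,n}_{j+1}\|^2\Bigr)^{q/2}\Bigr].$$

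Next I would bound the quadratic variation pathwise. Because $\cls$ is compact and $\nu\mapsto Q[\nu]$ is continuous by Lemma~\ref{lem:qsmooth}, $\sup_{\nu\in\cls}\|Q[\nu]\|<\infty$, and therefore $\|\delta^{1,i,n}_{k+1}\|\le\kappa_1\gamma_{k+1}$ deterministically; averaging in $i$ preserves this bound for $\|\delta^{1,n}_{j+1}\|$. Using monotonicity of $\gamma_j$,
$$\sum_{j=m(t_{n,k})}^{m(t_{n,k}+2T)}\|\delta^{1,n}_{j+1}\|^2\le \kappa_1^2\,\gamma_{m(t_{n,k})+1}\sum_{j=m(t_{n,k})}^{m(t_{n,k}+2T)}\gamma_{j+1}\le \kappa_1^2(2T+\gamma_{1})\,\gamma_{m(t_{n,k})+1},$$
since $\sum\gamma_{j+1}$ telescopes to $\tau_{m(t_{n,k}+2T)+1}-\tau_{m(t_{n,k})}\le 2T+\gamma_1$.

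Finally, the explicit form $\gamma_{k+1}=\gamma_*/(k+N_*)$ together with $\tau_k\ge\gamma_*(\log(k+N_*)-\log N_*)$ give $m(t)+1\ge c_1 e^{t/\gamma_*}$ and hence $\gamma_{m(t)+1}\le C\,e^{-t/\gamma_*}=C\exp(2\lambda t)$ for $t$ larger than some fixed threshold. Since $t_{n,k}\ge\tau_{n^{\po}}/2\to\infty$, this regime is reached for $n\ge n_0$, and combining the three estimates yields the claimed bound $C(q,T)\exp(q\lambda t_{n,k})$. The main subtlety is matching the exponent: the factor $\gamma_{m(t)}\sim\gamma_*e^{-t/\gamma_*}$ inside the quadratic variation becomes the rate $\lambda=-(2\gamma_*)^{-1}$ only after the square root in BDG, which is precisely why $\lambda$ and not $-\gamma_*^{-1}$ appears. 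One could in principle sharpen the bound by an extra factor of $a(n)^{-q/2}$ using the conditional independence of the $a(n)$ particles given $\mathcal{F}_k^n$, but the weaker stated form is all that is needed in the sequel.
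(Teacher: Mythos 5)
Your proof follows the same route as the paper's: verify the martingale-difference property of $\{\delta^{1,n}_{j}\}$, apply Burkholder, use the deterministic bound $\|\delta^{1,n}_{j+1}\|\le\kappa_1\gamma_{j+1}$, and convert the step-size asymptotics into exponential decay in $t_{n,k}$; your telescoping bound on $\sum\gamma_{j+1}^2$ is a clean variant of the paper's direct count of terms and gives the same rate. One small slip to fix: the lower bound $\tau_k\ge\gamma_*(\log(k+N_*)-\log N_*)$ combined with $\tau_{m(t)}\le t$ yields an \emph{upper} bound on $m(t)$, not the lower bound you need; what forces $m(t)$ to grow, and hence $\gamma_{m(t)+1}$ to decay like $e^{-t/\gamma_*}$, is the complementary upper bound $\tau_k\le 1+\gamma_*\log(k+N_*-1)$ used together with $\tau_{m(t)+1}>t$, which is exactly the paper's estimate \eqref{eq:eq257}.
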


\begin{proof}
Note that for each $n \in \NN$, $\{\delta^{1,n}_{j}\}_{j=1}^{\infty}$ is adapted to $\{\mathcal{F}^n_{j}\}_{j=1}^{\infty}$. Additionally, from \eqref{eq:eq242}, for all $n\in \NN, j \in \NN_0$,
\begin{equation}\label{eq:martinc1}
\begin{split}
\EE[\delta^{1,n}_{j+1} | \mathcal{F}_j] &= \gamma_{j+1} \frac{1}{a(n)} \sum_{i=1}^{a(n)} \EE\left[ \left(Q[\theta_j^{n}]_{X_{j+1}^{i,n},\cdot} - K[\theta_j^{n}]Q[\theta_j^{n}]_{X_{j}^{i,n},\cdot}\right) \bigg| \mathcal{F}_j^n \right] = 0.
\end{split}
\end{equation}
Also, for all  $n\in \NN$ and  $j \in \NN_0$, 
$$
\big\| \delta^{1,n}_{j+1} \big\| \le \frac{1}{a(n)} \sum_{i=1}^{a(n)} \big\| \delta^{1,i,n}_{j+1} \big\| \leq
  \gamma_{j+1}\frac{1}{a(n)} \sum_{i=1}^{a(n)} \left\| Q[\theta^n_j]_{X_{j+1}^{i,n},\cdot} - K[\theta^n_j]Q[\theta^n_j]_{X_{j}^{i,n},\cdot}\right\| \le \kappa_1\gamma_{j+1}
$$
for some  $\kappa_1 \in (0,\infty)$. 
Thus, for each $n \in \NN$, $\{\delta^{1,n}_{j}\}_{j=1}^{\infty}$ is a martingale difference sequence, and so from  Burkholder's inequality we can find a  $\kappa_2(q) \in (0,\infty)$ such that for all $n \in \NN$ and $0\le k \leq L_n$,
\begin{equation*}
\begin{split}
f_{n,k}(q) &\doteq \EE\left( \sup_{0\leq  u \leq 2T}\left\|  \sum_{i = m(t_{n,k})}^{m(t_{n,k}+u)} \delta^{1,n}_{i+1}\right\|^q\right)
\leq  \EE\left( \sup_{m(t_{n,k})\leq j \leq m(t_{n,k}+2T)}\left\| \sum_{i=m(t_{n,k})}^j \delta^{1,n}_{i+1}\right\|^q\right)\\
&\leq \kappa_2(q) \EE\left(\left[  \sum_{i=m(t_{n,k})}^{m(t_{n,k} + 2T)} \big\|\delta^{1,n}_{i+1}\big\|^2\right]^{q/2}\right)
\leq \kappa_2(q) \kappa_1^q  \left[ \sum_{i=m(t_{n,k})}^{m(t_{n,k}+2T)} \gamma_{i+1}^2\right]^{q/2}\\
&\leq \kappa_2(q) \kappa_1^q \big[ m(t_{n,k}+2T) - m(t_{n,k})+1 \big]^{q/2} \gamma^q_{m(t_{n,k})}.
\end{split}
\end{equation*}
Next note that, for some $\kappa_3(T) \in (0,\infty)$ and $n_0 \in \NN$,
\begin{equation}\label{eq:eq256}
\big[m(t_{n,k}+2T) - m(t_{n,k})+1 \big] \le  \kappa_3(T) \exp\left( \frac{t_{n,k}}{\gamma_*} \right)
\end{equation}
and for all $n \ge n_0$
\begin{equation}\label{eq:eq257}
\gamma_{m(t_{n,k})} \le  \frac{\gamma_*e}{N_*}\exp\left( - \frac{t_{n,k}}{\gamma_*} \right).
\end{equation}

%
%

Thus, there is some $\kappa_4(q,T) \in (0,\infty)$ such that for all  $n\ge n_0$, 
$$
f_{n,k}(q) \leq \kappa_4(q,T) \exp\left( - \frac{q}{2\gamma_*}t_{n,k}\right) = \kappa_4(q,T) \exp\left(\lambda qt_{n,k}\right).
$$
The result follows.
\end{proof}

The next three lemmas, namely Lemmas \ref{lem:delta2}, \ref{lem:delta3}, and \ref{lem:delta4} will be used to bound the remaining error terms.

\begin{lemma}\label{lem:delta2}
 There is a  $C \in (0, \infty)$ such that for all $T>0$, $n \in \NN$ and  $k \leq L_n$
$$
\sup_{0\leq u \leq 2T}\left\|\sum_{j=m(t_{n,k})}^{m(t_{n,k}+u)} \delta^{2,n}_{j+1}\right\| \leq C \gamma_{m( t_{n,k})}.
$$
\end{lemma}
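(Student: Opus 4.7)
The key observation is that $\delta^{2,i,n}_{j+1}$ has an extremely simple structure: from \eqref{eq:delta-defs},
\[
\delta^{2,i,n}_{j+1} = (\gamma_{j+1} - \gamma_j)\,(K[\theta_j^{n}]Q[\theta_j^{n}])_{X_j^{i,n},\cdot},
\]
so that
\[
\delta^{2,n}_{j+1} = (\gamma_{j+1}-\gamma_j)\cdot \frac{1}{a(n)}\sum_{i=1}^{a(n)} (K[\theta_j^{n}]Q[\theta_j^{n}])_{X_j^{i,n},\cdot}.
\]
The plan is therefore to take the norm inside the sum over $j$, use boundedness of $K[\nu]Q[\nu]$ uniformly in $\nu\in\mathcal{P}(\Delta^o)$, and exploit the monotonicity of $\{\gamma_j\}$ to produce a telescoping sum.

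First I would invoke Lemma \ref{lem:qsmooth} together with the continuity of $\nu\mapsto Q[\nu]$ and the compactness of $\mathcal{P}(\Delta^o)$ (identified with $\cls$) to conclude that there is a finite constant $\kappa$, depending only on the kernel $\{P_{x,y}\}$, such that
\[
\sup_{\nu\in\mathcal{P}(\Delta^o)}\max_{x\in\Delta^o}\|(K[\nu]Q[\nu])_{x,\cdot}\|\le \kappa.
\]
Consequently $\|\delta^{2,n}_{j+1}\|\le \kappa\,|\gamma_{j+1}-\gamma_j|$ for every $j\in\NN_0$ and every $n\in\NN$, pathwise.

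Next, since $\gamma_j = \gamma_*/(j-1+N_*)$ is monotonically decreasing in $j$, we have $|\gamma_{j+1}-\gamma_j| = \gamma_j - \gamma_{j+1}$, so for any $u\in[0,2T]$ the sum telescopes:
\[
\sum_{j=m(t_{n,k})}^{m(t_{n,k}+u)}|\gamma_{j+1}-\gamma_j| = \gamma_{m(t_{n,k})} - \gamma_{m(t_{n,k}+u)+1} \le \gamma_{m(t_{n,k})}.
\]
Combining these two displays gives, uniformly over $0\le u\le 2T$ and $k\le L_n$,
\[
\left\|\sum_{j=m(t_{n,k})}^{m(t_{n,k}+u)}\delta^{2,n}_{j+1}\right\|\le \sum_{j=m(t_{n,k})}^{m(t_{n,k}+u)}\|\delta^{2,n}_{j+1}\|\le \kappa\,\gamma_{m(t_{n,k})},
\]
which is the desired bound with $C=\kappa$. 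There is no real obstacle here — the lemma is a deterministic pathwise estimate and relies only on the uniform bound on $K[\nu]Q[\nu]$ from Lemma \ref{lem:qsmooth} and the monotonicity of the step-size sequence; the bound is independent of $T$, which matches the lemma's statement.
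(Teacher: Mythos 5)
Your proof is correct and takes essentially the same route as the paper: both isolate the common factor $(\gamma_{j+1}-\gamma_j)$ in $\delta^{2,i,n}_{j+1}$, bound $(K[\theta]Q[\theta])_{x,\cdot}$ uniformly over $\theta\in\mathcal{P}(\Delta^o)$ and $x\in\Delta^o$ by a constant $\kappa$, and telescope the resulting sum of $\gamma_j-\gamma_{j+1}$ to obtain $\kappa\,\gamma_{m(t_{n,k})}$. (The paper writes $\kappa(\gamma_{j+1}-\gamma_j)$ where the sign should be flipped, a typo you correctly avoid; otherwise the arguments coincide.)
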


\begin{proof}
Fix $n \in \NN$ and let 
\begin{equation}\label{eq:eq319}
\kappa \doteq \sup_{\theta \in \mathcal{P}(\Delta^o), x \in \Delta^o}\left\|
\left(K[\theta]Q[\theta]\right)_{x,\cdot}\right\| < \infty.
\end{equation}
Then for each $1 \leq i \leq a(n)$, $\| \delta^{2,i,n}_{j+1}\| \leq \kappa( \gamma_{j+1} - \gamma_{j})$ for all $j \in \NN_0$, and so for each $k \leq L_n$,
$$
\sup_{0 \leq u\leq 2T}\left\|\sum_{j=m(t_{n,k})}^{m(t_{n,k}+u)} \delta^{2,n}_{j+1} \right\| \leq \sup_{0 \leq u \leq 2T} \sum_{j=m(t_{n,k})}^{m(t_{n,k}+u)}  \frac{1}{a(n)}\sum_{i=1}^{a(n)}\big\| \delta^{2,i,n}_{j+1} \big\| \leq  \kappa \gamma_{m( t_{n,k})}.
$$
The result follows.
\end{proof}

\begin{lemma}\label{lem:delta3}
 There is a  $C \in (0,\infty)$  such that for all $T>0$, $n \in \NN$ and $0\le k \leq L_n$.
$$
\sup_{0\leq u \leq 2T}\left\|\sum_{j=m(t_{n,k})}^{m(t_{n,k}+u) } \delta^{3,n}_{j+1}\right\| \leq C  \gamma_{m(t_{n,k})}.
$$
\end{lemma}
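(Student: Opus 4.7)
The plan is to exploit the explicit telescoping structure of $\delta^{3,i,n}_{j+1}$. Observe that if we write $G^{i,n}_j \doteq \gamma_j\,(K[\theta_j^n]Q[\theta_j^n])_{X_j^{i,n},\cdot}$, then by the definition in \eqref{eq:delta-defs},
$$\delta^{3,i,n}_{j+1} = G^{i,n}_j - G^{i,n}_{j+1},$$
so summing collapses:
$$\sum_{j=m(t_{n,k})}^{m(t_{n,k}+u)} \delta^{3,i,n}_{j+1} = G^{i,n}_{m(t_{n,k})} - G^{i,n}_{m(t_{n,k}+u)+1}.$$

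From here the bound is immediate. Using the uniform bound $\kappa$ on $\|(K[\theta]Q[\theta])_{x,\cdot}\|$ from \eqref{eq:eq319} and the fact that $\{\gamma_j\}$ is decreasing, each of the two endpoint terms has norm at most $\kappa\,\gamma_{m(t_{n,k})}$ (the second because $m(t_{n,k}+u)+1\ge m(t_{n,k})$ implies $\gamma_{m(t_{n,k}+u)+1}\le \gamma_{m(t_{n,k})}$). Averaging over $1\le i\le a(n)$ preserves this bound, so for every $0\le u\le 2T$,
$$\left\|\sum_{j=m(t_{n,k})}^{m(t_{n,k}+u)}\delta^{3,n}_{j+1}\right\|
\le \frac{1}{a(n)}\sum_{i=1}^{a(n)}\bigl(\|G^{i,n}_{m(t_{n,k})}\| + \|G^{i,n}_{m(t_{n,k}+u)+1}\|\bigr) \le 2\kappa\,\gamma_{m(t_{n,k})}.$$
Taking the supremum over $u\in[0,2T]$ and setting $C\doteq 2\kappa$ yields the claim.

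There is really no obstacle here: the lemma is designed to extract the telescoping portion of the noise decomposition in \eqref{eq:eq417}, and the uniform boundedness of $K[\cdot]Q[\cdot]$ (which follows from the continuity of $\nu\mapsto Q[\nu]$ asserted in Lemma \ref{lem:qsmooth} together with compactness of $\mathcal{P}(\Delta^o)$) immediately converts the telescoped boundary terms into the desired $O(\gamma_{m(t_{n,k})})$ estimate. Unlike Lemma \ref{lem:boundu}, no martingale or concentration argument is needed, and unlike Lemma \ref{lem:delta2}, we do not even need to exploit smallness of consecutive increments of $\gamma_j$. The bound is deterministic and uniform in $n$, $k$, $u$, which is exactly what will be required when these four pieces are combined to control $\Delta(n,t_{n,k},T)$ in the subsequent Lemma \ref{lem:deltalemma}.
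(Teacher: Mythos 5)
Your proof is correct and follows essentially the same route as the paper: both exploit the telescoping identity $\delta^{3,i,n}_{j+1} = b^{i,n}_j - b^{i,n}_{j+1}$ (the paper calls your $G^{i,n}_j$ by the name $b^{i,n}_j$), bound each endpoint by $\kappa\gamma_j$ via the uniform bound on $K[\cdot]Q[\cdot]$, and use monotonicity of $\gamma_j$ to collapse everything to $2\kappa\gamma_{m(t_{n,k})}$.
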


\begin{proof}
Let $b_j^{i,n}(\cdot) \doteq \gamma_{j}K[\theta_j^{n}]Q[\theta_j^{n}]_{X_j^i, \cdot}$, so that $\delta^{3,i,n}_{j+1} = b_j^{i,n} - b_{j+1}^{i,n}$. Then, with $\kappa$ as in \eqref{eq:eq319},
we have that
$
\|b^{i,n}_j\|  \leq \kappa \gamma_{j}.
$
Thus
\begin{equation*}
\begin{split}
\left\|\sum_{j=m(t_{n,k})}^{m(t_{n,k}+u) } \delta^{3,n}_{j+1}\right\| 
& = \left\|\sum_{j=m(t_{n,k})}^{m(t_{n,k}+u) } \frac{1}{a(n)} \sum_{i=1}^{a(n)} \left(b^{i,n}_j - b^{i,n}_{j+1}\right)\right\| \\
&\leq \frac{1}{a(n)} \sum_{i=1}^{a(n)}\left\| b_{m(t_{n,k})}^{i,n} - b_{m(t_{n,k}+u) + 1}^{i,n}\right\| \leq 2\kappa \gamma_{m(t_{n,k})}.
\end{split}
\end{equation*}
The result follows.
\end{proof}

\begin{lemma}\label{lem:delta4}
Fix $T\in (0,\infty)$.
 There is a  $C \in (0,\infty)$ such that for all   $n\in \NN$ and $0\le k\le L_n$ 
$$
\sup_{0\leq u \leq 2T}\left\|\sum_{j=m(t_{n,k})}^{m(t_{n,k}+u)} \delta^{4,n}_{j+1}\right\| \leq C\gamma_{m(t_{n,k})}.
$$
\end{lemma}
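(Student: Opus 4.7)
The term $\delta^{4,n}_{j+1}$ measures the difference, at the common state $X^{i,n}_{j+1}$, between $K[\cdot]Q[\cdot]$ evaluated at two consecutive iterates $\theta^n_{j+1}$ and $\theta^n_j$, weighted by the same step size $\gamma_{j+1}$. Since consecutive iterates differ by $O(\gamma_{j+1})$ and the map $\nu \mapsto K[\nu]Q[\nu]$ is Lipschitz, each increment should be of order $\gamma_{j+1}^2$, and summing over a window of length $2T$ (which contains $O(1/\gamma_{m(t_{n,k})})$ terms of order $\gamma_{m(t_{n,k})}^2$) yields a contribution of order $\gamma_{m(t_{n,k})}$, matching the claim.

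Concretely, my plan is as follows. First, I use Lemma \ref{lem:qsmooth}, which gives continuous differentiability of $\nu \mapsto Q[\nu]$ (together with the obvious smoothness of $\nu \mapsto K[\nu]$) and the compactness of $\mathcal{P}(\Delta^o)$ to obtain a global Lipschitz constant $\kappa_1 \in (0,\infty)$ such that
$$
\left\|(K[\nu_1]Q[\nu_1])_{x,\cdot} - (K[\nu_2]Q[\nu_2])_{x,\cdot}\right\| \leq \kappa_1 \|\nu_1 - \nu_2\|
$$
for all $\nu_1, \nu_2 \in \mathcal{P}(\Delta^o)$ and $x \in \Delta^o$. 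Second, from \eqref{eq:stochalg} it is immediate that $\|\theta^n_{j+1} - \theta^n_j\| \leq 2\gamma_{j+1}$. Hence, for each $1 \leq i \leq a(n)$,
$$
\|\delta^{4,i,n}_{j+1}\| \leq \gamma_{j+1}\,\kappa_1\,\|\theta^n_{j+1}-\theta^n_j\| \leq 2\kappa_1\,\gamma_{j+1}^2,
$$
and therefore $\|\delta^{4,n}_{j+1}\| \leq 2\kappa_1\,\gamma_{j+1}^2$ as well.

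Third, I bound the sum by the triangle inequality and use monotonicity of $\{\gamma_j\}$ together with $\sum_{j=m(t_{n,k})}^{m(t_{n,k}+u)} \gamma_{j+1} = \tau_{m(t_{n,k}+u)+1} - \tau_{m(t_{n,k})} \leq 2T + 2\gamma_1$ for all $u \in [0,2T]$ (using $\tau_{m(s)} \leq s < \tau_{m(s)+1}$). This gives
$$
\sup_{0 \leq u \leq 2T}\left\|\sum_{j=m(t_{n,k})}^{m(t_{n,k}+u)} \delta^{4,n}_{j+1}\right\| \leq 2\kappa_1\,\gamma_{m(t_{n,k})}\sum_{j=m(t_{n,k})}^{m(t_{n,k}+2T)}\gamma_{j+1} \leq C\,\gamma_{m(t_{n,k})},
$$
with $C \doteq 2\kappa_1(2T+2\gamma_1)$, which is the desired bound.

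There is no serious obstacle here: unlike the martingale term $\delta^{1,n}$ of Lemma \ref{lem:boundu}, the term $\delta^{4,n}$ is a deterministic smoothness correction that is controlled pathwise, and the proof is structurally analogous to (but slightly simpler than) Lemma \ref{lem:delta3}. The only mild care needed is in invoking uniform Lipschitz continuity of $\nu \mapsto K[\nu]Q[\nu]$, which follows from Lemma \ref{lem:qsmooth} together with compactness of the simplex $\mathcal{P}(\Delta^o)$.
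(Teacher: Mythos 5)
Your proof is correct, and it reaches the bound by a slightly different (and in fact more elementary) route at the final step than the paper does. You share the first two steps with the paper: bounding $\|\delta^{4,i,n}_{j+1}\|\le\kappa_1\gamma_{j+1}\|\theta^n_{j+1}-\theta^n_j\|$ via the Lipschitz property of $\nu\mapsto K[\nu]Q[\nu]$, and bounding $\|\theta^n_{j+1}-\theta^n_j\|=O(\gamma_{j+1})$ from the update rule, to get $\|\delta^{4,n}_{j+1}\|\lesssim\gamma_{j+1}^2$. The divergence is in summing: the paper writes $\sum_j\gamma_{j+1}^2\le\gamma_{m(t_{n,k})+1}^2\big[m(t_{n,k}+2T)-m(t_{n,k})+1\big]$ and then invokes the two inequalities \eqref{eq:eq256} and \eqref{eq:eq257}, which are tied to the specific $\gamma_j\sim\gamma_*/j$ form (and, as stated, \eqref{eq:eq257} only holds for $n\ge n_0$, so a word would be needed to cover small $n$). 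You instead factor out a single $\gamma_{j+1}\le\gamma_{m(t_{n,k})}$ and observe that the remaining telescoping sum $\sum_{j=m(t_{n,k})}^{m(t_{n,k}+2T)}\gamma_{j+1}=\tau_{m(t_{n,k}+2T)+1}-\tau_{m(t_{n,k})}$ is uniformly bounded by $2T+2\gamma_1$, since step sizes partition time. This avoids the exponential estimates entirely, works uniformly in $n$ without a size restriction, and only uses monotonicity of $\{\gamma_j\}$, so it would carry over to more general decreasing step sequences; it also matches the simpler style the paper already uses for Lemmas \ref{lem:delta2} and \ref{lem:delta3}. The paper's choice seems motivated by consistency with the martingale bound in Lemma \ref{lem:boundu}, where the exponential estimates are genuinely needed, but for $\delta^{4,n}$ your direct argument is cleaner.
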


\begin{proof}
Using the boundedness and Lipschitz property of $K(\cdot)$ and $Q(\cdot)$, we see that for some $\kappa_1\in(0,\infty)$,
$\|\delta^{4,i,n}_{j+1}\| \leq \kappa_1 \gamma_{j+1} \|\theta_{j+1}^{n} - \theta_j^{n}\|$. Also, from \eqref{eq:stochalg1},
$$
\|\theta_{j+1}^{n} - \theta_j^{n}\| = \gamma_{j+1} \| h(\theta_j^{n}) + \epsilon_{j+1}^{n}\| \leq 3\gamma_{j+1}.
$$
Thus, for $0\le u \le 2T$,
\begin{align*}
\left\|\sum_{j=m(t_{n,k})}^{m(t_{n,k}+u)} \delta^{4,n}_{j+1}\right\| 
= \left\|\sum_{j=m(t_{n,k})}^{m(t_{n,k}+u)} \frac{1}{a(n)} \sum_{i=1}^{a(n)}\delta^{4,i,n}_{j+1}\right\| 
&\leq 3\kappa_1 \sum_{j=m(t_{n,k})}^{m(t_{n,k}+u)} \gamma_{j+1}^2 \\
&\leq  3\kappa_1 \gamma_{m(t_{n,k})+1}^2 \big[ m(t_{n,k}+2T) - m(t_{n,k})+1 \big].
\end{align*}
The result now follows on noting that from \eqref{eq:eq256} and \eqref{eq:eq257}, for some $\kappa_2\in (0,\infty)$
\[
\gamma^2_{m(t_{n,k})+1} \big[m(t_{n,k}+2T) - m(t_{n,k})+1 \big] \le  \kappa_2\gamma_{m(t_{n,k})}\exp\left(\frac{t_{n,k}}{\gamma_*}\right)\exp\left(-\frac{2t_{n,k}}{\gamma_*}\right)
\le \kappa_2\gamma_{m(t_{n,k})}.
\qedhere
\]
\end{proof}

The following corollary is used in the proof of Lemma \ref{lem:deltalemma}. Recall the definition of $r^n_{k+1}$ from \eqref{eq:errors}.
For a collection of events $\{A_{n,k}: 0 \le k \le L_n, n \in \NN\}$ we denote
$$\{A_{n,k} \mbox{ i.o. }\} \doteq \{\om: \om \in A_{n,k} \mbox{ for infinitely many  } (n,k), \mbox{ s.t. }
0\le k \le L_n, n \in \NN\}.$$


\begin{corollary}\label{cor:etazero}
Fix  $T \in (0,\infty)$. Then, for each $C \in (0,\infty)$
%
$$
  \PP\left( \sup_{0\leq u \leq 2T}\left\| \sum_{j = m(t_{n,k})}^{m(t_{n,k}+u)}\gamma_{j+1}r^{n}_{j+1} \right\|  + C \gamma_{m(t_{n,k})} > \frac{1}{2}\exp(\lambda t_{n,k}) \text{ i.o.}\right) = 0.
$$
\end{corollary}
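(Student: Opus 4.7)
The plan is to observe that the term $\sum_{j=m(t_{n,k})}^{m(t_{n,k}+u)}\gamma_{j+1}r^n_{j+1}$ is deterministically $O(\gamma_{m(t_{n,k})})$ via the noise decomposition already set up in \eqref{eq:noise-decomp}, while the right-hand side $\tfrac{1}{2}\exp(\lambda t_{n,k})$ decays strictly more slowly (as $\lambda = -(2\gamma_*)^{-1}$ versus the effective rate $-\gamma_*^{-1}$ of $\gamma_{m(t_{n,k})}$). Consequently the event in question will be empty for all sufficiently large $n$, and the i.o.\ statement will hold trivially.

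The first step is to identify the $r^n$-term with the sum of the $\ell=2,3,4$ noise components. By inspection of \eqref{eq:er-def} and \eqref{eq:delta-defs}, $\gamma_{k+1}e^{i,n}_{k+1}=\delta^{1,i,n}_{k+1}$, so combining this with \eqref{eq:eq417} and $\epsilon^{i,n}_{k+1}=e^{i,n}_{k+1}+r^{i,n}_{k+1}$ yields $\delta^{2,i,n}_{k+1}+\delta^{3,i,n}_{k+1}+\delta^{4,i,n}_{k+1}=\gamma_{k+1}r^{i,n}_{k+1}$ (this is the telescoping that is already baked into the definitions). Averaging over $i$ and applying Lemmas \ref{lem:delta2}, \ref{lem:delta3}, and \ref{lem:delta4} term by term, one obtains a deterministic constant $C'=C'(T)\in(0,\infty)$ with
$$
\sup_{0\le u\le 2T}\Big\|\sum_{j=m(t_{n,k})}^{m(t_{n,k}+u)}\gamma_{j+1}r^n_{j+1}\Big\| \le C'\gamma_{m(t_{n,k})}
$$
for every $n\in\NN$ and $0\le k\le L_n$.

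With this bound in hand the second step is a sizes comparison. Using \eqref{eq:eq257} and $\lambda=-(2\gamma_*)^{-1}$, the ratio $(C+C')\gamma_{m(t_{n,k})}/\bigl(\tfrac{1}{2}\exp(\lambda t_{n,k})\bigr)$ is bounded above by a fixed multiple of $\exp(-t_{n,k}/(2\gamma_*))$; since $t_{n,k}\ge \tau_{n^{\po}}/2\to\infty$ as $n\to\infty$ uniformly in $k$, there exists a deterministic $n_1\in\NN$ such that the event inside the probability is empty for every $n\ge n_1$ and every $0\le k\le L_n$. The remaining pairs $(n,k)$ with $n<n_1$ form a finite set (each $L_n$ is finite), so for every $\omega$ the event can occur for only finitely many $(n,k)$, which gives $\PP(\cdot\text{ i.o.})=0$.

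I do not anticipate a genuine obstacle: unlike the bound for $\delta^{1,n}$ in Lemma \ref{lem:boundu}, no Borel--Cantelli argument is required here because the pointwise estimates provided by Lemmas \ref{lem:delta2}--\ref{lem:delta4} are already of the correct order deterministically. The only item to keep track of is the telescoping identity that lets one express $\gamma_{k+1}r^n_{k+1}$ as $\delta^{2,n}_{k+1}+\delta^{3,n}_{k+1}+\delta^{4,n}_{k+1}$, and once this is noted the rest is a clean comparison of decay rates.
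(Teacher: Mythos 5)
Your proof is correct and follows the same route as the paper: you use the telescoping identity $\gamma_{k+1}r^{n}_{k+1}=\delta^{2,n}_{k+1}+\delta^{3,n}_{k+1}+\delta^{4,n}_{k+1}$, invoke Lemmas \ref{lem:delta2}--\ref{lem:delta4} to obtain the deterministic bound $\kappa\gamma_{m(t_{n,k})}$, and then compare the $\exp(-t_{n,k}/\gamma_*)$ decay of $\gamma_{m(t_{n,k})}$ against the slower $\exp(-t_{n,k}/(2\gamma_*))$ threshold to conclude the event is eventually empty. The explicit observation that no Borel--Cantelli argument is needed (in contrast to Lemma \ref{lem:boundu}) is a correct and helpful remark, but substantively this matches the paper's proof exactly.
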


\begin{proof}
	Fix $T, C \in (0,\infty)$.
From Lemmas \ref{lem:delta2}, \ref{lem:delta3}, and \ref{lem:delta4}, and \eqref{eq:eq257}, for some $\kappa_1\in (0,\infty)$, $n_0 \in \NN$, and all $n\ge n_0$,
\begin{align*}
	\sup_{0\leq u \leq 2T}\left\| \sum_{j = m(t_{n,k})}^{m(t_{n,k}+u)}\gamma_{j+1}r^{n}_{j+1} \right\|  + C \gamma_{m(t_{n,k})}
	&\le \sum_{\ell=2}^4 \sup_{0\leq u \leq 2T}\left\|\sum_{j=m(t_{n,k})}^{m(t_{n,k}+u)} \delta^{\ell,n}_{j+1}\right\| + C \gamma_{m(t_{n,k})} \\
	&\le \kappa_1 \gamma_{m(t_{n,k})}  
	\le \frac{\kappa_1\gamma_*e}{N_*} \exp\left( - \frac{t_{n,k}}{\gamma_*} \right).
\end{align*}
The last expression can be bounded by $\frac{1}{2}\exp(\lambda t_{n,k})$ for $n$ sufficiently large. The result follows.
%
%
%
\end{proof}

We now present the key estimate that will be used to prove Theorem \ref{thm:mainrate}.

\begin{lemma}\label{lem:deltalemma}
For each $T > 0$, 
$$
\limsup_{n\rightarrow\infty}\sup_{k \leq L_n} \frac{1}{t_{n,k}} \log \Delta(n,t_{n,k}, T) \le \lambda \quad\text{ a.s.}
$$

\end{lemma}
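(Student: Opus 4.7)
The approach is to decompose $\Delta(n,t_{n,k},T)$ into a martingale part, to which we apply the $L^q$-bound of Lemma~\ref{lem:boundu} combined with Markov's inequality and Borel--Cantelli, and a remainder that is handled deterministically via Corollary~\ref{cor:etazero}. Since $\bar\epsilon^n$ is piecewise constant with $\|\epsilon^n_j\| \le 2$, the integral $\int_{t_{n,k}}^{t_{n,k}+u} \bar\epsilon^n(s)\,ds$ differs from the telescoped sum $\sum_{j=m(t_{n,k})}^{m(t_{n,k}+u)} \gamma_{j+1}\epsilon^n_{j+1}$ only by the contributions of the two endpoint sub-intervals, whose total length is at most $2\gamma_{m(t_{n,k})+1}$. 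Writing $\gamma_{j+1}\epsilon^n_{j+1} = \delta^{1,n}_{j+1} + \gamma_{j+1} r^n_{j+1}$ (recall $\sum_{\ell=2}^4 \delta^{\ell,n}_{j+1} = \gamma_{j+1}r^n_{j+1}$), this yields, for some constant $C$,
\[
\Delta(n,t_{n,k},T) \le \sup_{0\le u \le 2T}\Bigl\|\sum_{j=m(t_{n,k})}^{m(t_{n,k}+u)} \delta^{1,n}_{j+1}\Bigr\| + \sup_{0\le u \le 2T}\Bigl\|\sum_{j=m(t_{n,k})}^{m(t_{n,k}+u)} \gamma_{j+1} r^n_{j+1}\Bigr\| + C \gamma_{m(t_{n,k})}.
\]
Corollary~\ref{cor:etazero} tells us the last two terms are, almost surely, at most $\tfrac12 \exp(\lambda t_{n,k})$ for all but finitely many $(n,k)$ with $k \le L_n$.

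It remains to control $M_{n,k} \doteq \sup_{0\le u\le 2T}\|\sum_{j=m(t_{n,k})}^{m(t_{n,k}+u)} \delta^{1,n}_{j+1}\|$. Fix $\eta \in (0,-\lambda)$. By Markov's inequality and Lemma~\ref{lem:boundu}, for every $q \ge 2$ and all $n$ large enough,
\[
\PP\bigl(M_{n,k} > \exp((\lambda+\eta) t_{n,k})\bigr) \le C(q,T) \exp(-q\eta t_{n,k}).
\]
Using $t_{n,k} = \tau_{n^{\po}}/2 + kT$ together with $\tau_{n^{\po}} \ge \gamma_*\bigl(\log(n^{\po}+N_*) - \log N_*\bigr)$, summing first over $0 \le k \le L_n$ produces a geometric series bounded uniformly in $n$, and the subsequent sum over $n$ is dominated by $\sum_{n} n^{-\po q\eta\gamma_*/2}$. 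This series converges as soon as $q > 2/(\po\,\eta\,\gamma_*)$. For such a $q$, the Borel--Cantelli lemma implies that almost surely, $M_{n,k} \le \exp((\lambda+\eta)t_{n,k})$ for all but finitely many pairs $(n,k)$ with $k \le L_n$.

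Combining the two estimates, for each $\eta \in (0,-\lambda)$ we obtain almost surely $\Delta(n,t_{n,k},T) \le 2\exp((\lambda+\eta) t_{n,k})$ for all but finitely many $(n,k)$ with $k \le L_n$, hence $\limsup_{n\to\infty}\sup_{k\le L_n} t_{n,k}^{-1}\log \Delta(n,t_{n,k},T) \le \lambda + \eta$ almost surely. Intersecting the corresponding full-measure events over the countable collection $\eta \in \{1/m : m \in \NN\}$ yields the desired bound with $\lambda$ on the right. The principal technical hurdle is the summability step: the Markov tail $\exp(-q\eta t_{n,k})$ must be summable over the double index $(n,k)$, which is made possible by the logarithmic-in-$n$ lower bound $t_{n,k} \ge \tau_{n^{\po}}/2 \sim (\po\gamma_*/2)\log n$, converting the exponential tail into a polynomial in $n$ whose exponent can be rendered arbitrarily large by taking $q$ large; here the strict positivity of $\po$ is essential.
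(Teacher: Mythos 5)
Your proof is correct and follows essentially the same approach as the paper: decompose $\Delta$ into the martingale piece $\sum_j \delta^{1,n}_{j+1}$ (bounded via Lemma~\ref{lem:boundu}, Markov, and Borel--Cantelli, exploiting the logarithmic-in-$n$ lower bound on $t_{n,k}$ to choose $q$ large enough for summability) and the remainder plus endpoint corrections (handled via Corollary~\ref{cor:etazero}). The only cosmetic difference is that you sum the $\exp(-q\eta t_{n,k})$ tail explicitly as a geometric series in $k$, whereas the paper bounds the $k$-sum crudely by $L_n \lesssim \log n$ times the worst term; both yield the same convergent series in $n$.
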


\begin{proof}
Fix $T\in (0,\infty)$ and  $\epsilon \in (0, - \lambda)$. Write $\sigma = \lambda+\epsilon$.
From the boundedness of $\epsilon^n_{k+1}$, we can find $\kappa_1 \in (0,\infty)$ such that
\begin{equation*}
\begin{split}
&\Delta(n,t_{n,k},T) = \sup_{0\leq u \leq 2T}\left\| \int_{t_{n,k}}^{t_{n,k}+u} \bar{\epsilon}^{n}(s)ds\right\|\\
&\leq\sup_{0\leq u \leq 2T}\left\| \int_{\tau_{m(t_{n,k})}}^{\tau_{m(t_{n,k}+u)}}\bar{\epsilon}^{n}(s)ds\right\|  + \kappa_1\left[ \sup_{0\leq u \leq T}| t_{n,k} - \tau_{m(t_{n,k})}| + |t_{n,k} + u - \tau_{m(t_{n,k} +u)}|\right]\\
&\leq \sup_{0\leq u \leq 2T}\left\| \sum_{j = m(t_{n,k})}^{m(t_{n,k}+u)} \gamma_{j+1}\epsilon_{j+1}^{n}\right\| + 2\kappa_1 \gamma_{m(t_{n,k})}.
\end{split}
\end{equation*}
From \eqref{eq:eq417} it follows that
\begin{equation*}
\begin{split}
\Delta(n,t_{n,k},T)  &\leq   \sup_{0\leq u \leq 2T}\left\| \sum_{j = m(t_{n,k})}^{m(t_{n,k}+u)}\delta^{1,n}_{j+1}\right\| +  
\sup_{0\leq u \leq 2T}\left\| \sum_{j = m(t_{n,k})}^{m(t_{n,k}+u)}\gamma_{j+1}r^{n}_{j+1} \right\|  + 2\kappa_1 \gamma_{m(t_{n,k})}.
\end{split}
\end{equation*}
Thus
\begin{equation}\label{eq:infoft1}
\begin{split}
\PP\Big(\Delta(n,t_{n,k},T) \geq \exp(t_{n,k}\sigma) \quad\text{i.o.}\Big)
\leq \PP\left(  \sup_{0\leq u \leq 2T}\left\| \sum_{j = m(t_{n,k})}^{m(t_{n,k}+u)}\delta^{1,n}_{j+1}\right\|  > \frac{1}{2}\exp(t_{n,k}\sigma) \quad\text{i.o.}\right) \\
\qquad +  \PP\left( \sup_{0\leq u \leq 2T}\left\| \sum_{j = m(t_{n,k})}^{m(t_{n,k}+u)}\gamma_{j+1}r^{n}_{j+1} \right\|  + 2\kappa_1 \gamma_{m(t_{n,k})} > \frac{1}{2}\exp(t_{n,k}\sigma) \quad\text{i.o.}\right).
\end{split}
\end{equation}
From Corollary \ref{cor:etazero}, since $\sigma>\lambda$,
the second term in \eqref{eq:infoft1} equals $0$.
Since $\epsilon \in (0, -\lambda)$ is arbitrary, in order 
to complete the proof of the lemma it now suffices to
show that
\begin{equation}\label{eq:borelcant1}
\PP\left(  \sup_{0\leq u \leq 2T}\left\| \sum_{j = m(t_{n,k})}^{m(t_{n,k}+u)} \delta^{1,n}_{j+1} \right\|  
> \frac{1}{2}\exp(t_{n,k}\sigma) \quad\text{i.o.}\right) = 0.
\end{equation}
Note that we can find some $\alpha_0 > 0$ and $n_0\in \NN$ such that for all $n \ge n_0$ and all $k \leq L_n$,
$$
t_{n,k}  \geq \frac{\tau_{n^{\po}}}{2} \geq \alpha_0 \log(n).
$$
Fix 
$q >  \frac{1}{\epsilon \alpha_0}\vee 2$. Applying Lemma \ref{lem:boundu} and Markov's inequality, we can find $\kappa_2, \kappa_3 \in (0,\infty)$ 
such that  
\begin{multline}
\sum_{n=1}^{\infty}\sum_{k=1}^{L_n}  \PP\left(  \sup_{0\leq u \leq 2T}\left\| \sum_{j = m(t_{n,k})}^{m(t_{n,k}+u)} \delta^{1,n}_{j+1} \right\|  > \frac{1}{2}\exp(t_{n,k}\sigma)\right)\\
\leq  2^q\kappa_2C(q,T) \sum_{n=1}^{\infty}\sum_{k=1}^{L_n}  \exp\left( -qt_{n,k} \sigma\right) \exp\left( qt_{n,k}\lambda\right)
=  2^q\kappa_2C(q,T) \sum_{n=1}^{\infty}\sum_{k=1}^{L_n}  \exp\left( -qt_{n,k} \epsilon\right)\\
\leq 2^q\kappa_3C(q,T) \sum_{n=1}^{\infty} \log(n) \exp\left( -q \epsilon \alpha_0 \log(n) \right)
\leq  2^q\kappa_3C(q,T) \sum_{n=1}^{\infty}\log(n) \frac{1}{n^{q\epsilon \alpha_0}} 
< \infty,
\end{multline}
where $C(q,T)$ is as in Lemma \ref{lem:boundu}. 
The equation in \eqref{eq:borelcant1} now follows from the Borel-Cantelli lemma and the result follows. 
\end{proof}

\subsection{Proof of Theorem \ref{thm:mainrate}}\label{sec:mainrate}

We now complete the proof of Theorem \ref{thm:mainrate}. 
Fix $\po \in (0,1)$ . From Lemma \ref{lem:deltaasy}, for every $T<\infty$,  there is a  $C(T) \in (0, \infty)$ 
such that for all $n \in \NN$ and all $0 \leq k \leq L_n$,
\begin{equation}\label{eq:eq637}
\begin{split}
\sup_{0\leq u \leq 2T} \big\| \hat{\theta}^{n} (t_{n,k}+u) - \Phi_u(\hat{\theta}^{n}(t_{n,k})) \big\| &\leq C(T) \big[ \Delta(n, t_{n,k}, T) + \gamma_{m(t_{n,k})} \big].
\end{split}
\end{equation}
Additionally, Lemma \ref{lem:deltalemma} ensures that for a.e. $\om$, for every  $\epsilon \in (0,-\lambda)$, $T<\infty$, there is some $n_1 \equiv n_1(\om,\epsilon, T)  \in \NN$ such that for all $n \geq n_1$ and all $0\le k \leq L_n$,
$$
 \Delta(n,t_{n,k}, T) \leq \exp\left( t_{n,k} \left(\lambda + \epsilon/2\right)\right).
$$
 Combining this with \eqref{eq:eq257} and \eqref{eq:eq637}, we have that for some $n_2 \ge n_1$ and all $n\ge n_2$,
$0\le k \leq L_n$, we have
$$
\sup_{0\leq u \leq 2T} \big\|\hat{\theta}^{n} (t_{n,k}+u) - \Phi_u(\hat{\theta}^{n}(t_{n,k})) \big\| \leq \exp\left(\left( \lambda + \epsilon\right)t_{n,k}\right).
$$
We have thus shown that $\{\hat{\theta}^{n}\}$ is a.s. a $(\lambda, \tau_n, \po)$-pseudo-trajectory, so Lemma \ref{lem:rate1} ensures that there is some $\beta > 0$ and $n_0=n_0(\om) \in \NN$ such that for all $n \geq n_0$ and  $n^{\po}\le k \leq n$, 
$$
\big\| \hat{\theta}^{n}(\tau_k) -  \theta_* \big\| \leq \exp(-\beta \tau_k).
$$
The result follows.\qed

\section{Analysis of the noise terms in Algorithm I}\label{sec:errors}
The goal of this section is to provide  estimates on the error terms defined in \eqref{eq:errors} that will be useful for the study of the CLT. In Section \ref{sec:covariance} we characterize the covariance structure of the error terms $\{e_{k+1}^n\}$. In Section \ref{sec:moments} we  provide some  bounds on the moments of  $\{e_{k+1}^n\}$. Finally, in Section \ref{sec:r errors} we estimate  the remainder terms $\{r_{k+1}^n\}$.

\subsection{Covariance structure of the error terms}\label{sec:covariance} 
We first study the covariance structure of the error terms  $\{e_{k+1}^n\}$.
Consider the collection of $d\times d$ matrices $\{F_{\theta}(z) : \theta \in \mathcal{P}(\Delta^o), z\in \Delta^o\}$ defined by, for $x,y \in \Delta^o, \; (\theta, z) \in \mathcal{P}(\Delta^o)\times \Delta^o$,
\begin{equation}\label{eq:eq444}
F_{\theta}(z)_{x,y} \doteq \sum_{u\in\Delta^o}( K[\theta]_{z,u}Q[\theta]_{u,x}Q[\theta]_{u,y}) - \left(K[\theta]Q[\theta]\right)_{z,y}\left(K[\theta]Q[\theta]\right)_{z,x},
\end{equation}
and let $U_*$ be the $d\times d$ matrix defined as
\begin{equation}\label{eq:ustar}
U_* \doteq \sum_{w\in \Delta^o}F_{\theta_*}(w)(\theta_*)_w.
\end{equation}
It is easily verified that $U_*$ is a nonnegative definite matrix.
The following result gives an expression for the conditional covariance matrix of $e_{k+1}^n$.
\begin{proposition}\label{prop:clt2 A} 
For each $n \in \NN$, $0\le k \leq n-1$, and $x, y \in \Delta^o$,
			\begin{equation}\label{eq:eproduct5}
			\EE\left[ e^{n}_{k+1} (x) e^{n}_{k+1} (y) \,\middle\vert\, \mathcal{F}^n_k\Big. \right] = \frac{1}{a(n)} \left( (U_*)_{x,y} + (D^{(1),n}_{k})_{x,y} + (D^{(2),n}_{k})_{x,y}\right),
			\end{equation}
			where the following hold:
			\begin{enumerate}[(i)]
			\item There is a $C_1\in (0,\infty)$ such that for all $n \in \NN$ and $0\le k \leq n-1$, $\| D^{(1),n}_k \| \leq C_1 \| \theta^{n}_k - \theta_*\|$.
			\item There are some $C_2, \beta \in (0,\infty)$ such that for  all $n \in \NN$ and $1\le k \leq n$, 
			$$
			\gamma_{k} \EE \left\| \sum_{m=1}^{k} D^{(2),n}_{m-1}\right\| \leq C_2 k^{-\beta}
			$$
			\end{enumerate}
\end{proposition}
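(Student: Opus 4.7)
The argument starts from the conditional independence of the $a(n)$ particles at step $k+1$ given $\mathcal{F}^n_k$ built into \eqref{eq:eq242}. A direct check using \eqref{eq:er-def} together with $\EE[Q[\theta^n_k]_{X^{i,n}_{k+1},\cdot}\mid\mathcal{F}^n_k] = (K[\theta^n_k]Q[\theta^n_k])_{X^{i,n}_k,\cdot}$ gives $\EE[e^{i,n}_{k+1}\mid\mathcal{F}^n_k]=0$, so by conditional independence the cross terms ($i\ne j$) in the expansion of $\EE[e^n_{k+1}(x)e^n_{k+1}(y)\mid\mathcal{F}^n_k]$ all vanish. Expanding the single-particle second moment and taking the conditional expectation under $K[\theta^n_k]$ produces exactly $F_{\theta^n_k}(X^{i,n}_k)_{x,y}$ as in \eqref{eq:eq444}. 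Writing $\eta^n_k \doteq \frac{1}{a(n)}\sum_{i=1}^{a(n)}\delta_{X^{i,n}_k}$ for the empirical measure of particle states, this gives
$$\EE\left[e^n_{k+1}(x)\,e^n_{k+1}(y)\,\middle|\,\mathcal{F}^n_k\right] = \frac{1}{a(n)}\sum_{w\in\Delta^o}\eta^n_k(w)\,F_{\theta^n_k}(w)_{x,y}.$$

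Next I would perform the algebraic decomposition $\sum_w \eta^n_k(w)F_{\theta^n_k}(w) = U_* + D^{(1),n}_k + D^{(2),n}_k$, by splitting $\eta^n_k(w)F_{\theta^n_k}(w) - \theta_*(w)F_{\theta_*}(w)$ as $\eta^n_k(w)[F_{\theta^n_k}(w)-F_{\theta_*}(w)] + [\eta^n_k(w)-\theta_*(w)]F_{\theta_*}(w)$. Part (i) is then immediate: the map $\theta\mapsto F_\theta(w)$ is Lipschitz on $\mathcal{P}(\Delta^o)$ (by smoothness of $Q[\cdot]$ from Lemma \ref{lem:qsmooth} and of $K[\cdot]$), $\eta^n_k$ is a probability measure, and $\Delta^o$ is finite.

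The real work is part (ii). Since $F_{\theta_*}(w)$ is fixed and $\Delta^o$ is finite, it is enough to bound $\EE\bigl|\sum_{m=1}^{k}(\eta^n_{m-1}(w)-\theta_*(w))\bigr|$ by $C\,k^{1-\beta}$ uniformly in $w$. I would split
$$\eta^n_{m-1}(w)-\theta_*(w) = \bigl[\eta^n_{m-1}(w)-\pi(\theta^n_{m-2})(w)\bigr] + \bigl[\pi(\theta^n_{m-2})(w)-\theta_*(w)\bigr].$$
For the first bracket, apply Poisson's equation (Lemma \ref{lem:qsmooth}) per particle with $f=\mathbf{1}_w$ and $\nu=\theta^n_{m-2}$; the standard rearrangement writes the increment as a martingale difference $g^{i,n}_{m-2}(X^{i,n}_{m-1})-K[\theta^n_{m-2}]g^{i,n}_{m-2}(X^{i,n}_{m-2})$ (with $g^{i,n}_{m-2}=Q[\theta^n_{m-2}]\mathbf{1}_w$) plus an increment $\phi_{m-2}(X^{i,n}_{m-1})-\phi_{m-2}(X^{i,n}_{m-2})$ with $\phi_m=K[\theta^n_m]Q[\theta^n_m]\mathbf{1}_w$. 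Burkholder gives an $L^2$ bound $O(\sqrt{k})$ on the martingale sum, and Abel summation combined with the Lipschitz bound $\|\phi_m-\phi_{m-1}\|\le C\|\theta^n_m-\theta^n_{m-1}\|\le C\gamma_m$ yields $O(\log k)$ for the residual. These bounds are preserved under averaging over $i$.

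The main obstacle is the second bracket, which requires converting the a.s.\ rate of Theorem \ref{thm:mainrate} into an expectation bound. Lipschitz continuity of $\pi$ and $\pi(\theta_*)=\theta_*$ reduce it to controlling $\sum_{m=1}^{k}\EE\|\theta^n_{m-2}-\theta_*\|$. I would extract from the Borel--Cantelli argument in the proof of Lemma \ref{lem:deltalemma} a quantitative tail bound $\PP(n_0(\omega)>n)\le C n^{-\alpha}$ with $\alpha$ as large as desired (by taking $q$ large there). Combined with the a.s.\ rate $\|\theta^n_m-\theta_*\|\le m^{-\beta_0}$ valid for $m\ge n^{p}$ and $n\ge n_0$, and the trivial bound $\|\theta^n_m-\theta_*\|\le 2$ for $m< n^{p}$, this gives $\sum_{m=1}^{k}\EE\|\theta^n_{m-2}-\theta_*\| \le C(n^{p} + k^{1-\beta_0} + k\,n^{-\alpha})$, which for $k\le n$, sufficiently small $p$, and sufficiently large $\alpha$ is bounded by $C k^{1-\beta}$ for some $\beta>0$. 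Multiplying by $\gamma_k\sim 1/k$ completes (ii).
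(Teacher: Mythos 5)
Your derivation of the covariance formula and your proof of part (i) are fine, but your choice of $D^{(1),n}_k$ and $D^{(2),n}_k$ is a genuinely different decomposition from the paper's, and it pushes an uncontrollable term into $D^{(2)}$. The paper writes $F_{\theta^n_k}(X^{i,n}_k) = U_* + D^{(1),i,n}_k + D^{(2),i,n}_k$ with $D^{(1),n}_k = \sum_w\bigl(F_{\theta^n_k}(w)\pi(\theta^n_k)_w - F_{\theta_*}(w)\theta_*(w)\bigr)$ (a pure ``parameter drift'' term, bounded by $\|\theta^n_k-\theta_*\|$) and $D^{(2),n}_k = a(n)^{-1}\sum_i F_{\theta^n_k}(X^{i,n}_k) - \sum_w F_{\theta^n_k}(w)\pi(\theta^n_k)_w$ (a deviation from the \emph{current} stationary law $\pi(\theta^n_k)$, which Poisson's equation converts into a martingale plus a telescoping residual). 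This $D^{(2)}$ is controlled uniformly in $(n,k)$ with no appeal to convergence of $\theta^n_m$ at all. Your $D^{(2),n}_k = \sum_w[\eta^n_k(w)-\theta_*(w)]F_{\theta_*}(w)$ instead centers at the fixed target $\theta_*$, so after your further split it retains the drift term $\sum_{m}\bigl(\pi(\theta^n_{m-2})-\theta_*\bigr)$, and you are forced into bounding $\sum_{m\le k}\EE\|\theta^n_{m-2}-\theta_*\|$.

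That is where the argument breaks. You propose the bound $\sum_{m=1}^{k}\EE\|\theta^n_{m-2}-\theta_*\|\le C\bigl(n^{\po}+k^{1-\beta_0}+k\,n^{-\alpha}\bigr)$ and assert that for $k\le n$, small $\po$, and large $\alpha$ this is $\le C k^{1-\beta}$. It is not: the $n^{\po}$ term comes from indices $m<n^{\po}$, where Theorem \ref{thm:mainrate} gives no rate at all (only the trivial bound $\le 2$, and this is sharp --- for fixed small $m$, $\theta^n_m$ stays near $\delta_{x_0}$ as $n\to\infty$, so $\EE\|\theta^n_m-\theta_*\|$ does not vanish). Multiplying by $\gamma_k$ gives roughly $n^{\po}/k + k^{-\beta_0} + n^{-\alpha}$, and for $k$ in the range $[1,n^{\po}]$ the first term is $\Theta(1)$, not $O(k^{-\beta})$. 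Taking, say, $k=\lceil n^{\po/2}\rceil$ makes the required bound $C_2 k^{-\beta}\to 0$ while your estimate stays bounded away from $0$. No choice of $\po\in(0,1)$ or polynomial tail bound $\PP(n_0>n)\le Cn^{-\alpha}$ can repair this, because the obstruction is not a rare event but the absence of any decay rate for $m<n^{\po}$. To close the gap you would need to either prove a decay rate for $\EE\|\theta^n_m-\theta_*\|$ valid down to $m=O(1)$ (which is not available and is not what Theorem \ref{thm:mainrate} says), or --- more simply --- adopt the paper's decomposition, whose $D^{(2)}$ is a martingale--residual object that never references $\theta^n_m-\theta_*$ at all.
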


\begin{proof}
	Fix $n \in \NN$ and $0\le k \leq n-1$.
Then, from \eqref{eq:eq242}, for each $x,y  \in \Delta^o$ and $1 \leq i \leq a(n)$,
 $$
\EE\left[ Q[\theta_k^n]_{X^{i,n}_{k+1},x} | \mathcal{F}_k^n\right] =\left(K[\theta_k^n]Q[\theta_k^n]\right)_{X_k^{i,n},x},
$$
 and
\begin{equation}\label{eq:eproduct3}
\begin{split}
\EE\left[ Q[\theta_k^n]_{X_{k+1}^{i,n},x}Q[\theta_k^n]_{X_{k+1}^{i,n},y} | \mathcal{F}_k^n\right] 
&= \sum_{w\in\Delta^o} K[\theta_k^n]_{X^{i,n}_k,w}Q[\theta_k^n]_{w,x}Q[\theta_k^n]_{w,y}.
\end{split}
\end{equation}
Similarly, for each $x,y \in \Delta^o$, if $1 \leq i \neq j \leq a(n)$, then
\begin{equation}\label{eq:eproduct2}
\begin{split}
\EE\left[ Q[\theta_k^n]_{X_{k+1}^{i,n},x}Q[\theta_k^n]_{X_{k+1}^{j,n},y} | \mathcal{F}_k^n \right] &= \left(K[\theta_k^n]Q[\theta_k^n]\right)_{X_k^{i,n},x}\left(K[\theta_k^n]Q[\theta_k^n]\right)_{X_k^{j,n},y}.
\end{split}
\end{equation}
Therefore, for each $x,y \in \Delta^o$,
\begin{equation}\label{eq:eproduct1}
\begin{aligned}
&\EE\left[  e_{k+1}^{n}(x)e_{k+1}^{n}(y) | \mathcal{F}_k^n \right]\\
 &= \frac{1}{a(n)^2} \sum_{i,j=1}^{a(n)}\EE\bigg[ Q[\theta_k^n]_{X_{k+1}^{i,n},x}Q[\theta_k^n]_{X_{k+1}^{j,n},y} - Q[\theta_{k}^{n}]_{X_{k+1}^{i,n},x}\left(K[\theta_k^n]Q[\theta_k^n]\right)_{X_{k}^{j,n},y} \\
&\qquad\qquad - Q[\theta_k^n]_{X_{k+1}^{j,n},y}\left(K[\theta_k^n]Q[\theta_k^n]\right)_{X_{k}^{i,n},x} + \left(K[\theta_k^n]Q[\theta_k^n]\right)_{X_{k}^{i,n},x}\left(K[\theta_k^n]Q[\theta_k^n]\right)_{X_{k}^{j,n},y} \bigg| \mathcal{F}_k^n \bigg]  \\
&= \frac{1}{a(n)^2} \sum_{i,j=1}^{a(n)}\EE\bigg[ Q[\theta_k^n]_{X_{k+1}^{i,n},x}Q[\theta_k^n]_{X_{k+1}^{j,n},y} - \left(K[\theta_k^n]Q[\theta_k^n]\right)_{X_{k}^{i,n},x}\left(K[\theta_k^n]Q[\theta_k^n]\right)_{X_{k}^{j,n},y} \bigg| \mathcal{F}_k^n \bigg]  \\
&= \frac{1}{a(n)^2} \sum_{i=1}^{a(n)}  \left(  \sum_{w\in\Delta^o} K[\theta_k^n]_{X_k^{i,n},w}Q[\theta_k^n]_{w,x}Q[\theta_k^n]_{w,y} - \left(K[\theta_k^n]Q[\theta_k^n]\right)_{X_k^{i,n},y}\left(K[\theta_k^n]Q[\theta_k^n]\right)_{X_k^{i,n},x}\right),
\end{aligned}
\end{equation}
where in the last line we have used \eqref{eq:eproduct3} and \eqref{eq:eproduct2}. Recalling the definition of $F_{\theta}(\cdot)$ we now see that
\begin{equation}\label{eq:eproduct4}
\begin{split}
&\EE[e_{k+1}^{n}(x)e_{k+1}^{n}(y) |\mathcal{F}_k^n] = \frac{1}{a(n)^2} \sum_{i=1}^{a(n)} F_{\theta_k^n}(X_k^{i,n})_{x,y}.
\end{split}
\end{equation}
We can write
$$
F_{\theta_k^n}(X_k^{i, n})_{x,y} = (U_*)_{x,y} + (D^{(1),i,n}_{k})_{x,y} + (D^{(2),i,n}_k)_{x,y},
$$
where
$$
D_k^{(1),i,n} \doteq \sum_{w\in \Delta^o} (F_{\theta_k^n}(w)\pi(\theta_k^n)_w - F_{\theta_*}(w)(\theta_*)_w), \;\; D_k^{(2),i,n} \doteq F_{\theta_k^n}(X_k^{i,n}) - \sum_{w\in \Delta^o}F_{\theta_k^n}(w)\pi(\theta_k^n)_w.
$$

Letting
$$
D_k^{(1),n} \doteq \frac{1}{a(n)} \sum_{i=1}^{a(n)} D^{(1),i,n}_k = \sum_{w\in\Delta^o} \left(F_{\theta_k^n}(w)\pi(\theta_k^n)_w - F_{\theta_*}(w)(\theta_*)_w \right),
$$
$$
D_k^{(2),n} \doteq  \frac{1}{a(n)} \sum_{i=1}^{a(n)} D^{(2),i,n}_k,
$$
and using (\ref{eq:eproduct4}), we obtain the identity in \eqref{eq:eproduct5}.\\

\noindent\emph{Proof of  Claim (i):}
Since $\pi(\theta_*) = \theta_*$, we have that
\begin{equation}\label{eq:eq939}
D^{(1),n}_k =  \sum_{w\in \Delta^o} \left(F_{\theta_k^n}(w)\pi(\theta_k^n)_w - F_{\theta_*}(w)\pi(\theta_*)_w\right).
\end{equation}
Since the maps $\theta \mapsto K[\theta]$, $\theta \mapsto Q[\theta]$ are bounded and Lipschitz, it follows that $\theta \mapsto F_{\theta}(w)$
is a bounded and Lipschitz map as well for every $w \in \Delta^o$. Also, $\theta \mapsto \pi(\theta)$ is a bounded and Lipschitz map 
(see e.g. \cite{benclo}*{Corollary 2.3}).
Combining these facts we see that $\theta \mapsto \sum_{w\in \Delta^o} F_{\theta}(w) \pi(\theta)_w$ is a bounded and Lipschitz map as well.
The claim in (i) is now immediate from the representation of $D_k^{(1),n}$ in \eqref{eq:eq939}.

\noindent\emph{Proof of Claim (ii):}
It suffices to show that for each $(u,v) \in \Delta^o \times \Delta^o$, there are some $C_2, \beta >0$ such that for all $n \in \NN$ and $1 \leq k \leq n$,
$$
\gamma_{k}\EE\left\| \sum_{m=1}^{k} (D^{(2),n}_{m-1})_{u,v}\right\| \leq C_2 k^{-\beta}.
$$

Now fix $(u,v) \in \Delta^o \times \Delta^o$ and, abusing notation, denote $(D^{(2),n}_m)_{u,v}$ once more as $D^{(2),n}_m$. By another abuse of notation, denote the $(u,v)$-th coordinate
of $F_{\theta}(x)$, for  $\theta \in \mathcal{P}(\Delta^o)$ and $x \in \Delta^o$, by $F_{\theta}(x)$ as well. For $\theta \in\mathcal{P}(\Delta^o)$ let $U_{\theta} \in \RR^d$ be the vector whose $x$-th coordinate is given by  $U_{\theta}(x) \doteq (Q[\theta]F_{\theta})(x)$. By the Poisson equation \eqref{eq:pois} we have that
\begin{equation*}
    \begin{split}
[(I - K[\theta])U_{\theta}](x) &=  F_{\theta}(x) - \sum_{w \in \Delta^o} F_{\theta}(w) \pi(\theta)_w.
\end{split}
\end{equation*}
Therefore, if we let
$$
D^{(2,a),i,n}_k \doteq U_{\theta^n_k}(X_{k+1}^{i,n}) - (K[\theta^n_k]U_{\theta^n_k})(X_{k}^{i,n}),
\qquad  
D^{(2,b),i,n}_k \doteq U_{\theta^n_k}(X_k^{i,n}) - U_{\theta^n_k}(X_{k+1}^{i,n}),
$$
and
$$
D^{(2,a),n}_k \doteq \frac{1}{a(n)}\sum_{i=1}^{a(n)}D_k^{(2,a),i,n},
\qquad  
D^{(2,b),n}_k \doteq \frac{1}{a(n)}\sum_{i=1}^{a(n)}D_k^{(2,b),i,n},
$$
then $D^{(2),n}_k = D^{(2,a),n}_k + D^{(2,b),n}_k$. Note that with $\mathcal{G}_k^n \doteq \mathcal{F}_{k+1}^n$, we have that, for each fixed $n \in \NN$, $\{D^{(2,a),n}_k\}_{k=1}^{\infty}$ is a $\mathcal{G}_k^n$-martingale increment sequence.
 Applying  Burkholder's inequality, we see that, for some $\kappa_1 \in (0,\infty)$,
and for all $n \in \NN$,
\begin{equation}\label{eq:d2a1}
\begin{split}
\EE \left[\left |  \sum_{m=1}^k D^{(2,a),n}_{m-1} \right|^2 \right] &\leq  \kappa_1 \sum_{m=1}^{k} \EE\left| D^{(2,a),n}_{m-1}\right|^2
\leq \kappa_1  \left(\frac{1}{a(n)}\right)^2 \sum_{m=1}^k  \sum_{i,j=1}^{a(n)} \EE\left[ D^{(2,a),i,n}_{m-1} D^{(2,a),j,n}_{m-1}\right].\\
\end{split}
\end{equation}
For $i \neq j$, we have  by a conditioning argument, and using \eqref{eq:eq242},  that
\begin{equation}\label{eq:d2a2}
\begin{split}
\EE\left[ D^{(2,a),i,n}_m D^{(2,a),j,n}_m \right] = 0.
\end{split}
\end{equation}
Let
$$
\kappa_2 \doteq \sup_{\theta \in \mathcal{P}(\Delta^o), x,y \in \Delta^o} | U_{\theta}(x) - (K[\theta]U_{\theta})(y) |^2 < \infty.
$$
Then
\begin{equation}\label{eq:d2a3}
\begin{split}
\EE\left[ \left( D_{m}^{(2,a),i,n}\right)^2 \right] &= \EE\left[ \left( U_{\theta_m^{n}}(X^{i,n}_{m+1}) - (K[\theta_m^{n}]U_{\theta_m^{n}})(X^{i,n}_m)\right)^2 \right] 
\leq \kappa_2.
\end{split}
\end{equation}
Combining (\ref{eq:d2a1}), (\ref{eq:d2a2}), and (\ref{eq:d2a3}) we see that with $\kappa_3 = \kappa_1\kappa_2$, 
$$
\EE \left[\left|  \sum_{m=1}^k D^{(2,a),n}_{m-1} \right|^2 \right] 
\leq \kappa_1 \sum_{m=1}^k  \frac{1}{a(n)^2}  \sum_{i=1}^{a(n)} \EE\left[ \left( D^{(2,a),i,n}_{m-1}\right)^2\right]
\leq \kappa_3  \frac{k}{a(n)}.
$$
Applying the Cauchy-Schwarz inequality, we have, for some $\kappa_4 \in (0,\infty)$, and all $n\in \NN$, $0\le k \le n$,
\begin{equation}\label{eq:d2abound}
\gamma_{k} \EE\left| \sum_{m=1}^k D^{(2,a),n}_{m-1}\right| 
\leq \kappa_3^{1/2} \gamma_{k} \left( \frac{k}{a(n)}\right)^{1/2} = \left( \kappa_4 \frac{\gamma_{k}}{a(n)}\right)^{1/2}.
\end{equation}
We now consider $\{D_k^{(2,b),n}\}$. Letting
$
\kappa_5 \doteq \sup_{\theta \in \mathcal{P}(\Delta^o), z\in \Delta^o}|U_{\theta}(z)| < \infty,
$
\begin{equation*}
\begin{split}
\left| \sum_{m=1}^k D^{(2,b),n}_{m-1} \right|  &= \left| \sum_{m=1}^k \frac{1}{a(n)} \sum_{i=1}^{a(n)} D^{(2,b),i,n}_{m-1}\right|\\
&= \left| \frac{1}{a(n)} \sum_{i=1}^{a(n)} \sum_{m=1}^k  \left( U_{{\theta}^{n}_{m-1}}(X^{i,n}_{m-1}) - U_{\theta^n_{m-1}}(X^{i,n}_{m}) \right) \right|\\
&= \left| \frac{1}{a(n)} \sum_{i=1}^{a(n)} \left( U_{\theta^n_0}(X^{i,n}_0) - U_{\theta^n_{k-1}}(X^{i,n}_{k}) + \sum_{m=1}^k \left( U_{\theta_m^{n}} (X^{i,n}_m) - U_{\theta_{m-1}^{n}}(X^{i,n}_m) \right) \right)\right|\\
&\leq 2\kappa_5 + \frac{1}{a(n)}\sum_{i=1}^{a(n)} \left| \sum_{m=1}^{k} \left(U_{\theta_m^{n}} (X^{i,n}_m) - U_{\theta_{m-1}^{n}}(X^{i,n}_m)\right) \right|
\end{split}
\end{equation*}
Since the maps $\theta \mapsto K[\theta]$ and $\theta \mapsto Q[\theta]$ are bounded Lipschitz maps, there is a 
$\kappa_6 \in (0,\infty)$ such that for all $x \in \Delta^o$ and $\theta,\theta' \in \mathcal{P}(\Delta^o)$, 
$
|U_{\theta}(x) - U_{\theta'}(x)| \leq \kappa_6 \|\theta - \theta'\|.
$
Observe that
\begin{equation}\label{eq:thetadiffa}
\| \theta^n_m - \theta^n_{m-1}\| = \gamma_{m} \left\| a(n)^{-1}\sum\limits_{i=1}^{a(n)} \delta_{X^{i,n}_{m}} - \theta^n_{m-1}\right\| \leq 2 \gamma_m,
\end{equation}
so
for some $\kappa_7\in (0,\infty)$,
\begin{equation}\label{eq:d2bbound}
\begin{split}
\left| \sum_{m=1}^k D^{(2,b),n}_{m-1}\right| &\leq \left(2\kappa_5 + \frac{\kappa_6}{a(n)}\sum_{i=1}^{a(n)} \sum_{m=1}^k \big\| \theta^n_{m} - \theta^n_{m-1} \big\| \right)
\leq\kappa_7\left( 1 + \sum_{m=1}^k \gamma_{m}\right).
\end{split}
\end{equation} 
Combining (\ref{eq:d2abound}) and (\ref{eq:d2bbound}) we see that for some $\kappa_8\in (0,\infty)$,
$$
\gamma_{k} \EE \left\| \sum_{m=1}^k D^{(2),n}_{m-1}\right\| \leq \kappa_8\left[ \left( \frac{\gamma_{k}}{a(n)}\right)^{1/2} +  \gamma_{k} \sum_{m=1}^{k} \gamma_{m}\right].
$$
The result follows.
\end{proof}

\subsection{Bounds on the moments of the error terms} \label{sec:moments} 
The following result gives a useful  moment bound for $\{e_{k}^n\}$.

\begin{proposition} \label{prop:clt2 B and C} 
\ There exists $C \in (0,\infty)$ such that for all $n \in \NN$ and $0\le k \leq n-1$, $$\EE \| e^{n}_{k+1}\|^4 \leq  \frac{C}{a(n)^2}.$$
\end{proposition}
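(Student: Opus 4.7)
The plan is to exploit the fact that the $e^{i,n}_{k+1}$'s are conditionally independent, mean-zero, and uniformly bounded given $\mathcal{F}^n_k$, and then apply a standard fourth-moment bound for averages of such variables.

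First, I would establish the uniform boundedness: since $\theta \mapsto Q[\theta]$ is continuous on the compact set $\mathcal{P}(\Delta^o)$ (by Lemma \ref{lem:qsmooth}) and similarly for $\theta \mapsto K[\theta]Q[\theta]$, there is a constant $\kappa < \infty$ such that $\|e^{i,n}_{k+1}\| \leq \kappa$ for all $i, n, k$. Next, the conditional independence and mean-zero properties: by \eqref{eq:eq242}, given $\mathcal{F}^n_k$ the variables $X^{1,n}_{k+1},\dots,X^{a(n),n}_{k+1}$ are conditionally independent, and since $e^{i,n}_{k+1} = Q[\theta^n_k]_{X^{i,n}_{k+1},\cdot} - (K[\theta^n_k]Q[\theta^n_k])_{X^{i,n}_k,\cdot}$ with the second term being $\mathcal{F}^n_k$-measurable, the $e^{i,n}_{k+1}$'s are conditionally independent given $\mathcal{F}^n_k$. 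Moreover, $\EE[e^{i,n}_{k+1} \mid \mathcal{F}^n_k] = 0$ by the Markov property and the definition of the conditional expectation of $Q[\theta^n_k]_{X^{i,n}_{k+1},\cdot}$.

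Now I would reduce to a coordinate-wise estimate: since $\Delta^o$ is finite with $d$ elements,
$$\EE\|e^n_{k+1}\|^4 \leq d^2 \sum_{x\in \Delta^o} \EE[|e^n_{k+1}(x)|^4],$$
so it suffices to bound $\EE[|e^n_{k+1}(x)|^4]$ for each fixed $x$. Fix such an $x$ and let $Z_i \doteq e^{i,n}_{k+1}(x)$. Conditional on $\mathcal{F}^n_k$, the $\{Z_i\}_{i=1}^{a(n)}$ are independent, mean-zero, and uniformly bounded by $\kappa$. For such sums, a direct expansion yields
$$\EE\Big[\Big(\sum_{i=1}^{a(n)} Z_i\Big)^4 \,\Big|\, \mathcal{F}^n_k\Big] = \sum_i \EE[Z_i^4\mid \mathcal{F}^n_k] + 3\sum_{i\neq j}\EE[Z_i^2\mid\mathcal{F}^n_k]\EE[Z_j^2\mid\mathcal{F}^n_k] \leq \kappa^4(a(n) + 3a(n)^2),$$
where all mixed terms with an odd power of some $Z_i$ vanish by conditional independence and mean-zero. (Alternatively one may invoke Burkholder's inequality as in the proof of Lemma \ref{lem:boundu}.)

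Dividing by $a(n)^4$ and taking total expectations, we obtain $\EE[|e^n_{k+1}(x)|^4] \leq 4\kappa^4 / a(n)^2$ for each $x$, and summing over $x$ yields the desired bound with $C \doteq 4d^3\kappa^4$. The argument is routine and I do not expect any genuine obstacle; the only point requiring care is verifying conditional independence and mean-zero of the $e^{i,n}_{k+1}$'s, which both follow directly from the product structure of the conditional law in \eqref{eq:eq242}.
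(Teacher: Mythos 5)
Your argument is correct and follows essentially the same route as the paper's: both exploit the conditional independence and mean-zero structure given $\mathcal{F}^n_k$, uniform boundedness of the individual error terms, and the combinatorial observation that only $O(a(n)^2)$ terms in the fourth-moment expansion survive (those where every index is repeated). The only cosmetic difference is that you reduce to coordinate-wise fourth moments via $\|v\|^4 \le d^2\sum_x |v(x)|^4$ before expanding, while the paper expands $\|e^n_{k+1}\|^4 = \sum_{x,y} e^n_{k+1}(x)^2\, e^n_{k+1}(y)^2$ directly; the bookkeeping is the same either way.
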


\begin{proof}
Recall that 
$$
e_{k+1}^n =  \frac{1}{\gamma_{k+1}} \delta^{1,n}_{k+1} =  \frac{1}{a(n)} \sum_{i=1}^{a(n)} \xi_i
$$
where  for each $1 \leq i \leq a(n)$ and $x\in \Delta^o$, 
$
 \xi_i(x) \doteq  Q[\theta_k^{n}]_{X^{i,n}_{k+1},x} - \left(K[\theta^{n}_k]Q[\theta^n_k]\right)_{X^{i,n}_k,x}.
$
The result is now immediate on observing that
if $1 \leq i_1, i_2, i_3, i_4 \leq a(n)$ and $i_4 \notin \{i_1,i_2,i_3\}$, then we have that 
$
\EE\big[ \xi_{i_1}(x) \xi_{i_2}(x) \xi_{i_3}(y)\xi_{i_4}(y)\big]
= 0,
$
and, for $x,y \in \Delta^o$,  $1 \leq i_1, i_2, i_3, i_4 \leq a(n)$, $0\le k \le n-1$ and $n \in \NN$,
$$
\big| \xi_{i_1}(x) \xi_{i_2}(x) \xi_{i_3}(y)\xi_{i_4}(y) \big| \leq 
\bigg( \sup_{\theta \in \mathcal{P}(\Delta^o)} \sup_{\substack{x_1,y_1\\x_2,y_2} \in \Delta^o} \Big|  Q[\theta]_{x_1,y_1} - (K[\theta]Q[\theta])_{x_2,y_2} \Big| \bigg)^4 < \infty.
$$
\end{proof}

\subsection{Analysis of the remainder terms}\label{sec:r errors} 
In this section we provide bounds to control the remainder terms $r_{k+1}^n$.
\begin{proposition}\label{prop:clt3}
We can write $r^n_{k+1} = r^{n,a}_{k+1} + r^{n,b}_{k+1}$, such that for some $C\in (0,\infty)$, and all $n\in \NN$ and $0\le k \le n-1$, 
$$
\text{(a)}\quad  \EE \left\| \frac{1}{\gamma_{k+1}} r^{n,a}_{k+1}\right\| \leq C 
\qquad\qquad\qquad
\text{(b)}\quad \left\| \;\sum_{i = k}^{n-1} r^{n,b}_{i+1}\, \right\| \leq C.
$$ 
 \end{proposition}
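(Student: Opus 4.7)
The plan is to decompose each term $r^{i,n}_{k+1}$ by inserting and subtracting $(K[\theta^n_{k+1}]Q[\theta^n_{k+1}])_{X^{i,n}_{k+1},\cdot}$ in the definition from \eqref{eq:er-def}. Concretely, I would set
\[
r^{i,n,a}_{k+1} \doteq \bigl(K[\theta^n_{k+1}]Q[\theta^n_{k+1}]\bigr)_{X^{i,n}_{k+1},\cdot} - \bigl(K[\theta^n_k]Q[\theta^n_k]\bigr)_{X^{i,n}_{k+1},\cdot},
\]
\[
r^{i,n,b}_{k+1} \doteq \bigl(K[\theta^n_k]Q[\theta^n_k]\bigr)_{X^{i,n}_k,\cdot} - \bigl(K[\theta^n_{k+1}]Q[\theta^n_{k+1}]\bigr)_{X^{i,n}_{k+1},\cdot},
\]
and then define $r^{n,a}_{k+1}$ and $r^{n,b}_{k+1}$ by averaging over $i=1,\dots,a(n)$ as in \eqref{eq:errors}. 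By construction $r^{n,a}_{k+1}+r^{n,b}_{k+1}=r^n_{k+1}$.

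For part (a), the key point is that the map $\theta\mapsto K[\theta]Q[\theta]$ is bounded and Lipschitz (using boundedness/Lipschitz continuity of $K[\cdot]$ and $Q[\cdot]$ from Lemma \ref{lem:qsmooth} together with the bounded Lipschitz property of $\pi$), and from \eqref{eq:stochalg1} together with $\|h\|_\infty<\infty$ and $\|\epsilon^n_{k+1}\|\le 2$ one has the deterministic estimate $\|\theta^n_{k+1}-\theta^n_k\|\le 2\gamma_{k+1}$, already used in \eqref{eq:thetadiffa}. It follows that $\|r^{i,n,a}_{k+1}\|\le \kappa\,\|\theta^n_{k+1}-\theta^n_k\|\le 2\kappa\gamma_{k+1}$ for some constant $\kappa$, uniformly in $i,n,k$, and averaging over $i$ gives $\|r^{n,a}_{k+1}/\gamma_{k+1}\|\le 2\kappa$ almost surely. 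Taking expectation yields (a).

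For part (b), the decomposition is chosen precisely so that the sum telescopes: for each particle index $i$,
\[
\sum_{j=k}^{n-1} r^{i,n,b}_{j+1} = \bigl(K[\theta^n_k]Q[\theta^n_k]\bigr)_{X^{i,n}_k,\cdot} - \bigl(K[\theta^n_n]Q[\theta^n_n]\bigr)_{X^{i,n}_n,\cdot},
\]
which is bounded in norm by $2\kappa'$, where $\kappa' \doteq \sup_{\theta\in\mathcal P(\Delta^o),\,x\in\Delta^o}\|(K[\theta]Q[\theta])_{x,\cdot}\|<\infty$ (the quantity already used in \eqref{eq:eq319}). Averaging over $i$ preserves this bound, giving the claim in (b) with $C=2\kappa'$.

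There is no real obstacle here; the proof is essentially the telescoping decomposition together with the Lipschitz continuity of $\theta\mapsto K[\theta]Q[\theta]$ and the a.s. increment bound $\|\theta^n_{k+1}-\theta^n_k\|\le 2\gamma_{k+1}$. The only thing to verify cleanly is that the constants in both bounds are uniform in $n$ and $k$, which is clear from the fact that $\mathcal P(\Delta^o)$ is compact and all the relevant maps are uniformly bounded and Lipschitz on it.
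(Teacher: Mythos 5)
Your decomposition is exactly the one the paper uses (compare \eqref{eq:radef}--\eqref{eq:rbdef}), and both parts (a) and (b) are established by the same arguments: Lipschitz continuity of $\theta\mapsto K[\theta]Q[\theta]$ together with $\|\theta^n_{k+1}-\theta^n_k\|\le 2\gamma_{k+1}$ for (a), and telescoping plus the uniform bound \eqref{eq:eq319} for (b). Correct, and the same approach as the paper's proof.
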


\begin{proof}
Recall that
$$
r^{n}_{k+1} \doteq  \frac{1}{\gamma_{k+1}} \left(\delta^{2,n}_{k+1} + \delta_{k+1}^{3,n} + \delta_{k+1}^{4,n}\right) = \frac{1}{a(n)} \sum_{j=1}^{a(n)} \left( \left(K[\theta^{n}_{k}]Q[\theta^{n}_k]\right)_{X_k^{j,n}, \cdot} - \left(K[\theta^{n}_k]Q[\theta^{n}_k]\right)_{X_{k+1}^{j,n},\cdot}\right).
$$

Rewrite this as $r^{n}_{k+1} = r^{n,a}_{k+1} + r^{n,b}_{k+1}$, where
\begin{equation}\label{eq:radef}
r^{n,a}_{k+1} \doteq \frac{1}{a(n)} \sum_{j=1}^{a(n)} \left( \left(K[\theta^{n}_{k+1}]Q[\theta^{n}_{k+1}]\right)_{X_{k+1}^{j,n}, \cdot} - \left(K[\theta^{n}_{k}]Q[\theta^{n}_{k}]\right)_{X_{k+1}^{j,n},\cdot}\right)
\end{equation}
and
\begin{equation}\label{eq:rbdef}
r^{n,b}_{k+1} \doteq \frac{1}{a(n)} \sum_{j=1}^{a(n)} \left( \left(K[\theta^{n}_{k}]Q[\theta^{n}_{k}]\right)_{X_k^{j,n}, \cdot} - \left(K[\theta^{n}_{k+1}]Q[\theta^{n}_{k+1}]\right)_{X_{k+1}^{j,n},\cdot}\right).
\end{equation}

\noindent\emph{Proof of  Claim (a):}
Since $\theta \mapsto K[\theta]$, $\theta \mapsto Q[\theta]$ are bounded Lipschitz maps,
 there is a $\kappa_1 \in (0,\infty)$ such that for all $\theta, \theta' \in \mathcal{P}(\Delta^o)$
$
\|K[\theta]Q[\theta] - K[\theta']Q[\theta']\| \leq \kappa_1 \| \theta - \theta'\|.
$
From this and \eqref{eq:thetadiffa} it follows that
\begin{equation*}
\begin{split}
 \| r^{n,a}_{k+1}\| 
&\leq \frac{\kappa_1}{a(n)} \sum_{j=1}^{a(n)} \| \theta^n_{k+1} - \theta^n_k\|
= \kappa_1 \| \theta^n_{k+1} - \theta^n_k\| \le 2\kappa_1 \gamma_{k+1},
\end{split}
\end{equation*}
which shows that $\EE \| \gamma_{k+1}^{-1} r^{n,a}_{k+1}\| \leq 2\kappa_1$.\\

\noindent\emph{Proof of  Claim (b):}
From the definition of $r^{n,b}_{i}$,
Note that
\begin{equation*}
\begin{split}
\sum_{i=k}^{n-1}r^{n,b}_{i+1} 
&= \frac{1}{a(n)} \sum_{j=1}^{a(n)} \left(  
 \left(K[\theta^{n}_k]Q[\theta^{n}_k]\right)_{X_k^{j,n},\cdot} - \left(K[\theta^{n}_{n}]Q[\theta^{n}_{n}]\right)_{X_{n}^{j,n},\cdot}\right)
\end{split}
\end{equation*}
from which it follows that 
$$
\left\| \sum_{i=k}^{n-1} r^{n,b}_{i+1} \right\| \leq \kappa_2 \doteq 2\sup_{\theta \in \mathcal{P}(\Delta^o)} \sup_{x\in \Delta^o} \big\| (K[\theta]Q[\theta])_{x,\cdot} \big\|.
$$
\end{proof}

\section{Central Limit Theorem for Algorithm I}\label{sec:clt}

The goal of this section is to prove Theorem \ref{thm:clt1}.  To do this we first study the linearized evolution \eqref{eq:mu}. Then, in Section \ref{sec:mucalc1}, we study the asymptotic behavior of the discrepancy  \eqref{eq:rho}. Finally, in Section  \ref{sec:proveclt} we present the proof of Theorem \ref{thm:clt1}.

\subsection{The linearized evolution}\label{sec:mucalc1}
Let, for $1\le k \le m <\infty$,
\begin{equation}\label{eq:psistarmatrixdef}
\psi_*(m,k) \doteq \prod_{j=k}^m (I + \gamma_j \nabla h(\theta_*)), \;\; \psi_*(m,m+1) \doteq I.
\end{equation}
Then by a simple recursion we see that for $0\le m \le n-1$,
\begin{equation}\label{eq:mu2}
\mu^{n}_{m+1} = \sum_{k=1}^{m+1} \gamma_k \psi_*(m+1,k+1) e^{n}_{k}.
\end{equation}
Furthermore, from \cite{for}*{Lemma 5.8}, with $L$ as introduced above \eqref{eq:stepsize}, for each $0<L'<L$ there is a $C(L')\in (0, \infty)$ such that
for all $n\in \NN$ and $1\le k \le n$
\begin{equation}\label{eq:eq104}
\| \psi_*(n,k)\| \leq C(L') \exp \left( -L' \sum_{j=k}^n \gamma_j\right).
\end{equation}
The following proposition provides some  useful bounds on $\{\mu_k^n\}$. 
\begin{proposition}\label{prop:muprop}
The following hold:
\begin{enumerate}[(i)]
\item With probability one, as  $n \to \infty$ we have $\mu_{n}^{n} \to 0$. Furthermore, for each $\po \in (0,1)$, and a.e. $\om$, there is some $\alpha > 0$
and $n_0(\om)\in \NN$ such that if $n\ge n_0(\om)$ and $n^{\po} \leq k \leq n$, then  $\| \mu_{k+1}^{n}\| \leq k^{-\alpha}$.
\item Suppose that $\gamma_* > (2L)^{-1}$. Then there is some $C > 0$ such that for all $n \in \NN$ and $ 0\le k \leq n-1$,  $$\EE \|\mu^{n}_{k+1}\|^2 \leq \frac{C \gamma_{k+1}}{a(n)}.$$
\end{enumerate}
\end{proposition}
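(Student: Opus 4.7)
The approach is to derive moment bounds on $\mu_{m+1}^n$ from the explicit martingale representation \eqref{eq:mu2}, combined with the exponential decay estimate \eqref{eq:eq104} for $\psi_*$ and suitable conditional moment bounds on $\{e_k^n\}$. The key preliminary fact is that, conditionally on $\mathcal{F}_k^n$, we have $e_{k+1}^n = a(n)^{-1}\sum_{i=1}^{a(n)}\xi_i^{n,k}$ with the $\xi_i^{n,k}$ uniformly bounded, conditionally independent and conditionally centred (by \eqref{eq:eq242}). A standard Marcinkiewicz--Zygmund-type bound for sums of bounded independent centred random variables therefore gives, for every $q\ge 1$,
\[
\EE\bigl[\|e_{k+1}^n\|^{2q}\,\bigm|\, \mathcal{F}_k^n\bigr]\le C_q/a(n)^q,
\]
and in particular $\EE\|e_k^n\|^2\le C/a(n)$ (which also follows by taking traces in Proposition \ref{prop:clt2 A}).

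\textbf{Proof of (ii).} For each fixed $n$, $\{e_k^n\}$ is a martingale difference sequence with respect to $\{\mathcal{F}_k^n\}$, so orthogonality applied to \eqref{eq:mu2} yields
\[
\EE\|\mu_{m+1}^n\|^2 = \sum_{k=1}^{m+1}\gamma_k^2\,\EE\|\psi_*(m+1,k+1)e_k^n\|^2 \le \frac{C}{a(n)}\,S_{m+1}, \qquad S_{m+1}\doteq \sum_{k=1}^{m+1}\gamma_k^2\|\psi_*(m+1,k+1)\|^2.
\]
Under $\gamma_*>(2L)^{-1}$, pick $L'\in((2\gamma_*)^{-1},L)$, so $2L'\gamma_*>1$. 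Combining \eqref{eq:eq104} with the elementary bound $\sum_{j=k+1}^{m+1}\gamma_j\ge \gamma_*\log\bigl((m+N_*)/(k+N_*)\bigr)$ gives
\[
S_{m+1} \le \frac{C'}{(m+N_*)^{2L'\gamma_*}}\sum_{k=1}^{m+1}(k+N_*)^{2L'\gamma_*-2}\le \frac{C''}{m+N_*}\le C'''\gamma_{m+1},
\]
where $2L'\gamma_*-2>-1$ makes the inner partial sum of order $(m+N_*)^{2L'\gamma_*-1}$. This establishes (ii).

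\textbf{Proof of (i).} Applying Burkholder's inequality to the vector-valued martingale in \eqref{eq:mu2} and then H\"older's inequality to the resulting weighted sum (with weights $a_k\doteq \gamma_k^2\|\psi_*(m+1,k+1)\|^2$) yields, for each $q\ge 1$,
\[
\EE\|\mu_{m+1}^n\|^{2q}\le C_q\,\EE\Bigl(\sum_{k=1}^{m+1}a_k\|e_k^n\|^2\Bigr)^q \le C_q\,S_{m+1}^{q-1}\sum_{k=1}^{m+1}a_k\,\EE\|e_k^n\|^{2q} \le \frac{\tilde C_q\,S_{m+1}^q}{a(n)^q}.
\]
For (i) the condition $\gamma_*>(2L)^{-1}$ is not needed: picking \emph{any} $L'\in(0,L)$, the same kind of sum estimate as above gives $S_{m+1}\le C(m+1)^{-s}$ for some $s=s(\gamma_*,L')>0$, hence $\EE\|\mu_{k+1}^n\|^{2q}\le C_q k^{-sq}a(n)^{-q}$. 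Fix $\alpha\in(0,s/2)$ and choose $q$ so large that $\po q(s-2\alpha)>1+\po$. Markov's inequality combined with the bound $\sum_{k\ge n^{\po}}k^{-q(s-2\alpha)}\le C n^{\po(1-q(s-2\alpha))}$ then gives
\[
\sum_{n=1}^\infty\sum_{k=\lfloor n^{\po}\rfloor}^n \PP\bigl(\|\mu_{k+1}^n\|>k^{-\alpha}\bigr)<\infty,
\]
and Borel--Cantelli yields the claimed almost-sure rate for $n^{\po}\le k\le n$. Specialising (say) $k=n-1$, or arguing directly from the $2q$-th moment bound at $m+1=n$ plus another Borel--Cantelli, gives $\mu_n^n\to 0$ almost surely.

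\textbf{Main obstacle.} The technical heart of the argument is the sum estimate for $S_{m+1}$: the assumption $\gamma_*>(2L)^{-1}$ is used precisely to get the sharp bound $S_{m+1}\le C\gamma_{m+1}$ (obtained from the partial sum $\sum k^{2L'\gamma_*-2}$ being of order $m^{2L'\gamma_*-1}$), which matches the CLT scaling $\sigma_n=\sqrt{a(n)/\gamma_n}$ in Theorem \ref{thm:clt1}; for (i), no condition on $\gamma_*$ is required because the moment order $q$ is free and can be inflated to compensate for a weaker polynomial bound on $S_{m+1}$.
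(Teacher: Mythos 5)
Your proof of part (ii) is essentially the same as the paper's: martingale orthogonality applied to the explicit representation \eqref{eq:mu2}, the exponential decay of $\psi_*$ from \eqref{eq:eq104}, and an $L^2$-bound $\EE\|e_k^n\|^2 \lesssim a(n)^{-1}$ (the paper pulls this from Proposition~\ref{prop:clt2 B and C} via Cauchy--Schwarz, you note it independently), followed by the identical sum estimate with $L'\gamma_* > 1/2$. No difference worth flagging there.

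Part (i) is where you take a genuinely different route. The paper declines to give the argument, saying only that ``the proof of (i) is similar to the proof of Theorem~\ref{thm:mainrate}'' --- i.e.\ one is expected to show that the affine interpolation of $\{\mu_k^n\}$ is a pseudo-trajectory sequence for the \emph{linear} flow $\dot z = \nabla h(\theta_*)z$ (which has $0$ as a globally exponentially stable fixed point since $\nabla h(\theta_*)$ is Hurwitz) and then invoke the rate lemma, exactly as in Section~\ref{sec:rate}. You instead argue by concentration: conditional independence of the $a(n)$ particle contributions in \eqref{eq:eq554S} gives $\EE[\|e_{k+1}^n\|^{2q}\mid\mathcal F_k^n]\lesssim a(n)^{-q}$ for every $q$, the vector Burkholder--Davis--Gundy inequality applied to the representation \eqref{eq:mu2} (with deterministic matrix weights) lifts this to $\EE\|\mu_{m+1}^n\|^{2q}\lesssim S_{m+1}^q/a(n)^q$, and the sum estimate $S_{m+1}\lesssim m^{-s}$ (which needs only \emph{some} $L'\in(0,L)$, so no lower bound on $\gamma_*$) feeds into Markov plus Borel--Cantelli over the double array. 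The key observation that $q$ can be taken arbitrarily large to compensate for a possibly small exponent $s$ is what makes this work without $\gamma_*>(2L)^{-1}$, consistent with the fact that the paper imposes no such condition in part (i). Your approach is more elementary and self-contained, sidestepping the PTS machinery of Section~\ref{sec:asymp}; the paper's implied approach has the virtue of uniformity with Theorem~\ref{thm:mainrate} and avoids higher-moment bookkeeping. Both are correct and I see no gap in yours.
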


\begin{proof}
The proof of (i) is similar to the proof of Theorem \ref{thm:mainrate} and is omitted for brevity. 

  Next, using (\ref{eq:mu2}), \eqref{eq:eq104}, and Proposition \ref{prop:clt2 B and C}, we see that for each $L' \in (0,L)$, there is a $\kappa_1(L')\in (0,\infty)$, such that, for $0\le m \le n-1$,
\begin{equation*}
\begin{split}
\EE\left[ \| \mu^{n}_{m+1}\|^2\right] &\leq \sum_{k=1}^{m+1} \gamma_k^2 \| \psi_*(m+1,k+1)\|^2 \EE\left[ \| {e}^{n}_k\|^2\right] \leq \frac{\kappa_1(L')}{a(n)} \sum_{k=1}^{m+1} \gamma_k^2  \exp\left( - 2L' \sum_{j=k+1}^{m+1}\gamma_j\right).\\
\end{split}
\end{equation*}
Choosing $L'\in (0,L)$ such that $L'\gamma_* >1/2$, and using the form of $\gamma_k$, we can find a $\kappa_2 \in (0,\infty)$ such that for all $0\le m \le n-1$,
$
\EE  \| \mu_{m+1}^{n}\|^2 \leq \frac{\kappa_2 \gamma_{m+1}}{a(n)}.
$
The result follows.
\end{proof}

\subsection{Analysis of the discrepancy}\label{sec:rhocalc1}

The following result is used to study the asymptotic behavior of $\{\rho^n_n\}$.
Following \cite{for}, consider for $\theta \in \clp(\Delta^o)$ the collection of $d\times d$ matrices $\{\mathbf{R}^{(n)}_i(\theta)\}_{i=1}^d$ defined as
\begin{equation}\label{eq:eq523}
	\mathbf{R}^{(n)}_i(\theta)[k,l] \doteq \int_0^1 \frac{1}{2} (1-t)^2 \frac{\partial^2 h_i}{\partial \theta_k \partial \theta_l}(\theta + t (\theta-\theta_*)) dt, \qquad 1\le k,\, l \le d.
\end{equation}
For $0\le j \le n$, we denote the random matrix $\mathbf{R}^{(n)}_i(\theta^n_j)$ as $R^{(n,j)}_i$.
Then, using Taylor's expansion, for $1\le i \le d$ and $0\le j \le n$,
$$
h_i(\theta^{n}_{j}) = \nabla h_i(\theta_*) \cdot (\theta^{n}_{j} - \theta_*) + (\theta^{n}_{j} - \theta_*)^T R_i^{(n, j)}(\theta^{n}_{j}-\theta_*).
$$
For brevity we write the above display in a vector form as
\begin{equation}\label{eq:tenseq}
h(\theta^{n}_{j}) = \nabla h(\theta_*) (\theta^{n}_{j} - \theta_*) + (\theta^{n}_{j} - \theta_*)^T R_{\bullet}^{(n, j)}(\theta^{n}_{j}-\theta_*).
\end{equation}

\begin{corollary}\label{cor:matrixcor1}
Let, for $1\le k \le n <\infty$,
\begin{equation*}
\begin{split}
\psi(n,k) &\doteq \prod_{j=k}^{n}\left(I+ \gamma_j \left( \nabla h(\theta_*) + 2( \mu_{j-1}^{n})^TR^{(n,j-1)}_{\bullet} + (\rho^{n}_{j-1})^T R^{(n, j-1)}_{\bullet}\right)\right).
\end{split}
\end{equation*}
Then for each $\po \in (0,1)$,  $L' \in (0,L)$, and a.e. $\om$, there is a   $C= C(\po,L',\om)\in (0,\infty)$ such that if  $n^{\po} \leq k \leq n$, then
$$
\| \psi(n,k)\| \leq C \exp \left( -L' \sum_{j=k}^n \gamma_j\right).
$$
\end{corollary}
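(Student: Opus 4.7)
The plan is to treat $\psi(n,k)$ as a small perturbation of $\psi_*(n,k)$ and transfer the bound \eqref{eq:eq104}. Write $M^{n}_j \doteq \nabla h(\theta_*) + E^n_j$ where $E^n_j \doteq 2(\mu^n_{j-1})^T R^{(n,j-1)}_{\bullet} + (\rho^n_{j-1})^T R^{(n,j-1)}_{\bullet}$, so that $\psi(n,k) = \prod_{j=k}^n (I + \gamma_j M^n_j)$. The strategy has two ingredients: an a.s.\ decay estimate on $E^n_j$ for $j \ge n^{\po}$, and the fact that the deterministic product bound \eqref{eq:eq104} is stable under a sufficiently small perturbation in each factor.

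First I would establish the almost-sure smallness of $E^n_j$. Since $h$ is smooth on the compact simplex $\cls$, the matrices $\mathbf{R}^{(n)}_i(\theta)$ of \eqref{eq:eq523} are uniformly bounded in $n$, $i$, and $\theta$. Proposition \ref{prop:muprop}(i) supplies, for the chosen $\po$ and a.e.\ $\omega$, an exponent $\alpha>0$ and $n_1(\omega)$ so that $\|\mu^n_{j-1}\|\le (j-1)^{-\alpha}$ whenever $n\ge n_1$ and $n^{\po}\le j-1 \le n$. Theorem \ref{thm:mainrate} gives $\beta > 0$ such that $\|\theta^n_{j-1}-\theta_*\|\le (j-1)^{-\beta}$ in the same range; since $\rho^n_{j-1} = (\theta^n_{j-1}-\theta_*)-\mu^n_{j-1}$, one gets $\|\rho^n_{j-1}\|\le 2(j-1)^{-\min(\alpha,\beta)}$, and hence $\|E^n_j\|\le C_0 (j-1)^{-\min(\alpha,\beta)}$ almost surely in that range.

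Second I would control the product using an adapted norm, as in the proof of \cite{for}*{Lemma 5.8}. Since $\nabla h(\theta_*)$ is Hurwitz with spectral abscissa $\le -L$, for any $L''\in(L',L)$ there is a norm $\|\cdot\|_*$ on $\RR^d$, equivalent to $\|\cdot\|$, and a $\gamma_0>0$ such that $\|I + \gamma \nabla h(\theta_*)\|_* \le e^{-\gamma L''}$ for all $0<\gamma\le \gamma_0$. Combining with Step 1, for $n$ large (so that both $\gamma_{n^{\po}}\le \gamma_0$ and $\|E^n_j\|$ is as small as desired for $j\ge n^{\po}$),
$$
\|I + \gamma_j M^n_j\|_* \le \|I+\gamma_j \nabla h(\theta_*)\|_* + \gamma_j \|E^n_j\|_* \le e^{-\gamma_j L''} + \gamma_j \cdot (L''-L') \le e^{-\gamma_j L'},
$$
the last inequality holding for $\gamma_j$ small by a one-line Taylor comparison. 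Multiplying over $j=k,\dots,n$,
$$
\|\psi(n,k)\|_* \le \prod_{j=k}^n e^{-\gamma_j L'} = \exp\!\Big(-L'\sum_{j=k}^n \gamma_j\Big),
$$
and the claim follows by norm equivalence, with the resulting constant $C(\po,L',\omega)$ absorbing the equivalence constants and a bounded initial-segment correction needed to handle any $j$ for which the pointwise bound has not yet kicked in.

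The main obstacle, and the reason one cannot simply cite \cite{for}*{Lemma 5.8} verbatim, is that the perturbation $E^n_j$ depends on the array index $n$ through $\mu^n_{j-1}$ and $\rho^n_{j-1}$ and is only controllable in the diagonal window $n^{\po}\le j \le n$. What makes the argument go through is precisely the quantitative decay $\|\mu^n_{j-1}\|,\|\rho^n_{j-1}\|\le (j-1)^{-\min(\alpha,\beta)}$ in this window, guaranteed by Theorem \ref{thm:mainrate} and Proposition \ref{prop:muprop}(i); this lets us choose a single threshold $n_2(\omega)\ge n_1(\omega)$ beyond which the per-factor inequality above holds simultaneously for every $j$ in the product, uniformly in $k\in[n^{\po},n]$.
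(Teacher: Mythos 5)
Your proof takes essentially the same route as the paper: both reduce the task to verifying that $\sup_{n^{\po}\le j\le n}\|A^n_j-\nabla h(\theta_*)\|\to0$ almost surely via Proposition \ref{prop:muprop}(i) and Theorem \ref{thm:mainrate}, and then control the matrix product by a perturbed-Hurwitz estimate. (The paper rewrites $2\mu^n_{j-1}+\rho^n_{j-1}=\mu^n_{j-1}+(\theta^n_{j-1}-\theta_*)$ rather than bounding $\|\rho^n_{j-1}\|$ by triangle inequality as you do; this is only a change of constants.) Where the paper cites its Lemma \ref{lem:matrixlem1}, an array-windowed variant of \cite{for}*{Lemma 5.8} proved in the appendix, you reprove the same estimate inline with an adapted norm --- the same underlying argument.

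One arithmetic step in your inline sketch needs correction: the claimed chain $e^{-\gamma L''}+\gamma(L''-L')\le e^{-\gamma L'}$ is actually false for small $\gamma>0$, since Taylor expansion gives $e^{-\gamma L'}-e^{-\gamma L''}=\gamma(L''-L')-\tfrac{\gamma^2}{2}(L''^2-L'^2)+O(\gamma^3)<\gamma(L''-L')$. Lemma \ref{lem:matrixlem1} avoids this by bounding the unperturbed factor by $1-\gamma L''$ rather than $e^{-\gamma L''}$, after which the elementary inequality $1-\gamma L''+\gamma(L''-L')=1-\gamma L'\le e^{-\gamma L'}$ closes the argument. You can fix your version the same way, or equivalently require the perturbation to be $\le (L''-L')/2$ and interpose a further rate $L'''\in(L',L'')$; since $\|E^n_j\|\to0$ uniformly over the window, either choice is available. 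The rest of the strategy is sound and matches the paper's.
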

\begin{proof}
Let $A \doteq \nabla h(\theta_*)$ and
\begin{align*}
A^n_j &\doteq \nabla h(\theta_*) + 2( \mu_{j-1}^{n})^TR^{(n,j-1)}_{\bullet} + (\rho^{n}_{j-1})^T R^{(n, j-1)}_{\bullet}\\
&=  \nabla h(\theta_*) +  ( \mu_{j-1}^{n})^TR^{(n,j-1)}_{\bullet} + ( \theta^{n}_{j-1} - \theta_*)^T R^{(n, j-1)}_{\bullet}, \;\; 
\end{align*}
so that with
$
\kappa_1 \doteq \sup_{n\in\NN} \sup_{\theta \in \clp(\Delta^o)}\max_{1\le i\le d}\|R_{i}^{(n)}(\theta)\|,
$
 we have
\begin{equation*}
\begin{split}
\|A^n_j - A\| &=  \left\|  ( \mu_{j-1}^{n})^TR^{(n,j-1)}_{\bullet} + ( \theta^{n}_{j-1} - \theta_*)^T R^{(n, j-1)}_{\bullet} \right\|
\leq \kappa_1 \left( \| \mu^{n}_{j-1}\| + \| \theta^{n}_{j-1} - \theta_*\|\right).
\end{split}
\end{equation*}
Fix $\po \in (0,1)$ and $L'\in (0,L)$. Applying Proposition \ref{prop:muprop}(i) and Theorem \ref{thm:mainrate}, choose $\alpha > 0$ and,
for a.e. $\om$, $n_1 \in \NN$ 
 such that if $n\ge n_1$, and $n^{\po} \leq j \leq n$, then $\| \mu^{n}_{j-1}\| \leq j^{-\alpha}$ and $\| \theta^{n}_{j-1} - \theta_*\| \leq j^{-\alpha}.$
Thus, for a.e. $\om$ there is an $n_0\in \NN$  such that for all $n\ge n_0$ and $n^{\po} \leq j \leq n$, 
$
\|A^n_j - A\| \leq j^{-\alpha/2}.
$
The result now follows from Lemma \ref{lem:matrixlem1}.
\end{proof}

Recall that $\sigma_n = \sqrt{a(n)/\gamma_n}$. The next result will be used to show that $\sigma_n \rho^{n}_n \stackrel{\PP}{\to} 0$ as $n \to \infty$.

\begin{lemma}\label{lem:rholem1}
Suppose that $\gamma_*> L^{-1}$ and $a(n)/n\to 0$ as $n\to \infty$. Then, for some $\kappa , \po \in (0,1)$, we have, as $n\to \infty$,
$$
 \sigma_n^{1+\kappa}\left[\rho^{n}_n  - \sum_{k=n^{\po}}^n \gamma_k \psi(n,k+1) r^{n}_k\right] \stackrel{\PP}{\to} 0
$$
and
\begin{equation}\label{eq:eq449r}
\sigma_n \left[\sum_{k=n^{\po}}^n \gamma_k \psi(n,k+1) r^{n}_k\right] \stackrel{\PP}{\to} 0.
\end{equation}
\end{lemma}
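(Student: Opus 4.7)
\emph{Proof plan.} The argument proceeds by deriving a recursion for $\rho^n_k$ whose homogeneous part is exactly governed by the matrix $\psi$ from Corollary \ref{cor:matrixcor1}. Combining \eqref{eq:rep1}, \eqref{eq:mu}, the identity $\theta^n_k - \theta_* = \mu^n_k + \rho^n_k$, and the Taylor expansion \eqref{eq:tenseq} with the quadratic term expanded as $(\mu^n_k + \rho^n_k)^T R^{(n,k)}_\bullet(\mu^n_k + \rho^n_k) = (\mu^n_k)^T R^{(n,k)}_\bullet \mu^n_k + [2(\mu^n_k)^T R^{(n,k)}_\bullet + (\rho^n_k)^T R^{(n,k)}_\bullet]\rho^n_k$, one obtains
\begin{equation*}
\rho^n_{k+1} = \bigl[I + \gamma_{k+1} A^n_{k+1}\bigr]\rho^n_k + \gamma_{k+1}(\mu^n_k)^T R^{(n,k)}_\bullet \mu^n_k + \gamma_{k+1} r^n_{k+1},
\end{equation*}
where $A^n_{k+1} = \nabla h(\theta_*) + 2(\mu^n_k)^T R^{(n,k)}_\bullet + (\rho^n_k)^T R^{(n,k)}_\bullet$ coincides with the factor appearing in the product defining $\psi(n,k+1)$. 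Iterating from $k_0 = \lceil n^\po \rceil - 1$ up to $n$ yields the key decomposition
\begin{equation*}
\rho^n_n - \sum_{k=n^\po}^n \gamma_k \psi(n, k+1) r^n_k = \psi(n, n^\po)\,\rho^n_{n^\po - 1} + \sum_{k=n^\po}^n \gamma_k \psi(n, k+1)\,(\mu^n_{k-1})^T R^{(n,k-1)}_\bullet \mu^n_{k-1}.
\end{equation*}

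For the first convergence, fix $L' \in (0, L)$ with $L'\gamma_* > 1$ (possible since $\gamma_* > L^{-1}$), and choose $\po,\kappa \in (0,1)$ small. The boundary term is controlled by combining Theorem \ref{thm:mainrate} and Proposition \ref{prop:muprop}(i) applied at an exponent $\po' < \po$ to obtain $\|\rho^n_{n^\po - 1}\| \leq n^{-\eta}$ a.s.\ for some $\eta > 0$; together with $\|\psi(n, n^\po)\| \leq C(\om)\, n^{-L'\gamma_*(1-\po)}$ from Corollary \ref{cor:matrixcor1} and the crude bound $\sigma_n^{1+\kappa} \leq C n^{1+\kappa}$ (using $a(n)\le n$), this yields an a.s.\ bound of order $n^{1+\kappa - L'\gamma_*(1-\po) - \eta} \to 0$. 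For the quadratic remainder term, bound the summand by $C\|\mu^n_{k-1}\|^2$, apply Proposition \ref{prop:muprop}(ii) to get $\EE\|\mu^n_{k-1}\|^2 \leq C\gamma_k/a(n)$, and use $\|\psi(n, k+1)\| \leq C(\om)(k/n)^{L'\gamma_*}$ on a set of probability close to $1$. A direct sum evaluation (again using $L'\gamma_* > 1$) produces an $L^1$ bound of order $1/(na(n))$, hence $\sigma_n^{1+\kappa}/(na(n)) \sim (na(n))^{(\kappa-1)/2} \to 0$, and Markov's inequality delivers convergence in probability.

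For the second convergence, split $r^n_k = r^{n,a}_k + r^{n,b}_k$ by Proposition \ref{prop:clt3}. The $r^{n,a}$ contribution is handled directly: $\EE\|r^{n,a}_k\| \leq C\gamma_k$ gives $\EE\bigl\|\sum_k \gamma_k \psi(n, k+1) r^{n,a}_k\bigr\| \leq C/n$, so multiplication by $\sigma_n \sim \sqrt{na(n)}$ produces $\sqrt{a(n)/n} \to 0$ by hypothesis. The $r^{n,b}$ contribution requires Abel summation: with $T_k \doteq \sum_{j=k}^n r^{n,b}_j$ uniformly bounded by Proposition \ref{prop:clt3}(b) and $r^{n,b}_k = T_k - T_{k+1}$, summation by parts produces a boundary term at $k = n^\po$ plus a main sum involving the increments $\gamma_k \psi(n, k+1) - \gamma_{k-1}\psi(n, k)$. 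Using $\psi(n, k) = (I + \gamma_k A^n_k)\psi(n, k+1)$, this increment equals $[(\gamma_k - \gamma_{k-1})I - \gamma_{k-1}\gamma_k A^n_k]\psi(n, k+1)$; since $|\gamma_k - \gamma_{k-1}|$ and $\gamma_{k-1}\gamma_k$ are both $O(k^{-2})$ and $\|A^n_k\|$ is a.s.\ bounded for $k \geq n^\po$, the main sum is bounded by $C\sum_{k=n^\po}^n k^{-2}(k/n)^{L'\gamma_*} \sim C/n$, and analogously for the boundary term, whence multiplication by $\sigma_n$ again yields $\sqrt{a(n)/n} \to 0$.

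The principal obstacle is the Abel summation step for $r^{n,b}$: since $r^n_k$ is only controllable in the telescoping sense from Proposition \ref{prop:clt3}(b) rather than being pointwise small, one must transfer a difference onto the smooth weight $\gamma_k\psi(n, k+1)$, and the resulting increment is manageable only because of the specific $O(k^{-2})$ cancellation in both $\gamma_k - \gamma_{k-1}$ and $\gamma_{k-1}\gamma_k$, together with the a.s.\ boundedness of $A^n_k$ guaranteed by the prior a.s.\ estimates on $\mu^n_{j-1}$ and $\rho^n_{j-1}$. This last ingredient is precisely what forces the lower summation index $n^\po$, as the needed a.s.\ bounds on $\mu^n_{j-1}$ and $\theta^n_{j-1} - \theta_*$ from Theorem \ref{thm:mainrate} and Proposition \ref{prop:muprop}(i) are only available in that range, which is also the reason the assumption $a(n)=o(n)$ enters, through $\sqrt{a(n)/n}\to 0$ in the final step.
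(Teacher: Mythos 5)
Your proposal is correct and follows essentially the same route as the paper's proof: iterate the $\rho^n_k$-recursion to produce the boundary term $\psi(n,n^{\po})\rho^n_{n^\po-1}$ and the quadratic $\mu^T R \mu$ sum, control those using Corollary~\ref{cor:matrixcor1} and Proposition~\ref{prop:muprop}(ii) for the first limit, and split $r^n = r^{n,a} + r^{n,b}$ with Abel summation for the second, closing with $\sqrt{a(n)/n}\to0$. The only departures are cosmetic: you bound $\|\rho^n_{n^\po-1}\|$ via the a.s.\ decay from Theorem~\ref{thm:mainrate} and Proposition~\ref{prop:muprop}(i) where the paper just uses trivial boundedness, and you telescope with suffix sums $T_k=\sum_{j\ge k}r^{n,b}_j$ where the paper uses prefix sums $\Xi^n_k$; both are immaterial. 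One small imprecision in your closing remark: what forces the lower index $n^\po$ is not the boundedness of $A^n_k$ (that holds trivially on all of $\clp(\Delta^o)$) but the need for $\|A^n_k-\nabla h(\theta_*)\|\to0$ uniformly over $n^\po\le k\le n$ so that Corollary~\ref{cor:matrixcor1} delivers the exponential bound on $\psi(n,k+1)$; Theorem~\ref{thm:mainrate} and Proposition~\ref{prop:muprop}(i) supply that decay only in that index range.
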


\begin{proof}
Fix $L' \in ( \gamma_*^{-1},L)$ and let $\po \in (0 , 1 - (L'\gamma_*)^{-1})$. 
 Using \eqref{eq:rep1}, \eqref{eq:mu}, \eqref{eq:rho}, \eqref{eq:tenseq} and a recursive argument,  we can write
$$\label{eq:rhon1}
\rho_{n}^{n} \doteq \psi(n,n^{\po})\rho^{n}_{n^{\po} -1} + \sum_{k=n^{\po}}^{n} \gamma_k \psi(n,k+1)\left[r^{n}_{k} + (\mu^{n}_{k-1})^TR_{\bullet}^{(n, k-1)}\mu^{n}_{k-1}\right],
$$
or equivalently,
\begin{equation}\label{eq:rhon12}
\rho_{n}^{n} -   \sum_{k=n^{\po}}^{n} \gamma_k \psi(n,k+1)r^{n}_{k}  = \psi(n,n^{\po})\rho^{n}_{n^{\po}-1} + \sum_{k=n^{\po}}^{n} \gamma_k \psi(n,k+1) (\mu^{n}_{k-1})^TR_{\bullet}^{(n, k-1)}\mu^{n}_{k-1}.
\end{equation}
Fix $\kappa \in \left( 0,  \frac{(1- \po)L'\gamma_* -1}{2} \wedge \frac{1}{4}\right)$. We begin by showing that
\begin{equation}\label{eq:eq505}
	 \sigma_n^{1+ \kappa} \psi(n, n^{\po}) \rho^{n}_{n^{\po}-1} \stackrel{\PP}{\to} 0.
\end{equation}
	  Since $\| \rho^{n}_{n^{\po}-1}\|$ is a bounded sequence, it is enough to show that $\sigma^{1+\kappa}_n \psi(n,n^{\po})$
	 converges to $0$ in probability. From Corollary \ref{cor:matrixcor1}, for a.e. $\om$,  there is a $\kappa_1(\om) \in (0,\infty)$ such that
\begin{equation}\label{eq:eq633}
\begin{aligned}
\sigma_n^{1 + \kappa} \|\psi(n,n^{\po})\| 
&\leq \kappa_1  (a(n)n)^{\frac{1}{2}(1+\kappa)}\exp\left( - L' \sum_{j = n^{\po}}^{n} \gamma_j\right) 
= \kappa_1 \left( \frac{a(n)}{n}\right)^{\frac{1}{2}(1 + \kappa)} n^{1 +  \kappa} \exp\left( - L' \sum_{j=n^{\po}}^{n} \gamma_j\right).
\end{aligned}
\end{equation}
From the definition of $\gamma_k$ we see that for all $1\le k \le n$
\begin{equation}\label{eq:eq628}
	\exp\left( - L' \sum_{j = k}^{n} \gamma_j\right) \le \left(\frac{N_*+k}{N_*+n}\right)^{L'\gamma_*}.
\end{equation}
From our choice of $\kappa$,
$
(\po - 1)L'\gamma_* + 1 + \kappa < 0,
$
and so we have,  on applying \eqref{eq:eq628} with $k= n^{\po}$, that the expression in \eqref{eq:eq633} converges to $0$. This completes the proof of \eqref{eq:eq505}. We now show that 
\begin{equation}\label{eq:rhomu1}
\sigma_n^{1 + \kappa} \sum_{k=n^{\po}}^{n} \gamma_k \psi(n,k+1) (\mu^{n}_{k-1})^T R^{(n,k-1)}_{\bullet}\mu_{k-1}^{n} \stackrel{\PP}{\to} 0.
\end{equation}
 Using the uniform-boundedness of $\{R_{\bullet}^{(n,k-1)}\}$ and Corollary \ref{cor:matrixcor1}, for a.e. $\om$, we can find a  $\kappa_2(\om) \in (0,\infty)$ such that
\begin{equation}\label{eq:rhomu2}
\begin{split}
\left\|  \sum_{k=n^{\po}}^{n} \gamma_k \psi(n,k+1)(\mu^{n}_{k-1})^TR_{\bullet}^{(n, k-1)}\mu^{n}_{k-1}\right\| &\leq \sum_{k=n^{\po}}^n  \gamma_k \| \psi(n,k+1)\| \| \mu_{k-1}^{n}\|^2 \| R^{(n,k-1)}_{\bullet}\|\\
&\leq \kappa_2 \sum_{k = n^{\po}}^{n} \gamma_k \exp\left( -L' \sum_{j=k}^{n} \gamma_j\right) \| \mu^{n}_{k-1}\|^2.
\end{split}
\end{equation}
From Proposition \ref{prop:muprop}, there is a  $\kappa_3 \in (0,\infty)$ such that for all $n \in \NN$ and $k \leq n$, $\EE\|\mu^n_{k-1}\|^2 \leq \kappa_3 \gamma_k / a(n)$. It follows that, for some $\kappa_4 \in (0,\infty)$, with $\tilde{\kappa} \doteq \frac{1}{2}(1+\kappa) < 1$,
\begin{multline}\label{eq:rhomu3}
 \sigma_n^{1+ \kappa}\EE \left( \sum_{k = n^{\po}}^{n} \gamma_k 
 \exp\left( -L' \sum_{j=k}^{n} \gamma_j \right) \|\mu^{n}_{k-1}\|^2 \right) \\
 \leq  \kappa_3 \sigma_n^{1 + \kappa} \sum_{k = n^{\po}}^{n} \gamma_k \exp\left( -L' \sum_{j=k}^{n} \gamma_j\right) \frac{\gamma_k}{a(n)}
 \le \kappa_4 n^{\tilde{\kappa}} a(n)^{\tilde{\kappa} - 1} \sum_{k=n^{\po}}^n \gamma_k^2 \exp\left( - L' \sum_{j=k}^n \gamma_j\right).\\
\end{multline}
Using \eqref{eq:eq628} once more, we can find some $\kappa_5 \in (0,\infty)$ such that the last expression is bounded above by
\begin{equation}\label{eq:eq350}
a(n)^{\tilde{\kappa}-1} (n+N_*)^{\tilde{\kappa} - L'\gamma_*} \sum\limits_{k=n^p}^n (k + N_*)^{L'\gamma_* - 2},
\end{equation}
which tends to $0$ as $n \to \infty$, since $\tilde{\kappa} < 1$. Combining this convergence with (\ref{eq:rhomu2}) and (\ref{eq:rhomu3}), we have  (\ref{eq:rhomu1}),
which together with \eqref{eq:eq505} proves the first statement in the lemma.

 We now prove the second statement. Let $r^{n,a}_i, r^{n,b}_i$ be as in (\ref{eq:radef}) and (\ref{eq:rbdef}), respectively,  so that ${r}^{n}_i = r^{n,a}_i + r^{n,b}_i$. Using Corollary \ref{cor:matrixcor1}, we can find, for a.e. $\om$,  some $\kappa_6 \equiv \kappa_6(\om) \in (0,\infty)$ such that 
\begin{equation}\label{eq:rhor11}
\begin{split}
\left\| \sum_{k=n^{\po}}^{n} \gamma_k \psi(n,k+1) r^{n,a}_k\right\| &\leq \kappa_6\sum_{k=n^{\po}}^n  \gamma_k^2 \exp\left( - L' \sum_{j=k}^{n} \gamma_j \right) \Big\| \frac{1}{\gamma_k} r^{n,a}_k\Big\|.\\
\end{split}
\end{equation}
Using Proposition \ref{prop:clt3}(a), we can find some $\kappa_7\in (0,\infty)$ such that for all $n \in \NN$ and $1 \le k \leq n$, $\EE \| r^{n,a}_k/\gamma_k \| \leq \kappa_7$.
Then, for some $\kappa_8\in (0,\infty)$,
\begin{equation}\label{eq:rhor12}
\begin{split}
\sigma_n \sum_{k=n^{\po}}^{n} \gamma_k^2 \exp\left( - L' \sum_{j=k}^n \gamma_j\right)  \EE \Big\| \frac{1}{\gamma_k}r^{n,a}_k \Big\| &\leq 
\kappa_7 \sigma_n \sum_{k=n^{\po}}^{n} \gamma_k^2 \exp\left( - L' \sum_{j=k}^{n} \gamma_j\right)\\
&\le \kappa_8 \sqrt{\frac{a(n)}{n}} n   \sum_{k=n^{\po}}^{n} \gamma_k^2 \exp\left( - L' \sum_{j=k}^{n} \gamma_j\right).\\
\end{split}
\end{equation}
As in \eqref{eq:eq350}, the last term can be bounded above by
$$\kappa_8 \sqrt{\frac{a(n)}{n}} n  \frac{\gamma_*^2}{(n+N_*)^{L'\gamma_*}}\sum_{k=n^{\po}}^n (k+N_*)^{L'\gamma_*-2}$$
which, since $a(n)=o(n)$, converges to $0$ as $n\to \infty$.
 Combining (\ref{eq:rhor11}) and (\ref{eq:rhor12}), we have that, as $n\to \infty$, 
\begin{equation}\label{eq:rhor1bound}
\sigma_n \left\| \sum_{k=n^{\po}}^n \gamma_k \psi(n,k+1) r^{n,a}_k\right\| \stackrel{\PP}{\to} 0.
\end{equation}
Now, consider the term
$
\sigma_n \left\|\sum_{k=1}^{n} \gamma_k \psi(n,k+1) r^{n,b}_k\right\|.
$
Define for $n \in \NN$ and $1\le k\le n$ 
$$
\Xi^{n}_k \doteq \sum_{i=1}^{k} r^{n,b}_i, \;\; H^{n}_k \doteq \nabla h(\theta_*) + 2 (\mu^{n}_k)^T R^{(n,k)}_{\bullet} + (\rho^{n}_k)^T R^{(n,k)}_{\bullet},
$$
and apply the summation by parts formula to see that
\begin{equation}\label{eq:rhor21a}
\sum_{k=n^{\po}}^{n} \gamma_k \psi(n,k+1) r^{n,b}_k = \gamma_n \Xi^{n}_n  - \gamma_{n^{\po}} \psi(n,n^{\po}+1) \Xi^{n}_{n^{\po}-1} - \sum_{k=n^{\po}}^{n-1} \Xi^{n}_k\left( \gamma_{k+1} \psi(n,k+2) - \gamma_k \psi(n,k+1)\right).\\
\end{equation}
Thus
\begin{equation}\label{eq:rhor21}
\begin{split}
&\sigma_n \sum_{k=n^{\po}}^{n} \gamma_k \psi(n,k+1) r^{n,b}_k\\
&= \sigma_n (\gamma_n \Xi^{n}_{n} -  \gamma_{n^{\po}} \psi(n,n^{\po}+1)\Xi^{n}_{n^{\po}-1}) - \sigma_n \sum_{k = n^{\po}}^{n-1} \Xi^{n}_k\left( \gamma_{k+1} \psi(n,k+2) - \gamma_k \psi(n,k+1)\right)\\
&= \sigma_n (\gamma_n \Xi^{n}_{n}  -   \gamma_{n^{\po}} \psi(n,n^{\po}+1)\Xi^{n}_{n^{\po}-1})\\
&\quad\quad- \sigma_n \sum_{k= n^{\po}}^{n-1} \Xi^{n}_k\left( \gamma_{k+1} \psi(n,k+2) - \gamma_k \psi(n,k+2)\left( I + \gamma_{k+1} H^{n}_k\right)\right)\\
&= \sigma_n (\gamma_n \Xi^{n}_{n}  -   \gamma_{n^{\po}} \psi(n,n^{\po}+1)\Xi^{n}_{n^{\po}-1})  + \sigma_n \sum_{k=n^{\po}}^{n-1} \gamma_k \gamma_{k+1} \Xi^{n}_k \psi(n,k+2) \left( \gamma_*^{-1}I + H^{n}_k\right).\\
\end{split}
\end{equation}
Applying Corollary \ref{cor:matrixcor1} and Proposition \ref{prop:clt3}, for a.e. $\om$, we can find a $\kappa_9 \equiv \kappa_9(\om) \in (0,\infty)$ such that
\begin{equation}\label{eq:rhor22}
\begin{split}
\sigma_n \gamma_{n^{\po}}\| \psi(n,n^{\po}+1) \Xi^{n}_{n^{\po}-1}\| &\leq \kappa_9 \sigma_n \gamma_{n^{\po}} \exp\left(- L' \sum_{j = n^{\po}+1}^{n} \gamma_j\right).\\
\end{split}
\end{equation}
Using \eqref{eq:eq628}, the expression in the previous display can be bounded by
$$\kappa_{10}\sqrt{na(n)} \frac{\gamma_*}{n^{\po}+N_*-1} \left(\frac{N_*+n^{\po}+1}{N_*+n}\right)^{L'\gamma_*},$$
which tends to $0$ as $n \to \infty$. 
Also,  using Proposition \ref{prop:clt3} we see that for some $\kappa_{11}\in (0,\infty)$
\begin{equation}\label{eq:rhor23}
\sigma_n \gamma_n\| \Xi^{n}_n \| \leq \kappa_{11} \left( \frac{a(n)}{n}\right)^{1/2},
\end{equation}
which, since $a(n)=o(n)$, also goes to $0$ as $n \to \infty$. Finally, note that
$$
A \doteq \sup_{n \in \NN}\sup_{n^{\po} \leq k \leq n} \left\| \left( \gamma_*^{-1}I + H^{n}_k\right) \right\| < \infty \text{ a.s.},
$$
which, together with Proposition \ref{prop:clt3}(b), ensures that for a.e. $\om$, there is a $\kappa_{12} \equiv \kappa_{12}(\om) \in (0,\infty)$ such that 
\begin{equation}\label{eq:rhor24}
\begin{split}
\left\|  \sigma_n \sum_{k=n^{\po}}^{n-1} \gamma_k \gamma_{k+1} \Xi^{n}_k \psi(n,k+2) \left( \gamma_*^{-1}I + H^{n}_k\right) \right\| 
&\leq  \kappa_{12} \sigma_n \sum_{k=n^{\po}}^{n-1} \gamma_k^{2} \exp\left( -L' \sum_{j=k}^{n} \gamma_j\right)\\
\end{split}
\end{equation}
which, as for \eqref{eq:rhor12}, goes to 0 as $n \to \infty$. 
Upon combining (\ref{eq:rhor21}), (\ref{eq:rhor22}), (\ref{eq:rhor23}), and (\ref{eq:rhor24}), we  see that, as $n\to \infty$,
$$
\sigma_n \left\| \sum_{k=n^{\po}}^n \gamma_k \psi(n,k+1) r^{n,b}_k \right\| \stackrel{\PP}{\to} 0.
$$
This, along with (\ref{eq:rhor1bound}), shows \eqref{eq:eq449r} and completes the 
proof of the lemma.
\end{proof}

\subsection{Proof of Theorem \ref{thm:clt1}}\label{sec:proveclt}
In order to prove Theorem \ref{thm:clt1}, it will be convenient to consider the array $\{Z_{n,k}, n\in \NN, 1\le k \le n\}$ defined for $n\in \NN$ and $1\le k \le n$  as
\begin{equation}\label{eq:xarray}
Z_{n,k} \doteq \sigma_{n} \gamma_k \psi_*(n,k+1)e_k^{n}. 
\end{equation}
From \eqref{eq:mu2} we see that
\begin{equation}\label{eq:eq925}
\sum_{k=1}^{n} Z_{n, k} = \sigma_{n} \mu_{n}^{n}.
\end{equation}
The next lemma is used to verify that a conditional Lindeberg condition holds for  $\{Z_{n,k}\}$. 

\begin{lemma}\label{lem:lindeberg}
Suppose that $\gamma_* > (2L)^{-1}$.
Then, as $n \to \infty$, we have
$
\sum_{k=1}^{n} \EE \|Z_{n,k}\|^4 \to 0.
$
\end{lemma}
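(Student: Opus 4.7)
The plan is straightforward: bound $\EE\|Z_{n,k}\|^4$ directly by combining the exponential decay of $\psi_*(n,k+1)$ from \eqref{eq:eq104} with the fourth-moment estimate on $e_k^n$ from Proposition \ref{prop:clt2 B and C}, then sum and check that the resulting series is summable to something vanishing in $n$.

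Concretely, fix $L' \in ((2\gamma_*)^{-1}, L)$; this is possible by the hypothesis $\gamma_* > (2L)^{-1}$. Using \eqref{eq:eq104} and Proposition \ref{prop:clt2 B and C}, there is a constant $\kappa_1 \in (0,\infty)$ such that for all $n \in \NN$ and $1 \le k \le n$,
$$
\EE\|Z_{n,k}\|^4 \le \sigma_n^4 \gamma_k^4 \|\psi_*(n,k+1)\|^4 \EE\|e_k^n\|^4 \le \kappa_1 \frac{\sigma_n^4}{a(n)^2}\gamma_k^4 \exp\Bigl(-4L'\sum_{j=k+1}^n\gamma_j\Bigr).
$$
Since $\sigma_n^4/a(n)^2 = 1/\gamma_n^2$, and using the elementary bound \eqref{eq:eq628} (applied with $4L'$ in place of $L'$), we obtain
$$
\sum_{k=1}^n \EE\|Z_{n,k}\|^4 \le \frac{\kappa_2}{\gamma_n^2 (n+N_*)^{4L'\gamma_*}} \sum_{k=1}^n (k+N_*)^{4L'\gamma_* - 4}
$$
for some $\kappa_2 \in (0,\infty)$, where I used $\gamma_k \sim \gamma_*/(k+N_*)$.

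The main (very mild) obstacle is that the exponent $4L'\gamma_* - 4$ can be $\ge -1$ or $<-1$, so one must split cases. If $4L'\gamma_* - 4 > -1$ (equivalently $L'\gamma_* > 3/4$), the sum is of order $(n+N_*)^{4L'\gamma_* - 3}$, so using $\gamma_n^{-2} \sim n^2/\gamma_*^2$ the right side is of order $n^{-1}$. If $4L'\gamma_* - 4 < -1$, the sum converges to a constant and the right side is of order $n^{2 - 4L'\gamma_*}$, which tends to $0$ since $L'\gamma_* > 1/2$. The boundary case $4L'\gamma_* = 3$ introduces only a $\log n$ factor and is still $o(1)$. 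In every case the bound vanishes as $n \to \infty$, completing the proof.
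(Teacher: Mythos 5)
Your proof is correct and takes essentially the same route as the paper: choose $L'$ with $L'\gamma_* > 1/2$, combine \eqref{eq:eq104} with Proposition \ref{prop:clt2 B and C}, and reduce to estimating $\gamma_n^{-2}(n+N_*)^{-4L'\gamma_*}\sum_{k=1}^n(k+N_*)^{4L'\gamma_*-4}$. The only difference is that you spell out the case analysis on the sign of $4L'\gamma_*-3$, which the paper leaves implicit by simply asserting the final expression tends to zero.
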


\begin{proof}
From Proposition \ref{prop:clt2 B and C},  there is $\kappa_1 \in (0,\infty)$ such that for all $n \in \NN$ and all $1\le k \leq n$,
$
\EE\left[ \| e^{n}_k\|^4 \right] \leq \frac{\kappa_1}{a(n)^2}.
$
Now, fix $L' \in (0,L)$ such that $L'\gamma_*>1/2$, 
and recall from \eqref{eq:eq104} that for some $\kappa_2 \in (0,\infty)$, and for all $n \in \NN$ and $1\le k \leq n$,
$$
\| \psi_*(n,k+1)\| \leq \kappa_2 \exp\left( -L' \sum_{j=k}^{n}\gamma_j\right).
$$
Thus, for some $\kappa_3 \in (0,\infty)$ we  have that
\begin{multline}
\sum_{k=1}^{n} \EE  \| Z_{n,k}\|^{4} \le \sum_{k=1}^{n} \sigma_{n}^4 \gamma_k^4 \| \psi_*(n,k+1)\|^4 \EE \| e_k^{n}\|^4
\leq \kappa_3 \sum_{k=1}^{n} \sigma_{n}^4  \gamma_k^4 \exp\left( - 4L' \sum_{j=k}^{n} \gamma_j\right) \frac{1}{a(n)^2}\\
\le \kappa_3  \frac{n^2\gamma_*^4}{(N_*+n)^{4L'\gamma_*}}  \sum_{k=1}^{n} (N_*+k)^{4(L'\gamma_*-1)},
\end{multline}
which tends to $0$ as $n \to \infty$. The result follows.
\end{proof}
The next lemma is used in the proof of Theorem \ref{thm:clt1} to establish the form of the limiting covariance matrix $V$. 
Recall the matrix $U_*$ introduced in \eqref{eq:ustar}.
\begin{lemma}\label{lem:limitvar}
Suppose that $\gamma_* > (2L)^{-1}$. Define
$$
V^{(1)}_n \doteq \sigma_n^2 \sum_{k=1}^{n} \gamma_k^2 \psi_*(n,k+1)  \frac{U_*}{a(n)}\psi_*(n,k+1)^T, 
$$
and
$$
V^{(2)}_n \doteq \sigma_n^2 \sum_{k=1}^{n} \gamma_k^2 \psi_*(n,k+1) \left( \EE[ e^n_k (e^n_k)^T | \mathcal{F}^n_{k-1}] - \frac{U_*}{a(n)}\right)\psi_*(n,k+1)^T.
$$
Then $V_n^{(1)} \stackrel{\PP}{\to} V $ and $V_n^{(2)} \stackrel{\PP}{\to} 0$, where $V$ is the matrix given as the unique solution of the Lyapunov equation
\begin{equation}\label{eq:eq918}
U_* + \nabla h(\theta_*)V + V \nabla h(\theta_*)^T + \gamma_*^{-1}V = 0.
\end{equation}
\end{lemma}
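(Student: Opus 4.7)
The plan is to treat the deterministic piece $V_n^{(1)}$ and the stochastic fluctuation $V_n^{(2)}$ separately. Since $\sigma_n^2 / a(n) = \gamma_n^{-1}$, I would write $V_n^{(1)} = \gamma_n^{-1} W_n$ with $W_n \doteq \sum_{k=1}^n \gamma_k^2 \psi_*(n,k+1) U_* \psi_*(n,k+1)^T$. The telescoping identity $\psi_*(n+1,k+1) = (I + \gamma_{n+1} A)\psi_*(n,k+1)$ with $A \doteq \nabla h(\theta_*)$ gives the recursion $W_{n+1} = (I+\gamma_{n+1}A)W_n(I + \gamma_{n+1}A)^T + \gamma_{n+1}^2 U_*$. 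Dividing by $\gamma_{n+1}$ and using $\gamma_n/\gamma_{n+1} = 1 + \gamma_n/\gamma_*$ from \eqref{eq:stepsize}, after collecting terms one obtains
\begin{equation*}
V_{n+1}^{(1)} - V_n^{(1)} = \gamma_{n+1}\bigl[U_* + A V_n^{(1)} + V_n^{(1)} A^T + \gamma_*^{-1} V_n^{(1)}\bigr] + O(\gamma_{n+1}^2).
\end{equation*}
Setting $\mathcal{L}(Z) \doteq AZ + ZA^T + \gamma_*^{-1} Z$, the hypothesis $\gamma_* > (2L)^{-1}$ guarantees that the vectorized operator $\mathcal{L}$ has eigenvalues with real parts at most $-2L + \gamma_*^{-1} < 0$, so $\mathcal{L}$ is Hurwitz and \eqref{eq:eq918} admits the unique solution $V$. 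Writing $Z_n \doteq V_n^{(1)} - V$ and using $U_* + \mathcal{L}(V) = 0$, the recursion becomes $Z_{n+1} = Z_n + \gamma_{n+1}\mathcal{L}(Z_n) + O(\gamma_{n+1}^2)$. The strict contractivity of $I + \gamma_{n+1}\mathcal{L}$ for large $n$, together with $\sum_n \gamma_{n+1}^2 < \infty$ and a discrete Gronwall argument, yields $Z_n \to 0$ and hence $V_n^{(1)} \to V$.

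For $V_n^{(2)}$, substituting the decomposition from Proposition \ref{prop:clt2 A} and again using $\sigma_n^2/a(n) = \gamma_n^{-1}$ yields
\begin{equation*}
V_n^{(2)} = \gamma_n^{-1}\sum_{k=1}^n \gamma_k^2\,\psi_*(n,k+1)\bigl(D^{(1),n}_{k-1} + D^{(2),n}_{k-1}\bigr)\psi_*(n,k+1)^T.
\end{equation*}
The $D^{(1)}$-piece is handled by combining the Lipschitz bound $\|D^{(1),n}_{k-1}\| \le C_1 \|\theta^n_{k-1} - \theta_*\|$ of Proposition \ref{prop:clt2 A}(i) with the a.s.\ rate $\|\theta^n_{k-1}-\theta_*\| \le k^{-\beta}$ from Theorem \ref{thm:mainrate} for $k \ge n^p$; for the tail $k < n^p$ one uses uniform boundedness of $D^{(1),n}$ absorbed into the exponential decay \eqref{eq:eq104} of $\psi_*$. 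Both contributions are then $o(1)$ thanks to the prefactor $\gamma_n^{-1}$ being dominated by the product of $\gamma_k^2$ against the squared exponential decay of $\psi_*$.

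The main obstacle is the $D^{(2)}$-piece, since Proposition \ref{prop:clt2 A}(ii) only controls the cumulative sum $S^n_k \doteq \sum_{m=1}^k D^{(2),n}_{m-1}$, not the individual increments. I plan to carry out an Abel summation with $D^{(2),n}_{k-1} = S^n_k - S^n_{k-1}$. The boundary term collapses to $\gamma_n^{-1}\cdot \gamma_n^2 S^n_n = \gamma_n S^n_n$ (using $\psi_*(n,n+1)=I$), and its expected norm is $\le C n^{-\beta}$ by Proposition \ref{prop:clt2 A}(ii). The remaining sum pairs $S^n_k$ against the matrix increment $\gamma_{k+1}^2 \psi_*(n,k+2) U_{\mathrm{inc}} \psi_*(n,k+2)^T - \gamma_k^2 \psi_*(n,k+1) U_{\mathrm{inc}} \psi_*(n,k+1)^T$; expanding via $\psi_*(n,k+1) = (I + \gamma_{k+1}A)\psi_*(n,k+2)$ extracts an additional factor of $\gamma_{k+1}$ from the increment. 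Combined with $\EE\|S^n_k\| \lesssim \gamma_k^{-1}k^{-\beta}$ and the geometric decay $\|\psi_*(n,k+1)\|^2 \lesssim ((k+N_*)/(n+N_*))^{2L'\gamma_*}$ from \eqref{eq:eq104}, the resulting sum is shown to be $o(1)$. Carefully tracking the noncommutative matrix structure when differentiating $\psi_*(n,k+1) U_{\mathrm{inc}} \psi_*(n,k+1)^T$ in $k$ is the most delicate step of the argument.
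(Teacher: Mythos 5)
Your proposal takes essentially the same route as the paper's own proof: it reduces $V^{(1)}_n$ to a deterministic recursion driven by the Lyapunov operator (which the paper simply delegates to Fort \cite{for}*{Section 5.4}, whereas you derive the recursion explicitly), and it handles $V^{(2)}_n$ by splitting it into the $D^{(1)}$-piece (Lipschitz bound plus the a.s.\ rate from Theorem~\ref{thm:mainrate}) and the $D^{(2)}$-piece via Abel summation using Proposition~\ref{prop:clt2 A}(ii). One clarification worth making: after summation by parts, $\Xi^n_k = \sum_{m=1}^k D^{(2),n}_{m-1}$ sits \emph{inside} the sandwich, i.e.\ the summand is $\gamma_k^2\psi_*(n,k+1)\Xi^n_k\psi_*(n,k+1)^T - \gamma_{k+1}^2\psi_*(n,k+2)\Xi^n_k\psi_*(n,k+2)^T$, not a scalar $S^n_k$ times a fixed matrix increment as your $U_{\mathrm{inc}}$ notation suggests; the ``delicate step'' you flag is resolved cleanly by the elementary inequality $\|A_1BA_1^T - A_2BA_2^T\| \le \|A_1-A_2\|\|B\|(\|A_1\|+\|A_2\|)$ (borrowed from Delyon \cite{dey}), which extracts the extra factor of $\gamma_{k+1}$ exactly as you anticipate.
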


\begin{proof}
We begin by noting that the  Lyapunov equation in \eqref{eq:eq918} has a unique solution.
 Indeed, note that $U_*$ is nonnegative definite and $ \nabla h(\theta_*) + (2 \gamma_*)^{-1}I$ is Hurwitz, as 
$
-L + (2\gamma_*)^{-1}  < 0.
$
The unique solvability of \eqref{eq:eq918} is now an immediate consequence of \cite{horjoh}*{Theorem 2.2.3}. Next, noting that 
$$
V_n^{(1)} = \gamma_n^{-1} \sum_{k=1}^{n} \gamma_k^2 \psi_*(n,k+1) U_*\psi_*(n,k+1)^T,
$$
 the proof that $V_n^{(1)} \stackrel{\PP}{\to} V$ as $n \to \infty$ follows from \cite{for} (see Section 5.4, {\em Limiting Variance}, therein).
  Now, recall that with the matrices $\{D^{(1),n}_k\} $ and $\{D^{(2),n}_k\}$ introduced in Proposition \ref{prop:clt2 A}, we can write
$$
 \EE[ e^n_k (e^n_k)^T | \mathcal{F}^n_{k-1}] - \frac{U_*}{a(n)} = \frac{ D^{(1),n}_k}{a(n)} + \frac{D^{(2),n}_k}{a(n)}.
$$
Thus, $V^{(2)}_n = V^{(2,a)}_n  + V^{(2,b)}_n$, where
$$
V^{(2,a)}_n \doteq \sigma_n^{2}  \sum_{k=1}^{n} \gamma_k^2 \psi_*(n,k+1) \frac{D^{(1),n}_{k-1}}{a(n)}\psi_*(n,k+1)^T =  \frac{1}{\gamma_n} \sum_{k=1}^{n} \gamma_k^2 \psi_*(n,k+1)   D^{(1),n}_{k-1} \psi_*(n, k+1)^T,
$$
and
$$
V^{(2,b)}_n \doteq \sigma_n^{2} \sum_{k=1}^n \gamma_k^2 \psi_*(n,k+1) \frac{D^{(2),n}_{k-1} }{a(n)}\psi_*(n,k+1)^T =  \frac{1}{\gamma_n} \sum_{k=1}^{n} \gamma_k^2 \psi_*(n,k+1)   D^{(2),n}_{k-1} \psi_*(n, k+1)^T.
$$
Using  part (i) of Proposition \ref{prop:clt2 A}, we can find some $\kappa_1 \in (0,\infty)$ such that for all $n \in\NN$ and $1\le k \leq n$, $\| D^{(1),n}_{k-1}\| \leq \kappa_1 \| \theta^n_{k-1} - \theta_*\|$. Also, for each $\po \in (0,1)$, from Theorem \ref{thm:mainrate}, we can find  $\alpha > 0$ such that for a.e. $\om$, there is an $n_0(\om)\in \NN$ such that for all $n \ge n_0(\om)$ and $n^{\po} \leq k \leq n$, $\|\theta^n_{k-1}(\om) - \theta_*\| \leq k^{-\alpha}$. 
Fix $L' \in (0,L)$ such that $L'\gamma_*>1/2$. 
Then, for $n \ge n_0(\om)$ and  some $\kappa_2 \in (0,\infty)$, we have
\begin{equation}\label{eq:v2rate}
\begin{split}
\| V_n^{(2,a)}\| &\leq  \frac{1}{\gamma_n} \sum_{k=1}^{n} \gamma_k^2   \| \psi_*(n, k+1)\|^2 \|D^{(1),n}_{k}\| \\
&\leq \kappa_2 \frac{1}{\gamma_n} \sum_{k=1}^{n} \gamma_k^2 \exp\left( - 2 L' \sum_{i=k}^{n} \gamma_i\right)  \| \theta^n_k - \theta_* \|\\
&= \kappa_2 \frac{1}{\gamma_n} \left( \sum_{k=1}^{n^{\po}-1}\gamma_k^2 \exp\left( - 2 L' \sum_{i=k}^{n} \gamma_i\right)  \| \theta^n_k - \theta_* \|  + \sum_{k=n^{\po}}^n \gamma_k^2 \exp\left( - 2 L' \sum_{i=k}^{n} \gamma_i\right)  \| \theta^n_{k-1} - \theta_* \|  \right)\\
&\leq 2\kappa_2 \frac{1}{\gamma_n} \left( \sum_{k=1}^{n^{\po}-1} \gamma_k^{2} \exp\left( -2 L' \sum_{i=k}^n \gamma_i\right) + \sum_{k=n^{\po}}^n \gamma_k^2 \exp\left( - 2L' \sum_{i=k}^n \gamma_i\right) k^{-\alpha}\right).\\
\end{split}
\end{equation}
We can find $\kappa_3 \in (0,\infty)$ such that first term on the last line is bounded above by
$$
\kappa_3 (n+N_*)^{-(2L'\gamma_*-1)}(n^{\po}+N_*)^{2L'\gamma_*-1},
$$
and such that the second term is bounded above by
$$
\kappa_3 \frac{(N_*+n)}{(N_*+n)^{2L'\gamma_*}}\sum_{k=n^{\po}}^n k^{-\alpha} (k+N_*)^{2L'\gamma_* -2}.$$
Since $L'\gamma_*>1/2$, both of these terms converge to $0$ as $n\to \infty$ and
so we have that $\| V_n^{(2,a)}\| \to 0$ almost surely, as $n \to \infty$. Now, let
$
\Xi^{n}_k \doteq \sum_{j=1}^k D^{(2),n}_{j-1}
$.
Using  summation by parts we have
\begin{equation}\label{eq:v2b1}
\begin{split}
V^{(2,b)}_{n} &= \gamma_{n} \Xi^{n}_{n}  + \frac{1}{\gamma_{n}} \sum_{k=1}^{n-1}\bigg( \gamma_k^2  \psi_*(n,k+1) \Xi^{n}_k \psi_*(n,k+1)^T
- \gamma_{k+1}^2 \psi_*(n,k+2) \Xi^{n}_{k} \psi_*(n,k+2)^T\bigg).
\end{split}
\end{equation}
From Proposition \ref{prop:clt2 A}(ii) we have that for some  $\kappa_4, \beta \in (0,\infty)$ and all  $n \in \NN$,  $1\le k \leq n$, 
$
\gamma_k \EE\| \Xi^{n}_k \| \leq \kappa_4 k^{-\beta},
$
and so 
\begin{equation}\label{eq:eq1114}
	\gamma_{n} \Xi^{n}_{n} \stackrel{\PP}{\to} 0 \mbox{ as } n\to \infty.
\end{equation} 
Let 
$\tilde{V}^{(2,b)}_{n} = V^{(2,b)}_{n} - \gamma_{n} \Xi^{n}_{n}$.
 Following \cite{dey}, if we let $A_1 \doteq \gamma_{k} \psi_*(n,k+1)$, $A_2 \doteq \gamma_{k+1} \psi_*(n,k+2)$, and $B \doteq \Xi^n_k$, then using the inequality
$$
\| A_1 B A_1^T - A_2 B A_2^T\| = \| (A_1 - A_2)B A_1^T - A_2 B (A_2^T - A_1^T)\| \leq  \| A_1 - A_2 \| \|B\| ( \|A_1\| + \|A_2\|),
$$
we see that
\begin{multline}\label{eq:v2bound1}
\| \tilde{V}^{(2,b)}_{n}\|\\
\leq \frac{1}{\gamma_{n}} \sum_{k=1}^{n-1}\bigg(  \|  \gamma_{k} \psi_*(n,k+1) - \gamma_{k+1} \psi_*(n,k+2)\| \cdot \| \Xi^n_k\| 
\cdot \big( \|  \gamma_{k} \psi_*(n,k+1)\| + \|\gamma_{k+1} \psi_*(n,k+2)\|\big) \bigg).
\end{multline}
Furthermore,  using the fact that $\gamma_{k+1} - \gamma_{k+2} \le \gamma_{k+1}^2 /\gamma_*$, we can find some $\kappa_5 \in (0,\infty)$ such that
\begin{multline}\label{eq:v2bound2a}
\|  \gamma_{k} \psi_*(n,k+1) - \gamma_{k+1} \psi_*(n,k+2)\|  
 = \| \gamma_{k}( I + \gamma_{k+1}\nabla h(\theta_*)) - \gamma_{k+1}I\| \| \psi_*(n,k+2)\|  \\
 \leq\left(  \gamma_{k} - \gamma_{k+1} + \| \nabla h(\theta_*)\| \gamma_{k}\gamma_{k+1}\right)\|\psi_*(n,k+2)\|  
 \leq \kappa_5 \gamma_{k+1}^2 \| \psi_*(n,k+2)\|.
\end{multline}
Additionally, there is some $\kappa_6\in (0,\infty)$ such that for all $n \in \NN$ and $1\le k \leq n -1$, 
\begin{equation}\label{eq:v2bound3a}
\| \gamma_{k} \psi_*(n,k+1)\| + \| \gamma_{k+1} \psi_*(n,k+2)\| \leq \kappa_6 \gamma_{k+1} \| \psi_*(n,k+2)\|.
\end{equation}
Using (\ref{eq:v2bound1}), (\ref{eq:v2bound2a}) and (\ref{eq:v2bound3a}) we see  that
\begin{align*}
\| \tilde{V}^{(2,b)}_{n}\|  
&\leq  \kappa_5 \kappa_6 \frac{1}{\gamma_{n}} \sum_{k=0}^{n-1} \gamma_{k+1}^2 \|\psi_*(n,k+2)\|^2 \gamma_{k+1}\|\Xi^n_k\|. 
\end{align*}
Thus, from Proposition \ref{prop:clt2 A}(ii),  for some $\kappa_7, \kappa_8\in (0,\infty)$, 
\begin{equation*}
\begin{split}
\EE \| \tilde{V}^{(2,b)}_{n}\| \leq \kappa_7 \frac{1}{\gamma_{n}} \sum_{k=0}^{n-1} \gamma_{k+1}^{2}\exp\left( - 2L' \sum_{j=k+2}^{n} \gamma_j\right) k^{-\beta}
\leq \frac{\kappa_7\gamma_*}{(n+N_*)^{2L'\gamma_*-1}}  \sum_{k=0}^{n-1} (k+N_*)^{2(L'\gamma_*-1)} k^{-\beta},
\end{split}
\end{equation*}
which goes to $0$ as $n \to \infty$. Combining the above convergence with \eqref{eq:eq1114},
it follows from \eqref{eq:v2b1} that $V^{(2,b)}_{n} \stackrel{\PP}{\to} 0$ as $n \to \infty$. The result follows.
\end{proof}

We now complete the proof of Theorem \ref{thm:clt1}.

\noindent\emph{Proof of Theorem \ref{thm:clt1}.} From \eqref{eq:rho} we see that
$
\sigma_n (\theta_n^n - \theta_*) = \sigma_n \mu_n^n + \sigma_n \rho_n^n. 
$
Also, from Lemma \ref{lem:rholem1}, $\sigma_n \rho_n^n \stackrel{\PP}{\to} 0$ as $n \to \infty$. Thus, it suffices to show that 
 $\sigma_n \mu_n^n \stackrel{\mathcal{L}}{\to} \mathcal{N}(0, V)$ where $V$ is as in the statement of the theorem.
  Recall the martingale difference array $\{Z_{n,k}\}$ introduced in (\ref{eq:xarray}), and note from \eqref{eq:eq925} that
$
\sigma_n \mu^n_n = \sum_{k=1}^{n} Z_{n,k}.
$
In order to complete the proof we apply \cite{halhey}*{Corollary 3.1}. From Lemma \ref{lem:lindeberg} it follows that for each $\epsilon > 0$,
$$
\sum_{k=1}^n \EE \left[ \|Z_{n,k}\|^2 \mathbbm{1}_{\| Z_{n,k} \|\geq \epsilon} \;\middle|\; \mathcal{F}^{n}_{k-1}\right] \stackrel{\PP}{\to} 0,
$$
as $n \to \infty$. Additionally, if we let $\{V^{(1)}_n\}$, $\{V^{(2)}_n\}$ and $V$ be as in Lemma \ref{lem:limitvar}, then we have from this lemma that
$$
\sum_{k=1}^{n} \EE\left[ Z_{n,k} Z_{n,k}^T | \mathcal{F}^n_{k-1}\right] = V^{(1)}_n + V^{(2)}_n \stackrel{\PP}{\to} V,
$$
as $n \to \infty$. 
Therefore, the conditions of \cite{halhey}*{Corollary 3.1} are satisfied, proving that
$
\sum_{k=1}^{n} Z_{n,k} \stackrel{\mathcal{L}}{\to} \mathcal{N}(0,V),
$
as $n \to \infty$. The result follows.\qed

\section{Convergence of Algorithm II}\label{sec:branch}
In this section we prove the a.s. convergence of  Algorithm II introduced in Section \ref{sec:algdescription}.
Namely we provide the proof of Theorem \ref{thm:branchlln1}.
Recall that in this method, we  initialize the scheme with  a single particle and as time progresses, particles are added to the system and the total time occupation measure of all particles is used to update the SA estimate. 
The goal of the section is to prove that $\btheta_n \to \theta_*$ as $n \to \infty$, where
$\btheta_n$ is as introduced in \eqref{eq:branchalg2}. The proof idea is similar to that in \cite{benclo}. We introduce the continuous-time process $\{\hat{\btheta}(t)\}$ given by
$$
\hat{\btheta}(\tau_n + t) \doteq \btheta_n + t \frac{\btheta_{n+1}  - \btheta_n}{\tau_{n+1} - \tau_n}, \;\; t \in [0, \gamma_{n+1}), \; n \in \NN_0,
$$
where the sequence $\{\tau_n\}$ is defined in (\ref{eq:updatetime}).  As before, for each $\nu \in \mathcal{P}(\Delta^o)$, we denote the solution to the ODE (\ref{eq:phiflow}) by $\{\Phi_t(\nu)\}$. We now recall the notion of an asymptotic pseudo-trajectory for a single trajectory \cites{ben,benclo}. Recall the space $\mathcal{C}^0$ from Section \ref{sec:notat}.
\begin{defn}
A trajectory $X \in \mathcal{C}^0$ is an asymptotic pseudo-trajectory of $\Phi$ if for all $T > 0$,
\begin{equation}\label{eq:singleasymptotic}
    \begin{split}
    \limsup_{t\to\infty} \sup_{0\leq u\leq T}\| X(t + u) - \Phi_u(X(t))\| = 0.
    \end{split}
\end{equation}
\end{defn}
In order to prove Theorem \ref{thm:branchlln1} we will show that $\{\hat{\btheta}\}$ is a.s. an asymptotic pseudo-trajectory of $\Phi$. For this, we begin by 
decomposing algorithm's noise in the following manner: for each $n \in \NN_0$, $1 \leq i \leq a(n+1)$, and $x \in \Delta^o$, let

\begin{equation}\label{eq:delta-defs-branch}
\delta^{\ell,i}_{n+1}(x) \doteq \left\lbrace
\begin{aligned}
&\gamma_{n+1}\,Q[\btheta_n]_{\bX_{n+1}^i,x}\;
-\gamma_{n+1}\,\left(K[\btheta_n]Q[\btheta_n]\right)_{\bX_n^i,x}
&\qquad\ell = 1&\;\\
&\gamma_{n+1}\,\left(K[\btheta_n]Q[\btheta_n]\right)_{\bX_n^i,x}
\;-\gamma_{n}\,\left(K[\btheta_n]Q[\btheta_n]\right)_{\bX_n^i,x}
&\qquad\ell= 2&\;\\
&\gamma_{n}\,\left(K[\btheta_n]Q[\btheta_n]\right)_{\bX_n^i,x}
\;-\gamma_{n+1}\left(K[\btheta_{n+1}]Q[\btheta_{n+1}]\right)_{\bX_{n+1}^i,x}
&\qquad\ell = 3&\;\\
&\gamma_{n+1}\,\left(K[\btheta_{n+1}]Q[\btheta_{n+1}]\right)_{\bX_{n+1}^i,x}
\;-\gamma_{n+1}\,\left(K[\btheta_n]Q[\btheta_n]\right)_{\bX_{n+1}^i,x}
&\qquad\ell  = 4&\;
\end{aligned}
\right.
\end{equation}
where $Q(\cdot)$ denotes the solution to the Poisson equation in (\ref{eq:pois}).

For  $1 \leq \ell \leq 4$, let $$\delta^{\ell}_{n+1} \doteq  \frac{1}{a(n+1)}\sum_{i=1}^{a(n+1)}\delta^{\ell,i}_{n+1},$$  and observe that
with $\beps_{n+1}$ as in \eqref{eq:eq143r},
\begin{equation}\label{eq:error-decomp-branch}
\gamma_{n+1} \beps_{n+1} =   \sum_{\ell =1}^4 \delta^{\ell}_{n+1}.
\end{equation}
We will now establish  several bounds on the error terms. 
 The following lemma provides a bound for the martingale noise term, namely the term corresponding to $\ell=1$. Recall the function $m(\cdot)$ defined in \eqref{eq:mtdef}.
\begin{lemma}\label{lem:delta1branch}
For each $T \in (0,\infty)$,
$$
\lim_{t\to\infty} \sup_{0\leq u \leq T}\left\|  \sum_{j=m(t)}^{m(t+u)-1} \delta^1_{j+1}\right\| = 0.
$$
\end{lemma}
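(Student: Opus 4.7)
\textbf{Proof plan for Lemma \ref{lem:delta1branch}.}

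The plan is to mimic the strategy used in Lemma \ref{lem:boundu}, establishing a martingale structure for $\{\delta^1_{n+1}\}$ and then invoking Burkholder's inequality together with Borel--Cantelli. First I will observe that, conditioned on $\bF_n$, each of the first $a(n+1)$ particles at time $n+1$ evolves according to $K[\btheta_n]$ starting from $\bX^i_n$, so that by the Poisson equation \eqref{eq:pois},
\[
\EE\bigl[Q[\btheta_n]_{\bX^i_{n+1},\cdot}\,\big|\,\bF_n\bigr]
= (K[\btheta_n]Q[\btheta_n])_{\bX^i_n,\cdot}
\]
for each $1\le i\le a(n+1)$. Consequently $\EE[\delta^{1,i}_{n+1}\mid \bF_n]=0$ for each such $i$, and hence $\EE[\delta^1_{n+1}\mid\bF_n]=0$, so $\{\delta^1_{n+1}\}_{n\ge 0}$ is a $\bF_n$-martingale difference sequence. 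The uniform boundedness of $Q[\cdot]$ and $K[\cdot]Q[\cdot]$ over $\clp(\Delta^o)$ then gives, for some $\kappa\in(0,\infty)$, the a.s.\ bound $\|\delta^{1,i}_{n+1}\|\le \kappa\gamma_{n+1}$ and thus $\|\delta^1_{n+1}\|\le \kappa\gamma_{n+1}$ by the triangle inequality.

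Next I will apply Burkholder's inequality with an integer $q\ge 2$ chosen later, to obtain
\[
\EE\!\left[\sup_{m(t)\le k\le m(t+T)}\left\|\sum_{j=m(t)}^{k-1}\delta^1_{j+1}\right\|^q\right]
\le C_q \EE\!\left[\left(\sum_{j=m(t)}^{m(t+T)}\|\delta^1_{j+1}\|^2\right)^{q/2}\right]
\le C'_q\!\left(\sum_{j=m(t)}^{m(t+T)}\gamma_{j+1}^2\right)^{q/2}.
\]
Using estimates entirely analogous to \eqref{eq:eq256}--\eqref{eq:eq257}, namely that $m(t+T)-m(t)\le \kappa_1(T)\exp(t/\gamma_*)$ and $\gamma_{m(t)}\le \kappa_2 \exp(-t/\gamma_*)$, the inner sum is bounded by $\kappa_3(T)\gamma_{m(t)}\le \kappa_4(T)\exp(-t/\gamma_*)$. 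This yields, for some $C_q(T)\in(0,\infty)$,
\[
\EE\!\left[\sup_{0\le u\le T}\left\|\sum_{j=m(t)}^{m(t+u)-1}\delta^1_{j+1}\right\|^q\right]
\le C_q(T)\exp\bigl(-q t/(2\gamma_*)\bigr).
\]

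Choosing $q$ large enough that $qT/(2\gamma_*)>1$ and applying Markov's inequality together with the Borel--Cantelli lemma along the discrete sequence $t_k\doteq kT$, $k\in\NN$, shows that for any $\epsilon>0$,
\[
\PP\!\left(\sup_{0\le u\le T}\left\|\sum_{j=m(kT)}^{m(kT+u)-1}\delta^1_{j+1}\right\|>\epsilon \text{ i.o.\ in }k\right)=0.
\]
Finally, to pass from the discrete sequence $kT$ to arbitrary $t\to\infty$, I will run the same argument with $2T$ in place of $T$; given $t\in[kT,(k+1)T]$ and $u\in[0,T]$, the decomposition
\[
\sum_{j=m(t)}^{m(t+u)-1}\delta^1_{j+1}
=\sum_{j=m(kT)}^{m(t+u)-1}\delta^1_{j+1}-\sum_{j=m(kT)}^{m(t)-1}\delta^1_{j+1}
\]
expresses the quantity of interest as a difference of two partial sums whose upper indices both lie in $[m(kT),m(kT+2T)]$, so each is dominated by $\sup_{0\le v\le 2T}\|\sum_{j=m(kT)}^{m(kT+v)-1}\delta^1_{j+1}\|$, which tends to $0$ almost surely. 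The main (mild) obstacle is precisely this interpolation step between the grid $\{kT\}$ and general $t$, handled by the $2T$-window trick; the rest is a direct analogue of Lemma \ref{lem:boundu}, simpler here because we need only a.s.\ convergence rather than a sharp exponential rate.
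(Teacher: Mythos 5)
Your proof is correct and takes essentially the same approach as the paper: establish that $\{\delta^1_j\}$ is a martingale difference sequence with $\|\delta^1_{j+1}\| \leq \kappa\gamma_{j+1}$, then appeal to a standard martingale estimate---where the paper simply cites the proof of Proposition~4.4 in \cite{ben}, you spell out the Burkholder + Markov + Borel--Cantelli argument together with the $2T$-window interpolation. (A minor cosmetic point: the restriction $qT/(2\gamma_*)>1$ is unnecessary, since $\sum_{k} \exp(-qkT/(2\gamma_*))$ is a convergent geometric series for any $q>0$.)
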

\begin{proof}
Note that $\{\delta^1_j\}_{j=1}^{\infty}$ is adapted to $\{\bF_j\}_{j=1}^{\infty}$, 
and $\EE[\delta_{j+1}^1 | \bF_j] = 0$, where $\bF_j$ is as introduced above \eqref{eq:branchalg1}. Thus,  $\{\delta_j^1\}_{j=1}^{\infty}$ is a martingale difference sequence. Furthermore, there is some $\kappa_1 \in (0,\infty)$ such that 
$
\| \delta^1_{j+1}\| \leq \kappa_1 \gamma_{j+1}$ for each $j \in \NN_0$.
The result now follows by standard martingale estimates (see e.g. the proof of  Proposition 4.4 in \cite{ben}).
\end{proof}
The next result provides bounds for the remaining error terms.
\begin{lemma}\label{lem:deltasbranch}
For $\ell=2,3,4$ and $T > 0$, we have that
\begin{equation}\label{eq:eq1030r}
\lim_{t\to\infty} \sup_{0\leq u \leq T} \left\| \sum_{j = m(t)}^{m(t+u)-1} \delta^{\ell}_{j+1}\right\| = 0.
\end{equation}
\end{lemma}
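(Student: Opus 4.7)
My plan is to adapt the arguments of Lemmas \ref{lem:delta2}, \ref{lem:delta3}, and \ref{lem:delta4} to Algorithm II. The new complication will be that the number of particles $a(n+1)$ varies with $n$, so the telescoping identities require some care. For $\ell = 2$ and $\ell = 4$ the situation will be routine: using the uniform bound $\|(K[\btheta]Q[\btheta])_{x,\cdot}\| \le \kappa$ from \eqref{eq:eq319} and the monotonicity of $\{\gamma_j\}$, I will show $\|\delta^{2,i}_{j+1}\| \le \kappa(\gamma_j-\gamma_{j+1})$, so that averaging over $i$ and telescoping give $\sum_{j=m(t)}^{m(t+u)-1}\|\delta^2_{j+1}\| \le \kappa\gamma_{m(t)}\to 0$. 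For $\ell = 4$, Lipschitz continuity of $\theta\mapsto K[\theta]Q[\theta]$ combined with $\|\btheta_{n+1}-\btheta_n\|\le 2\gamma_{n+1}$ (immediate from \eqref{eq:branchalg1}) will yield $\|\delta^4_{j+1}\| \le \kappa\gamma_{j+1}^2$, whose tail is summable since $\gamma_j^2 \sim j^{-2}$.

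The hard part will be $\ell = 3$. The direct telescoping of Lemma \ref{lem:delta3} no longer works because $\delta^3_{j+1}$ and $\delta^3_{j+2}$ carry different normalizations ($1/a(j+1)$ vs.\ $1/a(j+2)$) and different summation ranges whenever a new particle is added. To handle this, I will set $b^i_j \doteq \gamma_j (K[\btheta_j]Q[\btheta_j])_{\bX_j^i,\cdot}$ and define
\[
f_j \doteq \frac{1}{a(j+1)} \sum_{i=1}^{a(j+1)} b^i_j, \qquad g_{j+1} \doteq \frac{1}{a(j+1)} \sum_{i=1}^{a(j+1)} b^i_{j+1},
\]
so that $\delta^3_{j+1} = f_j - g_{j+1}$. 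The decomposition $\delta^3_{j+1} = (f_j - f_{j+1}) + (f_{j+1} - g_{j+1})$ separates a clean telescoping piece from a branching correction; the telescoping piece sums along $j$ to $f_{m(t)} - f_{m(t+u)}$, which is bounded by $2\kappa\gamma_{m(t)} \to 0$ since $\|f_j\|\le\kappa\gamma_j$.

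The residual $f_{j+1} - g_{j+1}$ will vanish whenever $a(j+2) = a(j+1)$, and on branching steps (where $a(j+2) = a(j+1)+1$) a direct algebraic manipulation gives $f_{j+1} - g_{j+1} = (b^{a(j+2)}_{j+1} - f_{j+1})/a(j+1)$, yielding $\|f_{j+1} - g_{j+1}\| \le 2\kappa\gamma_{j+1}/a(j+1)$. Branching happens exactly at the times $j+2 = b(k)$ for $k \ge 2$, with $a(j+1) = k-1$ and $\gamma_{j+1} \sim b(k)^{-1} \sim k^{-1/\zeta}$, so the $k$-th correction is of order $k^{-(1+1/\zeta)}$. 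Because $\zeta \in (0,1)$ forces $1 + 1/\zeta > 2$, this is summable in $k$, and its tail over those $k$ with $b(k) \ge m(t)$ tends to zero as $t\to\infty$, uniformly in $u \in [0,T]$. Combining the three pieces will yield \eqref{eq:eq1030r}; the essential input for $\ell = 3$ will be the sparsity of branching times guaranteed by $a(n) \sim n^\zeta$ with $\zeta < 1$, without which the branching correction would fail to be summable.
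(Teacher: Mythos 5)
Your proposal is correct and, for the key case $\ell=3$, takes a genuinely different route from the paper's proof. For $\ell=2,4$ the paper simply states that the arguments of Lemmas \ref{lem:delta2} and \ref{lem:delta4} carry over and omits the details; your outline reproduces those arguments and they are routine, exactly as you anticipate. For $\ell=3$, the paper's proof starts from the order-of-summation swap \eqref{eq:eq1032}, rewriting the double sum $\sum_j \frac{1}{a(j+1)}\sum_i(\beta^i_j-\beta^i_{j+1})$ as a sum over particles first (each particle contributing from its birth time $b(i)$ onwards), and then decomposes the result into four pieces $\eta_0,\dots,\eta_3$ which are bounded individually using $b(a(m))\sim m$, $a(n)\sim n^\zeta$ and $\tau_k\sim\gamma_*\log k$. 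Your decomposition instead keeps the time index primary: writing $\delta^3_{j+1}=(f_j-f_{j+1})+(f_{j+1}-g_{j+1})$ isolates a telescoping piece (trivially bounded by $2\kappa\gamma_{m(t)}$) from a residual that is nonzero only at branching steps, and the algebraic identity $f_{j+1}-g_{j+1}=(b^{a(j+2)}_{j+1}-f_{j+1})/a(j+1)$ together with $b(k)\sim k^{1/\zeta}$ makes the $k$-th correction of order $k^{-(1+1/\zeta)}$, whose tail vanishes. This is cleaner and more transparent than the paper's four-term bookkeeping; it buys you a shorter argument in which the role of the branching schedule is completely localized. The one imprecision is your closing remark that $\zeta<1$ is what makes the branching correction summable: the series $\sum_k k^{-(1+1/\zeta)}$ in fact converges for every $\zeta>0$ (and even at $\zeta=1$ the exponent equals $2$), so the restriction $\zeta<1$ is not needed for Lemma \ref{lem:deltasbranch}; it enters the analysis elsewhere (e.g.\ the CLT in Theorem \ref{thm:branchclt1}). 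That does not create a gap in your proof.
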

\begin{proof}
The proofs for the cases when $\ell  = 2$ and $\ell = 4$ are similar to the proofs of Lemma \ref{lem:delta2} and Lemma \ref{lem:delta4}, respectively, and are omitted.  We now consider the case when $\ell = 3$. Recall the sequence $\{b(n), n \in \NN_0\}$ defined in (\ref{eq:kappadef}). Note that for an array $\{\alpha^i_j, i,j \in \NN\}$, we have, for $n\in \NN$,
\begin{multline}\label{eq:eq1032}
        \sum_{j=0}^{n-1} \sum_{i=1}^{a(j+1)} \alpha^i_{j+1} =  \sum_{j=0}^{n-1} \sum_{i=1}^{a(n)} \alpha^i_{j+1} \mathbbm{1}_{\{1 \leq i \leq a(j+1)\}}
         =  \sum_{i=1}^{a(n)} \sum_{j=0}^{n-1} \alpha^i_{j+1} \mathbbm{1}_{\{1 \leq i \leq a(j+1)\}}\\
         = \sum_{j=0}^{n-1} \alpha^1_{j+1}  + \sum_{i=2}^{a(n)} \sum_{j=0}^{n-1} \alpha^i_{j+1} \mathbbm{1}_{\{1 \leq i \leq a(j+1)\}}
         = \sum_{j=0}^{n-1} \alpha^1_{j+1}  + \sum_{i=2}^{a(n)} \sum_{j=0}^{n-1} \alpha^i_{j+1} \mathbbm{1}_{\{0\le b(i) \leq j+1 \}}\\
         = \sum_{j=0}^{n-1} \alpha^1_{j+1}  + \sum_{i=2}^{a(n)} \sum_{j=b(i)-1}^{n-1} \alpha^i_{j+1}.\\
    \end{multline}
Let, for $j \in \NN_0$, $\beta^i_j \doteq \gamma_j (K[\theta_j]Q[\theta_j])_{\bX^i_j, \cdot}$ so that,
$$
\delta^{3}_{j+1} = \frac{1}{a(j+1)} \sum_{i=1}^{a(j+1)}\delta^{3,i}_{j+1} = \sum_{i=1}^{a(j+1)} \frac{\beta^i_j - \beta^i_{j+1}}{a(j+1)}.
$$
Then, using \eqref{eq:eq1032},
\begin{equation*}
\begin{split}
 \sum_{j=0}^{n-1} \delta^3_{j+1} &=  \sum_{j=0}^{n-1} \frac{\beta^1_j - \beta^1_{j+1}}{a(j+1)} + \sum_{i=2}^{a(n)} \sum_{j=b(i)-1}^{n-1} \frac{\beta^i_j - \beta^i_{j+1}}{a(j+1)}\\
 &= \sum_{j=0}^{n-1} \frac{\beta^1_j - \beta^1_{j+1}}{a(j+1)}
+ \sum_{i=2}^{a(n)} \left[\frac{\beta^i_{b(i)-1}}{a(b(i))} - \frac{\beta^i_n}{a(n)} + \sum_{j=b(i)}^{n-1} \left(\frac{\beta^i_j}{a(j+1)} - \frac{\beta^i_j}{a(j)} \right)\right].
 \end{split}
\end{equation*}
 For $t, u > 0$, let
 \begin{equation}\label{eq:defnm1}
 n  \doteq n(t,u) \doteq m(t+u), \;\; m\doteq m(t),
 \end{equation}
 where $m(t)$ is given by (\ref{eq:mtdef}). Observe that
 
\begin{equation*}
     \begin{split}
   \sum_{j=m}^{n-1} \delta^3_{j+1}
   &= \sum_{j=0}^{n-1} \delta^3_{j+1} - \sum_{j=0}^{m-1} \delta^3_{j+1}\\
   &= \sum_{j=m}^{n-1} \frac{\beta^1_j - \beta^1_{j+1}}{a(j+1)} + \sum_{i=a(m)+1}^{a(n)} \frac{\beta^i_{b(i)-1}}{a(b(i))} 
     + \sum_{i=2}^{a(m)} \frac{\beta^i_m}{a(m)} - \sum_{i=2}^{a(n)} \frac{\beta^i_n}{a(n)} \\
   &\qquad+ \sum_{i=2}^{a(n)} \sum_{j=b(i)}^{n-1} \left(\frac{\beta^i_j}{a(j+1)} - \frac{\beta^i_j}{a(j)}\right)
      -  \sum_{i=2}^{a(m)} \sum_{j=b(i)}^{m-1} \left(\frac{\beta^i_j}{a(j+1)} - \frac{\beta^i_j}{a(j)}\right).
   \end{split}
\end{equation*}
  The last expression can be rewritten as
  \begin{equation*}
     \begin{split}
   &\sum_{j=m}^{n-1} \frac{\beta^1_j - \beta^1_{j+1}}{a(j+1)} + \sum_{i=a(m)+1}^{a(n)} \frac{\beta^i_{b(i)-1}}{a(b(i))} 
     + \sum_{i=2}^{a(m)} \frac{\beta^i_m}{a(m)} - \sum_{i=2}^{a(n)} \frac{\beta^i_n}{a(n)}  \\
   &\qquad+ \sum_{i=2}^{a(m)} \sum_{j=m}^{n-1} \left(\frac{\beta^i_j}{a(j+1)} - \frac{\beta^i_j}{a(j)}\right)
     + \sum_{i=a(m)+1}^{a(n)}\sum_{j=b(i)}^{n-1} \left(\frac{\beta^i_j}{a(j+1)} - \frac{\beta^i_j}{a(j)}\right).
  \end{split}
\end{equation*}  
 Define 
\begin{align*}
\eta_0 (n,m) \doteq \sum_{j=m}^{n-1} \frac{\beta^1_j - \beta^1_{j+1}}{a(j+1)},&
 \;\; \eta_1(n,m) \doteq  \sum_{i=a(m)+1}^{a(n)} \frac{\beta^i_{b(i)-1}}{a(b(i))}  + \sum_{i=2}^{a(m)} \frac{\beta^i_m}{a(m)}  - \sum_{i=2}^{a(n)} \frac{\beta^i_n}{a(n)},\\
 \eta_2(n,m) \doteq \sum_{i=2}^{a(m)}\sum_{j=m}^{n-1} \left(\frac{\beta^i_j}{a(j+1)} - \frac{\beta^i_j}{a(j)}\right),& \;\;
 \eta_3(n,m) \doteq \sum_{i=a(m)+1}^{a(n)} \sum_{j=b(i)}^{n-1} \left(\frac{\beta^i_j}{a(j+1)} - \frac{\beta^i_j}{a(j)}\right).
\end{align*}
 Then
 \begin{equation}\label{eq:delta3etadecomp}
 \sum_{j=m}^{n-1}\delta^3_{j+1} = \sum_{\ell = 0}^3 \eta_{\ell}(n,m).
 \end{equation}
 We begin by considering $\eta_3(n,m)$. Let
 $
 \kappa_1 \doteq \sup_{\substack{\theta \in \mathcal{P}(\Delta^o)\\ x \in \Delta^o}} \| K[\theta]Q[\theta]_{x,\cdot}\|,
 $
 so that $\|\beta^i_j\| \leq \kappa_1 \gamma_j$. Note that there is some $\kappa_2 \in (0,\infty)$ such that
 $
\frac{\gamma_j}{a(j)} - \frac{\gamma_j}{a(j+1)} \leq \kappa_2 \frac{\gamma_j}{a(j)^{2}}.
 $
 Using the last two estimates, the form of $\gamma_j$, and the fact that $a(j) \sim j^{\zeta}$, we can find some $\kappa_3 \in (0,\infty)$ such that 
 \begin{equation}\label{eq:eq226}
\left\| \frac{\beta^i_j}{a(j+1)} - \frac{\beta^i_j}{a(j)} \right\| \leq \frac{\kappa_3}{j^{1 + 2\zeta}}.
\end{equation}
Note that if $1 \leq i \leq a(n)$, then $b(i) \leq n$, so $b(i)^{-2 \zeta} \geq n^{- 2 \zeta}$. It follows that there is some $\kappa_4 \in (0,\infty)$ such that
 \begin{equation}\label{eq:eta3split}
         \| \eta_3(n,m) \| \leq \kappa_3 \sum_{i=a(m)+1}^{a(n)} \sum_{j=b(i)}^{n} \frac{1}{j^{1 + 2 \zeta}}
         \leq \kappa_4 \sum_{i=a(m)+1}^{a(n)}\left( \frac{1}{(b(i)-1)^{2 \zeta}} - \frac{1}{n^{2 \zeta}}\right)
        \leq \kappa_4 \frac{a(n)}{b(a(m))^{2\zeta}}.
 \end{equation}
 Note that there are some $c_1,c_2 \in (0,\infty)$ and some $t_1 \in (0,\infty)$ such that if $t \geq t_1$, then 
 $
 c_1 m(t)^{\zeta} \leq a(m(t)) \leq c_2 m(t)^{\zeta}.
 $
From the definition of $a(\cdot)$ and $b(\cdot)$, we see that
$
b(a(m)) \sim m.
$
Fix $\epsilon \in (0,1)$. Then, there is a $t_2 \in (t_1,\infty)$ such that if $t \geq t_2$, then $b(a(m(t))) \geq (1-\epsilon) m(t)$. It follows that if $t \geq t_2$, then
\begin{equation}\label{eq:banamdiff}
\begin{split}
\frac{a(n(t,u))}{b(a(m(t))^{2\zeta}} &\leq \frac{c_2 n(t,u)^{\zeta}}{(1 - \epsilon)^{2\zeta} m(t)^{2 \zeta}}.
\end{split}
\end{equation}
Recall that $
 \tau_k = \sum_{j=1}^{k} \gamma_j \sim \gamma_* \log(k).
 $
 From this and the definition of $m(\cdot)$ it follows that, with $\alpha = 1/\gamma_*$,
for some $t_3 \in (t_2,\infty)$ and $c_3, c_4 \in (0,\infty)$, and all $t \geq t_3$, 
$$
c_3 \exp(4\alpha t/5) \leq m(t) \leq c_4\exp(3\alpha t/2).
$$
Combining the previous display and (\ref{eq:banamdiff}), we see that if $t \geq t_3$, then
\begin{equation}\label{eq:eta3to02}
    \frac{a(n(t,u))}{b(a(m(t))^{2\zeta}} \leq \frac{c_2 n(t,u)^{\zeta}}{(1 - \epsilon)^{2 \zeta} m(t)^{2 \zeta}} 
    \leq \frac{c_2 c_4^{\zeta} e^{3\zeta \alpha(t + u)/2}}{c_3^{2 \zeta}(1 - \epsilon)^{2 \zeta} e^{8 \zeta \alpha t/5}}  
    \leq \frac{c_2  c_4^{\zeta} e^{2\zeta \alpha u}}{c_3^{2 \zeta} (1 - \epsilon)^{2 \zeta} e^{\zeta \alpha t/10}}.\\
  \end{equation}
Let
$
\kappa_6 \doteq \kappa_6(T)  \doteq \frac{c_2 c_4^{\zeta} e^{2\zeta \alpha T}}{c_3^{2 \zeta}(1 - \epsilon)^{2 \zeta}}.
$
Then combining (\ref{eq:eta3split}), (\ref{eq:banamdiff}), and (\ref{eq:eta3to02}), we see that if $t \geq t_3$, then
$
\sup_{0\leq u \leq T}\|\eta_3(n(t,u) ,m(t))\| \leq \frac{\kappa_6}{e^{\zeta \alpha t/10}},
$
and so, as $t \to \infty$,
\begin{equation}\label{eq:eta3to0final}
\sup_{0\leq u \leq T}\|\eta_3(n(t,u) ,m(t))\| \to 0.
\end{equation}

We now consider $\eta_2(n,m)$. From \eqref{eq:eq226}, there is some $\kappa_7 \in (0,\infty)$ such that
\begin{equation}\label{eq:eta2new1}
        \| \eta_2(n,m)\| \leq \kappa_3 \sum_{i=2}^{a(m)} \sum_{j=m}^{n-1} \frac{1}{j^{1 + 2\zeta}}
        \leq \kappa_7 \sum_{i=1}^{a(m)} \frac{1}{m^{2 \zeta}} \\
        = \kappa_7\, a(m) \frac{1}{m^{2 \zeta}},
\end{equation}
which shows that as $t \to \infty$,
\begin{equation}\label{eq:eta2to0final}
\sup_{0 \leq u \leq T}\| \eta_2(n(t, u),m(t))\| \to 0.
\end{equation}

We now consider $\eta_1(n,m)$. We can find some $\kappa_8 \in (0,\infty)$ such that
\begin{equation}\label{eq:eta1aa2}
        \| \eta_1(n,m)\| \leq \kappa_8 \left[\sum_{i=a(m)+1}^{a(n)} \frac{\gamma_{b(i)-1}}{a(b(i))} +  \sum_{i=1}^{a(n)} \frac{\gamma_n}{a(n)} + \sum_{i=1}^{a(m)} \frac{\gamma_m}{a(m)} \right]
        = \kappa_8 \left[ \gamma_m + \gamma_n + \sum_{i=a(m)+1}^{a(n)} \frac{\gamma_{b(i)-1}}{a(b(i))} \right].
\end{equation}
Note that $a(b(i)) = i$, so there is some $\kappa_9 \in (0,\infty)$ such that
$$
\sum_{i=a(m)+1}^{a(n)} \frac{\gamma_{b(i)-1}}{a(b(i))} \leq \kappa_9 \sum_{i=a(m)+1}^{a(n)} \frac{1}{i b(i)}.
$$
Additionally, $b(i) \sim i^{1/\zeta}$, so we can find some $t_4 \in (0, \infty)$ such that if $t \geq t_4$ and $i \geq a(m(t))$, then
$
\frac{1}{b(i)} \leq (1 + \epsilon)\frac{1}{i^{1/\zeta}}.
$
It follows that there is some $\kappa_{10} \in (0,\infty)$ such that if $t \geq t_4$, 
\begin{equation}\label{eq:eta1aa1}
\sum_{i=a(m)+1}^{a(n)} \frac{\gamma_{b(i)-1}}{a(b(i))} \leq \kappa_9 (1+ \epsilon) \sum_{i=a(m)}^{a(n)} \frac{1}{i^{1 + 1/\zeta}}
\leq \kappa_{10}  \frac{1}{a(m)^{1/\zeta}}.
\end{equation}
Combining (\ref{eq:eta1aa2}) and (\ref{eq:eta1aa1}), we see that
\begin{equation}\label{eq:eta1to0final}
\sup_{0\leq u \leq T} \| \eta_1(n(t,u), m(t))\| \to 0,
\end{equation}
as $t \to \infty$. Finally,  consider $\eta_0(n,m)$. We have, for some $\kappa_{11}\in(0,\infty)$, that
        $\| \eta_0(n,m)\| \leq \kappa_{11} \sum_{j=m}^{n} \frac{1}{j^{1 + \zeta}}$,
so it follows that as $t \to \infty$.
\begin{equation}\label{eq:eta0to0}
\sup_{0\leq u \leq T} \| \eta_0(n(t,u),m(t))\| \to 0.
\end{equation}

Combining (\ref{eq:delta3etadecomp}), (\ref{eq:eta3to0final}), (\ref{eq:eta2to0final}), (\ref{eq:eta1to0final}), and (\ref{eq:eta0to0}) we see that the convergence in \eqref{eq:eq1030r} holds with $\ell=3$.
The result follows.
\end{proof}

Define the continuous-time process $\{\bar{\epsilon}(t), t \geq 0\}$ by
$$
\bar{\epsilon}(\tau_n + t) \doteq \beps_{n+1}, \; \; t \in [0,\gamma_{k+1}), \; n \in \NN_0,
$$
and define
$$
\Delta(t,T) \doteq \sup_{0\leq u \leq T}\left\| \int_t^{t+u} \bar{\epsilon}(s)ds\right\|, \;\; t, T \geq 0.
$$

We now complete the proof of Theorem \ref{thm:branchlln1}.

\noindent\emph{Proof of Theorem \ref{thm:branchlln1}:}
Fix $T\in (0,\infty)$. Then, for some $\kappa_1 \in (0,\infty)$, and all $t>0$.
\begin{equation}\label{eq:deltatT1}
\begin{split}
\Delta(t,T) &\leq \sup_{0\leq u \leq T} \left\|\int_{\tau_{m(t)}}^{\tau_{m(t+u)}} \bar{\epsilon}(s)ds\right\| +  \sup_{0\leq u \leq T} \left\|\int_{\tau_{m(t)}}^{t} \bar{\epsilon}(s)ds\right\|  + \sup_{0\leq u \leq T} \left\|\int_{\tau_{m(t+u)}}^{t+u} \bar{\epsilon}(s)ds\right\|\\
&\leq  \sum_{k=1}^{4} \sup_{0\leq u\leq T} \left\| \sum_{j=m(t)}^{m(t+u)-1} \delta^k_{j+1}\right\| + 2 \kappa_1 \gamma_{m(t)}.
\end{split}
\end{equation}
From Lemma \ref{lem:delta1branch} and Lemma \ref{lem:deltasbranch} we now have that
$
\lim_{t\to\infty} \Delta(t,T) = 0.
$
From \cite{ben3}*{Proposition 4.1} it  follows that  $\{\hat{\btheta}(t)\}$ is an asymptotic pseudo-trajectory. 
%
%
%
%
%
%
The result now follows exactly as in the proof of \cite{benclo}*{Theorem 1.2}. \qed

\section{Central Limit Theorem for Algorithm II}
\label{sec:cltalg2}

In this section we provide the proof of Theorem \ref{thm:branchclt1}.
In Section \ref{sec:branchcoven}, we characterize the covariance structure of the error sequence $\{\be_n\}$. 
In Section \ref{sec:branchmun} we present some results for the linearized evolution sequence $\{\bmu_n\}$, and in Section \ref{sec:branchrhon} we characterize the asymptotic behavior of  the discrepancy sequence $\{\brho_n\}$. The proof of Theorem \ref{thm:branchclt1} is completed in Section \ref{sec:branchclt}.

We begin by studying the covariance structure of the error terms.

\subsection{Covariance structure of the error terms}\label{sec:branchcoven} 
Recall the collection of matrices $\{F_{\theta}(z) : \theta \in \mathcal{P}(\Delta^o), z \in \Delta^o\}$ defined by \eqref{eq:eq444}
%
and let $U_*$ be the $d \times d$ matrix introduced in \eqref{eq:ustar}.
The following result gives an expression for the conditional covariance matrix of $\{\be_{n+1}\}$ introduced in \eqref{eq:benrn}. The proof is similar to the proof of Proposition \ref{prop:clt2 A}.

\begin{proposition}\label{prop:covarlem}
 For each $n \in \NN_0$ and $x, y \in \Delta^o$,
			\begin{equation}\label{eq:branchcovid}
			\EE\left[ \be_{n+1} (x) \be_{n+1} (y) | \mathcal{F}_n \right] = \frac{1}{a(n+1)} \left( (U_*)_{x,y} + (D^{(1)}_{n})_{x,y} + (D^{(2)}_n)_{x,y}\right),
			\end{equation}
			where $D^{(1)}_{n}$ and $D^{(2)}_n$ are $d\times d$ random matrices satisfying
			the following:
			\begin{enumerate}[(i)]
			\item for some $C_1 \in (0,\infty)$ and all $n \in \NN$, $\| D^{(1)}_n \| \leq C_1 \| \btheta_n - \theta_*\|$.
			\item for some $C_2,\beta \in (0,\infty)$ and for all $n \in \NN$,
			$$
			\gamma_{n+1} \EE \left\| \sum_{m=1}^{n} D^{(2)}_{m-1}\right\| \leq C n^{-\beta}.
			$$
			\end{enumerate}
\end{proposition}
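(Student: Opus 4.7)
The plan is to mimic closely the strategy of Proposition \ref{prop:clt2 A}, with the one structural change that the number of particles contributing to $\be_{n+1}$ is $a(n+1)$ and this varies with $n$. The crucial fact enabling a parallel treatment is that, conditional on $\bF_n$, the positions $\{\bX^i_{n+1}\}_{1 \le i \le a(n+1)}$ are independent with $\bX^i_{n+1}\sim K[\btheta_n]_{\bX^i_n,\cdot}$, which is visible from the product form of the conditional law in Algorithm II. Using this conditional independence, the cross-term calculation from \eqref{eq:eproduct2}--\eqref{eq:eproduct4} carries over to give
\begin{equation*}
\EE\bigl[\be_{n+1}(x)\be_{n+1}(y) \bigm| \bF_n\bigr]
= \frac{1}{a(n+1)}\cdot\frac{1}{a(n+1)}\sum_{i=1}^{a(n+1)} F_{\btheta_n}(\bX^i_n)_{x,y},
\end{equation*}
which I would then decompose as $U_* + D^{(1)}_n + D^{(2)}_n$ by setting
\begin{align*}
D^{(1)}_n &\doteq \sum_{w\in\Delta^o}\bigl(F_{\btheta_n}(w)\pi(\btheta_n)_w - F_{\theta_*}(w)(\theta_*)_w\bigr),\\
D^{(2)}_n &\doteq \frac{1}{a(n+1)}\sum_{i=1}^{a(n+1)} F_{\btheta_n}(\bX^i_n) - \sum_{w\in\Delta^o} F_{\btheta_n}(w)\pi(\btheta_n)_w,
\end{align*}
yielding \eqref{eq:branchcovid}.

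Claim (i) then follows exactly as in Proposition \ref{prop:clt2 A}, from $\pi(\theta_*) = \theta_*$ together with the joint boundedness and Lipschitz continuity of $\theta\mapsto K[\theta]$, $\theta\mapsto Q[\theta]$ and $\theta\mapsto\pi(\theta)$. For claim (ii), I would invoke Lemma \ref{lem:qsmooth} and set $U_\theta \doteq Q[\theta]F_\theta$, so that $(I - K[\theta])U_\theta(x) = F_\theta(x) - \sum_w F_\theta(w)\pi(\theta)_w$. This furnishes the splitting $D^{(2)}_{m-1} = D^{(2,a)}_{m-1} + D^{(2,b)}_{m-1}$, where
\begin{align*}
D^{(2,a)}_{m-1} &\doteq \frac{1}{a(m)}\sum_{i=1}^{a(m)}\bigl[U_{\btheta_{m-1}}(\bX^i_m) - (K[\btheta_{m-1}]U_{\btheta_{m-1}})(\bX^i_{m-1})\bigr],\\
D^{(2,b)}_{m-1} &\doteq \frac{1}{a(m)}\sum_{i=1}^{a(m)}\bigl[U_{\btheta_{m-1}}(\bX^i_{m-1}) - U_{\btheta_{m-1}}(\bX^i_m)\bigr].
\end{align*}
By the conditional independence of the particles, $\{D^{(2,a)}_{m-1}\}$ is a $\{\bF_m\}$-martingale difference sequence with $\EE\|D^{(2,a)}_{m-1}\|^2 = O(1/a(m)) = O(m^{-\zeta})$. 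Burkholder's inequality then yields $\EE\|\sum_{m=1}^n D^{(2,a)}_{m-1}\|^2 = O(n^{1-\zeta})$, and Cauchy--Schwarz gives $\gamma_{n+1}\,\EE\|\sum_{m=1}^n D^{(2,a)}_{m-1}\| = O(n^{-(1+\zeta)/2})$.

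The hard part will be controlling the telescoping piece $\sum_{m=1}^n D^{(2,b)}_{m-1}$: unlike in Algorithm I, both the weight $1/a(m)$ and the index range of the inner particle sum depend on $m$, so one cannot simply pull either outside the outer sum as in the proof of Proposition \ref{prop:clt2 A}. My plan is to apply the index-swapping identity \eqref{eq:eq1032}, as was done in the proof of Lemma \ref{lem:deltasbranch} for $\ell = 3$, rewriting the double sum as a sum over particles $i$ of inner sums over $m\ge b(i)$. Within each $i$, summation by parts produces uniformly bounded boundary contributions at the birth time $b(i)$ and the terminal time $n$, plus correction terms arising from the $\theta$-dependence of $U_\theta$ (controlled by $\|\btheta_m - \btheta_{m-1}\|\le 2\gamma_m$ via the Lipschitz continuity of $\theta\mapsto U_\theta$) and from the varying weights ($|a(m+1)^{-1} - a(m)^{-1}|\lesssim a(m)^{-2}$). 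Summing these contributions across $i$ and $m$, leveraging $a(m)\sim m^\zeta$ and reswapping where convenient, should yield the pathwise bound $\|\sum_{m=1}^n D^{(2,b)}_{m-1}\| = O(n^{1-\zeta})$; multiplying by $\gamma_{n+1}\sim n^{-1}$ gives the decay $O(n^{-\zeta})$. Combining with the martingale estimate establishes claim (ii) for any $\beta \in (0, \zeta)$.
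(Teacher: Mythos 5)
Your proposal is correct and follows essentially the same route as the paper: the covariance identity via conditional independence and $F_\theta$, the $D^{(1)}/D^{(2)}$ split, the Poisson-equation-based decomposition $D^{(2)} = D^{(2,a)} + D^{(2,b)}$, the Burkholder/Cauchy--Schwarz bound $O(n^{-(1+\zeta)/2})$ for the martingale piece, and the same final conclusion $\beta < \zeta$ all match. The one place you organize the computation differently is the telescoping piece $\sum_{m=1}^n D^{(2,b)}_{m-1}$: you propose to invoke the index-swapping identity \eqref{eq:eq1032} (as in Lemma \ref{lem:deltasbranch}) and Abel-sum the inner $m$-sum for each fixed particle $i$, collecting per-particle boundary contributions at $b(i)$ and $n$, Lipschitz-in-$\theta$ corrections, and weight-change corrections $|a(m+1)^{-1}-a(m)^{-1}|\lesssim a(m)^{-2}$. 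The paper instead telescopes the outer $m$-sum directly (see \eqref{eq:newd2split1}--\eqref{eq:newd2split4}), pairing $\frac{1}{a(m)}\sum_{i\le a(m)} U_{\btheta_{m-1}}(\bX^i_{m-1})$ against $\frac{1}{a(m-1)}\sum_{i\le a(m-1)} U_{\btheta_{m-2}}(\bX^i_{m-1})$, and then splits the cross-difference into the new-particle term, the weight-change term, and the $\theta$-change term. Both bookkeepings produce the same three correction categories, and after re-aggregating both give $\|\sum_{m=1}^n D^{(2,b)}_{m-1}\| = O(\sum_m a(m)^{-1}) = O(n^{1-\zeta})$, with the weight-change terms dominating. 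Two small notes: your phrase ``uniformly bounded boundary contributions'' undersells the useful estimate --- the birth-time boundary term for particle $i$ is $O(1/i)$ (since $a(b(i))=i$), so their sum is only $O(\log a(n))$ and is harmless; and as you anticipated, the $\theta$-change corrections contribute only $O(\sum_m \gamma_m)=O(\log n)$, subdominant to the weight-change piece for all $\zeta\in(0,1)$.
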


\begin{proof}
By a similar argument as in the proof of Proposition \ref{prop:clt2 A}, we have that
\begin{equation}\label{eq:neweproduct4}
\begin{split}
&\EE[\be_{n+1}(x)\be_{n+1}(y) |\mathcal{F}_n] = a(n+1)^{-2} \sum_{i=1}^{a(n+1)} F_{\btheta_n}(\bX_n^i)_{x,y}.
\end{split}
\end{equation}
We can write
$$
F_{\btheta_n}(\bX_n^{i})_{x,y} = (U_*)_{x,y} + (D^{(1),i}_{n})_{x,y} + (D^{(2),i}_n)_{x,y},
$$
where
$$
D_n^{(1),i} \doteq \sum_{w\in \Delta^o} (F_{\btheta_n}(w)\pi(\btheta_n)_w - F_{\theta_*}(w)(\theta_*)_w)
\mbox{ and }
D_n^{(2),i} \doteq F_{\btheta_n}(\bX_n^{i}) - \sum_{w\in \Delta^o}F_{\btheta_n}(w)\pi(\btheta_n)_w.
$$
The identity in (\ref{eq:branchcovid}) is obtained by defining
$$
D_n^{(1)} \doteq \frac{1}{a(n+1)} \sum_{i=1}^{a(n+1)} D^{(1),i}_n = \sum_{w\in\Delta^o} \left(F_{\btheta_n}(w)\pi(\btheta_n)_w - F_{\theta_*}(w)(\theta_*)_w \right),
$$
 and 
 $$
D_n^{(2)} \doteq  \frac{1}{a(n+1)} \sum_{i=1}^{a(n+1)} D^{(2),i}_n,
$$
and using the identity in (\ref{eq:neweproduct4}).  The proof of   (i) is similar to the proof of part  (i) of Proposition \ref{prop:clt2 A} and is omitted. We now show that   (ii) holds as well.

\emph{Proof of  (ii):} 
 As in the proof of part (ii) of Proposition \ref{prop:clt2 A}, it suffices to show that there is some $C_2, \beta > 0$ such that 
 for each $(u,v) \in \Delta^o \times \Delta^o$ and all $n \in \NN$,
$$
\gamma_{n+1}  \EE \left\| \sum_{m=1}^{n} (D_m^{(2)})_{u,v} \right\| \leq C_2 (n+1)^{-\beta}.
$$ 
Fix $(u,v) \in \Delta^o \times \Delta^o$ and, once more abusing notation, denote $(D_m^{(2)})_{u,v}$ as $D_m^{(2)}$. Using the Poisson equation \eqref{eq:pois} we have, as in the proof of Proposition \ref{prop:clt2 A}, with
$$
D^{(2,a),i}_n \doteq U_{\btheta_n}(\bX_{n+1}^{i}) - (K[\btheta_n]U_{\btheta_n})(\bX_{n}^{i}),\;\;  D^{(2,b),i}_n \doteq U_{\btheta_n}(\bX_n^{i}) - U_{\btheta_n}(\bX_{n+1}^{i}),
$$
that $D_n^{(2),i} = D_n^{(2,a),i} + D_{n}^{(2,b),i}$. Now, let
$$
D^{(2,a)}_n \doteq \frac{1}{a(n+1)}\sum_{i=1}^{a(n+1)}D_n^{(2,a),i},\;\;  D^{(2,b)}_n \doteq \frac{1}{a(n+1)}\sum_{i=1}^{a(n+1)}D_n^{(2,b),i},
$$
so that $D^{(2)}_n = D^{(2,a)}_n + D^{(2,b)}_n$. Note that with $\mathcal{G}_n \doteq \mathcal{F}_{n+1}$, $\{D^{(2,a)}_n\}_{n=1}^{\infty}$ is a $\{\mathcal{G}_n\}_{n=1}^{\infty}$-martingale increment sequence. Consequently, we can apply Burkholder's inequality and use a conditioning argument to show that, for some $\kappa_1 \in (0,\infty)$,
and all $n\in \NN$, 
\begin{equation}\label{eq:branchd2a}
\gamma_{n+1} \EE\left| \sum_{m=1}^{n} D^{(2,a)}_{m-1}\right | \leq \kappa_1 \gamma_{n+1} \left( \sum_{m=1}^{n} \frac{1}{a(m)}\right)^{1/2}.
\end{equation}

We now consider $\{D_n^{(2,b)}\}$. Observe that 
\begin{equation}\label{eq:newd2split1}
\begin{split}
\left| \sum_{m=1}^n D^{(2,b)}_{m-1} \right| 
&\leq \left| \frac{1}{a(1)} \sum_{i=1}^{a(1)}U_{\theta_1}(\bX^i_1) - \frac{1}{a(n)} \sum_{i=1}^{a(n)} U_{\btheta_n}(\bX^i_{n+1})\right|\\
&\qquad + \left| \sum_{m=2}^n \left[ \frac{1}{a(m)} \sum_{i=1}^{a(m)}U_{\theta_m}(\bX^i_m) - \frac{1}{a(m-1)} \sum_{i=1}^{a(m-1)} U_{\theta_{m-1}}(\bX^i_m)\right]\right|,
\end{split}
\end{equation}
and
\begin{equation}\label{eq:newd2split2}
\begin{split}
 &\left| \sum_{m=2}^n \left[ \frac{1}{a(m)} \sum_{i=1}^{a(m)}U_{\theta_m}(\bX^i_m) - \frac{1}{a(m-1)} \sum_{i=1}^{a(m-1)} U_{\theta_{m-1}}(\bX^i_m)\right]\right|\\
 &\leq \left| \sum_{m=2}^{n} \frac{1}{a(m)}\sum_{i=a(m-1)+1}^{a(m)} U_{\theta_m}(\bX^i_m)\right| + \left| \sum_{m=2}^{n} \sum_{i=1}^{a(m-1)} \left[ \frac{1}{a(m)} U_{\theta_m}(\bX^i_m) - \frac{1}{a(m-1)} U_{\theta_{m-1}}(\bX^i_m)\right]\right|.
\end{split}
\end{equation}
Letting
$
\kappa_2  \doteq \sup_{\theta \in \mathcal{P}(\Delta^o), z \in \Delta^o}|U_{\theta}(z)| < \infty,
$
and noting that $0\leq a(m) - a(m-1) \leq 1$ for all $m \in \NN$, we  see that
\begin{equation}\label{eq:newd2split3}
\begin{split}
 \left| \sum_{m=2}^{n} \frac{1}{a(m)}\sum_{i=a(m-1)+1}^{a(m)} U_{\theta_m}(\bX^i_m)\right| 
 \leq  \kappa_{2} \sum_{m=2}^{n} \frac{1}{a(m)}.
\end{split}
\end{equation}
Since the maps $\theta \mapsto K(\theta)$ and $\theta \mapsto Q(\theta)$ are bounded Lipschitz maps, there is a 
$\kappa_3 \in (0, \infty)$ such that for all $x \in \Delta^o$ and $\theta,\theta' \in \mathcal{P}(\Delta^o)$, 
$
|U_{\theta}(x) - U_{\theta'}(x)| \leq \kappa_3 \|\theta - \theta'\|,
$
so there is some $\kappa_{4} \in (0,\infty)$ such that
\begin{multline}\label{eq:newd2split4}
\left| \sum_{m=2}^{n} \sum_{i=1}^{a(m-1)} \left[ \frac{1}{a(m)} U_{\theta_m}(\bX^i_m) - \frac{1}{a(m-1)} U_{\theta_{m-1}}(\bX^i_m)\right]\right|\\
 \leq \sum_{m=2}^{n} \sum_{i=1}^{a(m-1)} \Bigg[\left| \frac{1}{a(m)}U_{\theta_m}(\bX^i_m) - \frac{1}{a(m-1)}U_{\theta_m}(\bX^i_m)\right|
+ \left| \frac{1}{a(m-1)}\left(U_{\theta_m}(\bX^i_m) -  U_{\theta_{m-1}}(\bX^i_m)\right)\right|\Bigg]\\
 \leq \sum_{m=2}^{n}\sum_{i=1}^{a(m-1)} \left[ \kappa_{2} \frac{1}{a(m-1)a(m)} + 2 \kappa_3  \gamma_m \frac{1}{a(m-1)}\right]
 \leq \kappa_{4}  \sum_{m=1}^n \left[\frac{1}{a(m)} + \gamma_{m}\right].
\end{multline}

Additionally,
\begin{equation}\label{eq:newd2split6}
 \left| \frac{1}{a(1)} \sum_{i=1}^{a(1)}U_{\theta_1}(\bX^i_1) - \frac{1}{a(n)} \sum_{i=1}^{a(n)} U_{\btheta_n}(\bX^i_{n+1})\right| \leq 2 \kappa_2.
\end{equation}
Combining  (\ref{eq:newd2split1}),(\ref{eq:newd2split2}),  (\ref{eq:newd2split3}),   (\ref{eq:newd2split4}), and (\ref{eq:newd2split6}) we see that there is some $\kappa_5 \in (0,\infty)$ such that
\begin{equation}\label{eq:newd2bfinal}
\gamma_{n+1} \left\| \sum_{m=1}^{n} D^{(2,b)}_{m-1}\right\| \leq \kappa_{5}\gamma_{n+1} \left(1 + \sum_{m=1}^{n} \left[ \frac{1}{a(m)} + \gamma_m \right]\right).
\end{equation}
The result follows on combining  (\ref{eq:branchd2a}) and (\ref{eq:newd2bfinal}).
\end{proof}

The next result provides a useful bound for the moments of the error sequence $\{\be_n\}$. The proof is similar to the proof of Proposition \ref{prop:clt2 B and C} and is omitted.

\begin{proposition}\label{prop:newemoments}
There is some $C > 0$ such that for all $n \in \NN$,
$
\EE \| \be_n\|^4 \leq C/a(n)^{2}.
$
\end{proposition}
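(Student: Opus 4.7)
My plan is to follow the template of the proof of Proposition \ref{prop:clt2 B and C}, adapting it to the time-varying particle count while checking that the branching rule does not disrupt the conditional independence that makes the argument work. First I would set $\xi_i(x) \doteq Q[\btheta_n]_{\bX_{n+1}^i,x} - (K[\btheta_n]Q[\btheta_n])_{\bX_n^i,x}$ for $1 \le i \le a(n+1)$, so that $\be_{n+1} = \frac{1}{a(n+1)} \sum_{i=1}^{a(n+1)} \xi_i$, and record three basic facts: (i) conditionally on $\bF_n$, the positions $\bX^1_{n+1},\ldots,\bX^{a(n+1)}_{n+1}$ are independent with $\bX^i_{n+1}$ having law $K[\btheta_n]_{\bX^i_n,\cdot}$, irrespective of whether branching occurs, since a newly spawned particle receives index $a(n+2)$ and does not appear in the sum defining $\be_{n+1}$; (ii) $\EE[\xi_i(x)\mid\bF_n]=0$ from the definition of $K[\cdot]$ acting on $Q[\cdot]$; and (iii) $\|\xi_i\|_\infty \le C_0$ uniformly in $n,i$, since the maps $\theta\mapsto K[\theta]$ and $\theta\mapsto Q[\theta]$ are bounded on $\mathcal{P}(\Delta^o)$.

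Second, I would expand
\[
\EE\|\be_{n+1}\|^4
= \frac{1}{a(n+1)^4}\sum_{x,y\in\Delta^o}\sum_{i_1,i_2,i_3,i_4}\EE\bigl[\xi_{i_1}(x)\xi_{i_2}(x)\xi_{i_3}(y)\xi_{i_4}(y)\bigr].
\]
Conditioning on $\bF_n$ and combining (i) with (ii), any tuple $(i_1,i_2,i_3,i_4)$ containing an index that does not equal any of the other three contributes zero. Hence only tuples with matched indices (patterns of type $\{i,i,i,i\}$ or $\{i,i,j,j\}$ in some order) survive, and there are at most $O(a(n+1)^2)$ such tuples. Combining with the uniform bound from (iii) and the finiteness of $|\Delta^o|$ yields $\EE\|\be_{n+1}\|^4 \le C_1/a(n+1)^2$ for some $C_1\in(0,\infty)$. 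The stated bound then follows on noting that $a(n+1)/a(n)\to 1$ by the assumptions on $\{a(n)\}$.

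I do not anticipate a serious obstacle here; the main thing to verify carefully is point (i), namely that the branching mechanism does not couple the particles $\bX^1_{n+1},\ldots,\bX^{a(n+1)}_{n+1}$ appearing in $\be_{n+1}$. This is immediate from the explicit form of the transition law in the algorithm description: the joint conditional density factors as $\prod_{i=1}^{a(n+1)}K[\btheta_n]_{x_i,y_i}$ (times an extra factor for the new particle in the branching case), so the first $a(n+1)$ coordinates are conditionally independent given $\bF_n$ regardless of whether or not a branching event occurs at time $n+1$.
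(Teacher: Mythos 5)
Your argument is exactly the one the paper has in mind: it simply adapts the proof of Proposition \ref{prop:clt2 B and C} by writing $\be_{n+1}$ as an average of $a(n+1)$ conditionally independent, mean-zero, uniformly bounded terms $\xi_i$ given $\bF_n$, then observing that in the expansion of $\EE\|\be_{n+1}\|^4$ only matched-index tuples survive, giving $O(a(n+1)^2)$ nonzero terms out of $a(n+1)^4$. Your explicit verification that the branching mechanism preserves the conditional independence of $\bX^1_{n+1},\ldots,\bX^{a(n+1)}_{n+1}$ (the newly spawned particle has index $a(n+2)$ and is excluded from $\be_{n+1}$) is the right thing to check and is indeed what makes the cross-referenced argument carry over; the final remark about $a(n+1)/a(n)\to 1$ is unnecessary since you have already proved the bound with index $n+1$.
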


\subsection{The linearized evolution sequence}\label{sec:branchmun}

The goal of this section is to study the linearized evolution sequence given in (\ref{eq:mudef1}). The following lemma says that $\bmu_n$ given by the linearized evolution in \eqref{eq:mudef1} converges a.s. to 0. The proof is similar to the proof of \cite{for}*{Proposition 5.1} and Proposition \ref{prop:muprop}, and is therefore omitted. 

\begin{lemma}\label{lem:newmuto0}
As $n \to \infty$ we have $\bmu_n \to 0$ a.s.
\end{lemma}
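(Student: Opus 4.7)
The plan is to mimic the argument used for Proposition \ref{prop:muprop}(i) in the array setting, exploiting the fact that $\nabla h(\theta_*)$ is Hurwitz and that $\be_n$ is a martingale difference sequence whose conditional second moments decay like $1/a(n)$.

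First, by iterating the recursion \eqref{eq:mudef1}, one obtains the explicit representation
\begin{equation*}
\bmu_{n+1} \;=\; \sum_{k=1}^{n+1} \gamma_k\, \psi_*(n+1,k+1)\,\be_k,
\end{equation*}
where $\psi_*(m,k)$ is as in \eqref{eq:psistarmatrixdef}. The key ingredients are the exponential decay bound \eqref{eq:eq104}, namely $\|\psi_*(n+1,k+1)\| \le C(L')\exp(-L'\sum_{j=k+1}^{n+1}\gamma_j)$ for any $L' \in (0,L)$, together with the moment bound $\EE\|\be_k\|^2 \le C/a(k)$ from Proposition \ref{prop:newemoments} (or from Proposition \ref{prop:covarlem}) and the assumption $a(k)\sim k^{\zeta}$ with $\zeta \in (0,1)$.

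Next I would establish an $L^2$ decay estimate. Since $\{\be_k\}_{k\ge 1}$ is a martingale difference sequence with respect to $\{\bF_k\}$, the martingale summands in the representation above are orthogonal, so
\begin{equation*}
\EE\|\bmu_{n+1}\|^2 \;\le\; \sum_{k=1}^{n+1} \gamma_k^2 \,\|\psi_*(n+1,k+1)\|^2\, \EE\|\be_k\|^2 \;\le\; C \sum_{k=1}^{n+1} \frac{\gamma_k^2}{a(k)} \exp\!\Bigl(-2L'\!\!\sum_{j=k+1}^{n+1}\gamma_j\Bigr).
\end{equation*}
Choosing $L'$ so that $2L'\gamma_*>1$, substituting $\gamma_j = \gamma_*/(j+N_*-1)$, and using $a(k)\sim k^\zeta$, the sum is dominated by $\kappa\, (n+N_*)^{-2L'\gamma_*}\sum_{k=1}^{n+1}(k+N_*)^{2L'\gamma_*-2-\zeta}$, which is $O(n^{-(1+\zeta)})$. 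Thus $\EE\|\bmu_n\|^2 \to 0$ at a polynomial rate.

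To upgrade the $L^2$ convergence to an almost sure statement, I would follow the same pseudo-trajectory style analysis that was used to prove Theorem \ref{thm:mainrate} and Proposition \ref{prop:muprop}(i). Namely, let $\hat{\bmu}$ be the continuous-time interpolation of $\{\bmu_n\}$ with respect to the times $\{\tau_n\}$. The recursion \eqref{eq:mudef1} is an Euler-type discretization of the linear ODE $\dot{y}(t) = \nabla h(\theta_*)\,y(t)$ driven by the noise sequence $\{\be_n\}$. Since $\nabla h(\theta_*)$ is Hurwitz (all orbits of the ODE decay exponentially to $0$), it suffices to show that $\hat{\bmu}$ is almost surely an asymptotic pseudo-trajectory of this linear flow. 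Writing $\bar\be$ for the piecewise-constant interpolation of $\{\be_n\}$ and
\begin{equation*}
\bar\Delta(t,T) \;\doteq\; \sup_{0\le u\le T}\Bigl\|\int_{t}^{t+u}\bar\be(s)\,ds\Bigr\|, \qquad T>0,
\end{equation*}
the usual Gr\"onwall-type argument (as in Lemma \ref{lem:deltaasy}) reduces the claim to showing $\bar\Delta(t,T)\to 0$ almost surely for every $T>0$. To establish this, I would decompose $\gamma_{n+1}\be_{n+1}$ exactly as in \eqref{eq:delta-defs-branch}--\eqref{eq:error-decomp-branch}, bound the martingale piece $\sum_j \delta^1_{j+1}$ using Burkholder's inequality together with $\|\delta^1_{j+1}\|\le\kappa\gamma_{j+1}/\sqrt{a(j+1)}$ (a consequence of Proposition \ref{prop:newemoments}), and bound the remaining three pieces using arguments identical to those in Lemma \ref{lem:deltasbranch}. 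The martingale contribution along the grid $\{\tau_n\}$ has summable variances by the computation above, so Borel--Cantelli yields the almost sure decay of $\bar\Delta$; the pseudo-trajectory property then gives $\hat{\bmu}(t)\to 0$ almost surely, and in particular $\bmu_n \to 0$ almost surely.

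The main obstacle is the martingale estimate on $\sum \delta^{1}_{j+1}$: one needs a quantitative bound (in $j$ and $n$) tight enough to sum over the grid $\{\tau_n\}$ and apply Borel--Cantelli, which is where the factor $1/a(j+1)\sim j^{-\zeta}$ coming from Proposition \ref{prop:newemoments} enters and gives the necessary improvement over the bare $O(\gamma_j)$ bound used in the Algorithm I case.
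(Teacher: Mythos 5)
Your high-level approach — interpolate $\bmu_n$ in continuous time and show that the interpolation is an asymptotic pseudo-trajectory of the globally contracting linear flow $\dot{y} = \nabla h(\theta_*)y$ — is exactly what the paper intends (it points to Proposition \ref{prop:muprop}(i) and to the proof of Theorem \ref{thm:mainrate}, both pseudo-trajectory arguments), and the outline is fundamentally sound. But two points are off. First, the recursion \eqref{eq:mudef1} for $\bmu_n$ involves only the martingale noise $\be_{n+1}$, which satisfies $\gamma_{n+1}\be_{n+1} = \delta^1_{n+1}$; the decomposition \eqref{eq:delta-defs-branch}--\eqref{eq:error-decomp-branch} applies to the full noise $\gamma_{n+1}\beps_{n+1}$, and the pieces $\delta^2,\delta^3,\delta^4$ encode the remainder $\br_{n+1}$, which does not enter \eqref{eq:mudef1} at all. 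So there are no ``remaining three pieces'' to handle via Lemma \ref{lem:deltasbranch}; the whole reduction is to Lemma \ref{lem:delta1branch} alone.

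Second, and more substantively, the claimed almost sure bound $\|\delta^1_{j+1}\| \leq \kappa\gamma_{j+1}/\sqrt{a(j+1)}$ does not hold. Averaging over $a(j+1)$ particles shrinks the \emph{second moment} of $\be_{j+1}$ by a factor $1/a(j+1)$ (Proposition \ref{prop:newemoments}), but each particle summand is only $O(1)$, so the valid almost sure bound is $\|\be_{j+1}\| \leq \kappa$, hence $\|\delta^1_{j+1}\| \leq \kappa\gamma_{j+1}$. Your closing paragraph asserts the $1/\sqrt{a}$ factor is the ``necessary improvement'' that makes Borel--Cantelli go through, but that factor is not available a.s.\ and, fortunately, is not needed: the bare bound $\kappa\gamma_{j+1}$ together with the standard martingale estimate of \cite{ben}*{Proposition 4.4} (exactly as in the paper's Lemma \ref{lem:delta1branch}) already gives $\bar{\Delta}(t,T)\to 0$ a.s.\ for every $T>0$; combined with the a.s.\ boundedness of $\hat{\bmu}$ (which follows from the representation $\bmu_{n+1} = \sum_{k\le n+1}\gamma_k\psi_*(n+1,k+1)\be_k$, the exponential bound \eqref{eq:eq104}, and $\|\be_k\|\le\kappa$), the pseudo-trajectory property yields $\bmu_n\to 0$ a.s. Finally, the $L^2$ step is not needed for the a.s.\ conclusion, and $2L'\gamma_* > 1$ alone does not give the stated rate $O(n^{-(1+\zeta)})$ — that requires $2L'\gamma_* > 1+\zeta$, as in the proof of Proposition \ref{prop:newmu1}.
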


The next result is used in the proof of Proposition \ref{prop:rhordiff}. It provides a useful bound on the moments of the linearized evolution sequence. Recall the quantity $\sigma_{n} = \sqrt{a(n)/\gamma_n}$ defined in  
\eqref{eq:signdef}.

\begin{proposition}\label{prop:newmu1}
Suppose that $\gamma_*> L^{-1}$. Then there is some $C \in (0,\infty)$ such that for all $n \in \NN_0$, $\EE\| \bmu_{n+1}\|^2 \leq C/\sigma_{n+1}^2$.
\end{proposition}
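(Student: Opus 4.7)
The plan is to iterate the recursion \eqref{eq:mudef1} to obtain a martingale-type representation of $\bmu_{n+1}$, exploit orthogonality of the noise terms $\be_k$, and combine the exponential decay estimate \eqref{eq:eq104} with the moment bound of Proposition \ref{prop:newemoments}.

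First, iterating \eqref{eq:mudef1} with $\bmu_0 = 0$ yields, with $\psi_*$ as in \eqref{eq:psistarmatrixdef},
\[
\bmu_{n+1} = \sum_{k=1}^{n+1} \gamma_k \, \psi_*(n+1, k+1) \, \be_k.
\]
Conditioned on $\bF_n$, the particle $\bX^i_{n+1}$ has distribution $K[\btheta_n]_{\bX^i_n, \cdot}$ for each $1 \le i \le a(n+1)$; combined with the Poisson identity \eqref{eq:pois} (as already used in the proof of Lemma \ref{lem:delta1branch}), this gives $\EE[\be_{n+1} \mid \bF_n] = 0$, so $\{\be_k\}_{k \ge 1}$ is a martingale difference sequence adapted to $\{\bF_k\}$. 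Since $\psi_*(n+1, k+1)$ is deterministic, the summands are pairwise orthogonal in $L^2$, and hence
\[
\EE\|\bmu_{n+1}\|^2 \le \sum_{k=1}^{n+1} \gamma_k^2 \, \|\psi_*(n+1, k+1)\|^2 \, \EE\|\be_k\|^2.
\]

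Next, Proposition \ref{prop:newemoments} and Jensen's inequality give $\EE\|\be_k\|^2 \le C'/a(k)$ for some $C'\in (0,\infty)$. Since $\gamma_* > L^{-1}$ and $\zeta \in (0,1)$, we have $2L\gamma_* > 2 > 1 + \zeta$, so we may pick $L' \in (0, L)$ with $2 L' \gamma_* > 1 + \zeta$. With this choice \eqref{eq:eq104} gives $\|\psi_*(n+1, k+1)\| \le C_1 \exp(-L' \sum_{j=k+1}^{n+1}\gamma_j)$, and using the explicit form of $\gamma_j$ from \eqref{eq:stepsize} one obtains the standard estimate
\[
\exp\!\left(-2 L' \sum_{j=k+1}^{n+1}\gamma_j \right) \le C_2 \left(\frac{k + N_*}{n + N_* + 1}\right)^{2 L' \gamma_*}.
\]
Combining these bounds with $a(k) \sim k^\zeta$, one finds a $C_3 \in (0,\infty)$ with
\[
\EE\|\bmu_{n+1}\|^2 \le C_3 \, (n + N_*)^{-2 L' \gamma_*} \sum_{k=1}^{n+1} (k + N_*)^{2 L' \gamma_* - 2 - \zeta}.
\]
Because $2 L' \gamma_* - 2 - \zeta > -1$, the sum is $O(n^{2 L' \gamma_* - 1 - \zeta})$, so $\EE\|\bmu_{n+1}\|^2 = O(n^{-1-\zeta})$. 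Since $\sigma_{n+1}^{-2} = \gamma_{n+1}/a(n+1) \sim n^{-1-\zeta}$, the proposition follows.

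The main subtlety is the choice of $L'$: because $a(k)$ grows like $k^\zeta$ rather than being held constant (as in the array setting of Proposition \ref{prop:muprop}(ii)), summability of the series forces $2 L' \gamma_* > 1 + \zeta$. This is exactly what the hypothesis $\gamma_* > L^{-1}$ (strictly stronger than the $\gamma_* > (2L)^{-1}$ sufficient for Algorithm I's linearized evolution) guarantees, mirroring the role played by the same condition in the CLT of Theorem \ref{thm:branchclt1}; the remaining manipulations are routine once this choice has been made.
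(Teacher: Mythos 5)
Your proof is correct and follows essentially the same route as the paper's: iterate \eqref{eq:mudef1} to get the representation $\bmu_{n+1} = \sum_{k=1}^{n+1}\gamma_k\psi_*(n+1,k+1)\be_k$ (the paper's \eqref{eq:branchmurecursive}), use the martingale-difference orthogonality to reduce to $\sum_k \gamma_k^2\|\psi_*(n+1,k+1)\|^2\EE\|\be_k\|^2$, invoke Proposition \ref{prop:newemoments} and \eqref{eq:eq104}, and sum. The only cosmetic difference is that you note the weaker requirement $2L'\gamma_* > 1+\zeta$ suffices for summability, whereas the paper simply picks $L'$ with $L'\gamma_* > 1$ (which implies your condition since $\zeta < 1$); both are available under $\gamma_* > L^{-1}$, so there is no substantive divergence.
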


\begin{proof}
Recall the collection of matrices $\{\psi_*(n,k), n \in \NN, k \leq n+1\}$ defined in (\ref{eq:psistarmatrixdef}). A simple recursive argument shows that
\begin{equation}\label{eq:branchmurecursive}
\bmu_{n+1} = \sum_{k=1}^{n+1}  \gamma_{k} \psi_*(n+1,k+1) \be_k.
\end{equation}
Proposition \ref{prop:newemoments} ensures that there is some $\kappa_1  \in(0,\infty)$ such that $\EE \|\be_k\|^2 \leq \kappa_1/a(k)$ for all $k \in \NN$. Fix $L' \in (0,L)$ such that $L'\gamma_* > 1$, and use (\ref{eq:eq104}) and (\ref{eq:branchmurecursive}) to find some $\kappa_2(L') \in (0,\infty)$ such that
$$
\EE \| \bmu_{n+1}\|^2 \leq  \kappa_2(L') \sum_{k=1}^{n+1} \gamma_k^2  \exp\left( -2L' \sum_{j=k+1}^{n+1} \gamma_j\right) \frac{1}{a(k)}.
$$
Note that there is some $\kappa_3 \in (0, \infty)$ such that
\begin{equation}\label{eq:bnupperbound}
\sigma_n^{2} \leq \kappa_3 n^{1 + \zeta},\;\; n \in \NN,
\end{equation}
so there is some $\kappa_4 \in (0,\infty)$ such that
$$
\sigma_{n+1}^{2} \EE \| \bmu_{n+1}\|^2 \leq  \kappa_4 n^{1 + \zeta} \sum_{k=1}^{n+1} \gamma_k^{2 + \zeta} \exp\left( - 2L' \sum_{j=k+1}^{n+1}\gamma_j\right).
$$
The right side is bounded since $\gamma_*L'>1$. The result follows.
\end{proof}

\subsection{Analysis of the discrepancy sequence}\label{sec:branchrhon}

The goal of this section is to show that the discrepancy sequence $\{\brho_n\}$ converges to $0$ in probability under the central limit scaling. As in Section \ref{sec:rhocalc1}, for each $n \in\NN$, we let $R^{(n)}_{\bullet}$ denote the tensor $R^{(n)}_i \doteq \mathbf{R}^{(n)}_i(\btheta_n)$, $1\le i\le d$,
where $\mathbf{R}^{(n)}_i(\theta)$, for $\theta \in \clp(\Delta^o)$, is defined as in \eqref{eq:eq523}.
Note that this tensor satisfies
\begin{equation}\label{eq:branchR}
h(\btheta_n) = \nabla h(\theta_*)(\btheta_n - \theta_*)  + (\btheta_n - \theta_*)^T R^{(n)}_{\bullet}(\btheta_n - \theta_*).
\end{equation}
For each $1 \leq k \leq n$, define the matrix $\psi(n,k)$ by
$$
\psi(n,k) \doteq \prod_{j=k}^{n} \left( I + \gamma_j \left( \nabla h(\theta_*) + 2\bmu^T_{j-1} R_{\bullet}^{(j-1)} + \brho^{T}_{j-1}R^{(j-1)}_{\bullet}\right)\right),
$$
and let $\psi(n,n+1) \doteq  I$. The next proposition provides a useful bound on $\psi(n,k)$. 

\begin{proposition}\label{prop:branchpsi}
For each $L' \in (0,L)$, and a.e. $\om$, there is a $C = C(L',\om) \in (0,\infty)$ such that if $1 \leq k \leq n$, then
\begin{equation}\label{eq:branchpsboundi}
\| \psi(n,k)\| \leq C \exp\left( - L' \sum_{j=k}^{n} \gamma_j\right).
\end{equation}
\end{proposition}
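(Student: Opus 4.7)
The plan is to follow the template of the proof of Corollary \ref{cor:matrixcor1}, but to replace the explicit polynomial convergence rates used there with the qualitative a.s.\ convergence statements available in the Algorithm II setting. This produces a constant $C$ that depends on $\omega$, matching the statement of the proposition.

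First I would decompose, using $\brho_{j-1} = (\btheta_{j-1}-\theta_*) - \bmu_{j-1}$,
$$A_j \doteq \nabla h(\theta_*) + 2\bmu_{j-1}^T R^{(j-1)}_\bullet + \brho_{j-1}^T R^{(j-1)}_\bullet = \nabla h(\theta_*) + \bmu_{j-1}^T R^{(j-1)}_\bullet + (\btheta_{j-1}-\theta_*)^T R^{(j-1)}_\bullet,$$
and set $E_j \doteq A_j - \nabla h(\theta_*)$. Uniform boundedness of the tensors $R^{(j-1)}_\bullet$ (from smoothness of $h$ and compactness of $\clp(\Delta^o)$) yields $\|E_j\|\le \kappa_1(\|\bmu_{j-1}\|+\|\btheta_{j-1}-\theta_*\|)$. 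Then Lemma \ref{lem:newmuto0} and Theorem \ref{thm:branchlln1} give $\|E_j\|\to 0$ almost surely.

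The core step is converting the qualitative $\|E_j\|\to 0$ into the exponential bound on $\psi(n,k)$. I would fix $L''\in(L',L)$ and appeal to Lyapunov theory to obtain a positive definite matrix $P = P(L'')$ satisfying $\nabla h(\theta_*)^T P + P\nabla h(\theta_*) \le -2L''P$. In the equivalent norm $\|x\|_P = (x^T P x)^{1/2}$, expanding $(I+\gamma_jA_j)^T P(I+\gamma_jA_j)$ yields
$$\|I+\gamma_jA_j\|_P^2 \le 1 - 2\gamma_j L'' + \kappa_2\gamma_j\|E_j\| + \kappa_3\gamma_j^2,$$
for constants $\kappa_2,\kappa_3$ depending only on $P$ and $\nabla h(\theta_*)$. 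Since $\|E_j\|\to 0$ and $\gamma_j\to 0$ a.s., for a.e.\ $\omega$ there is $N=N(\omega)\in\NN$ such that $\|I+\gamma_jA_j\|_P\le \exp(-\gamma_j L' + \kappa_4\gamma_j^2)$ for all $j\ge N$.

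For $k\ge N(\omega)$, submultiplicativity of $\|\cdot\|_P$ together with $\sum_j\gamma_j^2<\infty$ gives
$$\|\psi(n,k)\|_P \le \exp\Big(-L'\sum_{j=k}^n\gamma_j + \kappa_4\sum_{j=k}^n\gamma_j^2\Big) \le \kappa_5\exp\Big(-L'\sum_{j=k}^n\gamma_j\Big).$$
For $k<N(\omega)$, I would factor $\psi(n,k) = \psi(n,N(\omega))\prod_{j=k}^{N(\omega)-1}(I+\gamma_jA_j)$, bound the finite product on the right by an a.s.\ finite constant $C_6(\omega)$, and absorb the factor $\exp\bigl(L'\sum_{j=k}^{N(\omega)-1}\gamma_j\bigr)$ (bounded as $k$ ranges over the finite set $\{1,\dots,N(\omega)-1\}$) into the final constant. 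Equivalence of $\|\cdot\|_P$ and $\|\cdot\|$ then completes the proof. The main obstacle, the absence of a quantitative rate for $\brho_n$ (unlike in Corollary \ref{cor:matrixcor1}, where Theorem \ref{thm:mainrate} supplies one), is finessed by the Lyapunov-norm argument, which needs only qualitative convergence $\|E_j\|\to 0$ at the price of the $\omega$-dependent constant $C$ asserted in the proposition.
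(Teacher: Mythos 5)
Your proof is correct, and the preliminary reduction is the same as the paper's: decompose $A_j - \nabla h(\theta_*) = \bmu_{j-1}^T R^{(j-1)}_\bullet + (\btheta_{j-1}-\theta_*)^T R^{(j-1)}_\bullet$ and invoke Theorem \ref{thm:branchlln1} and Lemma \ref{lem:newmuto0} to get $\|A_j - \nabla h(\theta_*)\|\to 0$ a.s. Where you diverge is in how you turn this qualitative perturbation bound into the exponential decay of the matrix product. The paper simply cites \cite{for}*{Lemma 5.8} (whose mechanism is the Jordan-decomposition/diagonal-scaling argument reproduced in the paper's Appendix as Lemma \ref{lem:matrixlem1}, using $S D_t$ with $D_t = \mathrm{diag}(t,\dots,t^d)$ to bring $\nabla h(\theta_*)$ arbitrarily close to its diagonal part). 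You instead construct a Lyapunov quadratic form $P$ with $\nabla h(\theta_*)^T P + P\nabla h(\theta_*)\le -2L''P$, expand $(I+\gamma_j A_j)^T P(I+\gamma_j A_j)$ in the $P$-norm, and absorb the perturbation $E_j$ and the $O(\gamma_j^2)$ remainder into the exponent, with the $\gamma_j^2$-contribution controlled by $\sum_j\gamma_j^2<\infty$. Both routes are standard; the Lyapunov approach is more self-contained (it avoids leaning on the external reference and on the particular normal form of $\nabla h(\theta_*)$), while the Jordan route imported by the paper is essentially the same perturbative mechanism in a different norm. In either case, the tail index $N(\omega)$ beyond which $\|E_j\|$ is small is $\omega$-dependent, which is the source of the $\omega$-dependent constant $C$, and your handling of $k<N(\omega)$ by factoring out the finite initial product and absorbing $\exp(L'\sum_{j=k}^{N(\omega)-1}\gamma_j)$ into $C$ is exactly right. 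You also correctly identify that the analogous result for Algorithm I (Corollary \ref{cor:matrixcor1}) relies on the polynomial rate from Theorem \ref{thm:mainrate}, which is what allows that statement to be restricted to $n^{\po}\le k\le n$; here no rate is available, and the Lyapunov-norm argument makes transparent that none is needed once $C$ is permitted to depend on $\omega$.
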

\begin{proof}
	Let $A \doteq \nabla h(\theta_*)$ and
\begin{equation*}
	A_n  \doteq \nabla h(\theta_*) + 2 \bmu^T_{j-1} R^{(j-1)}_{\bullet} + \brho^T_{j-1} R^{(j-1)}_{\bullet}
	= \nabla h(\theta_*) + \bmu^T_{j-1} R^{(j-1)}_{\bullet} + (\theta_{j-1} - \theta_*)^T R^{(j-1)}_{\bullet}.
\end{equation*}
  From Theorem \ref{thm:branchlln1} and Lemma \ref{lem:newmuto0}, $\|A_n-A\|\to 0$ a.s. as $n \to \infty$. The result now follows from \cite{for}*{Lemma 5.8}.
\end{proof}

The next result will be used to show that $\{\brho_n\}$ tends to 0 in probability under the central limit scaling. 
\begin{proposition}\label{prop:rhordiff}
Suppose $\gamma_*>L^{-1}$.
For some $\kappa \in (0,1)$, we have, as $n \to \infty$,
$$
\sigma_n^{1 + \kappa} \left( \brho_n - \sum_{k=1}^n \gamma_k \psi(n,k+1)\br_k\right) \stackrel{\PP}{\to} 0.
$$
\end{proposition}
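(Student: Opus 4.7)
The plan is to follow the strategy of Lemma \ref{lem:rholem1}, adapted to Algorithm II. The key distinguishing feature is that because Theorem \ref{thm:branchlln1} does not provide a rate of convergence, the recursion must be unrolled all the way down to $k=1$ (rather than starting from $k = n^{\po}$); the $L^2$ bound in Proposition \ref{prop:newmu1} will compensate for this.

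First, I would derive a closed-form representation for $\brho_n$. Combining \eqref{eq:branchTheta1}, \eqref{eq:mudef1}, \eqref{eq:rhobranch1}, the identity $\btheta_n - \theta_* = \bmu_n + \brho_n$, the Taylor expansion \eqref{eq:branchR}, and the decomposition $\beps_{n+1} = \be_{n+1} + \br_{n+1}$, a straightforward algebraic manipulation yields the one-step recursion
\begin{equation*}
\brho_{n+1} = \left(I + \gamma_{n+1}\bigl[\nabla h(\theta_*) + 2\bmu_n^T R^{(n)}_{\bullet} + \brho_n^T R^{(n)}_{\bullet}\bigr]\right)\brho_n + \gamma_{n+1}\bmu_n^T R^{(n)}_{\bullet}\bmu_n + \gamma_{n+1}\br_{n+1}.
\end{equation*}
Iterating and using the definition of $\psi(n,k)$, I obtain
\begin{equation*}
\brho_n = \psi(n,1)\brho_0 + \sum_{k=1}^n \gamma_k \psi(n,k+1)\bigl[\bmu_{k-1}^T R^{(k-1)}_{\bullet}\bmu_{k-1} + \br_k\bigr].
\end{equation*}
Thus it suffices to show that $\sigma_n^{1+\kappa}\psi(n,1)\brho_0 \to 0$ almost surely and that $\sigma_n^{1+\kappa}\sum_{k=1}^n \gamma_k \psi(n,k+1)\bmu_{k-1}^T R^{(k-1)}_{\bullet}\bmu_{k-1} \to 0$ in probability.

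For the boundary term, I would fix $L' \in (0,L)$ with $L'\gamma_* > 1$ (possible since $\gamma_* > L^{-1}$). By Proposition \ref{prop:branchpsi}, $\|\psi(n,1)\| \leq C\exp(-L'\sum_{j=1}^n \gamma_j) \leq C'(N_*+n)^{-L'\gamma_*}$, while $\sigma_n^2 \sim n^{1+\zeta}$. Therefore $\sigma_n^{1+\kappa}\|\psi(n,1)\brho_0\|$ is dominated by a multiple of $n^{(1+\zeta)(1+\kappa)/2 - L'\gamma_*}$, which tends to $0$ provided $(1+\zeta)(1+\kappa) < 2L'\gamma_*$. Since $\zeta<1$ and $L'\gamma_*>1$, this is satisfied for any sufficiently small $\kappa > 0$.

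For the quadratic term, I would bound the expected norm. Using Proposition \ref{prop:branchpsi}, the uniform boundedness of $R^{(k-1)}_{\bullet}$, and Proposition \ref{prop:newmu1} (which gives $\EE\|\bmu_{k-1}\|^2 \leq C/\sigma_k^2 \leq C'\gamma_k/a(k)$), I obtain
\begin{equation*}
\sigma_n^{1+\kappa}\,\EE\left\|\sum_{k=1}^n \gamma_k \psi(n,k+1)\bmu_{k-1}^T R^{(k-1)}_{\bullet}\bmu_{k-1}\right\| \le C\,\sigma_n^{1+\kappa}\sum_{k=1}^n \frac{\gamma_k^2}{a(k)}\exp\left(-L'\sum_{j=k}^n \gamma_j\right).
\end{equation*}
Substituting $\gamma_k \sim 1/k$, $a(k) \sim k^\zeta$, $\sigma_n^{1+\kappa} \sim n^{(1+\zeta)(1+\kappa)/2}$, and $\exp(-L'\sum_{j=k}^n\gamma_j) \leq C(k/n)^{L'\gamma_*}$, the right-hand side is bounded by a constant times $n^{(1+\zeta)(1+\kappa)/2 - L'\gamma_*}\sum_{k=1}^n k^{L'\gamma_* - 2 - \zeta}$. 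A brief case analysis on whether $L'\gamma_*$ is smaller than, equal to, or greater than $1+\zeta$ shows that for suitable $L'$ close to $L$ and any sufficiently small $\kappa > 0$ this quantity tends to $0$, which gives $L^1$ (and hence probability) convergence to $0$.

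The main obstacle is calibrating the two parameters $L'$ and $\kappa$ so that every exponent comes out negative simultaneously in all regimes of $\zeta$ and $L\gamma_*$; once the scaling $\sigma_n^2 \sim n^{1+\zeta}$ is paired with the exponential decay supplied by Proposition \ref{prop:branchpsi} and the $\sigma_k^{-2}$ bound on $\EE\|\bmu_{k-1}\|^2$, the remainder is essentially a routine power-counting argument.
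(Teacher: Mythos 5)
Your approach is essentially the same as the paper's: you derive the identical one-step recursion for $\brho$, unroll it to $k=1$, control the boundary term $\psi(n,1)\brho_0$ via Proposition \ref{prop:branchpsi} and power counting in $n$, and control the quadratic term via Proposition \ref{prop:branchpsi} together with the $L^2$ bound on $\bmu_k$ from Proposition \ref{prop:newmu1}. The only technical slip is the claim of $L^1$ convergence for the quadratic term: Proposition \ref{prop:branchpsi} gives an almost sure bound on $\|\psi(n,k+1)\|$ with an $\om$-dependent constant (and $\sup_k\|R^{(k)}_\bullet\|$ is only almost surely finite), so you cannot push a deterministic constant through $\EE\|\cdot\|$; as in the paper, one should first bound the term pathwise by $\kappa(\om)\cdot\sigma_n^{1+\kappa}\sum_{k=1}^n\gamma_k e^{-L'\sum_{j=k}^n\gamma_j}\|\bmu_{k-1}\|^2$ and then show that the latter nonnegative factor has vanishing expectation, hence tends to $0$ in probability, which suffices.
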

\begin{proof}
Fix $L' \in ( \gamma_*^{-1} ,L)$ and $\kappa \in \left(0, \frac{1}{2} \wedge  \left[2L'\gamma_*(1+\zeta)^{-1} - 1\right] \right)$. Using (\ref{eq:branchTheta1}), (\ref{eq:mudef1}), (\ref{eq:rhobranch1}), (\ref{eq:branchR}), and a recursive argument similar to the one used for obtaining \eqref{eq:rhon12}, we have
\begin{equation}\label{eq:rhosum}
\begin{split}
\brho_n - \sum_{k=1}^{n} \gamma_k \psi(n,k+1)\br_k &= \psi(n,1)\brho_0 + \sum_{k=1}^n \gamma_k \psi(n,k+1) \left( \bmu^T_{k-1} R^{(k-1)}_{\bullet}\bmu_{k-1}\right).
\end{split}
\end{equation}
We begin by showing that as $n \to \infty$,
\begin{equation}\label{eq:rho branch p1}
\sigma_n^{1+ \kappa} \psi(n,1)\brho_0 \stackrel{\PP}{\to} 0.
\end{equation}
Since $\|\brho_0\|$ is bounded, it suffices to show that $\sigma_n^{1 + \kappa} \psi(n,1)$ tends to $0$ in probability. Using Proposition \ref{prop:branchpsi}, for a.e. $\om$, there is some $\kappa_1(\omega) \in (0,\infty)$ such that
\begin{equation}\label{eq:branchpsin1}
\begin{split}
\| \psi(n,1)\| &\leq \kappa_1  \exp\left( -L' \sum_{j=1}^{n} \gamma_j\right)
\end{split}
\end{equation}
Additionally, we can find some $\kappa_2 \in(0,\infty)$ such that 
\begin{equation}\label{eq:boundexpbn}
 \exp\left( -L' \sum_{j=1}^{n} \gamma_j\right) \leq \kappa_2 n^{-L'\gamma_*},
 \qquad
 \sigma_n^{1 + \kappa} \leq \kappa_2 n^{\frac{1}{2}(1 + \kappa)(1 + \zeta)}.
\end{equation}
Combining (\ref{eq:branchpsin1}) and (\ref{eq:boundexpbn}), we obtain, for a.e. $\om$, some $\kappa_3(\omega) \in (0,\infty)$ such that
\begin{equation}\label{eq:branchrhobn1a}
\begin{split}
\sigma_n^{1 + \kappa} \| \psi(n,1)\| &\leq \kappa_3 n^{\frac{1}{2}(1 + \kappa)(1+\zeta)} n^{-L'\gamma_*}.
\end{split}
\end{equation}
Since 
$\kappa \in \left(0, \frac{1}{2} \wedge  \left[2L'\gamma_*(1+\zeta)^{-1} - 1\right] \right)$, we have that $\frac{1}{2}(1 + \kappa)(1 +\zeta) - L'\gamma_* < 0,$
which ensures that the expression in (\ref{eq:branchrhobn1a}) converges to $0$. Therefore, (\ref{eq:rho branch p1}) holds. We now show that as $n \to \infty$,
\begin{equation}\label{eq:rho branch p2}
 \sigma_n^{1+ \kappa}\sum_{k=1}^n \gamma_k \psi(n,k+1) \left( \bmu^T_{k-1} R^{(k-1)}_{\bullet}\bmu_{k-1}\right) \stackrel{\PP}{\to} 0.
\end{equation}
 Note that $ R \doteq \sup_k \|R^{(k-1)}_{\bullet}\| < \infty$ a.s., so using Proposition \ref{prop:branchpsi}, for a.e. $\om$, we can find some $\kappa_4(\omega)\in(0,\infty)$ such that 
\begin{equation}\label{eq:branchrhopart1}
\begin{split}
\sigma_n^{1+ \kappa}\left\| \sum_{k=1}^n \gamma_k \psi(n,k+1) \left(\bmu^T_{k-1} R^{(k-1)}_{\bullet} \bmu_{k-1}\right)\right\|
&\leq \kappa_4  \sigma_n^{1+\kappa}\sum_{k=1}^{n} \gamma_k \exp\left( -L'\sum_{j=k}^n \gamma_j\right)\|\bmu_{k-1}\|^2
\end{split}
\end{equation}
Using Proposition \ref{prop:newmu1}, we can find some $\kappa_5 \in(0,\infty)$ such that for all $k \in \NN$, $\EE\| \bmu_{k-1}\|^2 \leq \kappa_5 \sigma_{k-1}^2$, so
\begin{equation}\label{eq:branchrhopart2}
\begin{split}
  \sigma_n^{1+\kappa}\EE\left[\sum_{k=1}^{n}  \gamma_k \exp\left( -L'\sum_{j=k}^n \gamma_j\right)\|\bmu_{k-1}\|^2 \right] &\leq \kappa_5  \sigma_n^{1 + \kappa} \sum_{k=1}^{n} \gamma_k \exp\left( - L'\sum_{j=k}^n \gamma_j\right) \sigma_{k-1}^2.\\
 \end{split}
 \end{equation}
 From (\ref{eq:boundexpbn}), we can bound the last term in (\ref{eq:branchrhopart2})  above by 
 \begin{equation}\label{eq:branchkappa6}
\kappa_6  n^{\frac{1}{2} (1+ \kappa)(1 + \zeta)}\sum_{k=1}^{n} \gamma_k \exp\left( - L'\sum_{j=k}^{n}\gamma_j\right) (k-1)^{-(1 + \zeta)}
 \end{equation}
 for some $\kappa_6 \in (0,\infty)$. Recalling that $\kappa <1/2$ we see that the expression in (\ref{eq:branchkappa6}) converges to $0$ as $n \to \infty$. Combining this observation with (\ref{eq:branchrhopart1}) and (\ref{eq:branchrhopart2}) we obtain (\ref{eq:rho branch p2}). The result now follows on combining (\ref{eq:rho branch p1}) and (\ref{eq:rho branch p2}).
\end{proof}

The next result will be used to prove Corollary \ref{cor:rhoto0}.
\begin{proposition}\label{prop:r1r2bounds}
Suppose that $\gamma_*> L^{-1}$. Then,
as $n \to \infty$, 
\begin{equation}\label{eq:rhoasy2}
\sigma_n \left[ \sum_{k=1}^{n} \gamma_k \psi(n,k+1) \br_k \right] \stackrel{\PP}{\to} 0.
\end{equation}
\end{proposition}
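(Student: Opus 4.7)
The plan is to adapt the argument used to prove \eqref{eq:eq449r} in Lemma \ref{lem:rholem1}. First, decompose $\br_k = \br^a_k + \br^b_k$ analogously to \eqref{eq:radef}--\eqref{eq:rbdef}, with
$$
\br^a_{n+1} \doteq \frac{1}{a(n+1)}\sum_{i=1}^{a(n+1)}\bigl[(K[\btheta_{n+1}]Q[\btheta_{n+1}])_{\bX^i_{n+1},\cdot} - (K[\btheta_n]Q[\btheta_n])_{\bX^i_{n+1},\cdot}\bigr]
$$
and $\br^b_{n+1} \doteq \br_{n+1} - \br^a_{n+1}$. The bounded Lipschitz property of $\theta \mapsto K[\theta]Q[\theta]$ together with $\|\btheta_{n+1}-\btheta_n\| \le 2\gamma_{n+1}$ gives $\|\br^a_{n+1}\| \le \kappa\gamma_{n+1}$ almost surely. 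Choosing $L' \in (\gamma_*^{-1}, L)$ so that $L'\gamma_* > 1$, Proposition \ref{prop:branchpsi} then yields
$$
\sigma_n \Bigl\|\sum_{k=1}^n \gamma_k \psi(n,k+1)\br^a_k\Bigr\| \le \kappa\sigma_n \sum_{k=1}^n \gamma_k^2 \|\psi(n,k+1)\| \lesssim \sigma_n\, n^{-1} = n^{(\zeta-1)/2} \to 0.
$$

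The bulk of the work concerns $\br^b$. Unlike in Algorithm I, where the partial sums $\sum_k r^{n,b}_k$ telescope to a uniformly bounded quantity (Proposition \ref{prop:clt3}(b)), here the particle additions produce partial sums of order $n^{1-\zeta}$, too large for direct summation by parts (which would leave a boundary term of order $\sigma_n\gamma_n\cdot n^{1-\zeta} = n^{(1-\zeta)/2}\to\infty$). I would therefore further decompose $\br^b_{k+1} = M_{k+1} + B_{k+1}$, where, writing $\bar T_k \doteq a(k+1)^{-1}\sum_{i=1}^{a(k+1)}(K[\btheta_k]Q[\btheta_k])_{\bX^i_k,\cdot}$,
$$
M_{k+1} \doteq \bar T_k - \bar T_{k+1}, \qquad B_{k+1} \doteq \bar T_{k+1} - \frac{1}{a(k+1)}\sum_{i=1}^{a(k+1)} (K[\btheta_{k+1}]Q[\btheta_{k+1}])_{\bX^i_{k+1},\cdot}.
$$
A direct algebraic identity confirms $M_{k+1}+B_{k+1} = \br^b_{k+1}$. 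The mover $M$ is a genuine telescoping sequence with $\sum_{j=1}^N M_j = \bar T_0 - \bar T_N$ uniformly bounded by $2\kappa$, while $B_{k+1}$ vanishes unless $a(k+2)>a(k+1)$ (i.e.\ a new particle appears going to step $k+1$) and satisfies $\|B_{k+1}\|\le 2\kappa/a(k+2)$; indexed by the birth index $i$ of the added particle, $B$ is supported on the sparse lattice $\{b(i)-1 : i\ge 2\}$ with $\|B_{b(i)-1}\|\le 2\kappa/i$.

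For the mover term, summation by parts gives
$$
\sum_{k=1}^n \gamma_k\psi(n,k+1)M_k = \gamma_n(\bar T_0 - \bar T_n) - \sum_{k=1}^{n-1}(\bar T_0 - \bar T_k)\bigl[\gamma_{k+1}\psi(n,k+2)-\gamma_k\psi(n,k+1)\bigr].
$$
The boundary term multiplied by $\sigma_n$ is $O(\sqrt{\gamma_n a(n)}) \to 0$ under $a(n)=o(n)$; the tail is controlled using the estimate $\|\gamma_{k+1}\psi(n,k+2)-\gamma_k\psi(n,k+1)\| \le \kappa\gamma_k^2\|\psi(n,k+2)\|$ (obtained from $\psi(n,k+1)=(I+\gamma_{k+1}H^k_n)\psi(n,k+2)$ with $\|H^k_n\|$ a.s.\ bounded, exactly as in \eqref{eq:v2bound2a}), reducing it to $\sigma_n\sum_k\gamma_k^2\|\psi(n,k+2)\| = O(n^{(\zeta-1)/2})\to 0$. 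For the brancher term, the asymptotics $b(i)\sim i^{1/\zeta}$, $\gamma_{b(i)}\sim i^{-1/\zeta}$, and $\|\psi(n,b(i))\|\lesssim (i^{1/\zeta}/n)^{L'\gamma_*}$ give
$$
\sigma_n\Bigl\|\sum_{k=1}^n \gamma_k\psi(n,k+1)B_k\Bigr\| \lesssim \sigma_n\, n^{-L'\gamma_*}\sum_{i=2}^{a(n+2)} i^{(L'\gamma_* - 1)/\zeta - 1},
$$
which tends to $0$ either because the summation evaluates to $\sim n^{L'\gamma_*-1}$, giving $\sigma_n/n = n^{(\zeta-1)/2}\to 0$ (when $L'\gamma_*\ge 1$), or, when $L'\gamma_*<1$, by choosing $L'$ close enough to $L$ (using $\gamma_*>L^{-1}$) so that $L'\gamma_*>(1+\zeta)/2$ makes $\sigma_n n^{-L'\gamma_*}\to 0$.

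The main obstacle is identifying the $M+B$ decomposition that isolates the branching contribution into a sparse sum with the favorable $1/i$ weight. The direct strategy from Algorithm I fails because the particle additions cause the partial sums of $\br^b$ to grow like $n^{1-\zeta}$, which is too fast relative to the central limit scaling $\sigma_n\gamma_n$; the sparsity of $B$ on the lattice of branching times, combined with the exponential decay of $\psi(n,b(i))$ as $i$ ranges over a polynomially thin set, is what rescues the argument.
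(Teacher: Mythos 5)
Your proof is correct, but the route differs from the paper's at the level of how $\br^{(b)}_k$ is handled, and the motivating narrative contains an inaccuracy worth flagging. The paper does \emph{not} abandon direct summation by parts for $\br^{(b)}$: setting $\Xi_n \doteq \sum_{k=1}^n\br^{(b)}_k$, the paper proves in \eqref{eq:xinbranch1}--\eqref{eq:branchpsir2} that $\|\Xi_n\| = O(\log n)$, not $O(n^{1-\zeta})$. The derivation swaps the order of the sums over the time index $k$ and the particle index $i$, then applies Abel summation along the time axis for each fixed $i$: the boundary contributions are $O(1/i)$ each, and the $1/a(k)$-increments telescope to another $O(1/i)$, so $\|\Xi_n\| \lesssim \sum_{i\le a(n)} 1/i \sim \zeta\log n$. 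With this, $\sigma_n\gamma_n\|\Xi_n\| = O(n^{(\zeta-1)/2}\log n) \to 0$ and the paper's summation-by-parts argument goes through directly, analogous to Algorithm~I but with a $\log n$ boundary in place of $O(1)$. Your claim that the partial sums grow like $n^{1-\zeta}$ is therefore too pessimistic, and your stated reason for departing from the Algorithm~I template does not actually apply.

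That said, your $M + B$ decomposition is a genuinely different and valid organization of the same underlying cancellations, and the bounds you derive are correct. Writing $\bar T_k = a(k+1)^{-1}\sum_{i\le a(k+1)}\beta^i_k$, your $M_{k+1} = \bar T_k - \bar T_{k+1}$ telescopes exactly so $\|\sum_{j\le N} M_j\| \le 2\kappa$ uniformly, and $B_{k+1}$ vanishes unless a particle is born at step $k+2$, in which case $\|B_{b(i)-1}\|\le 2\kappa/i$. Your handling of $M$ by summation by parts (boundary $O(\sigma_n\gamma_n)\to 0$, tail $O(\sigma_n\sum\gamma_k^2\|\psi\|) \to 0$) matches the paper's technique, and your direct estimate for the sparse $B$-sum (with $\gamma_{b(i)}\sim i^{-1/\zeta}$, $\|\psi(n,b(i))\| \lesssim (i^{1/\zeta}/n)^{L'\gamma_*}$, summand $\lesssim n^{-L'\gamma_*}i^{(L'\gamma_*-1)/\zeta-1}$) is correct; picking $L'\gamma_* > 1$, the sum is $\sim n^{L'\gamma_*-1}$, giving $\sigma_n/n = n^{(\zeta-1)/2}\to 0$. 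The paper's per-particle Abel summation and your per-timestep $M{+}B$ split are dual bookkeeping devices that isolate the same $1/a(k)$-change contributions; your version makes the telescoping structure and the role of the birth times $\{b(i)-1\}$ visible at the outset, at the cost of introducing an extra intermediate object $\bar T_k$.
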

\begin{proof}
Fix $L' \in (\gamma_*^{-1}, L)$, and define
$$
\br^{(a)}_{n+1} \doteq \frac{1}{a(n+1)} \sum_{i=1}^{a(n+1)} \left( \left(K[\btheta_{n+1}]Q[\btheta_{n+1}]\right)_{\bX^i_{n+1}, \cdot} - \left(K[\btheta_{n}]Q[\btheta_{n}]\right)_{\bX^i_{n+1},\cdot}\right),
$$
and
$$
\br^{(b)}_{n+1} \doteq \frac{1}{a(n+1)} \sum_{i=1}^{a(n+1)} \left( \left(K[\btheta_n]Q[\btheta_n]\right)_{\bX^i_n, \cdot} - \left(K[\btheta_{n+1}]Q[\btheta_{n+1}]\right)_{\bX^i_{n+1},\cdot}\right),
$$
so that $\br_{n+1} = \br^{(a)}_{n+1} + \br^{(b)}_{n+1}$. Using Proposition \ref{prop:branchpsi}, we can find, for a.e. $\om$, some $\kappa_1(\omega)\in(0,\infty)$ such that
\begin{equation}\label{eq:branchr1e1}
\left\| \sum_{k=1}^{n} \gamma_k \psi(n,k+1)\br_k^{(a)}\right\| \leq  \kappa_1 \sum_{k=1}^{n} \gamma_k^2 \exp\left(L' \sum_{j=k}^{n} \gamma_j\right) \Big\| \frac{1}{\gamma_k} \br_k^{(a)} \Big\|.
\end{equation}
Using the fact that $\theta \to K[\theta]$, $\theta \to Q[\theta]$ are bounded Lipschitz maps, we can find some $\kappa_2 \in (0,\infty)$ such that for $k \in \NN$,
$
\EE \Big\| \frac{1}{\gamma_k} \br^{(a)}_{k} \Big\| \leq \kappa_2.
$
From this and  (\ref{eq:bnupperbound}), we can find some $\kappa_3 \in (0,\infty)$ such that
\begin{equation}\label{eq:branchr1e3}
\begin{split}
\sigma_n \sum_{k=1}^{n} \gamma_k^{2} \exp\left( - L'\sum_{j=k}^{n}\gamma_j\right) \EE \Big\| \frac{1}{\gamma_k} \br_k^{(a)} \Big\| 
&\leq \kappa_3 n^{\frac{1}{2}(1 + \zeta) - L'\gamma_*} \sum_{k=1}^{n} k^{L'\gamma_* - 2}.
\end{split}
\end{equation}
Since $\zeta < 1$, the final term in (\ref{eq:branchr1e3}) tends to $0$ as $n \to \infty$. Combining this observation with (\ref{eq:branchr1e1}) and (\ref{eq:branchr1e3}), we have that, as $n \to \infty$,
\begin{equation}\label{eq:branchr1e4}
\sigma_n \left\| \sum_{k=1}^{n} \gamma_k \psi(n,k+1)\br^{(a)}_k \right\|  \stackrel{\PP}{\to} 0.
\end{equation}
Next, for $n \in \NN$ define
$$
\Xi_n \doteq \sum_{k=1}^n \br^{(b)}_{k}, 
\qquad
H_n \doteq \nabla h(\theta_*) + 2\bmu_n^T R^{(n)}_{\bullet} + \brho^T_n R^{(n)}_{\bullet},
$$
and apply the summation by parts formula as in \eqref{eq:rhor21a}-\eqref{eq:rhor21} to obtain
\begin{equation}\label{eq:boundxia}
\begin{split}
\sum_{k=1}^n \gamma_k \psi(n,k+1) \br^{(b)}_k &= \gamma_n \Xi_n -  \sum_{k=1}^{n-1} \Xi_k  ( {\gamma}_{k+1} \psi(n,k+2) - {\gamma}_k \psi(n,k+1))\\
&=  \gamma_n \Xi_n + \sum_{k=1}^{n-1} \gamma_k \gamma_{k+1} \Xi_k \psi(n,k+2)\left( \gamma_*^{-1} I + H_k\right).\\
\end{split}
\end{equation}
Recall the sequence  $\{b(n), n \in \NN_0\}$ defined as $b(n)\doteq \lfloor n^{1/\zeta}\rfloor$. Let $\beta^i_k \doteq \left(K[\theta_{k}]Q[\theta_{k}]\right)_{\bX^i_k,\cdot}$. Using the fact that  the map $\theta \to \| K[\theta]Q[\theta]\|$ is bounded, we can find some $\kappa_5 \in (0,\infty)$ such that
\begin{multline}\label{eq:xinbranch1}
\|\Xi_n\| = \left\|\sum_{k=1}^{n} \frac{1}{a(k)} \sum_{i=1}^{a(k)}\left( \beta^i_{k-1} - \beta^i_k \right)\right\|
= \left\| \sum_{k=1}^{n} \sum_{i=2}^{a(n)} \frac{\beta^i_{k-1} - \beta^i_{k}}{a(k)} \mathbbm{1}_{\{2 \leq i \leq a(k)\}}\right\| + \left\| \sum_{k=1}^{n} \frac{\beta^1_{k-1} - \beta^1_k}{a(k)} \right\|\\
\leq \left\| \sum_{k=1}^{n} \sum_{i=2}^{a(n)} \frac{\beta^i_{k-1} - \beta^i_{k}}{a(k)} \mathbbm{1}_{\{2 \leq i \leq a(k)\}}\right\|  + \left\| \frac{\beta^1_0}{a(1)}  - \frac{\beta^1_n}{a(n)} \right\| + \left\| \sum_{k=1}^{n-1} \left(\frac{\beta^1_k}{a(k+1)} - \frac{\beta^1_k}{a(k)}\right) \right\|\\
\leq \left\| \sum_{k=1}^{n} \sum_{i=2}^{a(n)} \frac{\beta^i_{k-1} - \beta^i_{k}}{a(k)} \mathbbm{1}_{\{2 \leq i \leq a(k)\}}\right\|  + \kappa_5.
\end{multline}
Furthermore, there is a $\kappa_6 \in (0,\infty)$ such that
\begin{multline}\label{eq:xinbranch2}
\left\| \sum_{i=2}^{a(n)}  \sum_{k=1}^{n} \frac{\beta^i_{k-1} - \beta^i_{k}}{a(k)} \mathbbm{1}_{\{2 \leq i \leq a(k)\}}\right\| 
= \left\| \sum_{i=2}^{a(n)}  \sum_{k=b(i)}^{n} \frac{\beta^i_{k-1} - \beta^i_{k}}{a(k)} \right\| \\
= \left\| \sum_{i=2}^{a(n)} \left[ \frac{\beta^i_{b(i)-1}}{a(b(i))} + \sum_{k=b(i)}^{n-1}\left(\frac{\beta^i_k}{a(k+1)} - \frac{\beta^i_k}{a(k)}\right) - \frac{\beta^i_n}{a(n)}\right]\right\|\\
\leq  \kappa_6 \sum_{i=2}^{a(n)}\left[ \frac{1}{a(b(i))} + \sum_{k=b(i)}^{n-1} \left( \frac{1}{a(k)} - \frac{1}{a(k+1)}\right) + \frac{1}{a(n)}\right] \\
= \kappa_6 \sum_{i=2}^{a(n)}\left[ \frac{1}{a(b(i))} + \left(\frac{1}{a(b(i))} - \frac{1}{a(n)}\right) + \frac{1}{a(n)}\right]
= 2 \kappa_6 \sum_{i=1}^{a(n)} \frac{1}{i}.
\end{multline}

Combining (\ref{eq:xinbranch1}) and (\ref{eq:xinbranch2}) we see that there is some $\kappa_7 \in (0,\infty)$ such that for each $n \in \NN$,
\begin{equation}\label{eq:branchpsir2}
\|\Xi_n\| \leq \kappa_7\log n.
\end{equation}
 Using  (\ref{eq:bnupperbound}) and (\ref{eq:branchpsir2}), we can find some $\kappa_8 \in (0,\infty)$ such that
\begin{equation}\label{eq:boundxi1}
\sigma_n \gamma_n \|\Xi_n\|  \leq   \kappa_8 (\log n) n^{- \frac{1}{2}(1 - \zeta)},
\end{equation}
which tends to $0$ as $n \to \infty$, since $\zeta < 1$. Note that 
$
A \doteq \sup_{n \in \NN} \left\| \gamma_*^{-1} I + H_n\right\| < \infty \text{ a.s.},
$
which, along with (\ref{eq:branchpsir2}) and Proposition \ref{prop:branchpsi}, ensures that for a.e. $\om$, there is some $\kappa_9(\om)\in(0,\infty)$ such that
\begin{equation}\label{eq:branchr2e4}
\begin{split}
\sigma_n \left\| \sum_{k=1}^{n-1} \gamma_k \gamma_{k+1} \Xi_k \psi(n,k+2)\left( \gamma_*^{-1} I + H_k\right)\right\| \leq \kappa_9 \sigma_n \sum_{k=1}^{n-1} \gamma_k^2 \exp\left( - L' \sum_{j=k}^{n} \gamma_j\right) \log k.
\end{split}
\end{equation}
The last term in (\ref{eq:branchr2e4})  can be bounded above by
$
\kappa_{10} (\log n)n^{-\frac{1}{2}(1 - \zeta)},
$
for some $\kappa_{10} \in(0,\infty)$ and hence the expression in \eqref{eq:branchr2e4} tends to $0$ as $n \to \infty$. 
 Combining this with (\ref{eq:boundxia}) and (\ref{eq:boundxi1}),  we see that as $n \to \infty$,
\begin{equation}\label{eq:branchr2prob}
\sigma_n \left\| \sum_{k=1}^{n} \gamma_k \psi(n,k+1)\br^{(b)}_k \right\|  \stackrel{\PP}{\to} 0.
\end{equation}
The result follows on combining (\ref{eq:branchr1e4}) and (\ref{eq:branchr2prob}).
\end{proof}

The following corollary says that the discrepancy sequence $\{\brho_n\}$ tends to 0 in probability under the central limit scaling.

\begin{corollary}\label{cor:rhoto0}
Suppose that $\gamma_*> L^{-1}$.
Then, as $n \to \infty$, $\sigma_n \brho_n \stackrel{\PP}{\to} 0$. 
\end{corollary}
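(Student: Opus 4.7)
The plan is to derive Corollary \ref{cor:rhoto0} as a short consequence of the two preceding results, Propositions \ref{prop:rhordiff} and \ref{prop:r1r2bounds}, using the trivial decomposition
$$
\sigma_n \brho_n \;=\; \sigma_n \sum_{k=1}^n \gamma_k \psi(n,k+1)\br_k \;+\; \sigma_n\Bigl(\brho_n - \sum_{k=1}^n \gamma_k \psi(n,k+1)\br_k\Bigr).
$$
I will argue that each of the two summands converges to zero in probability, which will directly yield the claim.

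For the first summand, Proposition \ref{prop:r1r2bounds} gives exactly
$\sigma_n \sum_{k=1}^n \gamma_k \psi(n,k+1)\br_k \stackrel{\PP}{\to} 0$
under the standing hypothesis $\gamma_* > L^{-1}$, so no further work is needed there.

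For the second summand, Proposition \ref{prop:rhordiff} provides some $\kappa \in (0,1)$ such that
$$
\sigma_n^{1+\kappa}\Bigl(\brho_n - \sum_{k=1}^n \gamma_k \psi(n,k+1)\br_k\Bigr) \stackrel{\PP}{\to} 0.
$$
I will factor out the stronger scaling by writing the second summand as $\sigma_n^{-\kappa}$ times the above quantity. Since $a(n) \sim n^\zeta$ and $\gamma_n \sim \gamma_*/n$, we have $\sigma_n = \sqrt{a(n)/\gamma_n} \to \infty$, hence $\sigma_n^{-\kappa} \to 0$. Multiplying a deterministic null sequence by a sequence that converges to zero in probability preserves convergence in probability to zero, so the second summand vanishes in probability as well. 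Combining the two pieces yields $\sigma_n \brho_n \stackrel{\PP}{\to} 0$.

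Since the substantive technical content (the recursive identity for $\brho_n$, the exponential bound on $\psi(n,k)$, the moment control on $\bmu_n$, the summation-by-parts treatment of $\br^{(b)}_k$, and the Lipschitz control on $\br^{(a)}_k$) has already been absorbed into Propositions \ref{prop:rhordiff} and \ref{prop:r1r2bounds}, there is no real obstacle here; the only thing to check is that $\sigma_n \to \infty$ so that the extra factor $\sigma_n^{-\kappa}$ is indeed a null sequence, which is immediate from the assumed growth of $a(n)$ and the form of $\gamma_n$.
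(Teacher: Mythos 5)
Your proof is correct and coincides with what the paper intends: the paper's proof of this corollary simply states that the result is immediate from Propositions \ref{prop:rhordiff} and \ref{prop:r1r2bounds}, which is exactly the decomposition and the $\sigma_n^{-\kappa}\to 0$ factoring you spell out.
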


\begin{proof}
The result is immediate from Proposition  \ref{prop:rhordiff} and Proposition \ref{prop:r1r2bounds}.
\end{proof}

\subsection{Proof of Theorem \ref{thm:branchclt1}}\label{sec:branchclt}

Consider the array $\{Z_{n,k} , n \in \NN, 1 \leq k \leq n \}$ given by
\begin{equation}\label{eq:newxarray}
Z_{n,k} \doteq \sigma_{n} \gamma_k \psi_*(n,k+1)\be_k, \;\; 1 \leq k \leq n, \; n \in \NN.
\end{equation}
Note that
\begin{equation}\label{eq:newmux}
\sigma_{n} \bmu_{n} = \sum_{k=1}^{n} Z_{n, k}.
\end{equation}
We will apply \cite{halhey}*{Corollary 3.1} to complete the proof of Theorem  \ref{thm:branchclt1}. The conditions for this result are verified in Lemma \ref{lem:newlindeberg} and Lemma \ref{lem:newlimitvar} given below.

\begin{lemma}\label{lem:newlindeberg}
Suppose that $\gamma_*> L^{-1}$.
Then, as $n \to \infty$,
$
\sum_{k=1}^{n+1} \EE \| Z_{n+1,k}\|^4 \to 0.
$
\end{lemma}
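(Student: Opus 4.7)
The plan is to mirror the proof of Lemma \ref{lem:lindeberg}, with two adjustments that reflect the particular structure of Algorithm II: the fourth-moment bound for $\be_k$ is $\EE\|\be_k\|^4 \leq C/a(k)^2$ (from Proposition \ref{prop:newemoments}), where the denominator carries $a(k)$ rather than $a(n+1)$ because only $a(k) \sim k^{\zeta}$ particles contribute at step $k$; and we have the stronger assumption $\gamma_* > L^{-1}$ (versus $\gamma_* > (2L)^{-1}$) which allows us to choose $L' \in (\gamma_*^{-1}, L)$, giving $L'\gamma_* > 1$.

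First I would fix $L' \in (\gamma_*^{-1}, L)$ and apply the exponential bound \eqref{eq:eq104} together with Proposition \ref{prop:newemoments} to conclude that, for some $\kappa \in (0,\infty)$,
$$\EE\|Z_{n+1,k}\|^4 \le \kappa\, \sigma_{n+1}^4\, \gamma_k^4 \exp\!\Bigl(-4L' \sum_{j=k}^{n+1} \gamma_j\Bigr)\frac{1}{a(k)^2}.$$
Then I would substitute the power-law asymptotics $\sigma_{n+1}^4 \sim n^{2+2\zeta}$ (using $a(n) \sim n^{\zeta}$ and $\gamma_n \sim n^{-1}$), $\gamma_k^4 \sim k^{-4}$, $a(k)^{-2} \sim k^{-2\zeta}$, and the bound $\exp(-4L' \sum_{j=k}^{n+1} \gamma_j) \lesssim (k/n)^{4L'\gamma_*}$ (compare \eqref{eq:eq628}). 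Summing, this yields
$$\sum_{k=1}^{n+1} \EE\|Z_{n+1,k}\|^4 \le \kappa'\, n^{2+2\zeta - 4L'\gamma_*} \sum_{k=1}^{n+1} k^{4L'\gamma_* - 4 - 2\zeta}.$$

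The final step is a routine case analysis on the exponent $4L'\gamma_* - 4 - 2\zeta$. When this exponent exceeds $-1$ (i.e., $L'\gamma_* > (3+2\zeta)/4$), the $k$-sum is of order $n^{4L'\gamma_*-3-2\zeta}$ and the whole expression becomes $O(n^{-1})$. When the exponent is at most $-1$, the $k$-sum is bounded (up to a logarithm), and the expression is $O(n^{2+2\zeta - 4L'\gamma_*})$, which decays provided $L'\gamma_* > (1+\zeta)/2$. Since $L'\gamma_* > 1$ and $(1+\zeta)/2 < 1$ for $\zeta \in (0,1)$, both regimes give convergence to $0$.

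The argument is routine in structure; the only thing requiring care is the exponent bookkeeping and verifying that the condition $\gamma_* > L^{-1}$ is used exactly to guarantee $L'\gamma_* > (1+\zeta)/2$, so no additional ingredients beyond Proposition \ref{prop:newemoments} and the estimate \eqref{eq:eq104} are needed.
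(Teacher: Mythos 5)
Your proposal is correct and follows essentially the same route as the paper's proof: fix $L' \in (\gamma_*^{-1},L)$, combine the bound \eqref{eq:eq104} with Proposition \ref{prop:newemoments} and the estimate $\sigma_{n}^2 \lesssim n^{1+\zeta}$ (from \eqref{eq:bnupperbound}) to arrive at the expression $\kappa\, n^{2(1+\zeta)-4L'\gamma_*}\sum_{k=1}^n k^{4(L'\gamma_*-1)-2\zeta}$, then check it vanishes. The only difference is that you spell out the case split on the exponent $4L'\gamma_*-4-2\zeta$ (versus $-1$), whereas the paper simply asserts the expression tends to $0$; your observation that the weaker condition $L'\gamma_* > (1+\zeta)/2$ already suffices here is a nice side remark, though of course $\gamma_*>L^{-1}$ is needed elsewhere in the proof of Theorem \ref{thm:branchclt1}.
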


\begin{proof}
Fix $L' \in (\gamma_*^{-1}, L)$.  Using  (\ref{eq:eq104}), Proposition \ref{prop:newemoments}, and  (\ref{eq:bnupperbound}), we can find some $\kappa_1 \in(0,\infty)$ such that
\begin{multline}
\sum_{k=1}^{n} \EE \| Z_{n,k}\|^4 = \sum_{k=1}^{n} \EE \| \sigma_{n} \gamma_k \psi_*(n,k+1)\be_k\|^4
\leq \sigma_{n}^{4} \sum_{k=1}^{n} \gamma_k^4 \|\psi_*(n,k+1)\|^4 \EE\|\be_k\|^4\\
\leq \kappa_1 n^{2( 1 + \zeta)} \sum_{k=1}^{n} \gamma_k^4  \exp\left( - 4L'\sum_{j=k}^{n} \gamma_j\right) \frac{1}{a(k)^2}.
\end{multline}
Thus, for some $\kappa_2 \in (0,\infty)$ we have that
$$
\sum_{k=1}^{n} \EE\| Z_{n,k}\|^4 \leq \kappa_2 n^{2(1 + \zeta) - 4L'\gamma_*} \sum_{k=1}^n k^{4(L'\gamma_* -1)} k^{-2\zeta},
$$
which tends to $0$ as $n\to \infty$. The result follows.
\end{proof}

The next lemma is used to characterize the limiting covariance matrix in Theorem \ref{thm:branchclt1}. Recall the matrix $U_*$ defined in (\ref{eq:ustar}).

\begin{lemma}\label{lem:newlimitvar}
Suppose that $\gamma_*> L^{-1}$.
Define
$$
V_n^{(1)} \doteq \sigma_n^2 \sum_{k=1}^{n} \gamma_k^2 \psi_*(n,k+1) \frac{U_*}{a(k)}\psi_*(n,k+1)^T,
$$
and
$$
V_n^{(2)} \doteq \sigma_n^2 \sum_{k=1}^{n} \gamma_k^2 \psi_*(n,k+1) \left[ \EE[ \be_k \be_k^T | \mathcal{F}_{k-1}] - \frac{U_*}{a(k)}\right]\psi_*(n,k+1)^T.
$$
As $n \to \infty$, $V^{(2)}_n \stackrel{\PP}{\to} 0$ and $V^{(1)}_n \stackrel{P}{\to} V$, where $V$ is the unique solution of the Lyapunov equation
\begin{equation}\label{eq:lyapunovpf1}
U_* + (1+ \zeta)\gamma_*^{-1} V + \nabla h(\theta_*) V + V\nabla h(\theta_*)^T = 0.
\end{equation}
\end{lemma}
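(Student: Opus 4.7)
The plan is to mirror the proof of Lemma \ref{lem:limitvar}, paying attention to two new features specific to Algorithm II: the factor $U_*/a(k)$ inside the sum defining $V_n^{(1)}$ now depends on the summation index $k$, and the scaling $\sigma_n^2 = a(n)/\gamma_n \sim n^{1+\zeta}/\gamma_*$ differs from the Algorithm I case. First, I verify that \eqref{eq:lyapunovpf1} has a unique solution. Since $\gamma_* > L^{-1}$ and $\zeta < 1$, we have $(1+\zeta)(2\gamma_*)^{-1} < L$, so $\nabla h(\theta_*) + (1+\zeta)(2\gamma_*)^{-1} I$ is Hurwitz; combined with $U_*$ being nonnegative definite, \cite{horjoh}*{Theorem 2.2.3} gives unique solvability.

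For $V_n^{(1)} \to V$, I would derive the one-step recursion
\begin{equation*}
V_{n+1}^{(1)} = \gamma_{n+1} U_* + \frac{\sigma_{n+1}^2}{\sigma_n^2}\,(I + \gamma_{n+1} A)\, V_n^{(1)}\, (I + \gamma_{n+1} A)^T, \qquad A \doteq \nabla h(\theta_*),
\end{equation*}
using $\psi_*(n+1,k+1) = (I+\gamma_{n+1} A)\psi_*(n,k+1)$ for $k \le n$, $\psi_*(n+1,n+2) = I$, and the identity $\sigma_{n+1}^2 \gamma_{n+1}^2/a(n+1) = \gamma_{n+1}$. Writing $\sigma_{n+1}^2/\sigma_n^2 = (a(n+1)/a(n))(\gamma_n/\gamma_{n+1})$ and using $a(n) \sim n^\zeta$ together with the explicit form of $\gamma_n$ gives $\sigma_{n+1}^2/\sigma_n^2 = 1 + (1+\zeta)\gamma_{n+1}/\gamma_* + o(\gamma_{n+1})$, where the $\zeta$ comes from the particle-count ratio and the $1$ from the step-size ratio. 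Expansion to first order then yields the Euler discretization
\begin{equation*}
V_{n+1}^{(1)} = V_n^{(1)} + \gamma_{n+1}\!\bigl[U_* + A V_n^{(1)} + V_n^{(1)} A^T + (1+\zeta)\gamma_*^{-1} V_n^{(1)}\bigr] + o(\gamma_{n+1}),
\end{equation*}
whose unique equilibrium is $V$. Since the linear operator $X\mapsto AX + XA^T + (1+\zeta)\gamma_*^{-1}X$ has spectrum in the open left half-plane (by the Hurwitz property above), a standard Gronwall argument on $V_n^{(1)} - V$ gives convergence. Equivalently, the change of variable $s=\tau_n-\tau_k$ in the sum defining $V_n^{(1)}$ identifies the limit with the explicit integral $\int_0^\infty e^{sA} U_* e^{sA^T} e^{(1+\zeta)s/\gamma_*}\,ds$, readily seen to solve \eqref{eq:lyapunovpf1}.

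For $V_n^{(2)}\stackrel{\PP}{\to} 0$, I use Proposition \ref{prop:covarlem} to write $\EE[\be_k \be_k^T | \clf_{k-1}] - U_*/a(k) = (D_{k-1}^{(1)} + D_{k-1}^{(2)})/a(k)$, splitting $V_n^{(2)} = V_n^{(2,a)} + V_n^{(2,b)}$. For $V_n^{(2,a)}$, Proposition \ref{prop:covarlem}(i) gives $\|D_{k-1}^{(1)}\| \le C_1\|\btheta_{k-1}-\theta_*\|$, and Theorem \ref{thm:branchlln1} ensures that for a.e.\ $\om$ and any $\epsilon>0$ there is a random $N(\om)$ past which $\|\btheta_{k-1}-\theta_*\|<\epsilon$; choosing $L'\in(0, L)$ with $2L'\gamma_* > 1+\zeta$ (possible since $\gamma_* L > 1$ and $\zeta < 1$), the estimate \eqref{eq:eq104} combined with $\sigma_n^2\sim n^{1+\zeta}/\gamma_*$, $\gamma_k\sim\gamma_*/k$, $a(k)\sim k^\zeta$ yields
\begin{equation*}
\|V_n^{(2,a)}\| \le C\epsilon\, n^{1+\zeta-2L'\gamma_*} \sum_{k=N}^n k^{2L'\gamma_* - \zeta - 2} + o(1) = O(\epsilon),
\end{equation*}
so $V_n^{(2,a)}\to 0$ a.s. For $V_n^{(2,b)}$, I would follow the Abel-summation template of Lemma \ref{lem:limitvar}: set $\Xi_n\doteq\sum_{m=1}^n D^{(2)}_{m-1}$, rewrite $V_n^{(2,b)} = \gamma_n^{-1}\sum_{k=1}^n (a(n)\gamma_k^2/a(k))\psi_*(n,k+1) D^{(2)}_{k-1}\psi_*(n,k+1)^T$ using $D^{(2)}_{k-1} = \Xi_k-\Xi_{k-1}$, and combine the rate $\gamma_n \EE\|\Xi_n\| \le C n^{-\beta}$ of Proposition \ref{prop:covarlem}(ii) with finite-difference estimates for the matrix weights.

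The main obstacle is the finite-difference estimate in the summation by parts for $V_n^{(2,b)}$: the extra factor $1/a(k)$ inside the sum requires tracking the discrete differences $1/a(k)-1/a(k+1)$, which vanish except at branching times and are of size $O(1/(ka(k)))$, alongside the usual $|\gamma_{k+1}^2-\gamma_k^2| \lesssim \gamma_{k+1}^3/\gamma_*$ and $\|\psi_*(n,k+1)-\psi_*(n,k+2)\| \le \gamma_{k+1}\|A\|\|\psi_*(n,k+2)\|$. Verifying that the resulting upper bound still decays to zero under the scaling $\sigma_n^2\sim n^{1+\zeta}/\gamma_*$ and the available rate $\beta$ (which Proposition \ref{prop:covarlem} gives of order $\zeta$) is the delicate calculation whose details parallel, but are somewhat more involved than, those in Lemma \ref{lem:limitvar}.
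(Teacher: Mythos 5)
Your overall plan mirrors the paper's for $V_n^{(1)}$ (exact one-step recursion, first-order expansion, Euler discretization converging to the Lyapunov solution via a Hurwitz argument), and for $V_n^{(2)}$ the paper simply refers to Lemma~\ref{lem:limitvar}, which is what your sketch does. However, there is a genuine gap in the central asymptotic you invoke: you claim the pointwise expansion
\[
\frac{\sigma_{n+1}^2}{\sigma_n^2} \;=\; \frac{a(n+1)}{a(n)}\cdot\frac{\gamma_n}{\gamma_{n+1}}
\;=\; 1 + (1+\zeta)\frac{\gamma_{n+1}}{\gamma_*} + o(\gamma_{n+1}),
\]
but $a(n)$ is integer-valued and $a(n+1)-a(n)\in\{0,1\}$, so $a(n+1)/a(n)$ equals $1$ at non-branching times and $1+1/a(n)\sim 1+n^{-\zeta}$ at branching times. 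At non-branching times the purported expansion has an error $-\zeta\gamma_{n+1}/\gamma_*$, which is of the \emph{same} order as $\gamma_{n+1}$, not $o(\gamma_{n+1})$; at branching times the error is of order $n^{-\zeta}\gg\gamma_{n+1}$ (since $\zeta<1$). So the expansion fails pointwise, and the Euler discretization you write down is not a direct consequence of the exact recursion. The $\zeta$ contribution to the drift is really an \emph{averaged} effect of the branching events, and this is exactly what needs a small extra device to handle.

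The paper addresses this by replacing $\sigma_n$ with the deterministic smooth proxy $\tsigma_n^2 \doteq n^{\zeta}/\gamma_n$ (there is a sign typo in the manuscript's exponent). Since $\tsigma_n^2/\sigma_n^2 = n^\zeta/a(n)\to 1$, it suffices to prove $\tilde V_n^{(1)} \doteq (\tsigma_n^2/\sigma_n^2)V_n^{(1)} \to V$. The ratio $\tsigma_{n+1}^2/\tsigma_n^2$ is smooth in $n$ and genuinely satisfies $\tsigma_{n+1}^2/\tsigma_n^2 = 1 + (1+\zeta)\gamma_n/\gamma_* + o(\gamma_n)$; one also verifies $\tsigma_{n+1}^2\gamma_{n+1}^2/a(n+1) = \gamma_{n+1}+o(\gamma_n)$ (not an exact identity, unlike in your version, but sufficient). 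With these, the one-step recursion for $\tilde V_n^{(1)}$ becomes a legitimate perturbed Euler scheme, and the rest of your argument --- boundedness of $\tilde V_n^{(1)}$, subtraction of the Lyapunov solution, Hurwitz contraction, convergence via the analog of \cite{for}*{Lemma 5.11} --- goes through exactly as you describe. Your $V_n^{(2,a)}$ estimate and the Abel-summation outline for $V_n^{(2,b)}$ are in line with what the paper omits; note that the available rate from Proposition~\ref{prop:covarlem}(ii) is indeed $\beta=\zeta$, and one should again use $\tsigma_n$ rather than $\sigma_n$ in the finite-difference step to avoid the same pointwise issue when differencing $\sigma_n^2\gamma_k^2/a(k)$ against $\sigma_n^2\gamma_{k+1}^2/a(k+1)$.
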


\begin{proof}
As in the proof of Lemma \ref{lem:limitvar}, the Lyapunov equation (\ref{eq:lyapunovpf1}) has a unique solution, as $ U_*$ is nonnegative definite, and the matrix
\begin{equation}\label{eq:branchhurwitz}
\tilde{H} \doteq \nabla h(\theta_*) + (1 + \zeta)(2\gamma_*)^{-1}I,
\end{equation}
 is Hurwitz since
$
- L + (1 + \zeta)(2 \gamma_*)^{-1} < -L + \gamma_*^{-1} < 0.
$
We now consider $V^{(1)}_n$. 
Define $\tsigma_n^2 \doteq n^{-\zeta} \gamma_n$. Since $\tsigma_n/\sigma_n\to 1$ as $n\to \infty$,
it suffices to prove that
\begin{equation}\frac{\tsigma_n^{2}}{\sigma_n^2} V^{(1)}_n \doteq \tilde V^{(1)}_n \stackrel{P}{\to} V.\label{eq:eq906}
	\end{equation}
Observe that
\begin{equation}\label{eq:bnasy3}
 \frac{\tsigma_{n+1}^2}{\tsigma_n^2} - 1 = (1 + \zeta) \frac{\gamma_n}{\gamma_*} + o(\gamma_n), 
 \qquad
\frac{\tsigma_n^2}{a(n)} \gamma_n^2 = \gamma_n + o(\gamma_n),
\end{equation}
and therefore 
\begin{equation}\label{eq:bnasy4}
\frac{\tsigma_{n+1}^2}{\tsigma_n^2} \gamma_{n+1}^2 = \gamma_{n+1}^2 + o(\gamma_n^2),
\qquad
\frac{\tsigma_{n+1}^2}{\tsigma_n^2} \gamma_{n+1} = \gamma_{n+1} + o(\gamma_n).
\end{equation}
From \eqref{eq:psistarmatrixdef} it follows that
$$
\psi_*(n+1,k+1) = (I + \gamma_{n+1} \nabla h(\theta_*)) \psi_*(n, k+1),
$$
so we have that
\begin{equation}\label{eq:vn1expand1}
\begin{split}
\tilde V^{(1)}_{n+1}  &= \frac{\tsigma_{n+1}^2}{a(n+1)}\gamma_{n+1}^2 U_*  + \frac{\tsigma_{n+1}^2}{\tsigma_n^2} \big(I + \gamma_{n+1} \nabla h(\theta_*)\big) \tilde V_n^{(1)} \big(I + \gamma_{n+1} \nabla h(\theta_*)\big)^T\\
&= \frac{\tsigma_{n+1}^2}{a(n+1)}\gamma_{n+1}^2 U_* + \frac{\tsigma_{n+1}^2}{\tsigma_n^2} \Big( \tilde V_n^{(1)} + \gamma_{n+1}\Big[ \nabla h(\theta_*) \tilde V^{(1)}_n + \tilde V_{n}^{(1)} \nabla h(\theta_*)^T\Big]\\
&\hspace{14em} + \gamma_{n+1}^2 \nabla h(\theta_*) \tilde V^{(1)}_{n} \nabla h(\theta_*)^T\Big).\\
\end{split}
\end{equation}
Using the second identity in (\ref{eq:bnasy3}), 
\begin{equation}\label{eq:branchustarb1}
\frac{\tsigma_{n+1}^2}{a(n+1)} \gamma_{n+1}^2 U_{*}  =\gamma_{n+1}U_* + o(\gamma_n).
\end{equation}
Fix $L' \in (\gamma_*^{-1}, L)$. Then, from \eqref{eq:psistarmatrixdef} there is some $\kappa_1\in(0,\infty)$ such that
$$
\|\tilde V_n^{(1)}\| \leq \kappa_1 n^{1 + \zeta} \sum_{k=1}^{n} \gamma_k^{2}  \exp\left( - 2L' \sum_{j=k+1}^{n} \gamma_j\right)k^{-\zeta},
$$
and so we can find some $\kappa_2 \in (0,\infty)$ such that
\begin{equation}\label{eq:vn1bound}
\|\tilde V_n^{(1)}\| \leq \kappa_2, \;\; n \in \NN.
\end{equation}
 Combining the first identity in (\ref{eq:bnasy3}) with (\ref{eq:vn1bound}) we see that
\begin{equation}\label{eq:branchsubv}
\frac{\tsigma_{n+1}^2}{\tsigma_n^2} \tilde V^{(1)}_n - \tilde V^{(1)}_n = \left( \frac{\tsigma_{n+1}^2}{\tsigma_n^2}  - 1\right) \tilde V_{n}^{(1)} = (1+\zeta)\frac{\gamma_n}{\gamma_*} \tilde V_n^{(1)} + o(\gamma_n).
\end{equation}
Additionally, from  \eqref{eq:bnasy4} and  (\ref{eq:vn1bound}) we see that 
\begin{equation}\label{eq:vn1prodterm}
\frac{\tsigma_{n+1}^2}{\tsigma_n^2} \gamma_{n+1}^2  \nabla h(\theta_*) \tilde V^{(1)}_n \nabla h(\theta_*)^T = o(\gamma_n).
\end{equation}
Finally, noting that $\gamma_{n+1} = \gamma_n + o(\gamma_n)$, and combining (\ref{eq:bnasy4}), (\ref{eq:branchustarb1}), (\ref{eq:branchsubv}), and (\ref{eq:vn1prodterm}) with (\ref{eq:vn1expand1}) 
we see that
\begin{equation}\label{eq:eq924}
\begin{split}
\tilde V_{n+1}^{(1)} &= \tilde V_{n}^{(1)}  + \gamma_{n} \left[   U_* + \gamma_*^{-1} (1 + \zeta) \tilde V^{(1)}_n + \nabla h(\theta_*) \tilde V^{(1)}_n + \tilde V^{(1)}_n \nabla h(\theta_*)^T\right] + o(\gamma_n).\\
\end{split}
\end{equation}
Let $V$ denote the aforementioned unique solution to (\ref{eq:lyapunovpf1}). Recall from \eqref{eq:lyapunovpf1} that the matrix $\tilde{H}$ defined in (\ref{eq:branchhurwitz}) satisfies
$U_*+\tilde H V + V\tilde H^T=0$. Thus, \eqref{eq:eq924} can be rewritten as 
\begin{equation}\label{eq:vn1minv}
\begin{split}
\tilde V_{n+1}^{(1)} - V  
&= \tilde V_n^{(1)} - V + \gamma_n \left[ \tilde{H}(\tilde V_n^{(1)} - V) + (\tilde V_n^{(1)} - V) \tilde{H}^T\right] + o(\gamma_n).
\end{split}
\end{equation}
Recalling once more that $\tilde{H}$ is Hurwitz, it follows from (\ref{eq:vn1minv}) and the proof of \cite{for}*{Lemma 5.11} that $\tilde V_n^{(1)} \stackrel{\PP}{\to} V$ as $n \to \infty$. This proves \eqref{eq:eq906} and, as noted previously, shows that $V_n^{(1)} \stackrel{\PP}{\to} V$ as $n \to \infty$. The proof that $V_n^{(2)} \stackrel{\PP}{\to} 0$ as $n \to \infty$ is similar to the analogous result in Lemma \ref{lem:limitvar}, and is omitted.
\end{proof}

\begin{proof}[Proof of Theorem \ref{thm:branchclt1}]
From \eqref{eq:rhobranch1} we see that
$
\sigma_n (\btheta_n - \theta_*) = \sigma_n \bmu_n + \sigma_n \brho_n. 
$
Also, from Corollary \ref{cor:rhoto0}, $\sigma_n \brho_n \stackrel{\PP}{\to} 0$ as $n \to \infty$. Thus, it suffices to show that 
 $\sigma_n \bmu_n \stackrel{\mathcal{L}}{\to} \mathcal{N}(0, V)$ where $V$ is as in the statement of the theorem.
  Recall the martingale difference array $\{Z_{n,k}\}$ introduced in (\ref{eq:newxarray}), and note from \eqref{eq:newmux} that
$
\sigma_n \bmu_n = \sum_{k=1}^{n} Z_{n,k}.
$
In order to complete the proof we apply \cite{halhey}*{Corollary 3.1}. From Lemma \ref{lem:lindeberg} it follows that for each $\epsilon > 0$,
$$
\sum_{k=1}^n \EE \left[ \| Z_{n,k}\|^2 \mathbbm{1}_{\| Z_{n,k} \|\geq \epsilon} | \mathcal{F}_{k-1}\right] \stackrel{\PP}{\to} 0,
$$
as $n \to \infty$. Additionally, if we let $\{V^{(1)}_n\}$, $\{V^{(2)}_n\}$, and $V$ be as in Lemma \ref{lem:newlimitvar}, then we have that
$$
\sum_{k=1}^{n} \EE\left[ Z_{n,k} Z_{n,k}^T | \mathcal{F}_{k-1}\right] = V^{(1)}_n + V^{(2)}_n
$$
and from Lemma \ref{lem:newlimitvar},
$
\sum_{k=1}^{n} \EE\left[ Z_{n,k} Z_{n,k}^T | \mathcal{F}_{k-1}\right]  \stackrel{\PP}{\to} V,
$
as $n \to \infty$. Therefore, the conditions of \cite{halhey}*{Corollary 3.1} are satisfied, proving that
$
\sum_{k=1}^{n} Z_{n,k} \stackrel{\mathcal{L}}{\to} \mathcal{N}(0,V),
$
as $n \to \infty$. The result follows.
\end{proof}


\section{Numerical Results}\label{sec:numeric}

In this section we present results from some   numerical experiments.  We compare five simulation based methods for computing the QSD of a finite state Markov chain. 
The first four methods can be viewed as stochastic approximation algorithms and are described in terms of a sequence of step sizes given as
$$
\gamma_{n+1} \doteq \frac{\gamma_*}{n + N_*}, \;\; n \in\NN_0,
$$
where $\gamma_* \in (0,\infty)$ and $N_* \doteq \lfloor \gamma_*\rfloor + 1$. In order to ensure that the results from the various methods are comparable, we measure the run-time of each method by the total number of particle transitions. 

The first estimation method, which we refer to as the {\em Single scheme}, is the algorithm given  in \cite{benclo}*{Equation (7)}. In order to obtain an estimate for the QSD using this scheme, we run the algorithm for $na(n)$ time steps. Since there is a single particle, and it moves once at each time step, this means that there are a total of $na(n)$ particle movements. The second scheme, which we refer to as the {\em Independent scheme}, is given by evolving $a(n)$ Single schemes independently of one another. Each of these independent schemes runs for $n$ time steps, and the estimate for the QSD is then given by the average of  the $a(n)$ estimates. At each time instant, there are $a(n)$ particle movements, so the total number of particle movements is  $n a(n)$. The third scheme, which we refer to as the {\em Interacting scheme}, is the algorithm defined in (\ref{eq:stochalg}). In the notation of this work, our final estimate for the QSD is then given by $\theta^n_n$. As with the Independent scheme, since $a(n)$ particles move at each time instant, there are $na(n)$ particle movements in total.  The fourth scheme is the {\em Branching scheme}, which is described in \eqref{eq:branchalg1}. Note that for this scheme, by time instant $k$ there are a total of
$
\sum\limits_{i=1}^{k}a(i+1)
$
particle movements. Consequently, we run this scheme for $\xi(n)$ time steps, where
$$
\xi(n) \doteq \inf\left\{ k : \sum\limits_{i=1}^{k}a(i+1) \geq na(n)\right\}.
$$
The final method is the Fleming-Viot approximation. A description of this method and some important results regarding its convergence properties can be found in \cite{grojon}. In order to estimate the QSD using the Fleming-Viot approximation, we consider a collection of $a(n)$ particles that evolve according to the dynamics described in \cite{grojon}. At each time instant a particle is chosen uniformly at random to move, so after $n a(n)$ time steps, there have been $na(n)$ particle movements. The final estimate for the QSD is given by the empirical measure of the $a(n)$ particles at the $n a(n)$-th time instant. Our experimental results suggest that the first four methods all converge rapidly when the dynamics of the underlying Markov chain are  simple. For example, for the Markov chain whose transition matrix is given by 
\[
P = 
\begin{bmatrix}
1 & 0 & 0\\
0.2 & 0.5 & 0.3\\
0.3 & 0.3 & 0.4
\end{bmatrix},
\]
the rates of convergence of the various methods were comparable regardless of the distribution of the initial states of the particles in the systems. However, we find significant differences  when there are several points in the state space at which the Markov chain is expected to spend a relatively long time. With an abuse of terminology, for a Markov chain $\{Y_n\}$ on $\Delta$, we refer to a point $x \in \Delta^o$ as  a fixed point if $\EE_x(Y_1) = x$. We now consider an example of a Markov chain that has several fixed points.

Let $\{Y_n\}$ be the Markov chain on $ \Delta \doteq \{0,1,\dots, 9\}$ with transition probability matrix
\[
P = 
\begin{bmatrix}
1  &0 & 0 &0 & 0& 0& 0&  0&  0&  0\\
 0.2 & 0.1 & 0.7 & 0& 0& 0& 0&  0&  0&   0\\
 0 & 0.1 & 0.8 & 0.1 &0 &0& 0&  0 & 0 &  0\\
  0&  0 & 0.8 & 0.1 &0.1 & 0& 0 & 0 & 0 &  0\\
  0&  0&  0 & 0.8& 0.1& 0.1& 0 & 0&  0 &  0\\
    0 & 0&  0 & 0& 0.01 &0.98 &0.01&  0 & 0 &  0\\
   0&  0&  0 & 0 &0 &0.1& 0.1&  0.8 & 0 & 0\\
 0 & 0 & 0 & 0 &0 &0 &0.1&  0.1&  0.8&   0\\
   0 & 0 & 0&  0 &0 &0 &0 & 0.1 & 0.8  & 0.1\\
 0.2&  0&  0&  0& 0 &0 &0 & 0 & 0.7 &  0.1
\end{bmatrix}.
\]
Note that (in our terminology) $\{Y_n\}$ has three fixed points, namely, $2, 5$, and $8$.
We implemented the various QSD approximation methods discussed above for estimating the QSD of $\{Y_n\}$. 
Applying  \cite{benclo}*{Corollary 2.3} we see that $\gamma_* \approx 4.17$ satisfies $\gamma_* > L^{-1}$, where $L$ is as in Section \ref{sec:algdescription}. 

In our first experiment we take $a(n) = \lfloor n^{0.75} \rfloor$ and $n \doteq 1000$. We repeated the experiment for each scheme $R = 300$ times and averaged the results. For the Independent, Interacting, and Fleming-Viot schemes, the initial states of the $a(1000) = 177$ particles were chosen uniformly at random from $\{4,5,6\}$. This same set of initial states was used in each of the 300 repetitions of the simulation. Since the Single and Branching schemes are initialized with only a single particle, we chose the initial states of the 300 repetitions so that they would be proportionate to the initial states used for the schemes that start with $a(n)$ particles. In Figure \ref{fig:figure1} we plot the total variation distance between the estimate of the QSD given by each scheme and the true QSD as a function of the number of particle transitions. The results are plotted for the first 70,000 particle movements.

\begin{figure}
\includegraphics[width=100mm, scale=0.8]{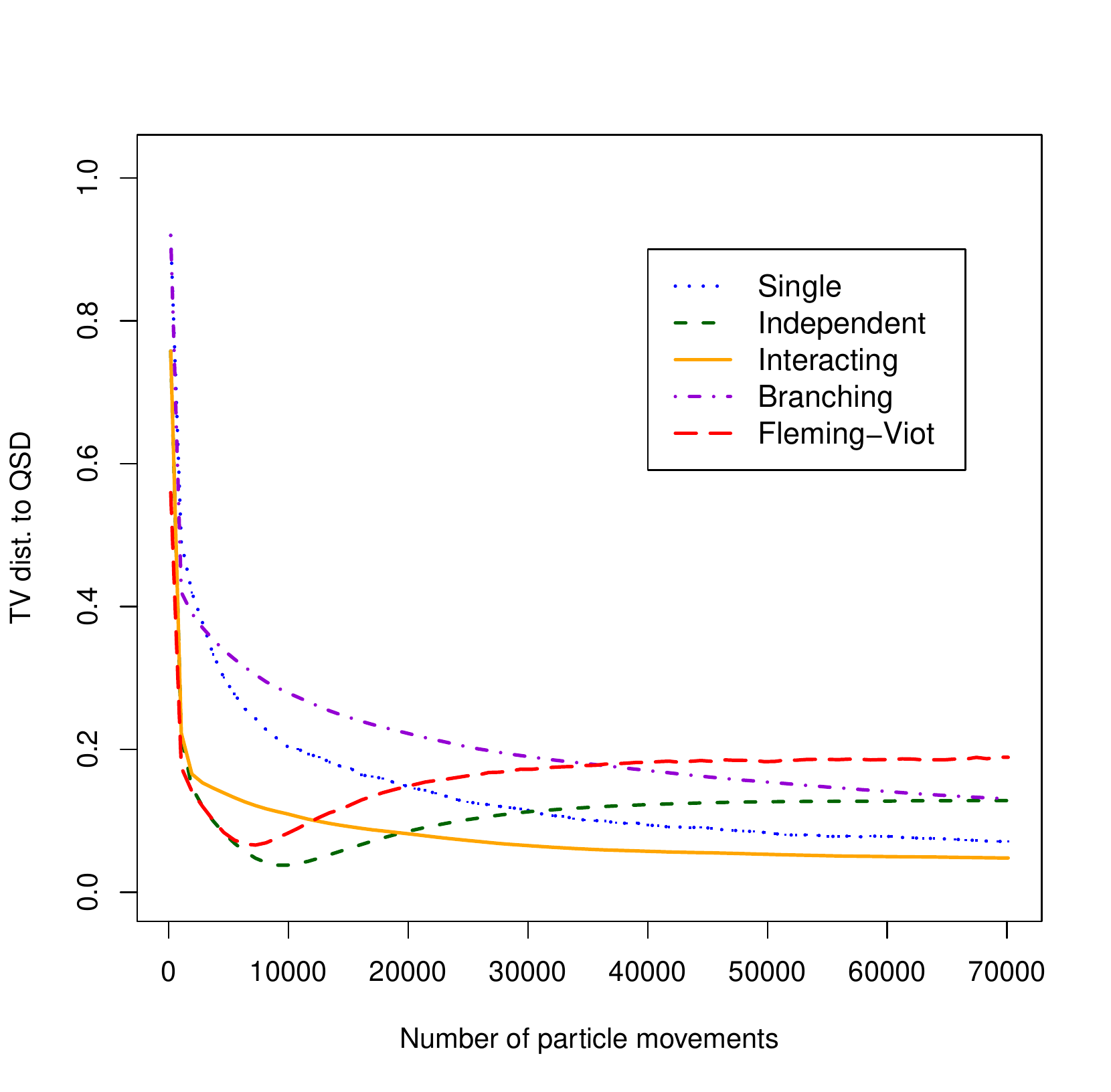}
\caption{Results for the first experiment: $a(n) = \lfloor n^{0.75} \rfloor$ and $n \doteq 1000$. The Interacting scheme converges most rapidly, and  the Single scheme converges more rapidly than the branching scheme.} 
\label{fig:figure1}
\end{figure}

Note that the Interacting scheme converges most quickly to the QSD in this experiment. The Fleming-Viot algorithm appears to have  a significant asymptotic bias, which is a consequence of the fact that the number of particles is not sufficiently large for the time asymptotic behavior of the Fleming-Viot processes to effectively approximate the QSD.
The experimental results when the initial states of the particles were chosen uniformly at random from $\{1,2,\dots,9\}$ were similar.

Our second experiment is on the same underlying Markov chain, but with $a(n) \doteq \lfloor n^{0.5} \rfloor$, and $n = 2000$. In this experiment we started every particle from $5$, which is the fixed point from which the Markov chain is expected to take the longest time to escape. While the Interacting scheme still performed best in this setting, we found that the Branching scheme performed better than the Single scheme. The results for the first 40,000 particle movements are plotted in Figure \ref{fig:figure3}.

\begin{figure}
\includegraphics[width=100mm, scale=0.8]{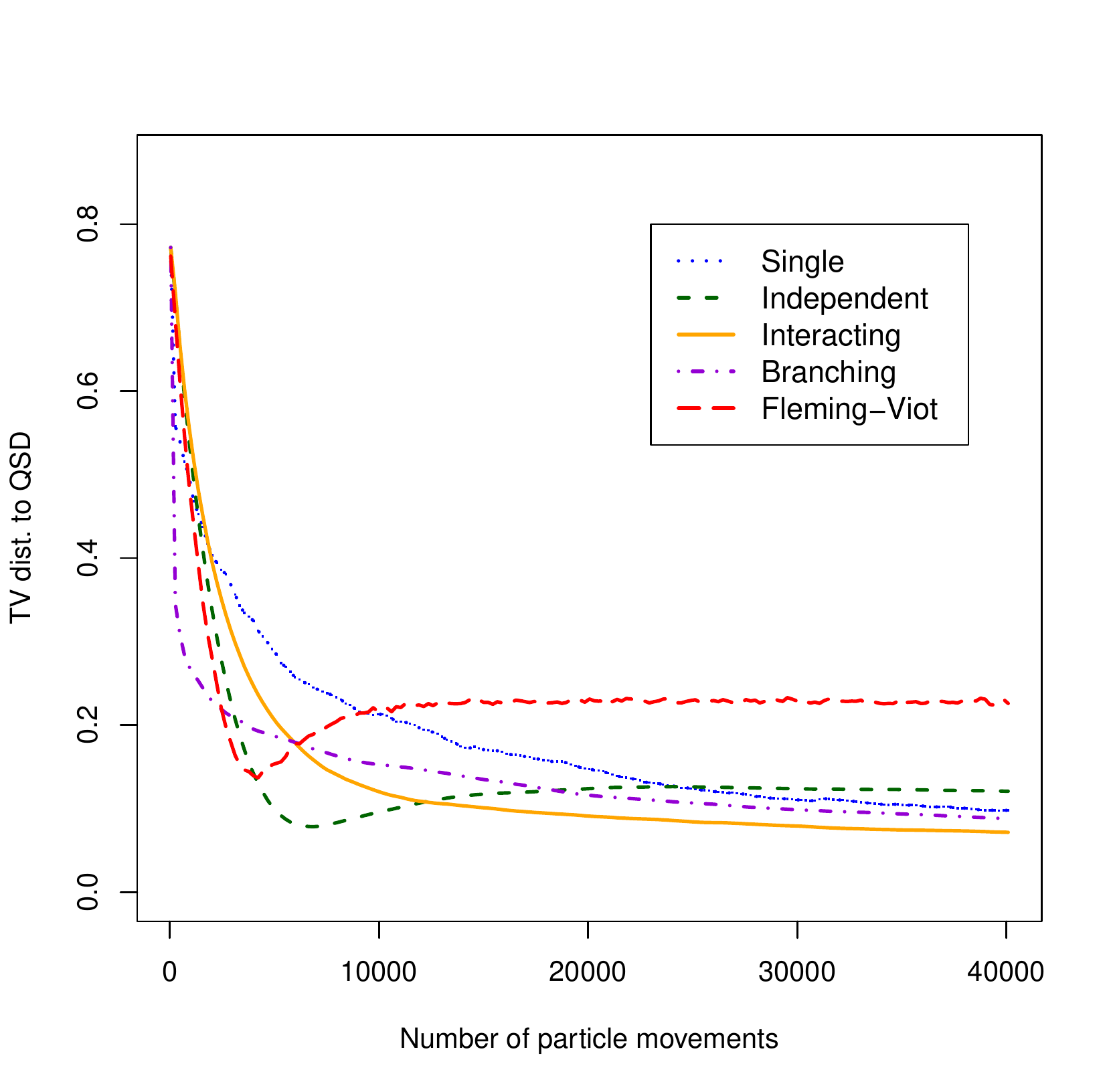}
\caption{Results of the second experiment: $a(n) = \lfloor n^{0.5} \rfloor$ and $n \doteq 2000$. While the Interacting scheme still converges most rapidly, the Branching scheme converges more rapidly than the Single scheme.} 
\label{fig:figure3}
\end{figure}

\appendix
\section{A Matrix Estimate}
\label{sec:auxres}
The following lemma is similar to \cite{for}*{Lemma 5.8}.
\begin{lemma}\label{lem:matrixlem1}
Let $A$ be a $d \times d$ Hurwitz matrix
such that the real part of all of its eigenvalues is bounded above by $-L$ where $L\in (0,\infty)$.
Fix $\po \in (0,1)$. Let $\{A^n_k\}_{n,k=1}^{\infty}$ be an array of matrices such that 
$\sup_{n^\po \le k \le n}\|A^n_k - A\| \to 0$, where $\| \cdot \|$ denotes the Frobenius norm on the space of $d \times d$ matrices.  For each $L' \in (0,L)$, there is a constant $C > 0$ such that if $n^{\po} \leq k \leq n$, then 
$$
\left\| \prod_{j=k}^{n}(I + \gamma_j A^n_j) \right\| \leq C \exp\left(-L' \sum_{j=k}^{n} \gamma_j\right).
$$
\end{lemma}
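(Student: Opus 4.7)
The plan is to exploit the Hurwitz structure of $A$ through a Lyapunov inner product, reduce the operator-norm product to a scalar geometric estimate, and absorb the small perturbations $A^n_j - A$ uniformly on the index range $n^{\po} \le j \le n$ using the convergence hypothesis.

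First, I would fix an auxiliary rate $L'' \in (L', L)$. Since the real parts of the eigenvalues of $A + L'' I$ are bounded above by $-(L - L'') < 0$, a standard Lyapunov argument (e.g.\ \cite{horjoh}*{Theorem 2.2.3}) produces a symmetric positive definite matrix $P$ solving $PA + A^T P \preceq -2L'' P$. Let $\|x\|_P \doteq (x^T P x)^{1/2}$ and let $\|\cdot\|_P$ denote the associated operator norm on $d\times d$ matrices. Both quantities are equivalent to the Frobenius norm up to a constant $\kappa_1 \in (0,\infty)$ depending only on $P$, so it suffices to establish the bound in $\|\cdot\|_P$.

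Next, I would derive a one-step contraction estimate. For any matrix $B$ and $\gamma > 0$, expanding $\|(I+\gamma B)x\|_P^2$ yields
\begin{equation*}
\|I + \gamma B\|_P^2 \le 1 + \gamma \,\Lambda_P(PB + B^T P) + \gamma^2 \|B\|_P^2,
\end{equation*}
where $\Lambda_P(\cdot)$ denotes the largest eigenvalue with respect to the $P$-inner product. Writing $A^n_j = A + (A^n_j - A)$ and using the Lyapunov inequality, the first-order term is bounded by $-2L'' + \kappa_2 \|A^n_j - A\|$ for some $\kappa_2 \in (0,\infty)$; the quadratic term is $O(\gamma_j^2)$ uniformly in $j$ since $\{A^n_j\}$ remains bounded by the convergence hypothesis.

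Finally, because $\sup_{n^{\po} \le j \le n}\|A^n_j - A\| \to 0$ and $\gamma_j \to 0$, I can choose $n_0$ large enough that for all $n \ge n_0$ and $n^{\po} \le j \le n$,
\begin{equation*}
\|I + \gamma_j A^n_j\|_P^2 \le 1 - 2 \gamma_j L' \le \exp(-2 \gamma_j L'),
\end{equation*}
so $\|I + \gamma_j A^n_j\|_P \le \exp(-\gamma_j L')$. Submultiplicativity then gives, for all $n \ge n_0$ and $n^{\po} \le k \le n$,
\begin{equation*}
\left\| \prod_{j=k}^{n}(I + \gamma_j A^n_j) \right\|_P \le \prod_{j=k}^n \exp(-\gamma_j L') = \exp\left( -L' \sum_{j=k}^n \gamma_j\right).
\end{equation*}
Converting back to the Frobenius norm costs a factor $\kappa_1^2$, and the finitely many indices $n < n_0$ contribute a uniformly bounded product that can be absorbed into the constant $C$. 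The only real care is in making the absorption of the perturbation and $O(\gamma_j^2)$ terms uniform on $[n^{\po}, n]$; this is precisely what the hypothesis $\sup_{n^{\po} \le k \le n}\|A^n_k - A\| \to 0$ ensures, so no serious obstacle arises.
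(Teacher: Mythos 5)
Your proof is correct and takes a genuinely different route from the paper's. The paper's proof proceeds through the Jordan decomposition $A = SJS^{-1}$, conjugates by $SD_t$ with $D_t = \mathrm{diag}(t,t^2,\dots,t^d)$ so that $(SD_t)^{-1}A(SD_t) = \Lambda + R_t$ with $\|R_t\|$ small, and then bounds the conjugated product term by term by $1 - \gamma_k L'$, choosing $t$ small and $n$ large so that both the nilpotent remainder $R_t$ and the array perturbation $\|A^n_k - A\|$ are absorbed by the spectral gap $L - L'$. You instead construct a Lyapunov quadratic form $P \succ 0$ with $PA + A^TP \preceq -2L''P$, prove a one-step contraction in the $P$-norm, and then use submultiplicativity. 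The two arguments are morally related (one can view $P = (SD_t)^{-*}(SD_t)^{-1}$ as the Lyapunov matrix implicitly produced by the paper's conjugation), but yours avoids Jordan-form bookkeeping and ties in more directly to standard stability theory via the Lyapunov equation; the paper's version is closer to the reference \cite{for}*{Lemma 5.8} it is modeled on. One small bookkeeping note: converting $\|\cdot\|_P$ back to the Frobenius norm costs a single factor $\kappa_1$ (not $\kappa_1^2$) since the right-hand side is a scalar, and strictly speaking one also needs $2\gamma_j L' \le 1$ before invoking $1 - x \le e^{-x}$; both are harmless and hold for $n$ large, so they fold into the constant $C$ exactly as you describe.
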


\begin{proof}
Let $\{\lambda_i\}_{i=1}^d$ denote the eigenvalues of $A$, and use the Jordan decomposition of $A$ to write $A = SJS^{-1}$, where $S$ is invertible and $J$ is a Jordan matrix. Let 
$$
D_t \doteq \text{diag}(t,t^2,\dots,t^d), \;\; \Lambda \doteq \text{diag}(\lambda_1,\lambda_2,\dots,\lambda_d).
$$
Then, following \cite{for}, we have
that
$
A = (SD_t) (\Lambda + R_t)(SD_t)^{-1},
$
where $\lim_{t\rightarrow0}\|R_t\| = 0$. Write
$$
(SD_t)^{-1} (I + \gamma_j A^n_j) (SD_t)= I + \gamma_j \Lambda + \gamma_j R_t + \gamma_j (SD_t)^{-1} (A_j^n - A)(SD_t).
$$
For $n^{\po} \leq k \leq n$, we have
\begin{equation}\label{eq:matrixeq1}
\begin{split}
\| (SD_t)^{-1}(I + \gamma_k A_k^n)(SD_t)\| &\leq \|I + \gamma_k \Lambda\| + \gamma_k \|R_t\| + \gamma_k \|A^n_k - A\|\|SD_t\|\|(SD_t)^{-1}\|.
\end{split}
\end{equation}
Fix $0 < L' < L'' < L$, and note that there is some $t_0 > 0$ such that if $0\le t \leq t_0$, then $\|R_t\| \leq (L'' - L')/2$.  Also, there is some $n_0$ such that if $n \geq n_0$, $n^{\po} \le k \le n$, then 
\begin{equation}\label{eq:matrixeq2}
\| I + \gamma_k \Lambda\| \leq 1 - \gamma_k L'', \;\; \|A^n_k - A\|\|SD_t\| \|(SD_t)^{-1}\| \leq (L''-L')/2.
\end{equation}
Combining (\ref{eq:matrixeq1}) and (\ref{eq:matrixeq2}), we  see that if $n^{\po} \leq k \leq n$, $t \leq t_0$, and $n \geq n_0$,
$$
\| (SD_t)^{-1} (I + \gamma_k A^n_k)(SD_t)\| \leq  1 - \gamma_k L'.
$$
It follows that
\begin{equation*}
\begin{split}
\left\| \prod_{j=k}^n (I + \gamma_j A^n_j)\right\| &= \left\| (SD_t) \left[\prod_{j=k}^n (SD_t)^{-1} (I + \gamma_j A^n_j)(SD_t) \right] (SD_t)^{-1}\right\|\\
&\leq \| SD_t\| \prod_{j=k}^n\| (SD_t)^{-1} (I + \gamma_j A^n_j)(SD_t)\| \|(SD_t)^{-1}\|\\
&\leq \|SD_t\| \|(SD_t)^{-1}\| \exp\left( -L' \sum_{j=k}^n  \gamma_j\right).\qedhere
\end{split}
\end{equation*}
\end{proof}

\section*{Acknowledgements}
Research  of AB is supported in part by the National Science Foundation (DMS-1814894 and DMS-1853968).


\bibliographystyle{amsplain}
\bibliography{main}

\end{document}